\newcommand{\cC}{\mathcal{C}}
\newcommand{\cA}{\mathcal{A}}
\newcommand{\cM}{\mathcal{M}}
\newcommand{\cT}{\mathcal{T}}
\newcommand{\cD}{\mathcal{D}}
\newcommand{\cO}{\mathcal{O}}
\newcommand{\cI}{\mathcal{I}}
\newcommand{\cS}{\mathcal{S}}
\newcommand{\cH}{\mathcal{H}}
\newcommand{\cJ}{\mathcal{J}}
\newcommand{\cL}{\mathcal{L}}
\newcommand{\fh}{\mathfrak{h}}
\newcommand{\bv}{\mathbf{v}}
\newcommand{\bw}{\mathbf{w}}
\DeclareMathOperator{\DR}{DR}
\DeclareMathOperator{\Rep}{Rep}
\newcommand{\fX}{\mathfrak{X}}
\newcommand{\bA}{\mathbf{A}}
\newcommand{\fp}{\mathfrak{p}}
\newcommand{\eu}{\mathsf{eu}}
\newcommand{\CC}{\mathbb{C}}
\newcommand{\QQ}{\mathbb{Q}}
\newcommand{\RR}{\mathbb{R}}
\newcommand{\ZZ}{\mathbb{Z}}
\newcommand{\bc}{\mathbf{c}}
\newcommand{\fS}{\mathfrak{S}}
\newcommand{\fg}{\mathfrak{g}}
\newcommand{\C}{\mathbb{C}}
\DeclareMathOperator{\Ind}{Ind}
\DeclareMathOperator{\modd}{-mod}
\DeclareMathOperator{\GL}{GL}
\DeclareMathOperator{\gl}{\mathfrak{gl}}
\DeclareMathOperator{\Mat}{Mat}
\DeclareMathOperator{\Spec}{Spec}
\DeclareMathOperator{\Hom}{Hom}
\DeclareMathOperator{\gr}{gr}
\DeclareMathOperator{\End}{End}
\DeclareMathOperator{\triv}{triv}
\DeclareMathOperator{\Ext}{Ext}
\DeclareMathOperator{\ad}{ad}
\DeclareMathOperator{\Res}{Res}
\DeclareMathOperator{\sign}{sign}
\DeclareMathOperator{\rank}{rank}
\DeclareMathOperator{\HC}{HC}
\DeclareMathOperator{\Loc}{Loc}
\DeclareMathOperator{\opp}{opp}
\DeclareMathOperator{\Vect}{Vect}
\DeclareMathOperator{\Supp}{SS}
\DeclareMathOperator{\bimod}{-bimod}
\DeclareMathOperator{\id}{id}
\DeclareMathOperator{\rad}{rad}
\DeclareMathOperator{\Triv}{Triv}
\DeclareMathOperator{\im}{im}
\DeclareMathOperator{\Proj}{Proj}
\DeclareMathOperator{\sing}{SS}
\newcommand{\cP}{\mathcal{P}}
\newcommand{\scrM}{\mathscr{M}}
\newcommand{\cU}{\mathcal{U}}
\newcommand{\scrA}{\mathscr{A}}
\newcommand{\scrI}{\mathscr{I}}
\newcommand{\fP}{\mathfrak{P}}
\newcommand{\cB}{\mathcal{B}}
\newcommand{\gHC}{\mathcal{HC}}
\newcommand{\DD}{\mathbb{D}}
\newcommand{\euss}{\eu\operatorname{-ss}}
\newcommand{\homf}[2]{\Hom_{\mathtt{fin}}(#1,#2)}
\newtheorem{theorem}{Theorem}[section]
\newtheorem{remark}[theorem]{Remark}
\newtheorem{proposition}[theorem]{Proposition}
\newtheorem{definition}[theorem]{Definition}
\newtheorem{corollary}[theorem]{Corollary}
\newtheorem{lemma}[theorem]{Lemma}
\newtheorem{example}[theorem]{Example}
\newtheorem{conjecture}[theorem]{Conjecture}
\author{Jos\'e Simental}
\address{Instituto de Matem\'aticas, Universidad Nacional Aut\'onoma de M\'exico. Ciudad Universitaria, CDMX. M\'exico.}
\email{simental@im.unam.mx}
\thanks{Keywords: Harish-Chandra bimodule, Rational Cherednik algebra, Category O, Principal block, Duality}
\thanks{MSC 2010: 16D70, 16D99, 18D99}
\title{Harish-Chandra bimodules for type A rational Cherednik algebras}
\begin{document}
	\begin{abstract}
		We study Harish-Chandra bimodules for the rational Cherednik algebra associated to the symmetric group $S_{n}$. In particular, we show that for any parameter $c \in \CC$, the category of Harish-Chandra $H_{c}$-bimodules admits a fully faithful embedding into the category $\cO_{c}$, and describe the irreducibles in the image. We also construct a duality on the category of Harish-Chandra bimodules, and in fact we do this in a greater generality of quantizations of Nakajima quiver varieties. We use this duality, along with induction and restriction functors, to describe, as an abelian category, the smallest Serre subcategory of the category of Harish-Chandra bimodules containing the regular bimodule, as well as to explicitly describe the tensor products of its irreducible objects.
	\end{abstract}

\maketitle

\section{Introduction}\label{sect:introduction}

Rational Cherednik algebras were introduced by Etingof and Ginzburg in \cite{EG}. Since then, they have been studied in connection with several areas of mathematics, such as integrable systems, algebraic geometry, knot homology and representation theory. Among the rational Cherednik algebras, perhaps the most studied class is the one associated to the symmetric group $S_{n}$ acting on $\CC^{n}$ by permuting the coordinates. The objective of this paper is to study Harish-Chandra bimodules over these and more general algebras. 

Let us give the definition of a HC bimodule, in general. Let $A$ be a $\ZZ_{\geq 0}$-graded algebra, finite over its center $Z$ and let $\mathcal{A}$ be a quantization of $A$, that is, $\mathcal{A}$ comes equipped with a $\ZZ_{\geq 0}$-filtration $F$ and an isomorphism $\gr\mathcal{A} \cong A$. We also assume that $Z$ is a finitely generated algebra, in particular, both $A$ and $\cA$ are Noetherian. An $\cA$-bimodule $\cB$ is called \emph{Harish-Chandra} (HC, for short) if it admits a bimodule filtration so that the left and right actions of the commutative algebra $Z$ on $\gr\cB$ coincide, making $\gr\cB$ a \emph{finitely generated} module over $Z$. Note that it automatically follows that $\cB$ is a finitely generated $\cA$-bimodule. 

Examples of such algebras $\mathcal{A}$ abound in representation theory, and HC bimodules have been studied over them in many cases. For example, $\mathcal{A}$ could be the universal enveloping algebra of a semisimple Lie algebra.  The theory of HC bimodules over such algebras arises in the study of translation functors and has been well-studied in \cite{bernsteingelfand, gbimod}. Another example arises when $\mathcal{A}$ is (a central reduction of) a finite W-algebra. In this context, HC bimodules have been studied for example in \cite{losev_w, ginzburg_w}. As a more general example (that also encompasses the type $A$ rational Cherednik algebras), $A$ is the algebra of functions of a conical symplectic singularity, and $\cA$ is its quantization, \cite{BPW, BLPW, BL}. 

Or, one can take $\cA$ to be a rational Cherednik algebra, we will recall the definition of these in Section \ref{subsect:defrca}. In this context, HC bimodules have been introduced by Berest, Etingof and Ginzburg in \cite{BEG_do} and their study has been further developed in \cite{losev_completions, losev_derived, hcbimod}. In particular in \cite{hcbimod}, the author gave a complete characterization of the category of HC bimodules modulo those without full support, which turns out to be equivalent to the category of representations of a group that can be explicitly recovered from the data defining the rational Cherednik algebra. In that paper, the author also counted the number of irreducible HC bimodules in the type $A$ case, that is, for $W = S_{n}$ acting on $\CC^n$ by permuting the coordinates. The first main result of this paper is a much subtler refinement of \cite[Theorem 6.8]{hcbimod}. Recall that the type $A$ rational Cherednik algebra depends on a parameter $c \in \CC$, we will denote by $\HC(c,c)$ the category of HC bimodules associated to the algebra with parameter $c$.  Also, recall that there is a (extensively studied, see for example \cite{GGOR, losev_derived, shan, rouquier}) highest weight category $\cO_{c}$ of modules over the rational Cherednik algebra, whose definition will be reviewed in Section \ref{subsect:O}.

\begin{theorem}\label{thm:main}
For any $c \in \CC \setminus \RR_{< 0}$, the functor $\Phi_{c}: \HC(c,c) \to \cO_{c}$ defined by $\Phi_{c}(B) = B \otimes_{H_{c}} \Delta(\triv)$ satisfies the following properties.
\begin{enumerate}
\item $\Phi_{c}$ is exact, fully faithful and intertwines the restriction functors.
\item The image of $\Phi_{c}$ is closed under subquotients.
\item $\Phi_{c}$ preserves the filtrations by support. More precisely, if the support of $B \in \HC(c,c)$ is $\{v \in (\CC^{n} \oplus \CC^{n})/S_{n} : W' \subseteq W_{v}\}$ for some parabolic subgroup $W'$ determined by $B$, then the support of $\Phi_{c}(B)$ is $\{x \in \CC^n : W' \subseteq W_{x}\}$. 
\item The simples in the image of $\Phi_{c}$ are given as follows:
\begin{itemize}
\item[(a)] If $c \not\in  \{r/m : \gcd(r;m) = 1, 1 \leq m \leq n\}$ then the only simple in the image of $\Phi_{c}$ is $L_{c}(\triv) = \Delta_{c}(\triv)$.
\item[(b)] If $c \in \ZZ$, then $\Phi_{c}$ is a category equivalence.
\item[(c)] If $c = r/m > 0$ with $\gcd(r;m) = 1$ and $1 < m \leq n$, then a simple $L_{c}(\lambda)$ belongs to the image of $\Phi_{c}$ if and only if it has the form $\lambda = \mu + m\nu$, where $\mu = ((m-1)^{x}, y)$ for some $x \geq 0$ and $0 \leq y < m-1$. Here, addition of partitions is done component-wise, and so is multiplication of partitions by scalars. 
\end{itemize}
\end{enumerate}
\end{theorem}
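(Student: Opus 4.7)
The plan is to establish the structural properties (1)--(3) uniformly for $c \not\in \RR_{<0}$, and then extract the simple classification (4) by combining them with the count of irreducible HC bimodules from \cite[Theorem 6.8]{hcbimod}. The key input for (1) is that for these parameters the trivial partition $(n)$ sits at the top of the Bezrukavnikov--Etingof highest-weight order on $\cO_c$, so $\Delta_c(\triv) = P_c(\triv)$ is projective. Combined with its PBW presentation $\Delta_c(\triv) \cong \CC[\mathfrak{h}] \otimes \triv$, this yields flatness as a left $H_c$-module, hence exactness of $\Phi_c$. For full faithfulness, I would localize to the regular locus, where $\Phi_c$ becomes the standard passage from bimodules over $\Diff(\mathfrak{h}^{\mathrm{reg}})^{S_n}$ to their underlying left modules; the HC condition then controls the extension across the discriminant. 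The intertwining of restriction functors in (1) and the support statement (3) are handled together: Losev's $\Res^{\HC}_{W'}$ and Bezrukavnikov--Etingof $\Res^{\cO}_{W'}$ are both defined via completion along the $W'$-stratum, and $\Delta_c(\triv)$ descends to the trivial Verma of the parabolic rational Cherednik algebra, giving a natural isomorphism $\Res^{\cO}_{W'} \circ \Phi_c \cong \Phi^{W'} \circ \Res^{\HC}_{W'}$. Support preservation then follows by detecting nonvanishing of restrictions at each stratum.

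For (2), I would take $B'$ to be the largest subbimodule of $B$ whose image under $\Phi_c$ lies inside a prescribed $M \subseteq \Phi_c(B)$. Exactness forces $\Phi_c(B') \subseteq M$, and I would prove the reverse containment by induction on the support stratum, using (3) to reduce to the open stratum, where $\Phi_c$ becomes a forgetful-type functor from representations of the finite group controlling full-support HC bimodules in \cite{hcbimod}. Closure under quotients follows either symmetrically or via the duality on $\HC(c,c)$ constructed elsewhere in the paper.

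The main obstacle is (4). Case (a) is immediate: at generic $c$, $\HC(c,c)$ has $H_c$ as its unique simple object and $\Phi_c(H_c) = \Delta_c(\triv) = L_c(\triv)$. In case (b), $c \in \ZZ$, the simples of $\HC(c,c)$ and $\cO_c$ are both indexed by partitions of $n$, so full faithfulness together with subquotient-closure from (2) upgrade $\Phi_c$ to an equivalence. The delicate case is (c), $c = r/m$ with $1 < m \leq n$. Here I would induct on the support stratum via (3), reducing to the full-support simples for each parabolic subgroup $\prod_i S_{n_i}$; the count of such simples is given by \cite[Theorem 6.8]{hcbimod}. Identifying them with the partitions $\lambda = \mu + m\nu$ where $\mu = ((m-1)^x, y)$ requires an explicit combinatorial bijection, which I would build using the $m$-core/$m$-quotient decomposition of partitions: the $m$-quotient encodes the factor $m\nu$, while the staircase $\mu$ matches the labels of full-support irreducibles extracted from the finite group in \cite{hcbimod}. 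Checking that this bijection is compatible with the restriction functors at every stratum (so that the inductive bootstrap closes) is where the essential combinatorial work lies.
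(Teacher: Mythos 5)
Your proposal contains a genuine error at the start and a structural gap that propagates through the rest. First, exactness: you deduce flatness of $\Delta_c(\triv)$ as a left $H_c$-module from projectivity of $\Delta_c(\triv)$ inside $\cO_c$. This implication is false. Projectivity in $\cO_c$ only controls $\Ext^1_{\cO_c}$, not $\Ext^1_{H_c}$; projectives in $\cO_c$ are never projective (hence never flat) as $H_c$-modules. The $n=1$ case already refutes it: $H_c = D(\CC)$ is the Weyl algebra and $\Delta(\triv) = \CC[x] = D(\CC)/D(\CC)\partial$ has projective dimension $1$, not $0$. The functor $B \mapsto B\otimes_{H_c}\Delta(\triv)$ is \emph{not} exact on arbitrary bimodules; its exactness on the subcategory $\HC(c,c)$ is a nontrivial result specific to HC bimodules, which the paper quotes from Losev (\cite[Lemma 3.9]{losev_derived}) and which uses the good filtration on $B$, not a flatness statement.

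Second, and more fundamental: your approach to full faithfulness by localizing to $\fh^{\mathrm{reg}}$ (i.e.\ restricting to $W' = \{1\}$) cannot detect HC bimodules supported away from the open leaf, and for $c = r/m$ such bimodules exist. Saying the HC condition ``controls the extension across the discriminant'' is not an argument; it is exactly the hard step. The mechanism the paper uses is to restrict not to the open stratum but to the \emph{deepest} relevant one: the parabolic $W' = S_m^{\times\lfloor n/m\rfloor}$, chosen so that $B_\heartsuit \neq 0$ for every nonzero $B$. This reduces everything to the base case $H_{r/m}(S_m)$, where $\HC(r/m,r/m)$ has been completely described (four indecomposables $H_c$, $D$, $Q$, $J$), and the theorem is checked by hand. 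That explicit base case, together with the reduction to $S_m^{\times\lfloor n/m\rfloor}$, is the engine of the whole proof, and it is absent from your outline. Relatedly, your suggestion to deduce closure under quotients ``via the duality'' does not work directly, since the duality of Theorem \ref{thm:main3} goes from $\HC(c,c)$ to $\HC(-c+N,-c+N)^{\opp}$, a different category; the paper instead proves closure under quotients first using the unit of the $(\Phi_c,\Psi_c)$-adjunction and restriction to $S_m^{\times\lfloor n/m\rfloor}$, and then obtains closure under subobjects from exactness. Your treatment of part (4) is reasonable in outline, but without the base-case reduction the inductive bootstrap you describe has nothing to bottom out on.
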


Some remarks are in order. First, if $c \in \RR_{<0}$ the functor $\bullet \otimes_{H_{c}}\Delta(\sign)$ satisfies analogous properties. Second, note that Theorem \ref{thm:main} is a Cherednik algebra analogue of the corresponding result for semisimple Lie algebras, \cite[Theorem 5.9]{bernsteingelfand}. In the Cherednik setting, however, an analogue of Theorem \ref{thm:main} does not exist outside of type $A$: there are simply \emph{too many} irreducible HC bimodules for it to hold. For example, if $W = \ZZ/\ell\ZZ$ then category $\cO_{c}$ always has $\ell$ irreducibles, while it is not difficult to find values of the parameter $c$ for which $\HC(c,c)$ has $(\ell - 1)^2 + 1$ irreducibles. 

A final remark is that, while Theorem \ref{thm:main} gives a precise count of the number of irreducibles in $\HC(c,c)$ it does \emph{not} describe the category $\HC(c,c)$ completely, for the image of $\Phi_{c}$ is not in general closed under extensions. We do not know how to describe extensions between arbitrary simples. However, we can completely describe the Serre subcategory spanned by the simple subquotients of the regular bimodule $H_{c}$. Note that this category is not semismiple only in case (c) of Theorem \ref{thm:main}(4). 

\begin{theorem}\label{thm:main2}
Denote by $\mathcal{P}_{r/m}$ the minimal Serre subcategory of $\HC(r/m, r/m)$ containing the regular bimodule $H_{r/m}$. Then, $\mathcal{P}_{r/m}$ is equivalent to the category of representations of the quiver

\begin{equation*}\label{eqn:quiver}
\xymatrix{1\ar@/^/[rr]^{\alpha_{1}} & & 2 \ar@/^/[ll]^{\beta_{1}} \ar@/^/[rr]^{\alpha_{2}} & & 3 \ar@/^/[ll]^{\beta_{2}} \ar@/^/[r] & \cdots \ar@/^/[l] \ar@/^/[r] & \ar@/^/[l] \lfloor n/m\rfloor  \ar@/^/[rr]^{\alpha_{\lfloor n/m\rfloor}} & & \lfloor n/m\rfloor + 1  \ar@/^/[ll]^{\beta_{\lfloor n/m\rfloor}}}  \tag{$\dagger$}
\end{equation*}

\noindent with relations $\alpha_{i}\beta_{i} = \beta_{i}\alpha_{i} = 0$ for every $i = 1, \dots, \lfloor n/m\rfloor$. 
\end{theorem}

The category $\mathcal{P}_{r/m}$ is quadratic, but not Koszul, and it has infinite homological dimension. In the case $n = 2$, the category $\mathcal{P}_{r/m}$ actually coincides with the category of all HC bimodules. In particular, the category of HC bimodules is not, in general, a highest weight category. 





In the proof of Theorem \ref{thm:main2}, we will need some more structure on the category $\HC(c,c)$: (co-)induction and restriction functors that were introduced in \cite{losev_completions} and studied further in \cite{losev_derived, hcbimod} as well as the present paper; and a duality functor whose existence is our last main result. The first indication that such a duality functor could exist was given in \cite{BL}, where the authors of that paper introduce a bimodule (the \emph{double wall-crossing bimodule}) whose composition series looks like that of the regular bimodule but the order in which the subquotients appear is reversed. It turns out that the double wall-crossing bimodule is the dual of the regular bimodule.

\begin{theorem}\label{thm:main3}
Let $c > 0$. Then, for an integer $N \gg 0$ there exists an equivalence of categories

$$
\DD: \HC(c,c) \to \HC(-c + N, -c + N)^{\opp}
$$

\noindent that preseves the support of a bimodule. If $c = r/m$, then $\DD$ induces an equivalence $\cP_{r/m} \to \cP_{-r/m + N}^{\opp}$.
\end{theorem}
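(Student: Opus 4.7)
The plan is to construct $\DD$ out of wall-crossing bimodules, following the hint in \cite{BL}. Recall that for $c>0$ and $c'=-c+N$ with $N\gg 0$, there is a wall-crossing bimodule ${}_{c'}\cB_c$, an $(H_{c'},H_c)$-bimodule inducing a derived equivalence $D^b(H_c\modd)\simeq D^b(H_{c'}\modd)$, together with its inverse ${}_c\cB_{c'}$; both are Harish-Chandra across parameters and preserve the support filtration. The double wall-crossing bimodule ${}_{c'}\cB_c\otimes_{H_c}{}_c\cB_{c'}\in \HC(c',c')$ is the candidate image of the regular bimodule $H_c$ under $\DD$.

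For $B\in\HC(c,c)$ I define $\DD(B)$ by composing the naive bimodule dual with tensor products by the two wall-crossing bimodules:
$$
\DD(B):= {}_{c'}\cB_c\otimes_{H_c}^L R\Hom_{H_c\otimes H_c^{opp}}(B,\,H_c\otimes H_c)\otimes_{H_c}^L{}_c\cB_{c'},
$$
up to a cohomological shift determined by the twisted Calabi--Yau structure on $H_c$ (chosen so that $\DD(H_c)$ is exactly the double wall-crossing bimodule for $H_{c'}$). The main technical step is to check that $\DD(B)$ is concentrated in a single cohomological degree and lies in $\HC(c',c')$. This decomposes into two sub-claims: (a) the naive bimodule dual is a (shifted) duality on $\HC(c,c)$ concentrated in one degree, which follows from Cohen--Macaulayness of $\gr B$ over its support in $\Spec Z(\gr H_c)$, standard in the conical symplectic singularity setting; and (b) tensoring with the wall-crossing bimodules is t-exact on the HC subcategory, an abelian-equivalence statement that is the HC-bimodule analogue of the corresponding result for category $\cO$ at far-apart parameters proved in \cite{losev_derived, hcbimod}.

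Granting (a) and (b), I construct a quasi-inverse $\DD'$ symmetrically using wall-crossing in the opposite direction; the identities $\DD'\circ\DD\simeq\id$ and $\DD\circ\DD'\simeq\id$ follow from the Morita-type isomorphisms ${}_{c'}\cB_c\otimes_{H_c}^L{}_c\cB_{c'}\simeq H_{c'}$ and ${}_c\cB_{c'}\otimes_{H_{c'}}^L{}_{c'}\cB_c\simeq H_c$, together with the involutivity of the naive bimodule dual. Support preservation is automatic, since each of the three ingredients (both wall-crossing bimodules and the naive dual) preserves support. For the principal-block statement, by construction $\DD(H_c)$ is the double wall-crossing bimodule for $H_{c'}$, which lies in $\cP_{-r/m+N}$ by \cite{BL}; since $\DD$ is an equivalence of abelian categories preserving the block decomposition, it restricts to an equivalence $\cP_{r/m}\xrightarrow{\sim}\cP_{-r/m+N}^{opp}$.

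The main obstacle is claim (b): the t-exactness of wall-crossing on HC bimodules. While t-exactness of wall-crossing is known for category $\cO$ at far parameters, the bimodule analogue requires extra care, because HC bimodules carry their own support filtration and one must control the vanishing of higher $\Tor$ and $\Ext$ groups uniformly across the (many) walls crossed between $c$ and $c'=-c+N$.
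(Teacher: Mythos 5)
Your proposal takes a genuinely different route from the paper, and both gaps you flag are real, with (b) being fatal as stated. The paper does not use wall-crossing bimodules at all. It works at the sheaf level on the smooth resolution $\cM^{\theta}_{0}\times\cM^{\theta}_{0}$, where the duality $\cB\mapsto\mathcal{E}xt^{\dim\cM^{0}_{0}}\bigl(\cB,\,\cA^{\theta}_{\lambda}\boxtimes(\cA^{\theta}_{\mu})^{\opp}\bigr)$ is concentrated in a single degree because $\gr\cB$ is supported on a Lagrangian ($f$-Steinberg) subvariety of a \emph{smooth} ambient space -- this is where holonomic duality is actually standard. To descend to the algebra level, the paper uses (i) the filtered algebra isomorphism $\Phi_{\lambda}\colon\cA_{\lambda}\xrightarrow{\sim}\cA_{-\lambda}$ induced by the transpose automorphism $\Upsilon$ of the quiver variety, which is manifestly an exact twist of the HC categories, and (ii) translation $\lambda\mapsto\lambda+n\theta$ by quantizations of line bundles, exact at the sheaf level, combined with abelian localization at dominant parameters; the $f$-twisted HC formalism is introduced precisely to keep track of the twist that $\Upsilon$ creates through these steps.

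On your sub-claims. For (a): $\gr(H_{c}\otimes H_{c}^{\opp})\cong\bigl(\CC[\fh\oplus\fh^{*}]\rtimes W\bigr)^{\otimes 2}$ is singular, and purity of the algebraic bimodule dual of a HC bimodule over a singular ring is not a standard fact -- the references you have in mind, and \cite[Section 4.2]{BL} which the paper quotes, establish purity on the resolution, and descending to the algebra requires localization, which is exactly why the paper's argument is structured the way it is. For (b): the parameter change $c\mapsto -c+N$ changes the fractional part of $c$ whenever $2c\notin\ZZ$, so it cannot be realized by integer translations alone, and the route from $c$ to $-c+N$ passes through genuine walls; an individual wall-crossing functor is in general only a derived (perverse) equivalence, not t-exact, and no t-exactness statement for wall-crossing on HC bimodules appears in \cite{losev_derived} or \cite{hcbimod}, nor should one expect it. Replacing your wall-crossing bimodules by the combination ``algebra isomorphism $\Phi_{\lambda}$ doing $\lambda\mapsto-\lambda$, then exact dominant translation'' is exactly the idea that closes this gap and turns the heuristic into a proof.
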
 

In the context of Theorem \ref{thm:main2}, the duality $\DD$ preseves every vertex, and it behaves on arrows as $\alpha_{i} \leftrightarrow \beta_{i}$, $i = 1, \dots, \lfloor n/m\rfloor$. We remark, however, that the duality $\DD$ does \emph{not} intertwine the tensor products.

The proof of Theorem \ref{thm:main3} is geometric, and it uses the realization of (the spherical subalgebra of) the rational Cherednik algebra as a quantized quiver variety. For these algebras, we have both a notion of geometric and algebraic HC bimodules, the former are sheaves on a resolution of the quiver variety. That such a duality exists on a geometric level is not surprising since HC bimodules, by their definition, satisfy a condition analogous to holonomicity of D-modules. The content of Theorem \ref{thm:main3} is, then, to show that such an equivalence still exists at the algebraic level. Note that, even if abelian localization holds at $c$, it may not hold for the category $\HC(c,c)$. 

The organization of this paper is as follows. In Section \ref{sect:prelim} we recall the definition of rational Cherednik algebras and their categories $\cO$, as well as the main properties of the Bezrukavnikov-Etingof restriction and induction functors, \cite{BE}, that will be important in this paper. In Section \ref{sect:hcbimod} we recall known results about categories of Harish-Chandra bimodules for general rational Cherednik algebras, including induction and restriction functors from \cite{losev_completions}. In this section we also give a proof that the category of HC bimodules is equivalent to the category of representations of a finite-dimensional algebra that, although well-known to experts, does not seem to have been explicitly written in the literature. We specialize to type $A$ in Section \ref{sect:HCvsO}, where we prove Theorem \ref{thm:main} and use it to give some explicit computations of induction and restriction functors for HC bimodules, as well as to give an alternative description of the double wall-crossing bimodule. Section \ref{sect:duality} deals with the proof of Theorem \ref{thm:main3}. Since we prove this theorem in the more general context of quantized quiver varieties, a large portion of this section is on preliminaries of these. To prove this theorem, we have to introduce a generalized category of HC bimodules, that are roughly speaking those that are supported on the graph of an automorphism of the quiver variety. Finally, in Section \ref{sect:prblock} we use the computations of restriction and induction functors taken up in Section \ref{sect:HCvsO}, as well as properties of the duality functor, to prove Theorem \ref{thm:main2}.\\

{\bf Acknowledgements.} I would like to thank Ivan Losev, Dmytro Matvieievskyi and Vasileios Aravantinos-Sotiropoulos for helpful discussions. I am also grateful to Dmytro Matvieievskyi for very helpful comments on a preliminary version of this paper. Finally, I am very grateful to an anonymous referee for a careful reading of this manuscript, indicating several inaccuracies and making suggestions that allowed me to improve the exposition, and catching a significant gap in a previous version of the text. 

\section{Rational Cherednik algebras and category $\cO$}\label{sect:prelim}

\subsection{Rational Cherednik algebras}\label{subsect:defrca}

Let $W$ be a complex reflection group, with reflection representation $\fh$. By $S \subseteq W$ we will denote the set of reflections of $W$, that is, $s \in S$ if and only if $\rank(\id_{\fh} - s) = 1$. For $s \in S$, let $\alpha^{\vee}_{s} \in \fh$ be an eigenvector with eigenvalue $\lambda_{s} \neq 1$, and $\alpha_{s} \in \fh^{*}$ an eigenvector with eigenvalue $\lambda_{s}^{-1}$. The elements $\alpha_{s}, \alpha_{s}^{\vee}$ are determined up to a nonzero scalar, and we partially eliminate this ambiguity by requiring $\langle \alpha_{s}, \alpha_{s}^{\vee}\rangle = 2$. 

Note that $W$ acts on $S$ by conjugation, and we let $c: S \to \CC$ be a $W$-invariant function, the set of such functions forms a vector space $\fp \cong \CC^{|S/W|}$. For $c \in \fp$ the rational Cherednik algebra $H_{c} = H_{c}(W, \fh)$ is, by definition, the quotient of the semidirect product $T(\fh \oplus \fh^{*}) \rtimes W$ by the relations
\[
[x, x'] = [y, y'] = 0, [y,x] = \langle y, x\rangle - \sum_{s \in S}c(s)\langle y, \alpha_{s}\rangle\langle \alpha_{s}^{\vee}, x\rangle s, \hspace{1cm} x, x' \in \fh^{*}, y, y' \in \fh
\]

Note that $H_{c}$ is the specialization of an algebra $H_{\fp}$ defined over $\fp$ -- the relations are the same, but the parameters $c(s)$ are formal variables $\bc(s)$ instead of complex numbers. For most of this paper, we will focus on the case $W = S_{n}$, acting on $\fh := \{(x_1, \dots, x_n) \mid \sum x_{i} = 0\} \subseteq \CC^n$ by permuting the coordinates.

\begin{example}\label{def:rca}
	Let us take $W = S_{n}$ acting on $\fh$ by permuting the coordinates. Here, $|S/W| = 1$ and so the parameter may be thought of as a single complex number $c \in \CC$.  The rational Cherednik algebra $H_{c} = H_{c}(n)$ is the quotient of the semidirect product algebra $\CC\langle x_{1}, \dots, x_{n}, y_{1}, \dots, y_{n}\rangle\rtimes S_{n}$ by the relations	
	\[
	\begin{array}{lllll}
	\sum_{i = 1}^{n} x_{i} = 0, & \sum_{i = 1}^{n} y_{i} = 0, &[x_{i}, x_{j}] = 0, & [y_{i}, y_{j}] = 0, & [y_{i}, x_{j}] = cs_{ij} - \frac{1}{n}
	\end{array}
	\]

\noindent where, in the last equation, we have $i \neq j$. 
\end{example}

The algebra $H_{c}$ can be filtered by setting $\deg(\fh^*) = 1 = \deg(\fh)$ while $\deg(W) = 0$. The PBW theorem for Cherednik algebras says that, under this filtration, we have a natural isomorphism $\gr H_{c} \cong S(\fh \oplus \fh^*) \rtimes W$. This implies that $H_{c}$ admits a triangular decomposition $H_{c} \cong S(\fh^*) \otimes \CC W \otimes S(\fh)$, where $S(\fh^*)$, $\CC W$ and $S(\fh)$ all sit naturally as subalgebras of $H_{c}$. 

Finally, note that the algebra $H_{c}$ also admits a grading by setting $\deg(\fh^{*}) = 1, \deg(\fh) = -1$ and $\deg(W) = 0$. This grading is inner: define the Euler element $\eu \in H_{c}$ by $\eu := \frac{1}{2}\sum y_{i}x_{i} + x_{i}y_{i}$, where $y_{1}, \dots, y_{n}$ is a basis of $\fh$ and $x_{1}, \dots, x_{n}$ the dual basis (it is straightforward to check that $\eu$ does not depend on the chosen basis). Then, $[\eu, x] = x, [\eu, y] = -y$ and $[\eu, w] = 0$ for $x \in \fh^{*}$, $y \in \fh$ and $w \in W$. 

\subsection{Category $\cO_{c}$}\label{subsect:O} Following \cite{GGOR} we define the category $\cO_{c}$ to consist of those $H_{c}$-modules that are finitely generated and on which $S(\fh)_{+}$ acts locally nilpotently. Note that any module $M \in \cO_{c}$ is actually finitely generated over the algebra $S(\fh^*)$. 

Analogously to the representation theory of semisimple Lie algebras, we can define standard modules as follows. Let $\lambda$ be an irreducible representation of $W$.  We may see $\lambda$ as a representation of $S(\fh)\rtimes W$ by letting $\fh$ act by $0$ on $\lambda$. We now define the standard module to be
\[
\Delta(\lambda) := \Ind_{S(\fh)\rtimes W}^{H_{c}}\lambda = H_{c}\otimes_{S(\fh) \rtimes W}\lambda
\]

It is easy to see that the standard module $\Delta(\lambda)$ has a unique irreducible quotient, which we denote $L(\lambda)$. Moreover, the modules $L(\lambda)$ where $\lambda$ runs over all irreducible representations of $W$ form a complete and non-redundant list of irreducible objects of the category $\cO_{c}$.

The category $\cO_{c}$ has the structure of a highest weight category. This means that it has enough projectives and there is a partial order on the set of irreducibles that satisfies some upper triangularity properties. Let us denote by $P(\lambda)$ the projective cover of $L(\lambda)$, so we have homomorphisms $P(\lambda) \twoheadrightarrow \Delta(\lambda) \twoheadrightarrow L(\lambda)$. A consequence of $\cO_{c}$ being a highest weight category is that the sets $\{[P_{c}(\lambda)]\}, \{[\Delta_{c}(\lambda)]\}, \{[L_{c}(\lambda)]\}$ where $\lambda$ runs over the irreducible representations of $W$ are all bases of $K_{0}(\cO_{c})$. 
 
Let $W' \subseteq W$ be a parabolic subgroup, that is, $W'$ is the stabilizer of a point $b \in \fh$. Bezrukavnikov and Etingof \cite{BE} have introduced induction and restriction functors relating the categories $\cO_{c}(W, \fh)$ and $\cO_{c}(W', \fh_{W'})$, where $\fh_{W'}$ is the unique $W'$-stable complement to $\fh^{W}$ in $\fh$. These functors depend on a point $b \in \fh$  with $W' = W_{b}$, but for different such $b$ the functors are (not canonically!) isomorphic, so we will simply denote them $\Res^{W}_{W'}: \cO_{c}(W) \to \cO_{c}(W')$, $\Ind^{W}_{W'}: \cO_{c}(W') \to \cO_{c}(W)$. The following encompasses important properties of these functors that we will need.

\begin{theorem}\label{thm:prop res and ind}
The following is true.

\begin{enumerate}
\item The functors $\Res^{W}_{W'}, \Ind^{W}_{W'}$ are exact and biadjoint, \cite[Proposition 2.9]{shan}, \cite{losev_isofunctors}.
\item On the Grothendieck group level, $[\Res^{W}_{W'}\Delta(\lambda)] = \sum \dim(\Hom_{W'}(\tau, \lambda))[\Delta(\tau)]$, where the sum on the right-hand side runs over irreducible representations of $W'$, \cite[Proposition 3.14]{BE}.  
\item Moreover, $\Res^{W}_{W'}\Delta(\lambda)$ admits a filtration whose subquotients are all standard modules, cf. \cite[Proposition 1.9]{shan}.
\end{enumerate}
\end{theorem}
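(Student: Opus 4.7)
The plan is to rely on the explicit construction of the Bezrukavnikov--Etingof restriction functor via completions. Fix $b\in\fh$ with stabilizer $W'=W_b$, and recall that BE produce an isomorphism $H_c(W)^{\wedge_b}\cong \bigl(H_c(W',\fh_{W'})\otimes D(\fh^{W'})\bigr)^{\wedge_0}\otimes_{\CC W'}\CC W$ (a matrix-algebra-type identification). The functor $\Res^W_{W'}$ sends $M\in\cO_c(W)$ to the $\CC[\fh^{W'}]$-locally-nilpotent vectors in the completed module $M^{\wedge_b}$, viewed as an $H_c(W',\fh_{W'})$-module through the BE isomorphism; the induction functor is produced analogously.

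For part (1), exactness is immediate from this construction: completion along $b$ is exact on $\cO_c$ (as modules are finitely generated over $S(\fh^*)$), the BE isomorphism is an equivalence, and extracting the $\fh^{W'}$-locally-nilpotent part is exact since it is an appropriate generalized eigenspace decomposition. One side of the adjunction (e.g.\ $\Res$ left adjoint to $\Ind$) follows from a Frobenius-reciprocity-style computation using the completed module structure, as in Shan. The nontrivial direction is the second adjunction, which I would establish by citing Losev's isofunctor theory: one constructs an explicit natural isomorphism between $\Ind$ and the formal right adjoint of $\Res$, identifying them via a carefully chosen intertwiner on the completions.

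For part (2), I would compute $\Res^W_{W'}\Delta(\lambda)$ directly from the definition. The completion of $\Delta(\lambda)=H_c\otimes_{S(\fh)\rtimes W}\lambda$ at $b$ decomposes, as a module over $\CC[[\fh]]\rtimes W$, as $\CC[[\fh]]\otimes\Ind^W_{W'}\Res^W_{W'}\lambda$ after a Dunkl-type change of variables that trivializes the $S(\fh^*)$-action near $b$. Passing through the BE isomorphism and taking the $\fh^{W'}$-nilpotent part, the result becomes a module whose $H_c(W',\fh_{W'})$-structure is naturally filtered by pieces indexed by the $W'$-isotypic components $\tau\subseteq\Res^W_{W'}\lambda$, each piece contributing a copy of $\Delta(\tau)$. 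Taking classes in $K_0$ gives the stated formula.

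Part (3), which upgrades the Grothendieck identity to an actual standard filtration, is the main obstacle. The issue is that the decomposition produced above is only canonical at the associated graded level of a natural filtration on the completion, so one must show that the filtration of $\Res^W_{W'}\Delta(\lambda)$ whose subquotients are the terms above really is by submodules in $\cO_c(W')$. Following Shan, the approach I would take is deformation-theoretic: work over the family $H_{\bc}$, construct $\Res^W_{W'}$ in the family (so that it produces a module flat over $\fp$), and note that for generic $\bc$ the category $\cO$ is semisimple with standards equal to simples, forcing the generic fiber to be a direct sum of standards. A flatness argument then lets one specialize a standard filtration from the generic point to any $c\in\fp$, with the key technical point being that the Jordan--Hölder filtration of the generic fiber extends as a filtration by coherent $\cO$-sheaves over a neighborhood; this is precisely where care is needed, and where I would invoke \cite[Proposition 1.9]{shan}.
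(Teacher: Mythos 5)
This theorem is stated in the paper without proof; it is a compilation of known results whose proofs the paper outsources to \cite[Proposition 2.9]{shan}, \cite{losev_isofunctors}, \cite[Proposition 3.14]{BE}, and \cite[Proposition 1.9]{shan}. So there is no ``paper proof'' to compare against directly; what you have written is a reconstruction of the cited arguments, and it is broadly faithful to them.

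For parts (1) and (2) your outline is correct in spirit. One small attribution slip: the biadjointness of $\Res$ and $\Ind$ is already in Shan's Proposition 2.9, which establishes both adjunctions. The role of \cite{losev_isofunctors} is different --- it shows that the restriction functors constructed from different base points $b$ are isomorphic and, more importantly for the paper's later use, that they intertwine the naive duality swapping $\Delta$ and $\nabla$ (this is used just after Corollary \ref{cor:respoly}). So invoking Losev for ``the second adjunction'' is not quite where that reference does its work. Also, in (1) the claim that ``extracting the $\fh^{W'}$-locally-nilpotent part is exact since it is an appropriate generalized eigenspace decomposition'' papers over the real content: on a general completed module the $\fh^{W'}$-action need not be semisimple, and the exactness argument in BE and Shan requires a genuine analysis rather than a one-line appeal to eigenspaces.

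For part (3) you have overestimated the difficulty and, as a result, taken a longer route than necessary. The computation you sketch for (2) in fact yields, after passing through the BE isomorphism and taking nilpotent vectors, the \emph{isomorphism} $\Res^{W}_{W'}\Delta(\lambda)\cong\bigoplus_{\tau}\Delta(\tau)^{\dim\Hom_{W'}(\tau,\lambda)}$ (this is the actual content of \cite[Proposition 3.14]{BE}), which is a direct sum of standards and so trivially gives the filtration asked for in (3). Your deformation-theoretic argument --- construct the functor in the family over $\fp$, use generic semisimplicity of $\cO_{\bc}$, and specialize a standard filtration --- is a valid alternative proof and is closer in spirit to what one needs if one wants the more general statement that $\Res$ preserves the class of $\Delta$-filtered modules (which is roughly what \cite[Proposition 1.9]{shan} asserts). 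But that general statement is usually proved via the homological characterization of $\Delta$-filtered objects in a highest weight category ($\Ext^{1}(M,\nabla(\mu))=0$ for all $\mu$), transferred across the adjunction, rather than by deforming. So your proposal for (3) is not wrong, but it is genuinely different from the cited route, and for the specific case of $\Delta(\lambda)$ it is also more work than the direct computation from (2) already buys you.
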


The following easy corollary will be important.

\begin{corollary}\label{cor:respoly}
	Let $W' \subseteq W$ be a parabolic subgroup. Then, $\Res^{W}_{W'}\Delta(\triv) = \Delta(\triv_{W'})$. 
\end{corollary}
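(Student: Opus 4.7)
The plan is to combine parts (2) and (3) of Theorem \ref{thm:prop res and ind} and observe that the restriction of $\triv_W$ to $W'$ is the one-dimensional representation $\triv_{W'}$, which is already irreducible.

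First I would compute the class of $\Res^W_{W'}\Delta(\triv)$ in the Grothendieck group $K_0(\cO_c(W'))$ using part (2): the formula gives
\[
[\Res^W_{W'}\Delta(\triv)] = \sum_{\tau} \dim\Hom_{W'}(\tau, \triv|_{W'})\,[\Delta(\tau)],
\]
where $\tau$ ranges over irreducible representations of $W'$. Since $\triv|_{W'} = \triv_{W'}$ is itself an irreducible $W'$-representation, the only $\tau$ contributing is $\tau = \triv_{W'}$, with multiplicity one. Hence $[\Res^W_{W'}\Delta(\triv)] = [\Delta(\triv_{W'})]$ in $K_0(\cO_c(W'))$.

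Next I would invoke part (3): $\Res^W_{W'}\Delta(\triv)$ admits a filtration whose successive subquotients are standard modules. Because standard modules form a basis of the Grothendieck group, the multiset of subquotients of any standard filtration is determined by the class in $K_0$. Combined with the previous step, this filtration therefore has exactly one subquotient, namely $\Delta(\triv_{W'})$, and so $\Res^W_{W'}\Delta(\triv) \cong \Delta(\triv_{W'})$ as desired.

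There is essentially no obstacle here: the argument is a direct application of the two cited parts of Theorem \ref{thm:prop res and ind}, together with the elementary fact that the trivial representation of $W$ restricts to the trivial representation of the parabolic subgroup $W'$. The only point worth stating carefully is that a filtration by standards is rigid on the level of subquotients thanks to the linear independence of the classes $[\Delta(\tau)]$ in $K_0(\cO_c(W'))$.
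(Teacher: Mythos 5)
Your proposal is correct and follows exactly the paper's route: the paper's proof simply says the result ``Follows immediately from Theorem \ref{thm:prop res and ind}, (2) and (3),'' and your writeup supplies precisely the details that phrase leaves implicit. Nothing to add.
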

\begin{proof}
Follows immediately from Theorem \ref{thm:prop res and ind}, (2) and (3). 
\end{proof}

We remark that Theorem \ref{thm:prop res and ind} (2) and (3), and therefore Corollary \ref{cor:respoly}, remain valid when replacing the standard module $\Delta(\triv)$ with the costandard module $\nabla(\triv)$. This follows from the compatibility of the restriction functors with the naive duality functors that interchange standard and costandard modules, which is the main result of \cite{losev_isofunctors}. 

We will also consider the category $\cO_{c}^{\euss}$, which is the full subcategory of $\cO_{c}$ consisting of modules where the Euler element $\eu \in H_{c}$ acts diagonalizably. We remark that it is always the case that $\Delta(\lambda), L(\lambda) \in \cO_{c}^{\euss}$, but projectives need not be $\eu$-semisimple. In particular, $\cO_{c}^{\euss} \subseteq \cO_{c}$ has the same simple objects, but it may have less extensions. We note that $\cO_{c}^{\euss}$ has enough projectives. This stems from the following result.

\begin{lemma}
Let $P \in \cO_{c}$ be any module, and assume that $N, M \subseteq P$ are submodules such that $P/N, P/M \in \cO_{c}^{\euss}$. Then, $P/(M\cap N) \in \cO_{c}^{\euss}$. In particular, $P$ has the largest quotient $P^{\euss}$ which belongs to $\cO^{\euss}$.
\end{lemma}
\begin{proof}
Note that the obvious map $P/(M\cap N) \to P/M \oplus P/N$ is injective. Since $P/M \oplus P/N \in \cO^{\euss}$, the result follows.
\end{proof}

\begin{corollary}
Let $P(\lambda) \in \cO_{c}$ be the projective cover of $S(\lambda)$. Then, $P(\lambda)^{\euss}$ is projective in $\cO_{c}^{\euss}$ and surjects onto $S(\lambda)$. Thus, the category $\cO^{\euss}$ has enough projectives.
\end{corollary}
\begin{proof}
Since $L(\lambda) \in \cO_{c}^{\euss}$ and $P(\lambda)^{\euss}$ is the largest quotient of $P(\lambda)$ which belongs to $\cO_{c}^{\euss}$, it is clear that we have a surjection $P(\lambda)^{\euss} \twoheadrightarrow L(\lambda)$. Now assume that we have a surjection $M \twoheadrightarrow N$ with $M, N \in \cO_{c}^{\euss}$ and a map $P(\lambda)^{\euss} \to N$. This gives a map $P(\lambda) \to N$ and, since $P(\lambda)$ is projective, this map lifts to $P(\lambda) \to M$. But $M \in \cO^{\euss}$, so this latter map factors through $P(\lambda)^{\euss}$. It follows that $P(\lambda)^{\euss}$ is projective in $\cO_{c}^{\euss}$. 
\end{proof}

We note, however, that $\cO_{c}^{\euss}$ need not be a highest weight category. For example, when $n = 2$ and $c = 1/2$, the category $\cO_{c}^{\euss}$ is equivalent to the category of representations of the quiver
\begin{equation*}\label{eqn:quiver}
\xymatrix{1\ar@/^/[rr]^{\alpha} & & 2 \ar@/^/[ll]^{\beta}}
\end{equation*}
with relations $\alpha\beta = \beta\alpha = 0$. This category has infinite homological dimension, and thus cannot be highest weight.

\section{Harish-Chandra bimodules}\label{sect:hcbimod}

\subsection{Definition and basic properties.}\label{subsect:defhc} Note that, independently of the parameter $c \in \fp$, the algebras $S(\fh)^{W}$ and $S(\fh^{*})^{W}$ may be embedded inside the algebra $H_{c}$. So, for two parameters $c, c' \in \fp$ and a $(H_{c}, H_{c'})$-bimodule $B$, it makes sense to consider the operator $\ad(a): B \to B$,  $b \mapsto ab - ba$ for any $a \in S(\fh^{*})^{W} \cup S(\fh)^{W}$. 

\begin{definition}
	Let $c, c' \in \fp$. A $(H_{c}, H_{c'})$-bimodule $B$ is said to be \emph{Harish-Chandra} (shortly, \emph{HC}) if $B$ is finitely generated and for every $a \in S(\fh^{*})^{W} \cup S(\fh)^{W}$, the operator $\ad(a)$ on $B$ is locally nilpotent. 
\end{definition}

 For example, the algebra $H_{c}$ itself with diagonal action is a HC $H_{c}$-bimodule. We will refer to this as the \emph{regular bimodule}. Note that its sub-bimodules are precisely the two-sided ideals of $H_{c}$. Also, finite-dimensional $(H_{c}, H_{c'})$-bimodules, if they exist, are HC. We will denote by $\HC(c,c')$ the category of Harish-Chandra $(H_{c}, H_{c'})$-bimodules. We remark that subquotients and extensions of HC bimodules are HC. Below, we will see that $\HC(c,c')$ is actually equivalent to the category of modules over a finite-dimensional algebra. The following are basic results on HC bimodules.

\begin{lemma}\label{lemma:basicbimod}
Let $B \in \HC(c,c')$. Then:
\begin{enumerate}
\item $B$ is finitely generated as a left $H_{c}$-module, as a right $H_{c'}$-module, and as a $S(\fh)^{W} \otimes S(\fh^{*})^{W \opp}$-module, where the superscript \lq$\opp$\rq \, means that the corresponding algebra acts on the right, \cite[Lemma 3.3]{BEG_do}. 
\item If $B' \in \HC(c', c'')$, then $B \otimes_{H_{c'}} B' \in \HC(c,c'')$, \cite[Lemma 8.3]{BEG_do}.
\item If $M \in \cO_{c'}$, then $B \otimes_{H_{c'}} M \in \cO_{c}$, \cite[Proposition 3.1]{losev_derived}.
\item The bimodule $B$ has finite length, \cite[Corollary 6.3.3]{ginzburg}
\end{enumerate}
\end{lemma}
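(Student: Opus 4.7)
The plan is to reduce every claim to the behavior of the associated graded $\gr B$ under a compatible bimodule filtration, and then invoke commutative algebra on $\CC[\fh\oplus\fh^*]$. I pick a finite generating set of $B$ and endow $B$ with the bimodule filtration it induces from the PBW filtrations on $H_c$ and $H_{c'}$. Because $\ad(a)$ is locally nilpotent and strictly lowers filtration degree for every $a\in\CC[\fh]^W\cup\CC[\fh^*]^W$ (the degree-lowering is the PBW fact that these elements are central in $\gr H_c$), the left and right actions of $\CC[\fh]^W$ and of $\CC[\fh^*]^W$ coincide on $\gr B$, making $\gr B$ a finitely generated module over the diagonal copy of $\CC[\fh]^W\otimes\CC[\fh^*]^W$. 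Combined with the fact that $\gr H_c\cong S(\fh\oplus\fh^*)\rtimes W$ is finite over $\CC[\fh]^W\otimes\CC[\fh^*]^W$ (which in turn uses Chevalley--Shephard--Todd), this gives (1) by the standard lifting of finite generation from $\gr$ to the filtered object, with the three listed cases obtained by asking the diagonal polynomial subalgebra to act on the left, on the right, or one tensor factor on each side respectively.

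For (2), the induced filtration $F_k(B\otimes_{H_{c'}}B')=\sum_{i+j=k}F_iB\otimes F_jB'$ has finitely generated graded pieces by (1), and the Leibniz identity
\[
\ad(a)^N(b\otimes b') \;=\; \sum_{k=0}^N \binom{N}{k}\,\ad(a)^k(b)\otimes \ad(a)^{N-k}(b')
\]
propagates local nilpotence of $\ad(a)$ from the factors to the tensor product as soon as $N$ exceeds the sum of their local-nilpotence orders on chosen generators. For (3), finite generation of $B\otimes_{H_{c'}}M$ over $H_c$ follows from (1) together with finite generation of $M$. Local nilpotence of $\CC[\fh]^W_+$ on the tensor product follows from the analogous identity $a(b\otimes m)=\ad(a)(b)\otimes m+b\otimes a m$, which uses only that $\ad(a)$ acts locally nilpotently on $B$ and that $a$ acts locally nilpotently on $M$; the standard equivalence (again a consequence of Chevalley--Shephard--Todd: $\CC[\fh]$ is finite free over $\CC[\fh]^W$, so a monic polynomial identity expresses any $p\in\CC[\fh]$ in terms of $\CC[\fh]^W$) that local nilpotence of $\fh$ is the same as that of $\CC[\fh]^W_+$ on finitely generated $H_c$-modules then promotes this to membership in $\cO_c$.

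The deepest assertion is (4), where I would lean entirely on the cited \cite{ginzburg}. The philosophy is that HC bimodules play the role of holonomic $\mathcal{D}$-modules: the associated graded $\gr B$ is supported on a closed subvariety of $\Spec\CC[\fh\oplus\fh^*]^W\times\Spec\CC[\fh\oplus\fh^*]^W$ contained in the diagonal, and the dimension of this support, equivalently the Gelfand--Kirillov dimension of $B$, is bounded by that of a single copy of $\Spec\CC[\fh\oplus\fh^*]^W$; this is a Bernstein/Gabber-type inequality. Finite length is then obtained by induction on this dimension, using Noetherianity from (1) and the trivial observation that finite-dimensional bimodules have finite length. The main obstacle is this Bernstein-type inequality itself, which is precisely the content of \cite[Section 6]{ginzburg} and the only place in the lemma where genuine input beyond filtration bookkeeping is required.
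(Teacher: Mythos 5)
The paper offers no proof of this lemma: each item is stated with a pointer to the literature (BEG Lemma 3.3 and 8.3, Losev's Proposition 3.1, Ginzburg's Corollary 6.3.3), so there is nothing in the paper to compare against; I will assess the reconstruction on its own terms.

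The load-bearing step in your argument for (1) has a genuine gap. You assert that, for the filtration on $B$ induced from an arbitrary finite generating set, $\ad(a)$ strictly lowers filtration degree for $a\in\CC[\fh]^W\cup\CC[\fh^*]^W$, and justify this by centrality of these elements in $\gr H_c$. That centrality controls $[a,u]$ for $u\in H_c$ or $u\in H_{c'}$, but it says nothing about $\ad(a)(b_l)$ for a generator $b_l\in B$: writing $\ad(a)(ub_lv)=[a,u]b_lv+u\,\ad(a)(b_l)\,v+ub_l[a,v]$, the first and last summands indeed drop degree, but $\ad(a)(b_l)$ is just some element of $B$ whose degree in your naive filtration depends on how it is expressed in the generators, and can be arbitrarily large. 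Local nilpotence of $\ad(a)$ bounds the length of an $\ad(a)$-string but does not by itself force the generating set to be stable under the (mutually non-commuting) operators $\ad(a)$, $a\in\CC[\fh]^W\cup\CC[\fh^*]^W$. The statement you are implicitly using --- that a compatible bimodule filtration with the degree-lowering property \emph{can be chosen} --- is precisely the content of Theorem \ref{thm:altdef} (Losev, \cite[Section 5.4]{losev_completions}), whose proof is not mere bookkeeping; you should cite it rather than present the naive filtration as if it automatically worked. Once a good filtration is granted, the rest of (1), via finiteness of $\CC[\fh\oplus\fh^*]\rtimes W$ over $S(\fh)^W\otimes S(\fh^*)^W$, is as you say.

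Items (2)--(3) are essentially correct: the binomial identity for $\ad(a)^N$ handles local nilpotence, and finite generation of the tensor product is routine. One sign/notation slip in (3): objects of $\cO_{c'}$ have $S(\fh)=\CC[\fh^*]$ (not $\CC[\fh]=S(\fh^*)$) acting locally nilpotently, so the relevant operator is $a\in\CC[\fh^*]^W_+$; with that substitution your Chevalley--Shephard--Todd upgrade is fine. For (4) you defer to \cite{ginzburg}, as the paper does; note only that the upper bound on the dimension of the support is trivial (it sits inside the diagonal), and the nontrivial ``Bernstein'' input is the lower bound on GK-dimension for nonzero subquotients together with the multiplicity/holonomicity bookkeeping, but you correctly flag that the real content lives in the cited source.
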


Let us provide a way to construct HC bimodules. Let $M \in \cO_{c}$, $N \in \cO_{c'}$. Then, $\Hom_{\CC}(N, M)$ becomes a $(H_{c}, H_{c'})$-bimodule. Let $\homf{N}{M}$ denote the sum of the HC sub-bimodules of $\Hom_{\CC}(N, M)$. A priori, $\homf{N}{M}$ is only a pro-object of $\HC(c,c)$, but \cite[Proposition 5.7.1]{losev_completions} shows that $\homf{N}{M}$ is finitely generated, so it is HC. Moreover, every irreducible HC bimodule $B$ can be embedded into a bimodule of the form $\homf{N}{M}$ for suitable $N \in \cO_{c'}$, $M \in \cO_{c}$, cf. \cite[Proposition 2.4]{hcbimod}. Furthermore, both $N$ and $M$ can be taken to be irreducible.

Every module $N \in \cO_{c'}$ gives a functor $\homf{N}{\bullet}: \cO_{c} \to \HC(c,c')$ that is right adjoint to the tensor product functor $\bullet \otimes_{H_{c'}} N: \HC(c,c') \to \cO_{c}$. The following result should be well-known to experts, but we could not find a proof for it in the literature.

\begin{theorem}\label{thm: module category}
For any parameters $c, c'$, the category $\HC(c,c')$ is equivalent to the category of modules over a finite-dimensional algebra.
\end{theorem}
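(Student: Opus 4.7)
The plan is to establish $\HC(c,c')$ as a Hom-finite $\CC$-linear abelian category of finite-length objects, with finitely many iso classes of simples and a finite-length injective cogenerator whose endomorphism algebra is finite-dimensional. Once these properties are in hand, a standard equivalence identifies such a category with the category of finite-dimensional modules over the opposite of the endomorphism algebra of the cogenerator. The first step is to count simples: by the observation immediately preceding the theorem, every irreducible HC bimodule embeds into $\homf{L_{c'}(\nu)}{L_c(\lambda)}$ for some pair of irreducible $W$-representations $\nu,\lambda$. Since $W$ has only finitely many irreducibles, and each $\homf{L_{c'}(\nu)}{L_c(\lambda)}$ has finite length by Lemma~\ref{lemma:basicbimod}(4), $\HC(c,c')$ has finitely many iso classes of simple objects.

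Next I would construct a finite-length injective cogenerator. Let $I_c(\lambda)$ be the injective hull of $L_c(\lambda)$ in $\cO_c$, of finite length because $\cO_c$ is a highest weight category with finitely many simples. Set $N=\bigoplus_\nu L_{c'}(\nu)$, $I=\bigoplus_\lambda I_c(\lambda)$ (both finite direct sums), and $J:=\homf{N}{I}$. The left exactness of $\homf{N}{-}$, as right adjoint to $-\otimes_{H_{c'}} N$, applied to $L_c(\lambda)\hookrightarrow I_c(\lambda)$ shows that every simple of $\HC(c,c')$ embeds into the direct summand $\homf{L_{c'}(\nu)}{I_c(\lambda)}$ of $J$. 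To verify that $J$ is genuinely injective in $\HC(c,c')$, I would realize $J$ as the HC-part of an ambient bimodule that is injective in the whole bimodule category: take $\widetilde I$ the injective hull of $I$ in the full category of $H_c$-modules, and consider $\Hom_\CC(H_{c'},\widetilde I)$, which is injective in the category of $(H_c,H_{c'})$-bimodules by Hom-tensor adjunction combined with flatness of $H_{c'}$ over itself. Given a mono $B\hookrightarrow B'$ in $\HC(c,c')$ and $f:B\to J$, one extends $f$ to $B'\to\Hom_\CC(H_{c'},\widetilde I)$ using injectivity in the ambient category; the image, being a quotient of the HC bimodule $B'$, remains HC and therefore lies in the HC-part, from which a suitable summand projection yields the required extension landing in $J$.

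For Hom-finiteness, adjunction gives $\Hom_{\HC(c,c')}(B,J)=\Hom_{\cO_c}(B\otimes_{H_{c'}} N,I)$. Since $B$ is finitely generated (HC), $B\otimes_{H_{c'}}N\in\cO_c$ is finitely generated, hence of finite length; combined with $I$ of finite length and the Hom-finiteness of $\cO_c$ (which is equivalent to modules over a finite-dimensional algebra), we obtain $\Hom_{\HC(c,c')}(B,J)$ finite-dimensional and, in particular, $\End_{\HC(c,c')}(J)$ finite-dimensional. Any $B'\in\HC(c,c')$ embeds into a power $J^{\oplus k}$ using the finite length of $B'$ together with the cogenerator property, so $\Hom_{\HC(c,c')}(B,B')$ is finite-dimensional for all $B,B'$. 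The finite abelian category formalism then yields an equivalence $\HC(c,c')\simeq \End_{\HC(c,c')}(J)^{\opp}\text{-mod}$, the category of finite-dimensional modules over the finite-dimensional algebra $\End_{\HC(c,c')}(J)^{\opp}$, via $B\mapsto \Hom_{\HC(c,c')}(B,J)^{*}$.

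The principal obstacle is the injectivity of $J$ inside $\HC(c,c')$: because the functor $-\otimes_{H_{c'}} N$ is not exact on bimodules (as $N$ is in general far from flat over $H_{c'}$), one cannot immediately transfer injectivity from $I\in\cO_c$ to $\homf{N}{I}$. The workaround outlined above extends via an ambient injective bimodule and relies on closure of the HC property under bimodule quotients, at the cost of some technical bookkeeping to see how $J$ sits inside the HC-part of $\Hom_\CC(H_{c'},\widetilde I)$; once this injectivity is established, the remainder of the proof is a standard application of results on finite abelian categories.
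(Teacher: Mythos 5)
Your overall framework (finite length, finitely many simples, injective cogenerator of finite length with finite-dimensional endomorphism algebra, then invoke the standard equivalence with modules over a finite-dimensional algebra) is sound, and the finite-length and simple-counting steps match the paper. But the workaround you propose for the injectivity of $J=\homf{N}{I}$ has a real gap, not merely ``technical bookkeeping.'' After extending $f:B\to J\hookrightarrow\Hom_{\CC}(H_{c'},\widetilde I)$ to $\tilde f:B'\to\Hom_{\CC}(H_{c'},\widetilde I)$, the image $\tilde f(B')$ is indeed an HC sub-bimodule of the ambient object, but it need not lie inside $\Hom_{\CC}(N,I)\subseteq\Hom_{\CC}(H_{c'},\widetilde I)$, so there is no reason it sits in $J$. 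The ``HC-part'' of $\Hom_{\CC}(H_{c'},\widetilde I)$ is a priori much larger than $J$, need not be finitely generated (hence may fail to be an object of $\HC(c,c')$), and you have no projection $\text{HC-part}\twoheadrightarrow J$. So the asserted ``suitable summand projection'' does not exist, and injectivity of $J$ remains unproved.

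The root of the difficulty is your choice $N=\bigoplus_\nu L_{c'}(\nu)$: as you note, $-\otimes_{H_{c'}}N$ is not exact, so $\homf{N}{-}$ does not automatically preserve injectives. The paper's proof sidesteps this precisely by taking instead a \emph{projective generator} $P_{c'}$ of $\cO_{c'}$. By \cite[Lemma 3.9]{losev_derived}, $-\otimes_{H_{c'}}P_{c'}$ \emph{is} exact on $\HC(c,c')$, hence its right adjoint $\homf{P_{c'}}{-}$ preserves injectives with no further work. One then checks $B\otimes_{H_{c'}}P_{c'}\neq 0$ for simple $B$, so $B\hookrightarrow\homf{P_{c'}}{B\otimes_{H_{c'}}P_{c'}}\hookrightarrow\homf{P_{c'}}{E}$ with $E$ injective in $\cO_c$, and $\homf{P_{c'}}{E}$ is injective in $\HC(c,c')$ (and HC, since $E$ has finite length). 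Replacing your $N$ by $P_{c'}$ repairs your argument and makes the ambient-injective detour unnecessary; with that change your proposal is essentially the paper's proof.
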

\begin{proof}
We have to check:
\begin{itemize}
\item[(a)] That every HC bimodule has finite length.
\item[(b)] That $\HC(c,c')$ has a finite number of irreducible objects.
\item[(c)] That every irreducible can be embedded into an injective object.
\end{itemize}

Statement (a) is (4) of Lemma \ref{lemma:basicbimod}. For statement (b), recall that every irreducible bimodule can be embedded into a bimodule of the form $\homf{N}{M}$ for suitable \emph{irreducible} modules $N \in \cO_{c'}, M \in \cO_{c}$. By (a), $\homf{N}{M}$ has finite length. So the number of irreducible HC  $(H_{c}, H_{c'})$-bimodules is bounded above by $\sum_{\lambda, \mu}\text{length}(\homf{L_{c'}(\lambda)}{L_{c}(\mu)})$ and (b) follows. We move on to (c). Let $P_{c'}$ be a projective generator of $\cO_{c'}$. Thanks to \cite[Lemma 3.9]{losev_derived}, the functor $\bullet \otimes_{H_{c'}} P_{c'}$ is exact, so its right adjoint $\homf{P_{c'}}{\bullet}$ preserves injective objects. Now let $B \in \HC(c,c')$ be irreducible. As in \cite[Lemma 3.10]{losev_derived}, we can check that $B \otimes_{H_{c'}} P_{c'} \neq 0$, and so we have an embedding $B \hookrightarrow \homf{P_{c'}}{B \otimes_{H_{c'}} P_{c'}}$. If $E \in \cO_{c}$ is injective admitting an embedding $B \otimes_{H_{c'}} P_{c'} \hookrightarrow E$, then $\homf{P_{c'}}{B \otimes_{H_{c'}}P_{c'}} \hookrightarrow \homf{P_{c'}}{E}$. The latter bimodule is injective in $\HC(c,c')$, and this finishes the proof of the theorem. 
\end{proof}

\subsection{Supports of Harish-Chandra bimodules} We have the following result of Losev that tells us that the notion of a HC bimodule given in Section \ref{subsect:defhc} coincides with the one given in the introduction. Recall that both algebras $H_{c}$ and $H_{c'}$ admit a filtration whose associated graded is the smash-product algebra $S(\fh \oplus \fh^{*}) \rtimes W$. This algebra is finite over its center $Z$, which can be identified with the algebra of invariants $S(\fh \oplus \fh^{*})^{W} \cong \C[\fh \oplus \fh^{*}]^{W}$.

\begin{theorem}[Section 5.4, \cite{losev_completions}]\label{thm:altdef}
An $(H_{c}, H_{c'})$-bimodule $B$ is HC if and only if it admits a bimodule filtration such that $\gr B$ is a finitely generated $S(\fh \oplus \fh^{*})\rtimes W$-bimodule, and the left and right actions of $Z$ on $\gr B$ coincide. 
\end{theorem}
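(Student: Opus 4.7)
The plan is to establish the biconditional. Denote by $(A)$ the adjoint-nilpotency condition from Section 3.1 and by $(B)$ the filtration condition of the theorem. The equivalence is proved in \cite[Section 5.4]{losev_completions}; here is my approach.

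For $(A)\Rightarrow(B)$, I fix a finite set of bimodule generators $b_{1},\dots,b_{n}$ of $B$ and a finite set of algebra generators $a_{1},\dots,a_{s}$ of $\CC[\fh]^{W}\cup\CC[\fh^{*}]^{W}$. Local nilpotency of each $\ad(a_{i})$ on each $b_{j}$ is equivalent to local finiteness, so the smallest subspace $V\subseteq B$ containing the $b_{j}$ and stable under every $\ad(a_{i})$ is finite-dimensional. Using the PBW filtrations on $H_{c}$ and $H_{c'}$, I equip $B$ with the bimodule filtration $F^{k}B:=\sum_{i+j\leq k}F^{i}H_{c}\cdot V\cdot F^{j}H_{c'}$. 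Then $\gr B$ is manifestly finitely generated over $\CC[\fh\oplus\fh^{*}]\rtimes W$, being generated by the image of $V$. For any $a\in\CC[\fh]^{W}\cup\CC[\fh^{*}]^{W}$, the PBW symbol $\bar a$ lies in the center $Z=\CC[\fh\oplus\fh^{*}]^{W}$ of $\gr H_{c}$, so $[a,\cdot]$ strictly drops PBW-degree on $H_{c}$; combined with the $\ad$-stability of $V$, this gives $\ad(a)(F^{k}B)\subseteq F^{k+\deg(a)-1}B$, i.e.\ $L_{\bar a}=R_{\bar a}$ on $\gr B$. This establishes the coincidence of actions on the tensor subalgebra $T=\CC[\fh]^{W}\otimes\CC[\fh^{*}]^{W}\subseteq Z$. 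Since $Z$ is finite over $T$, the subring $\{z\in Z:L_{z}=R_{z}\text{ on }\gr B\}$ is integrally closed in $Z$ over $T$ by a standard argument with the minimal polynomial, and thus coincides with $Z$.

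For $(B)\Rightarrow(A)$, lifting bimodule generators of $\gr B$ to $B$ gives finite generation of $B$ by Noetherianity. For the local nilpotency of $\ad(a)$, the centrality of $\bar a\in Z$ in $\gr H_{c}$ together with $L_{\bar a}=R_{\bar a}$ on $\gr B$ translates into $\ad(a)(F^{k}B)\subseteq F^{k+\deg(a)-1}B$. To upgrade this to genuine nilpotency on a given element $b\in B$, I restrict to the sub-bimodule $B_{b}\subseteq B$ generated by $b$, whose induced filtration makes $\gr B_{b}$ a cyclic finitely generated $Z$-module. An induction on the Krull dimension of $\gr B_{b}$, combined with the fact that each iteration of $\ad(a)$ produces a new leading symbol in a strictly lower stratum of the $Z$-support, forces $(\ad a)^{N}b=0$ for some finite $N$.

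The main obstacle I expect is direction $(B)\Rightarrow(A)$. The filtration condition only controls the leading order of $\ad(a)$ in the Rees/semiclassical limit, and without invoking the finite generation of $\gr B$ as a coherent $Z$-module, one cannot conclude nilpotency: iterated commutators can a priori keep climbing the filtration. The efficient argument in \cite[Section 5.4]{losev_completions} handles this by viewing $\gr B$ as a coherent sheaf on $\Spec Z$ and tracking how the iterated Hamiltonian vector fields attached to symbols in $Z$ act on such a sheaf.
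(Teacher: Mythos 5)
The paper does not prove Theorem \ref{thm:altdef}; it cites it from \cite[Section 5.4]{losev_completions}, so there is no in-paper proof to compare against. Judged on its own terms, your argument has genuine gaps in both directions.

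In $(A)\Rightarrow(B)$, the central claim that ``the smallest subspace $V\subseteq B$ containing the $b_{j}$ and stable under every $\ad(a_{i})$ is finite-dimensional'' does not follow from local nilpotency of the individual $\ad(a_{i})$. Local nilpotency gives local finiteness for \emph{each} operator separately, but the joint stable subspace is controlled only if the $\ad(a_{i})$ generate a Lie algebra whose action on $B$ is manifestly locally finite. Here they do not: $[a,a']$ for $a\in\CC[\fh]^{W}$ and $a'\in\CC[\fh^{*}]^{W}$ is an element of $H_{c}$ that typically leaves $\CC[\fh]^{W}\cup\CC[\fh^{*}]^{W}$ (already for $W=S_{2}$, $[y^{2},x^{2}]$ is a multiple of the Euler element), and $\ad$ of such an element has no a priori nilpotency. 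So $V$ could in principle be infinite-dimensional, and since your filtration estimate $\ad(a)(F^{k}B)\subseteq F^{k+\deg(a)-1}B$ genuinely needs $\ad(a)(V)\subseteq V$ for degree-one generators, the construction is incomplete. The efficient route is to invoke finite generation of $B$ over $S(\fh)^{W}\otimes S(\fh^{*})^{W,\opp}$ (\cite[Lemma 3.3]{BEG_do}, quoted as Lemma \ref{lemma:basicbimod}(1) in the paper) and build the filtration from that, not from the $\ad$-orbit of the $b_{j}$. Separately, the closing ``integral closure'' step is not a valid argument: the subring $\{z\in Z:L_{z}=R_{z}\}$ contains $T=\CC[\fh]^{W}\otimes\CC[\fh^{*}]^{W}$, but two commuting operators satisfying the same monic polynomial over $T$ need not be equal, so nothing forces this subring to be all of $Z$. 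Passing from $T$ to $Z$ genuinely requires a further argument (or a different filtration).

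In $(B)\Rightarrow(A)$, you correctly observe that $\ad(a)(F^{k}B)\subseteq F^{k+\deg(a)-1}B$ does not yield nilpotency, since the filtration level can climb. But the proposed fix — ``each iteration of $\ad(a)$ produces a new leading symbol in a strictly lower stratum of the $Z$-support'' — is not established and is not true as stated: the principal symbol of $\ad(a)$ is, after dividing out the vanishing top term, the Hamiltonian vector field of $\bar a$ acting on $\gr B$, and Hamiltonian flow preserves, rather than decreases, the symplectic-leaf stratification of $\Spec Z$. The actual argument has to use the coherence of $\gr B$ over $Z$ in a more structured way (e.g.\ via the Rees module and a Noetherian induction on its annihilator, or Gabber-type considerations), which you gesture toward but do not carry out. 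As written, both implications have missing steps at exactly the points that make this theorem nontrivial.
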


We can use Theorem \ref{thm:altdef} to define the singular support of a HC bimodule as follows. Let $B \in \HC(c,c)$. Pick a filtration on $B$ as in Theorem \ref{thm:altdef}. Then, we let $\sing(B) \subseteq \Spec Z = (\fh \oplus \fh^{*})/W$ be the set theoretic support of the module $\gr B$. By a standard argument, $\sing(B)$ does not depend on the chosen filtration. 

We remark that, in general, $\sing(B)$ is a union of symplectic leaves of the Poisson variety $(\fh \oplus \fh^{*})/W$. The symplectic leaves in $(\fh \oplus \fh^{*})/W$ are in correspondence with conjugacy classes of parabolic subgroups as follows. Let $W' \subseteq W$ be a parabolic subgroup. Then, $\mathcal{L}_{W'} := \pi\{(x,y) \in \fh \oplus \fh^{*} : W_{(x,y)} = W'\}$ is a symplectic leaf, where $\pi: \fh \oplus \fh^{*} \to (\fh \oplus \fh^{*})/W$ is the projection, and all symplectic leaves arise in this way, cf. \cite[Proposition 7.4]{browngordon}. Note that it follows, first, that there are only a finite number of symplectic leaves and, second, that $\overline{\mathcal{L}_{W'}} = \bigcup_{W' \subseteq W''} \mathcal{L}_{W''}$. For a symplectic leaf $\mathcal{L}$, we will denote by $\HC_{\overline{\mathcal{L}}}(c, c')$ the category of HC $(H_{c}, H_{c'})$-bimodules whose singular support is contained in $\overline{\mathcal{L}}$. We also denote $\HC_{\partial \mathcal{L}}(c,c')$ the full subcategory consisting of bimodules whose singular support is contained in $\overline{\mathcal{L}} \setminus \mathcal{L}$. Note that this is a Serre subcategory in $\HC_{\overline{\mathcal{L}}}(c,c')$ and we denote the quotient $ \HC_{\overline{\mathcal{L}}}(c,c')/\HC_{\partial \mathcal{L}}(c,c')$ by $\HC_{\mathcal{L}}(c,c')$.

\subsection{Restriction and induction functors for Harish-Chandra bimodules}

In this section we recall the properties of restriction and induction functors for categories of HC bimodules, that were first introduced by Losev in \cite{losev_completions} and further studied in \cite{losev_derived},\cite{hcbimod}.

Let $\underline{W} \subseteq W$ be a parabolic subgroup, and denote by $\underline{H}_{c}$ the rational Cherednik algebra associated to $\underline{W}$, with reflection representation $\fh_{\underline{W}}$, that is the unique $\underline{W}$-stable complement to $\fh^{\underline{W}}$ in $\fh$. Note that the normalizer $N := N_{W}(\underline{W})$ setwise stabilizes $\fh_{\underline{W}}$, and from here it is easy to see that it acts on $\underline{H}_{c}$. Let us denote $\Xi := N_{W}(\underline{W})/\underline{W}$. A $\Xi$-equivariant HC $(\underline{H}_{c}, \underline{H}_{c'})$-bimodule $B$ is a HC bimodule together with an action of $N$ on $B$, such that the action of $\underline{W} \subseteq N$ coincides with the adjoint action, $\underline{w}.b = \underline{w}b\underline{w}^{-1}$. For example, the regular bimodule $\underline{H}_{c}$ is $\Xi$-equivariant. We will denote by $\underline{\HC}^{\Xi}(c,c')$ the category of $\Xi$-equivariant HC $(\underline{H}_{c}, \underline{H}_{c'})$-bimodules. 

In \cite[Section 3]{losev_completions}, Losev constructs a restriction functor $\bullet_{\dagger}: \HC(c,c') \to \underline{\HC}^{\Xi}(c,c')$ with the following properties, cf. \cite[Theorem 3.4.6]{losev_completions}.

\begin{itemize}
\item[(R1)] $\bullet_{\dagger}$ is exact and intertwines the tensor product.
\item[(R2)] $B_{\dagger} = 0$ if and only if $\sing(B)\cap \cL_{\underline{W}} = \emptyset$.
\item[(R3)] If $B$ is irreducible, then $B_{\dagger}$ is finite-dimensional if and only if $\sing(B) = \overline{\cL_{\underline{W}}}$.
\item[(R4)] $\bullet_{\dagger}$ admits a right adjoint $\bullet^{\dagger}: \underline{\HC}^{\Xi}(c, c') \to \HC(c,c')$ that is left exact.
\item[(R5)] If $B \in \HC_{\overline{\cL_{\underline{W}}}}(c,c')$, then the kernel and cokernel of the adjunction map $B \to (B_{\dagger})^{\dagger}$ belong to $\HC_{\partial\cL_{\underline{W}}}(c,c')$. 
\end{itemize}

We remark that in \cite{losev_completions} it is only shown that $B^{\dagger}$ is the direct limit of its HC sub-bimodules. Below, in Lemma \ref{lemma:indfg} we will see that $B^{\dagger}$ is indeed finitely generared and therefore HC. 

The construction of the functors $\bullet_{\dagger}, \bullet^{\dagger}$ is quite technical. In computations, we will need a more simple minded version of these functors that are constructed in a similar spirit to the restriction functors for category $\cO_{c}$ but do not see the $\Xi$-equivariance. These functors are denoted by $\bullet_{\heartsuit}: \HC(c,c) \to \underline{\HC}(c,c')$, $\bullet^{\heartsuit}: \underline{\HC}(c,c') \to \HC(c,c')$ . They satisfy properties analogous to (R1)-(R4) above. The relationship between $\bullet_{\dagger}, \bullet^{\dagger}$ and $\bullet_{\heartsuit}, \bullet^{\heartsuit}$ is as follows.

\begin{lemma}[Lemma 3.10.1 in \cite{losev_completions}]\label{lemma:equivariant vs naive}
The following holds.

\begin{enumerate}
\item If $B \in \HC(c,c')$, then $B_{\heartsuit} = \operatorname{Forget}(B_{\dagger})$, where $\operatorname{Forget}$ is the functor that forgets the $\Xi$-equivariance.
\item  If $B \in \underline{\HC}^{\Xi}(c,c')$, then there is a functorial embedding $B^{\dagger} \hookrightarrow (\operatorname{Forget}(B))^{\heartsuit}$.
\end{enumerate}
\end{lemma}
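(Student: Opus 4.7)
The plan is to exploit the fact that both functors $\bullet_{\dagger}$ and $\bullet_{\heartsuit}$ arise from the same completion construction at a point $b \in \fh$ with $W_{b} = \underline{W}$, the difference being that $\bullet_{\dagger}$ packages the output together with the natural action of $N = N_{W}(\underline{W})$ that permutes components of the completion along the orbit $Wb$ (descending to $\Xi$-equivariance after accounting for the internal $\underline{W}$-adjoint action), while $\bullet_{\heartsuit}$ extracts only the piece at the single point $b$ and forgets this outer permutation. First I would verify part (1) by unwinding the two constructions in parallel, and then derive part (2) formally from part (1) using the two adjunctions via Yoneda.

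For (1), I would reproduce the formal setup from \cite{losev_completions}: using the Bezrukavnikov--Etingof isomorphism $H_{c}^{\wedge_{Wb}} \cong \Fun(W/\underline{W})\otimes \underline{H}_{c}^{\wedge_{0}}\,\widehat{\otimes}\,D(\fh^{\underline{W}})^{\wedge_{0}}$ (and the analogue for $H_{c'}$), complete $B \in \HC(c,c')$ along the orbit $Wb$ to obtain $B^{\wedge_{Wb}}$ as a bimodule over these completed algebras, then use the HC condition to extract its $\fh^{\underline{W}}$-locally finite part. The resulting object is an $(\underline{H}_{c}, \underline{H}_{c'})$-bimodule carrying an $N$-action in which $\underline{W} \subset N$ acts adjointly, so the quotient $\Xi$ acts, yielding $B_{\dagger}$. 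The functor $\bullet_{\heartsuit}$ performs the same procedure applied only to the factor at $b$ in the $\Fun(W/\underline{W})$-decomposition, with no $\Xi$-action present from the outset. The canonical projection onto this single factor identifies the underlying $(\underline{H}_{c}, \underline{H}_{c'})$-bimodule of $B_{\dagger}$ with $B_{\heartsuit}$, and since this is precisely the effect of $\operatorname{Forget}$, one obtains $B_{\heartsuit} = \operatorname{Forget}(B_{\dagger})$, functorially in $B$.

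For (2), fix $B \in \underline{\HC}^{\Xi}(c,c')$ and let $X \in \HC(c,c')$ be arbitrary. Combining the adjunctions with part (1) yields canonical isomorphisms
\[
\Hom_{\HC(c,c')}(X, B^{\dagger}) \cong \Hom_{\underline{\HC}^{\Xi}(c,c')}(X_{\dagger}, B)
\]
and
\[
\Hom_{\HC(c,c')}(X, (\operatorname{Forget}(B))^{\heartsuit}) \cong \Hom_{\underline{\HC}(c,c')}(X_{\heartsuit}, \operatorname{Forget}(B)) \cong \Hom_{\underline{\HC}(c,c')}(\operatorname{Forget}(X_{\dagger}), \operatorname{Forget}(B)).
\]
The natural map from the first $\Hom$-space to the second is the inclusion of $\Xi$-equivariant morphisms into arbitrary morphisms, hence injective and natural in $X$. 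By the Yoneda lemma, a pointwise injective natural transformation between representable functors is represented by a monomorphism, so one obtains the required embedding $B^{\dagger} \hookrightarrow (\operatorname{Forget}(B))^{\heartsuit}$, functorial in $B$.

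The main obstacle is (1): because $\bullet^{\dagger}$ and $\bullet^{\heartsuit}$ are defined by a universal property rather than a direct formula, part (2) is forced to proceed formally through Yoneda and becomes essentially automatic once (1) is established. The genuine work therefore lies in carefully tracking the interaction of the $W$-action on $Wb$ with the BE decomposition of the completed algebras and verifying that the $\Xi$-equivariance on $B_{\dagger}$ is exactly the extra structure removed by $\operatorname{Forget}$ after passage to the single-point piece; any discrepancy between the two completion procedures would have to be reconciled here.
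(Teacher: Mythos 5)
The paper does not prove this lemma; it is quoted verbatim as Lemma 3.10.1 from \cite{losev_completions}, so there is no in-paper proof to compare against. Evaluated on its own, your proposal is essentially sound, and the reduction of part (2) to part (1) via adjunction and Yoneda is a clean observation that makes (2) formal: the forgetful functor $\underline{\HC}^{\Xi}(c,c') \to \underline{\HC}(c,c')$ is faithful (a $\Xi$-equivariant morphism is determined by its underlying bimodule map), so the composite natural transformation
\[
\Hom_{\HC}(X, B^{\dagger}) \cong \Hom_{\underline{\HC}^{\Xi}}(X_{\dagger}, B) \hookrightarrow \Hom_{\underline{\HC}}(\operatorname{Forget}(X_{\dagger}), \operatorname{Forget}(B)) \cong \Hom_{\HC}(X, \operatorname{Forget}(B)^{\heartsuit})
\]
is pointwise injective and natural in $X$, and Yoneda then represents it by a monomorphism $B^{\dagger} \hookrightarrow \operatorname{Forget}(B)^{\heartsuit}$, functorial in $B$.

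Two caveats. First, your account of part (1) is a plausible sketch of Losev's completion construction but is not a proof; the whole content of the lemma sits there, and "tracking the $W$-action on $Wb$ against the BE decomposition" is exactly the nontrivial step, so a complete argument would need to reproduce the relevant portions of \cite[Section 3]{losev_completions} in detail. Second, there is a category-theoretic subtlety you gloss over: as the present paper remarks, \cite{losev_completions} only establishes that $B^{\dagger}$ and $B^{\heartsuit}$ are direct limits of HC sub-bimodules, and it is precisely this embedding (combined with Lemma 3.3 of the paper, which shows $B^{\heartsuit}$ is finitely generated) that is later used to conclude $B^{\dagger}$ is HC. So the adjunction isomorphisms and the Yoneda step in your argument must be interpreted in the Ind-category of $\HC(c,c')$ (or the ambient category of bimodules locally finite under the adjoint action), not in $\HC(c,c')$ itself, lest the argument become circular. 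Once phrased that way the argument is fine, and a posteriori the mono does live in $\HC(c,c')$.
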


We remark that the embedding in Lemma \ref{lemma:equivariant vs naive}(2) is in general a \emph{proper} embedding. We also remark that the property (R5) above is \emph{not} valid for the functors $\bullet_{\heartsuit}, \bullet^{\heartsuit}$ and this is the main reason why we need the functors $\bullet_{\dagger}$ and $\bullet^{\dagger}$. Similarly to the functor $\bullet^{\dagger}$, in \cite{losev_completions} it is only shown that $B^{\heartsuit}$ is the direct limit of its HC sub-bimodules. In the next result, we show that it is indeed HC. 

\begin{lemma}\label{lemma:indfg}
Let $B \in \underline{\HC}(c,c')$. Then, $B^{\heartsuit}$ is a finitely generated $(H_{c}, H_{c'})$-bimodule and therefore it is HC. In particular, $B^{\dagger} \subseteq B^{\heartsuit}$ is also HC. 
\end{lemma}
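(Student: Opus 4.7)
The plan is to show that although $B^{\heartsuit}$ is a priori only a union of its HC sub-bimodules, this union is finite. Write $B^{\heartsuit} = \bigcup_{i} B_{i}$ with each $B_{i}$ a HC sub-bimodule, forming a directed system; the task is to show the chain stabilizes.

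The main tool will be the exact functor $\bullet \otimes_{H_{c'}} P_{c'} \colon \HC(c,c') \to \cO_{c}$, where $P_{c'}$ is a projective generator of $\cO_{c'}$. This functor is exact by \cite[Lemma 3.9]{losev_derived} and \emph{faithful} on HC bimodules in the sense that $B \otimes_{H_{c'}} P_{c'} \neq 0$ for any nonzero HC bimodule $B$ (as already invoked in the proof of the preceding theorem, following \cite[Lemma 3.10]{losev_derived}). Exactness and commutation with filtered colimits give $B^{\heartsuit} \otimes_{H_{c'}} P_{c'} = \varinjlim_{i} B_{i} \otimes_{H_{c'}} P_{c'}$, with genuine inclusions in the directed system.

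The crucial step is to verify that $B^{\heartsuit} \otimes_{H_{c'}} P_{c'}$ is itself finitely generated over $H_{c}$, i.e., belongs to $\cO_{c}$. For this I plan to establish a natural isomorphism
\[ B^{\heartsuit} \otimes_{H_{c'}} P_{c'} \;\cong\; \Ind^{W}_{\underline{W}}\bigl(B \otimes_{\underline{H}_{c'}} \Res^{W}_{\underline{W}}(P_{c'})\bigr), \]
where $\Ind^{W}_{\underline{W}}$ and $\Res^{W}_{\underline{W}}$ are the Bezrukavnikov--Etingof functors on category $\cO$. The isomorphism should follow by tracing through Losev's construction of $\bullet^{\heartsuit}$ via completion at a point $b \in \fh$ with $W_{b} = \underline{W}$, which is parallel to the completion-based construction of Bezrukavnikov--Etingof restriction in \cite{BE}. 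Granting this identification, the right-hand side lies in $\cO_{c}$: indeed, $\Res^{W}_{\underline{W}}(P_{c'}) \in \cO_{c'}(\underline{W})$ by Theorem \ref{thm:prop res and ind}, hence $B \otimes_{\underline{H}_{c'}} \Res^{W}_{\underline{W}}(P_{c'}) \in \cO_{c'}(\underline{W})$ by Lemma \ref{lemma:basicbimod}(3), and $\Ind^{W}_{\underline{W}}$ preserves $\cO$.

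Having placed $B^{\heartsuit} \otimes_{H_{c'}} P_{c'}$ inside $\cO_{c}$, whose objects have finite length, the ascending chain $\{B_{i} \otimes_{H_{c'}} P_{c'}\}$ stabilizes at some index $i_{0}$. If $B^{\heartsuit} \neq B_{i_{0}}$, any nonzero element of $B^{\heartsuit}/B_{i_{0}}$ generates a finitely generated, hence HC, sub-bimodule $C \neq 0$; by faithfulness $C \otimes_{H_{c'}} P_{c'} \neq 0$, contradicting $(B^{\heartsuit}/B_{i_{0}}) \otimes_{H_{c'}} P_{c'} = 0$. Hence $B^{\heartsuit} = B_{i_{0}}$ is HC, and $B^{\dagger} \subseteq B^{\heartsuit}$ is HC since sub-bimodules of HC bimodules are HC. The main obstacle will be the intertwining identity in the crucial step, which demands a careful comparison of Losev's completion-based definition of $\bullet^{\heartsuit}$ with the parallel construction of the Bezrukavnikov--Etingof restriction on $\cO$.
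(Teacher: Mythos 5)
Your reduction is sound in outline: if $B^{\heartsuit}\otimes_{H_{c'}}P_{c'}$ lies in $\cO_{c}$, then noetherianness of objects of $\cO_{c}$ together with exactness and faithfulness of $\bullet\otimes_{H_{c'}}P_{c'}$ forces the chain $\{B_{i}\otimes_{H_{c'}}P_{c'}\}$ (and hence $\{B_{i}\}$) to stabilize. You also correctly flag the intertwining identity as the main obstacle. Unfortunately, that identity is false. Take $W=S_{n}$, $\underline{W}=\{1\}$, $c=c'\notin\QQ$ and $B=\CC$. Then $B^{\heartsuit}=H_{c}$ (Lemma~\ref{lemma:calculations}), so the left side is $P_{c}=\bigoplus_{\lambda}\Delta_{c}(\lambda)$. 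On the right, $\Res^{W}_{\{1\}}(P_{c})$ is a vector space of dimension $d=\sum_{\lambda}\dim\lambda$, and $\Ind^{W}_{\{1\}}(\CC^{d})=\CC^{d}\otimes\bigoplus_{\mu}\Delta_{c}(\mu)\otimes\mu$, whose class in $K_{0}$ is $d\sum_{\mu}\dim\mu\,[\Delta_{c}(\mu)]$; this already differs from $[P_{c}]=\sum_{\mu}[\Delta_{c}(\mu)]$ for $n=2$, where it is twice as large. The failure is to be expected: Proposition~\ref{prop:compind} in the paper shows that even for the single module $\Delta_{c}(\triv)$, compatibility of $\bullet^{\heartsuit}$ with $\Ind$ requires the extra truncation $\iota^{!}$, so no clean identity is available for a general $P_{c'}$. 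Even the weaker claim that the natural map $B^{\heartsuit}\otimes P_{c'}\to\Ind(B\otimes\Res(P_{c'}))$ is injective (which would suffice for your argument) is not automatic: that map is obtained by applying the $(\Res,\Ind)$ adjunction to $(\text{counit}\otimes\id)\colon(B^{\heartsuit})_{\heartsuit}\otimes\Res(P_{c'})\to B\otimes\Res(P_{c'})$, and the counit $(B^{\heartsuit})_{\heartsuit}\to B$ has no obvious injectivity.

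The paper's proof stays inside the bimodule categories and bypasses category $\cO$ altogether. By the theorem immediately preceding this lemma, $\HC(c,c')$ and $\underline{\HC}(c,c')$ are module categories over finite-dimensional algebras, so it suffices to show that $\Hom_{\HC}(P,B^{\heartsuit})$ is finite-dimensional for a projective generator $P$ of $\HC(c,c')$. The adjunction $(\bullet_{\heartsuit},\bullet^{\heartsuit})$ gives $\Hom_{\HC}(P,B^{\heartsuit})\cong\Hom_{\underline{\HC}}(P_{\heartsuit},B)$, and the latter is finite-dimensional since both $P_{\heartsuit}$ and $B$ are HC. In other words, the same adjunction you try to exploit indirectly through $\cO$ can be applied directly to a projective generator of $\HC(c,c')$ (rather than $\cO_{c'}$), which removes any need to commute with induction functors.
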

\begin{proof}
Since we know that $B^{\heartsuit}$ is the direct limit of its HC subbimodules and, by Theorem \ref{thm: module category}, $\HC(c,c')$ is equivalent to the category of finite-dimensional representations of some finite-dimensional algebra, we need to show that every simple HC bimodule can only appear a finite number of times as a composition factor of $B^{\heartsuit}$. This is equivalent to showing that, for every projective HC $(H_{c}, H_{c'})$-bimodule $P$, $\Hom_{\HC}(P, B^{\heartsuit})$ is finite-dimensional. By adjunction, $\Hom_{\HC}(P, B^{\heartsuit}) = \Hom_{\underline{\HC}}(P_{\heartsuit}, B)$. The latter hom space is finite-dimensional because both bimodules $P_{\heartsuit}$ and $B$ are HC. We are done. 
\end{proof}

To finish this subsection, we recall a result of \cite[Section 3.3]{losev_derived} that tells us that the restriction functors for HC bimodules and category $\cO_{c}$ are compatible in an obvious way.

\begin{lemma}\label{lemma:comp res}
Let $B \in \HC(c, c')$ and $M \in \cO_{c'}$. Then, $\Res^{W}_{\underline{W}}(B \otimes_{H_{c'}} M) \cong B_{\heartsuit} \otimes_{\underline{H}_{c}} \Res^{W}_{\underline{W}}(M)$. 
\end{lemma}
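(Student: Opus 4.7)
The plan is to reduce the statement to the construction of both restriction functors via completion at a suitable point. Recall that both the Bezrukavnikov--Etingof restriction $\Res^W_{\underline{W}}$ on category $\cO$ and Losev's naive restriction $\bullet_{\heartsuit}$ on HC bimodules are built from the same input: a point $b \in \fh$ with $W_b = \underline{W}$, the formal completion $\bullet^{\wedge_b}$, and the isomorphism of Bezrukavnikov--Etingof identifying $H_c^{\wedge_b}$ with a matrix-type algebra over the completion $\underline{H}_c^{\wedge_0}$ (more precisely, with $\Fun(W/\underline{W}, \underline{H}_c^{\wedge_0}) \widehat{\otimes} \CC[[\fh^{\underline{W}}]]$, with $H_c$ embedded diagonally). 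Once everything is completed at $b$, both functors amount to extracting a distinguished \lq\lq$\underline{H}_c^{\wedge_0}$-component'' followed by taking the locally finite vectors with respect to the $\fh^{\underline{W}}$-part of the polynomial algebra.

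First, I would complete both sides of the desired isomorphism at $b$. The key preliminary step is to verify the compatibility of completion with the tensor product, namely
\[
(B \otimes_{H_{c'}} M)^{\wedge_b} \cong B^{\wedge_b} \otimes_{H_{c'}^{\wedge_b}} M^{\wedge_b}.
\]
This is where finiteness hypotheses enter: by Lemma \ref{lemma:basicbimod}(3), $B \otimes_{H_{c'}} M$ lies in $\cO_c$, so it is finitely generated over $S(\fh^*)$ and its $\fh^*$-weights are bounded above; combined with the finite generation of $B$ as a right $H_{c'}$-module and of $M$ as an $S(\fh^*)$-module, completion behaves well enough on this tensor product to give the claimed identification.

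Next I would transport the right-hand side through the Bezrukavnikov--Etingof isomorphism. Under the identification $H_{c'}^{\wedge_b} \cong \Fun(W/\underline{W}, \underline{H}_{c'}^{\wedge_0}) \widehat{\otimes} \CC[[\fh^{\underline{W}}]]$, the completed tensor product $B^{\wedge_b} \otimes_{H_{c'}^{\wedge_b}} M^{\wedge_b}$ decomposes according to the direct product structure on $\Fun(W/\underline{W}, -)$; extracting the component indexed by the trivial coset, one obtains $B_{\heartsuit}^{\wedge_0} \otimes_{\underline{H}_{c'}^{\wedge_0}} \Res^W_{\underline{W}}(M)^{\wedge_0}$ at the level of completions. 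Taking the locally finite vectors for the action of the \lq\lq polynomial part'' $\CC[[\fh^{\underline{W}}]]$ then gives the desired $B_{\heartsuit} \otimes_{\underline{H}_c} \Res^W_{\underline{W}}(M)$; here one uses that locally finite vectors in a (completed) tensor product of modules with nilpotent actions can be computed componentwise, which reduces to the fact established in the construction of $\bullet_{\heartsuit}$ and of $\Res$ that these functors are exact.

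The main technical obstacle is the commutativity of $(\bullet)^{\wedge_b}$ with the tensor product $\otimes_{H_{c'}}$, since formal completion is not in general exact on arbitrary bimodules or modules. Fortunately, the hypotheses that $B$ is Harish-Chandra and $M \in \cO_{c'}$ provide precisely the finiteness we need: the relevant $S(\fh)^W$- and $S(\fh^*)^W$-actions have appropriate nilpotency/finite generation properties (Lemma \ref{lemma:basicbimod}(1) and the definition of $\cO_{c'}$), so that $B \otimes_{H_{c'}} M$ is finitely generated over $S(\fh^*)$ and the relevant completions are flat over the completed center. Once this point is settled, the rest of the proof is a formal unraveling of the two constructions through the Bezrukavnikov--Etingof completion isomorphism.
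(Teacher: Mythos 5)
The paper does not prove this lemma; it cites \cite[Section 3.3]{losev_derived}, so there is no in-paper argument to compare against. Your outline follows the expected route --- complete at a point $b$ with $W_b = \underline{W}$, transport through the Bezrukavnikov--Etingof isomorphism identifying $H_c^{\wedge_b}$ with a matrix-type algebra over $\underline{H}_c^{\wedge_0}$, apply the resulting Morita equivalence, and pass back to uncompleted modules by taking locally nilpotent vectors --- and this agrees with what Losev's argument does. The heart of the matter is the compatibility of $b$-adic completion with $\otimes_{H_{c'}}$, which you correctly single out but leave essentially as an assertion. To make it precise, observe that $M \mapsto (B \otimes_{H_{c'}} M)^{\wedge_b}$ and $M \mapsto B^{\wedge_b}\,\widehat{\otimes}_{H_{c'}^{\wedge_b}}\, M^{\wedge_b}$ are both right exact in $M$ (completion is exact on $\cO_{c'}$ because objects there are finitely generated over $S(\fh^*)$), and they agree tautologically when $M = H_{c'} \otimes_{S(\fh) \rtimes W} Y$ for a finite-dimensional $S(\fh) \rtimes W$-module $Y$, since then $B \otimes_{H_{c'}} M \cong B \otimes_{S(\fh)\rtimes W} Y$ and the completion only touches finitely many copies of $B^{\wedge_b}$; presenting a general $M$ by two such objects and comparing cokernels yields the identification. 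Two minor points of phrasing: the final step extracts $\fh$-locally nilpotent vectors, not vectors \lq\lq locally finite for $\CC[[\fh^{\underline{W}}]]$\rq\rq; and the printed statement contains a small typo that you have inherited --- since $B_{\heartsuit}$ is an $(\underline{H}_c, \underline{H}_{c'})$-bimodule and $\Res^W_{\underline{W}}(M)$ is a left $\underline{H}_{c'}$-module, the tensor on the right-hand side should be over $\underline{H}_{c'}$, not $\underline{H}_c$.
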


\section{Harish-Chandra bimodules and category $\cO_{c}$}\label{sect:HCvsO}

In this section, we prove Theorem \ref{thm:main}. From now on, we will only deal with the rational Cherednik algebra of the symmetric group $S_{n}$, the parameter here is a single complex number $c \in \CC$. The strategy to prove this theorem is quite simple. We first deal with the case $H_{r/n}(n)$. For this case, the category of Harish-Chandra bimodules has been described in \cite{hcbimod} and so we can actually prove the theorem \lq\lq by hand.\rq\rq \, Then, we use restriction functors to prove the theorem in more general cases. 

\subsection{Case $c = r/n$}\label{subsect:r/n} First, we recall the description of the category $\HC(c,c)$ that was given in \cite{hcbimod}. In this case, the algebra $H_{c}$ has a unique proper two-sided ideal, $J$, and the quotient $Q := H_{c}/J$ is finite-dimensional. By \cite[Proposition 2.11]{hcbimod}, $H_{c}$ coincides with the injective hull of $J$ in the category $\HC(c,c)$. There is one more indecomposable bimodule, namely the \emph{double wall-crossing bimodule} $D$. The results in \cite[Section 6.5]{hcbimod} say that $H_{c}, D, Q$ and $J$ exhaust the list of isomorphism classes of indecomposable HC bimodules; that $H_{c}$ is both the injective hull of $J$ and the projective cover of $Q$; and that $D$ is the injective hull of $Q$ and the projective cover of $J$. 

Recall now that we denote by $\Phi_{c}$ the functor $\HC(c,c) \rightarrow \cO_{c}$, $B \mapsto B \otimes_{H_{c}}\Delta_{c}(\triv)$, with right adjoint $\Psi_{c}: \cO_{c} \rightarrow \HC(c,c)$, $M \mapsto \homf{\Delta_{c}(\triv)}{M}$. We remark that $\Phi_{c}$ is exact by \cite[Lemma 3.9]{losev_derived}. Note that $\Phi_{c}(H_{c}) = \Delta_{c}(\triv)$ and $\Phi_{c}(Q) = L_{c}(\triv)$, the unique finite-dimensional irreducible module in category $\cO_{c}$. By exactness of $\Phi_{c}$, it follows that $\Phi_{c}(J) = I:= \rad(\Delta_{c}(\triv))$. In particular, $\Phi_{c}$ does not kill any simple bimodule. The faithfulness of $\Phi_{c}$ now follows from the fact that an exact functor that does not kill nonzero objects must be faithful. Note that, by standard properties of adjunctions, it follows that the unit of adjunction $B \rightarrow \Psi_{c}\Phi_{c}(B)$ is injective for any HC bimodule $B$. Also, we remark that $\homf{\Delta_{c}(\triv)}{L_{c}(\triv)} = \homf{L_{c}(\triv)}{L_{c}(\triv)} = \Hom_{\CC}(L_{c}(\triv), L_{c}(\triv)) = Q$. The first equality follows because $\homf{I}{L_{c}} = 0$, as $I$ and $L_{c}$ are simple modules with different support, cf. \cite[Lemma 5.7.2]{losev_completions}. The next-to-last equality follows because $L_{c}(\triv)$ is finite-dimensional.

Let us now show that $\Phi_{c}$ is full. For this, we compute $\Phi_{c}(D)$. By exactness, $\Phi_{c}(D)$ must be an extension of $L_{c}(\triv)$ by $I$. This extension cannot be trivial, since $D$ embeds into $\Psi_{c}\Phi_{c}(D)$. Thus, we conclude that $\Phi_{c}(D) = \nabla_{c}(\triv)$. But now it follows from \cite[Corollary 4.7 and Proposition 6.12]{hcbimod} that the unit of adjunction $\id_{\HC(c,c)} \rightarrow \Psi_{c}\Phi_{c}$ is actually an isomorphism. Thus, $\Phi_{c}$ is fully faithful. The claim about the image being closed under subquotients follows immediately by the computations on this section, as well as the claim about the compatibility of supports.


\subsection{General case} Let us now show Theorem \ref{thm:main} for general $c$. First, we note that we can restrict to a certain set of parameters.

\begin{lemma}
Fix $n > 0$, and let $\HC(c,c)$ denote the category of HC bimodules for the rational Cherednik algebra of the symmetric group $S_n$ with its reflection representation.
\begin{enumerate}
\item If $c \not\in \{r/m \mid 1 \leq m \leq n \; \text{and} \; \gcd(r;m) = 1\}$ then $\HC(c,c)$ is equivalent, as a tensor category, to the category of vector spaces. The unique simple is the regular bimodule $H_{c}$.
\item If $c \in \ZZ$, then $\HC(c,c)$ is equivalent, as a tensor category, to the category of representations of the symmetric group $S_n$.
\end{enumerate}
Moreover, in these cases Theorem \ref{thm:main} holds. 
\end{lemma}
\begin{proof}
In both cases (a) and (b), the category $\cO_{c}$ is semisimple and all modules have full support, cf. e.g. \cite{Wilcox}. Then both parts follow from \cite[Theorem 1.1]{hcbimod}. In Case (a), Theorem \ref{thm:main} holds trivially, while in Case (b) it is a consequence of \cite[Theorem 8.15]{BEG_do}.
\end{proof}

Thus, for the rest of this section we will assume that $c = r/m$ with $1 < m \leq n$ and $\gcd(r;m) = 1$. Moreover, we may restrict to $c > 0$. If $c < 0$, we simply replace $\Delta(\triv)$ by $\Delta(\sign)$. Let us remark that exactness of $\Phi_{c}$ follows from \cite[Lemma 3.9]{losev_derived}.

\begin{remark}\label{rmk: faithful res}
Under the assumption that $c = r/m > 0$, it follows from \cite[Lemma 4.2]{losev_Bernstein} and \cite[Theorem 5.8.1]{losev_completions} that the possible supports of HC bimodules are as follows. For each $i = 0, 1, \dots, \ell := \lfloor n/m\rfloor$, let $\mathcal{L}_{i} \subseteq (\fh \oplus \fh^{*})/S_n$ be the symplectic leaf $\cL_{S_{m}^{\times i}}$ (so that, for example, $\mathcal{L}_{0}$ is the open symplectic leaf.) Then, the possible supports of a HC bimodule are $\overline{\cL_{0}}, \dots, \overline{\cL_{\ell}}$. Note that
\[
\overline{\cL}_{\ell} \subseteq \overline{\cL}_{\ell - 1} \subseteq \cdots \subseteq \overline{\cL}_{0} = (\fh \oplus \fh^{*})/S_n.
\]
Note, in particular, that the restriction functor $\bullet_{\dagger^{S_n}_{S_m^{\ell}}}$ does not kill any nonzero HC bimodules. We will repeatedly use this below. 
\end{remark}

\subsubsection{$\Phi_{c}$ is faithful} Let us remark that, since $\Phi_{c}$ is exact, to check that it is faithful it is enough to show that $\Phi_{c}$ does not kill any nonzero bimodules.

\begin{lemma}
	Let $c = r/m> 0$, $1 < m \leq n$, $\gcd(r;m) = 1$. Let $B \in \HC(c,c)$ be nonzero. Then, $B \otimes_{H_{c}} \Delta_{c}(\triv)$ is nonzero.
\end{lemma}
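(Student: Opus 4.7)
The plan is to induct on $n$, with base case $n = m$ covered by the analysis in Section~\ref{subsect:r/n}. Since $\Phi_c := \bullet \otimes_{H_c} \Delta_c(\triv)$ is exact and $\HC(c,c)$ has finite length, faithfulness of $\Phi_c$ reduces to showing $\Phi_c(L) \neq 0$ for each simple bimodule $L$.

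Given a simple $L$ with $\sing L = \overline{\mathcal{L}_{\underline{W}}}$, the main case is when $\underline{W}$ is a proper parabolic of the form $S_{\lambda_1} \times \cdots \times S_{\lambda_k}$ with each $\lambda_i < n$. In that case $L_{\heartsuit} \neq 0$ by the analogue of property (R2) for the functor $\bullet_\heartsuit$, and Lemma~\ref{lemma:comp res} together with Corollary~\ref{cor:respoly} gives
\[
\Res^{S_n}_{\underline{W}}(L \otimes_{H_c} \Delta_c(\triv)) \;\cong\; L_{\heartsuit} \otimes_{\underline{H}_c} \Delta_c(\triv_{\underline{W}}).
\]
Since $\underline{H}_c \cong \bigotimes_i H_c(S_{\lambda_i})$, $\Delta_c(\triv_{\underline{W}}) \cong \bigotimes_i \Delta_c(\triv_{\lambda_i})$, and HC bimodules over a tensor product algebra decompose as direct sums of external tensor products over the factors, the right hand side reduces to a direct sum of tensor products $\bigotimes_i \bigl(L^{(i)} \otimes_{H_c(S_{\lambda_i})} \Delta_c(\triv_{\lambda_i})\bigr)$. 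The inductive hypothesis handles factors with $\lambda_i \geq m$; for $\lambda_i < m$ the parameter $c = r/m$ is generic for $H_c(S_{\lambda_i})$ and $\HC(c,c)$ is essentially trivial, so the statement follows by direct inspection. Hence $L \otimes_{H_c} \Delta_c(\triv) \neq 0$.

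The hard part will be the remaining case $\underline{W} = S_n$, where $L$ has the deepest possible singular support and restriction to any proper parabolic kills it. Such $L$ can only arise when $m \mid n$, in which case finite-dimensional simple HC bimodules have the form $L_c(\lambda) \boxtimes L_c(\mu)^*$ for finite-dimensional simples $L_c(\lambda), L_c(\mu) \in \cO_c$. I would attack this case by a direct calculation:
\[
L \otimes_{H_c} \Delta_c(\triv) \;\cong\; L_c(\lambda) \otimes \bigl(\Delta_c(\triv) \big/ \ann_{H_c}(L_c(\mu)) \cdot \Delta_c(\triv)\bigr),
\]
reducing non-vanishing to the statement that $L_c(\mu)$ appears as a composition factor of $\Delta_c(\triv)$. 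This last assertion should be accessible through character computations and the BGG-type reciprocity available in the highest weight category $\cO_c$, and is the main technical obstacle in this approach.
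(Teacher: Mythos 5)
Your induction would work, but you have misdiagnosed the difficulty: the ``hard case'' $\underline{W} = S_n$ is vacuous unless $m = n$, which is your base case. You assert it occurs whenever $m \mid n$, but for $m < n$ the algebra $H_{r/m}(S_n)$ admits \emph{no} finite-dimensional modules: a finite-dimensional module over $H_c(S_n)$ exists iff $c = a/n$ with $\gcd(a;n) = 1$, and with $\gcd(r;m) = 1$ the equality $r/m = a/n$ in lowest terms forces $m = n$ even when $m$ divides $n$. Hence there are no nonzero finite-dimensional HC $H_{r/m}(S_n)$-bimodules for $m < n$, a simple HC bimodule always has singular support whose minimal leaf is $\cL_{S_m^{\times i}}$ for some $1 \le i \le \lfloor n/m\rfloor$, and the ``main technical obstacle'' you flag never arises. (The formula you write in that case is also suspect, but this is moot.)

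This observation is exactly what the paper exploits, and it lets one avoid the induction entirely: for \emph{any} nonzero $B$ (not just a simple), the minimal leaves of $\sing(B)$ are of the form $\cL_{S_m^{\times i}}$, so $\sing(B)$ always contains the deepest such leaf $\cL_{W'}$ with $W' = S_m^{\times \ell}$, $\ell = \lfloor n/m\rfloor$. By the analogue of (R2) for $\heartsuit$, the restriction $B_{\heartsuit}$ to $W'$ is nonzero, and the base analysis of Section~\ref{subsect:r/n} applied to $H_c(W') = H_c(m)^{\otimes\ell}$ then gives $B_{\heartsuit} \otimes \Delta_c(\triv_{W'}) \neq 0$, hence $\Res^{S_n}_{W'}(B \otimes_{H_c}\Delta_c(\triv)) \neq 0$. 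Two smaller remarks on your argument: since the relevant parabolics are always products of copies of $S_m$, your inductive hypothesis for factors $\lambda_i > m$ is never actually invoked; and the claim that HC bimodules over a tensor product algebra split as direct sums of external tensor products is false in general (extensions between external tensor products need not split), though what you actually need --- that faithfulness of $\Phi_c$ passes from $S_m$ to $S_m^{\times i}$ --- does hold.
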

\begin{proof}
Let $\ell := \lfloor n/m\rfloor$ and consider $W' = S_{m}^{\times \ell}$, a parabolic subgroup of $W = S_{n}$. By our choice of $\ell$, the bimodule $B_{\heartsuit^{W}_{W'}}$ is nonzero, cf. Remark \ref{rmk: faithful res}. Thus, by the results of Section \ref{subsect:r/n}, $B_{\heartsuit^{W}_{W'}} \otimes_{H_{c}(W')} \Delta_{c}(\triv_{W'})$ is nonzero. But this is $\Res^{W}_{W'}(B \otimes_{H_{c}} \Delta_{c}(\triv))$, cf. Corollary \ref{cor:respoly} and Lemma \ref{lemma:comp res}. The result follows.	
\end{proof}

\subsubsection{$\Phi_{c}$ is full} Now we show that $\Phi_{c}$ is full. In order to do this, we will show that the unit map $\id_{\HC(c, c)} \rightarrow \Psi_{c}\Phi_{c}$ is an isomorphism where, recall, $\Psi_{c}: \cO_{c} \rightarrow \HC(c, c)$ is $\homf{\Delta_{c}(\triv)}{\bullet}$. Since $\Phi_{c}$ is faithful, the unit is always injective.

 \begin{lemma}\label{lemma:full}
	Let $c= r/m > 0$, $1 < m \leq n$ and $\gcd(r;m) = 1$. Let $B \in \HC(c, c)$. Then, the adjunction map $B \rightarrow \homf{\Delta_{c}(\triv)}{B \otimes_{H_{c}}\Delta_{c}(\triv)}$ is an isomorphism.
\end{lemma}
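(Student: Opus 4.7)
The map $\eta_B: B \to \homf{\Delta_c(\triv)}{B \otimes_{H_c}\Delta_c(\triv)}$ is injective by the faithfulness of $\Phi_c$, so the task is to show surjectivity. My strategy is to reduce, via restriction functors, to the case $c = r/n$ treated in Section~\ref{subsect:r/n}.

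First, I reduce to simple $B$. Given a short exact sequence $0\to B'\to B\to B''\to 0$, applying $\Psi_c\Phi_c$ yields a commutative diagram with exact top row and left-exact bottom row (exactness of $\Phi_c$ composed with left-exactness of $\Psi_c = \homf{\Delta_c(\triv)}{\bullet}$). Since all three units are injective, a standard diagram chase shows that if $\eta_{B'}$ and $\eta_{B''}$ are isomorphisms then so is $\eta_B$. By the finite-length property of $\HC(c,c)$ (Lemma~\ref{lemma:basicbimod}(4)), induction on length reduces the claim to simple $B$.

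For simple $B$ with $\sing(B) = \overline{\cL_{\underline{W}}}$, I apply the restriction functor $\bullet_\heartsuit$ with respect to $\underline{W}$. By Lemma~\ref{lemma:comp res} and Corollary~\ref{cor:respoly}, $\Res^W_{\underline{W}}(\Phi_c B) \cong \Phi_c^{\underline{W}}(B_\heartsuit)$, and a parallel compatibility for $\Psi_c$ identifies $(\eta_B)_\heartsuit$ with the unit $\eta_{B_\heartsuit}$ over $\underline{H}_c = H_c(\underline{W})$. Since $\underline{W}$ is a Young subgroup $S_{m_1}\times\cdots\times S_{m_k}$, the algebra $\underline{H}_c$ decomposes as a tensor product $\bigotimes_i H_c(S_{m_i})$; on each factor the claim is either a direct application of Section~\ref{subsect:r/n} (when the parameter takes its critical form for that factor) or follows from induction on $n$ (when the factor is strictly smaller than $S_n$). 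Combining via a K{\"u}nneth-type decomposition of HC bimodules over tensor products of Cherednik algebras, $\eta_{B_\heartsuit}$ is an isomorphism. Hence $(\operatorname{coker}\eta_B)_\heartsuit = 0$; combined with $\sing(\operatorname{coker}\eta_B)\subseteq\sing(B)$ (support-preservation under $\Phi_c$ and $\Psi_c$), the cokernel has strictly smaller singular support, and an outer induction on the dimension of singular support concludes $\operatorname{coker}(\eta_B)=0$. The main obstacle will be verifying the compatibility of $\Psi_c$ with restriction---identifying $(\eta_B)_\heartsuit$ with $\eta_{B_\heartsuit}$---and establishing the K{\"u}nneth-type decomposition for HC bimodules over tensor products.
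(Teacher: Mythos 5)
Your proposal is in the same spirit as the paper's argument (reduce to Section~4.1 via restriction to a parabolic $S_m^{\times j}$), but there are two genuine gaps, one of which you flag yourself.

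The more serious one is the claimed compatibility $(\eta_B)_\heartsuit\cong\eta_{B_\heartsuit}$, i.e.\ that $\Psi_c$ intertwines restriction. This is not available. The paper uses strictly less: from the construction of $\bullet_\heartsuit$ one gets only a one-way natural \emph{embedding}
\[
\homf{\Delta_c(\triv)}{N}_{\heartsuit^{S_n}_{W'}}\ \hookrightarrow\ \homf{\Delta_c(\triv_{W'})}{\Res^{S_n}_{W'}(N)},
\]
and this is all that is used. An embedding suffices because the goal is to prove that the (automatically surjective) counit $\Psi_c(N)\otimes\Delta\twoheadrightarrow N$ has zero kernel: after restricting, the left-hand side embeds into $\Psi_c^{W'}(\Res N)\otimes\Delta_{W'}$, which is enough to compare with the local counit. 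Unless you can show that the adjoint pair $(\Res,\Ind)$ is compatible with $(\Phi_c,\Psi_c)$ in a way that upgrades this embedding to an isomorphism (the required statement would be that $\Psi_c$ commutes with restriction, not just $\Phi_c$), your identification of units does not go through.

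The second gap is in the outer induction. You deduce $\sing(\operatorname{coker}\eta_B)\subsetneq\sing(B)$ from $(\operatorname{coker}\eta_B)_\heartsuit=0$. But the induction hypothesis ``the unit is an isomorphism for bimodules with strictly smaller support'' does not make $\operatorname{coker}\eta_B$ vanish---it is not the unit \emph{of} the cokernel that you know is zero, it is the cokernel \emph{of} the unit. There is no mechanism connecting the two. Also, $\sing(\operatorname{coker}\eta_B)\subseteq\sing(B)$ is not obvious: $\Psi_c$ a priori could increase support, and you invoke ``support preservation under $\Psi_c$'' without justification.

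The paper's proof sidesteps both issues by never reducing to simple $B$ and never inducting on support. It restricts once and for all to the \emph{fixed} parabolic $W'=S_m^{\times\lfloor n/m\rfloor}$, chosen precisely so that $\bullet_{\heartsuit^{S_n}_{W'}}$ kills no nonzero HC bimodule; injectivity of the restricted counit then follows from the embedding above, the tensor-product decomposition $H_c(W')=H_c(m)^{\otimes\lfloor n/m\rfloor}$, and the closure of $\operatorname{im}\Phi_c^{W'}$ under subquotients established in Section~4.1. That last point (needed to know $\Res N$ lies in the image of the local $\Phi$) is also absent from your argument.
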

\begin{proof}
	Denote $N := \Phi_{c}(B) \in \cO_{c}$. Since $\Phi_{c}$ is exact and faithful, to check that the map $B \rightarrow \homf{\Delta_{c}(\triv)}{N}$ is an isomorphism, it is enough to check that $\homf{\Delta_{c}(\triv)}{N} \otimes_{H_{c}} \Delta_{c}(\triv)$ is isomorphic to $N$. Note that, since $N$ is in the image of $\Phi_{c}$, we have an epimorphism $\homf{\Delta_{c}(\triv)}{N} \otimes_{H_{c}}\Delta_{c}(\triv) \twoheadrightarrow N$. We show that its kernel is zero. To do so, let $\ell := \lfloor n/m\rfloor$ and let $W' := S_{m}^{\times \ell} \subseteq S_{n}$.  Consider $\Res^{S_{n}}_{W'}$, which is an exact functor and, by our choice of $\ell$, does not kill nonzero modules. So it is enough to check that the induced epimorphism
	
	\begin{equation}\label{eqn:ind_epi}
	\Res^{S_{n}}_{W'}(\homf{\Delta_{c}(\triv)}{N} \otimes_{H_{c}}\Delta_{c}(\triv)) \twoheadrightarrow \Res^{S_{n}}_{W'}(N)
	\end{equation}
	
	\noindent is an isomorphism, i.e. it is injective. Recall that we have $\Res^{S_{n}}_{W'}(\homf{\Delta_{c}(\triv)}{N} \otimes_{H_{c}} \Delta_{c}(\triv)) = \homf{\Delta_{c}(\triv)}{N}_{\heartsuit^{S_{n}}_{W'}} \otimes_{H_{c}(W')} \Delta_{c}(\triv_{W'})$. By construction of the restriction functor, we have that $\homf{\Delta_{c}(\triv)}{N}_{\heartsuit^{S_{n}}_{W'}}$ embeds into $\homf{\Delta_{c}(\triv_{W'})}{\Res^{S_{n}}_{W'}(N)}$, so by exactness of the functor $\bullet \otimes_{H_{c}(W')} \Delta_{c}(\triv_{W'})$, it is enough to show that the morphism
	
	$$
	\homf{\Delta_{c}(\triv_{W'})}{\Res^{S_{n}}_{W'}(N)} \otimes_{H_{c}(W')} \Delta_{c}(\triv_{W'}) \rightarrow \Res^{S_{n}}_{W'}(N)
	$$
	
	\noindent is an isomorphism. Since $H_{c}(W') = H_{c}(m)^{\otimes \ell}$, this follows by the results of Section \ref{subsect:r/n}, provided we show that $\Res^{S_{n}}_{W'}(N)$ belongs to the image of the functor $\bullet \otimes_{H_{c}(W')} \Delta_{c}(\triv_{W'})$. This follows because the map in (\ref{eqn:ind_epi}) is an epimorphism and from results of Section \ref{subsect:r/n}, namely, that for $c = r/m$ the image of $\Phi_{c}$ is closed under subquotients. We are done.
\end{proof}

\subsubsection{The image of $\Phi_{c}$ is closed under subquotients.} Let us now show that the image of the functor $\Phi_{c}$ is closed under subquotients. First, we show that it is closed under quotients. 

\begin{lemma}\label{lemma:closedquotients}
	Let $c= r/m > 0$, $1 < m \leq n$ and $\gcd(r;m) = 1$. Let $M := B \otimes_{H_{c}}\Delta_{c}(\triv)$ for some $B \in \HC(H_{c}, H_{c})$, and assume that $f: M \twoheadrightarrow N$ is an epimorphism. Then, there exists a quotient $B'$ of $B$ such that $B' \otimes_{H_{c}}\Delta_{c}(\triv) = N$.
\end{lemma}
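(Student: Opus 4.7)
The natural candidate for $B'$ arises from the adjunction $\Phi_c \dashv \Psi_c$ with $\Psi_c := \homf{\Delta_c(\triv)}{-}$. Set $K := \ker f$. By Lemma~\ref{lemma:full} the unit $B \to \Psi_c(M)$ is an isomorphism, so left exactness of $\Psi_c$ applied to $0 \to K \to M \to N \to 0$ produces an inclusion $\Psi_c(K) \hookrightarrow B$, and I would set $B' := B/\Psi_c(K)$. Applying the exact functor $\Phi_c$ yields $0 \to \Phi_c\Psi_c(K) \to M \to \Phi_c(B') \to 0$, and naturality of the counit $\Phi_c\Psi_c(K) \to K$ shows that $\Phi_c\Psi_c(K) \subseteq K$ inside $M$. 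Hence there is a canonical surjection $\Phi_c(B') \twoheadrightarrow N$, and the task reduces to proving injectivity.

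For this I would restrict to the parabolic $W' := S_m^{\times \ell}$ with $\ell := \lfloor n/m \rfloor$; the functor $\Res^{S_n}_{W'}$ is exact and, by the choice of $\ell$, kills no nonzero object of $\cO_c$, so it detects isomorphisms. The compatibility $\Res^{S_n}_{W'}\Phi_c = \Phi_{c,W'}(-)_\heartsuit$ of Lemma~\ref{lemma:comp res} and Corollary~\ref{cor:respoly}, combined with the tensor-factorwise application of the base case of Section~\ref{subsect:r/n} (using $H_c(W') \cong H_c(m)^{\otimes \ell}$), tells us that $\Phi_{c,W'}$ is fully faithful with image closed under subquotients; in particular the restricted sequence $0 \to \Res^{S_n}_{W'}K \to \Res^{S_n}_{W'}M \to \Res^{S_n}_{W'}N \to 0$ is the image under $\Phi_{c,W'}$ of a short exact sequence $0 \to \tilde K \to B_\heartsuit \to \tilde N \to 0$ of HC bimodules, with $\tilde K = \Psi_{c,W'}(\Res^{S_n}_{W'}K)$. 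Since $(-)_\heartsuit$ is exact (Lemma~\ref{lemma:equivariant vs naive} together with property (R1) of Section~\ref{sect:hcbimod}), the $\heartsuit$-restriction of the defining sequence for $B'$ reads $0 \to (\Psi_c K)_\heartsuit \to B_\heartsuit \to B'_\heartsuit \to 0$. By the faithfulness of $\Phi_{c,W'}$, the desired isomorphism $\Phi_c(B') \cong N$ reduces to the kernel identification $(\Psi_c K)_\heartsuit = \tilde K$ of sub-bimodules of $B_\heartsuit$.

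This kernel identification is the main technical obstacle of the proof. The inclusion $(\Psi_c K)_\heartsuit \subseteq \tilde K$ follows from the canonical natural transformation $(\Psi_c -)_\heartsuit \Rightarrow \Psi_{c,W'}(\Res^{S_n}_{W'}-)$ built from the counit of $\Phi_c \dashv \Psi_c$, the compatibility $\Res^{S_n}_{W'}\Phi_c = \Phi_{c,W'}(-)_\heartsuit$, and the adjunction $\Phi_{c,W'} \dashv \Psi_{c,W'}$. For the reverse inclusion I would identify the composite $B_\heartsuit \twoheadrightarrow B'_\heartsuit \hookrightarrow (\Psi_c N)_\heartsuit \to \tilde N$ with the direct adjoint $B_\heartsuit \twoheadrightarrow \tilde N$ of $\Res^{S_n}_{W'}f$ under $\Phi_{c,W'} \dashv \Psi_{c,W'}$. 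This identification will rest on the triangle identity for $\Phi_c \dashv \Psi_c$ together with the base-case fact that the downstairs counit $\Phi_{c,W'}\Psi_{c,W'}(\Res^{S_n}_{W'}N) \to \Res^{S_n}_{W'}N$ is an isomorphism, since $\Res^{S_n}_{W'}N$ lies in the image of $\Phi_{c,W'}$. Matching the kernels of these two surjections from $B_\heartsuit$ then forces $\tilde K \subseteq (\Psi_c K)_\heartsuit$, completing the identification and hence the proof.
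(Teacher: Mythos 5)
Your proposal is correct and follows essentially the same route as the paper: your $B' = B/\Psi_c(K)$ coincides with the paper's image of $\Psi_c(f)$ (since $\Psi_c$ is left exact, $\ker\Psi_c(f)=\Psi_c(K)$), and the injectivity check after restriction to $S_m^{\times\ell}$ is exactly what the paper invokes by deferring to the proof of Lemma~\ref{lemma:full}. The paper is simply terser, leaving implicit the kernel-matching and counit-compatibility details that you spell out explicitly.
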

\begin{proof}
	Note that by Lemma \ref{lemma:full} we may assume that $B = \homf{\Delta_{c}(\triv)}{M}$. Consider the induced morphism
	\[
	B = \homf{\Delta_{c}(\triv)}{M} \buildrel \overline{f} \over \rightarrow \homf{\Delta_{c}(\triv)}{N}
	\]
	
	\noindent and let us denote by $B'$ its image. So we have a map $B' \otimes_{H_{c}}\Delta_{c}(\triv) \rightarrow N$ which is an epimorphism since $f$ is an epimorphism. Let us show that it is injective. So let $W' := S_{m}^{\times \ell} \subseteq S_{n}$ where $\ell := \lfloor n/m \rfloor$. It is enough to check that the induced epimorphism $\Res^{S_{n}}_{W'}(B' \otimes_{H_{c}} \Delta_{c}(\triv)) \rightarrow \Res^{S_{n}}_{W'}(N)$ is an isomorphism. This follows exactly as in the proof of Lemma \ref{lemma:full}.
\end{proof}

Let us now show that the image of $\Phi_{c}$ is closed under submodules. So let $M = B \otimes_{H_{c}} \Delta_{c}(\triv)$ and $N' \subseteq M$. Let $N = M/N'$. By Lemma \ref{lemma:closedquotients} we know that there exists a quotient $B'$ of $B$ such that $B' \otimes_{H_{c}}\Delta_{c}(\triv) = N$. If we let $B''$ be the kernel of the epimorphism $B \rightarrow B'$, exactness of $\Phi_{c}$ shows that $N' = B'' \otimes_{H_{c}} \Delta_{c}(\triv)$.

Finally, we remark that (3) of Theorem \ref{thm:main} is a consequence of Lemma 2.6 in \cite{hcbimod}. To finish the proof of Theorem \ref{thm:main} (minus assertion (4) there), it remains to show that $\Phi_{c}$ intertwines the restriction functors. This follows at once from Corollary \ref{cor:respoly} and Lemma \ref{lemma:comp res}. Let us state an easy consequence of Theorem \ref{thm:main}.

\begin{corollary}\label{cor:losevetingofstoica}
	Assume $c \not\in \RR_{< 0}$. For any $B \in \HC(H_{c}, H_{c})$, there is an order-preserving bijection between sub-bimodules of $B$ and submodules of $\Phi_{c}(B)$. In particular, there is an order-preserving bijection between the set of two-sided ideals of $H_{c}$ and the set of submodules of $\Delta_{c}(\triv)$, which is given by $\cJ \mapsto \cJ\Delta_{c}(\triv)$.
\end{corollary}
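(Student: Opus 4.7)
The plan is to deduce this directly from the three properties of $\Phi_c$ already proved in Theorem \ref{thm:main}: exactness, full faithfulness, and closure of the image under subquotients. First I would define the candidate bijection
\[
\sigma \colon \{\text{sub-bimodules of } B\} \longrightarrow \{\text{submodules of } \Phi_c(B)\}, \qquad B' \mapsto \Phi_c(B'),
\]
where $\Phi_c(B')$ is regarded as a submodule of $\Phi_c(B)$ via exactness. Order-preservation is immediate: if $B_1 \subseteq B_2$ then the inclusion stays an inclusion after applying the exact functor $\Phi_c$.

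For surjectivity I would invoke Theorem \ref{thm:main}(2). Given a submodule $N \subseteq \Phi_c(B)$, Lemma \ref{lemma:closedquotients} applied to the quotient $\Phi_c(B) \twoheadrightarrow \Phi_c(B)/N$ yields a quotient $B \twoheadrightarrow B'$ with $\Phi_c(B') \cong \Phi_c(B)/N$, and the kernel $B'' \subseteq B$ satisfies $\Phi_c(B'') = N$ by exactness; this is exactly the argument carried out right after Lemma \ref{lemma:closedquotients}. For injectivity, suppose $\Phi_c(B_1) = \Phi_c(B_2)$ as submodules of $\Phi_c(B)$. Exactness gives
\[
\Phi_c(B_1 + B_2) = \Phi_c(B_1) + \Phi_c(B_2) = \Phi_c(B_1), \qquad \Phi_c(B_1 \cap B_2) = \Phi_c(B_1) \cap \Phi_c(B_2) = \Phi_c(B_1),
\]
so $\Phi_c\bigl((B_1+B_2)/B_1\bigr) = 0$ and $\Phi_c\bigl(B_1/(B_1 \cap B_2)\bigr) = 0$. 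Since $\Phi_c$ is faithful (it kills no nonzero HC bimodule, as was shown in Section \ref{subsect:r/n} and the faithfulness step of Section~4.2), both quotients vanish, giving $B_2 \subseteq B_1$ and $B_1 \subseteq B_2$.

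For the ``in particular'' clause, I would specialize to $B = H_c$, the regular bimodule. Sub-bimodules of $H_c$ are exactly two-sided ideals of $H_c$, and $\Phi_c(H_c) = H_c \otimes_{H_c} \Delta_c(\triv) = \Delta_c(\triv)$; the bijection sends $\cJ$ to $\Phi_c(\cJ) = \cJ \otimes_{H_c}\Delta_c(\triv)$, which, tracing through the definition of $\Phi_c$ and the inclusion $\Phi_c(\cJ) \hookrightarrow \Phi_c(H_c)$ coming from exactness, is precisely the submodule $\cJ \Delta_c(\triv) \subseteq \Delta_c(\triv)$.

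The only step that needs more than a one-line citation is the injectivity argument, but even there the whole point has been set up already: one just has to observe that $\Phi_c$ preserves sums and intersections (trivially, by exactness) and does not annihilate any nonzero HC bimodule. There is no genuine obstacle — this corollary is essentially a formal consequence of the main theorem, packaged in a convenient way for the later applications to ideals of $H_c$.
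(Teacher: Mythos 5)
Your proof is correct and takes exactly the route the paper intends; the paper simply asserts the corollary as an easy consequence of Theorem \ref{thm:main} without spelling out the details, and your argument (exactness gives the map on subobject lattices and its order-preservation, Lemma \ref{lemma:closedquotients} together with exactness gives surjectivity, and faithfulness plus preservation of sums and intersections gives injectivity) fills in precisely what the paper leaves implicit. A minor remark: injectivity can be obtained slightly more quickly by noting that if $\Phi_c(B_1)=\Phi_c(B_2)$ inside $\Phi_c(B)$, then $\Phi_c$ sends the composite $B_1\hookrightarrow B\twoheadrightarrow B/B_2$ to zero, so faithfulness forces $B_1\subseteq B_2$ directly, but your sum-and-intersection version is equally valid.
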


The statement about ideals in $H_{c}$ and submodules of the polynomial representation $\Delta_{c}(\triv)$ is not new. The set of ideals of $H_{c}$ was calculated by Losev in \cite[Theorem 5.8.1]{losev_completions}, while the set of submodules of the standard module $\Delta_{c}(\triv)$ was calculated by Etingof-Stoica in \cite[Theorem 5.10]{etingof-stoica}. However, the proof we give here is closer in spirit to that of the corresponding result for universal enveloping algebras, see for example \cite[Section 3.4]{gbimod}.

\subsubsection{$\Phi_c$ preserves filtration by supports} The statement (3) of Theorem \ref{thm:main} follows from \cite[Lemma 2.6]{hcbimod}, that we reproduce here for convenience.

\begin{lemma}\label{lem: support compatibility}
Let $B \in \HC(c,c)$ be an irreducible HC bimodule, and let $N \in \cO_{c}$ be irreducible. Then, $B \otimes_{H_{c}} N = 0$ unless $\mathrm{RAnn}(B) = \mathrm{Ann}(N)$. If $B \otimes_{H_{c}} N \neq 0$, then the annihilator of every irreducible quotient of $B \otimes_{H_{c}} N$ coincides with $\mathrm{LAnn}(B)$. 
\end{lemma}

Now we show Theorem \ref{thm:main} (3). Let $c= r/m > 0$, $1 < m \leq n$ and $\gcd(r;m) = 1$. If $B \in \HC(c,c)$ is irreducible, then there must exist a simple subquotient $S$ of $\Delta_{c}(\triv)$ such that $B \otimes_{H_{c}} S \neq 0$. If $\Supp(B) = \overline{\cL}_{i}$, then by Lemma \ref{lem: support compatibility}, this simple subquotient has to be the unique simple subquotient of $\Delta_{c}(\triv)$ (cf. \cite[Theorem 5.10]{etingof-stoica}) whose support is $\overline{\{x \in \C^{n} \mid S_{m}^{\times i} \subseteq W_{x}\}}$. This implies (3) of Theorem \ref{thm:main}. 

\subsection{Image} 

Let us describe the simples in the image of the functor $\Phi_{c}$. This will finish the proof of Theorem \ref{thm:main}. First of all, by a \emph{partition} $\lambda = (\lambda^{1}, \lambda^{2}, \dots)$ we mean an infinite nonincreasing sequence of nonnegative integers, with the property that only a finite number of the $\lambda_{i}$ are nonzero. We write $|\lambda| := \sum_{i} \lambda^{i}$, and call this the \emph{size} of $\lambda$. For an integer $m > 0$, a partition $\lambda$ is said to be \emph{m-restricted} if $\lambda^{i} - \lambda^{i - 1} < m$ for every $i$. Note that by the division algorithm, for every partition $\lambda$ there exist unique partitions $\mu, \nu$ with the property that $\mu$ is $m$-restricted and  $\lambda = \mu + m\nu$, where addition of partitions is done component-wise and so is multiplication by scalars. For example, if $\lambda = (7,5,1,1,0,\dots)$ and $m = 3$, then $\mu = (4,2,1,1,0,\dots)$ while $\nu = (1, 1, 0, \dots) $. 

We say that a partition $\nu$ is \emph{m-trivial} if it has the form $\nu = (m-1, m-1, \dots, m-1, b, 0, \dots)$ for some $0 \leq b < m - 1$ (note that it is possible that $\nu$ does not have parts equal to $m-1$, in which case $\nu$ has the form $(b, 0, \dots)$ for $b < m-1$.) Obviously, for each non-negative integer $k$ there exists a unique $m$-trivial partition $\mu$ of size $k$, which we denote by $\Triv_{m}(k)$.  

Let us explain the reason behind the terminology $m$-trivial. Let $q$ be a primitive $m$-root of unity, and consider the Hecke algebra $\cH_{q}(k)$. The algebra $\cH_{q}(k)$ is cellular and, as such, it has distinguished representations called \emph{Specht} modules, which are indexed by partitions $\lambda$ of size $k$, let us denote by $S(\lambda)$ the corresponding Specht module. The Specht modules are not, in general, irreducible, but they come equipped with a canonical bilinear form $\beta_{\lambda} : S(\lambda) \otimes S(\lambda) \rightarrow \CC$. The module $D(\lambda) := S(\lambda)/\rad(\beta_{\lambda})$ is now either irreducible or zero, and those nonzero modules $D(\lambda)$ form a complete and irredundant list of irreducible $\cH_{q}(k)$-modules. It is known that $D(\lambda)$ is nonzero if and only if $\lambda$ is $m$-restricted. Moreover, $D(\Triv_{m}(k))$ is precisely the trivial representation of $\cH_{q}(k)$. 

Let us go back to the setting of rational Cherednik algebras. The following is an easy consequence of Theorem 6.8 and Lemma 6.1 in \cite{hcbimod}.

\begin{proposition}
	Let $\lambda$ be a partition of $n$, and consider its decomposition $\lambda = \mu + m\nu$, where $\mu$ is $m$-restricted. Then, the irreducible $L_{c}(\lambda)$ belongs to the image of the functor $\Phi_{c}$ if and only if $\mu$ is $m$-trivial. 
\end{proposition}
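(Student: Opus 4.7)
The plan is to use the fact, established in the preceding three subsections, that $\Phi_c$ is exact, fully faithful, and has image closed under subquotients. These properties together imply that $\Phi_c$ induces an injection on isomorphism classes of simples, and that the simples in the image are exactly those $L_c(\lambda)$ lying in the essential image. So the proposition reduces to a combinatorial identification of the set of partitions $\lambda$ that appear.

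For the identification, I would invoke the two results of \cite{hcbimod} cited in the statement. \cite[Theorem 6.8]{hcbimod} enumerates the simple objects of $\HC(c,c)$: the simples are parametrized by pairs $(\ell, \nu)$ with $0 \leq \ell \leq \lfloor n/m \rfloor$ and $\nu$ a partition of $\ell$, so their total number is $\sum_{\ell = 0}^{\lfloor n/m\rfloor} p(\ell)$. On the other side, for each such $\ell$ there is a \emph{unique} $m$-trivial partition of size $n - m\ell$, so the number of $\lambda \vdash n$ admitting a decomposition $\lambda = \mu + m\nu$ with $\mu$ $m$-trivial of size $n - m|\nu|$ is also $\sum_{\ell = 0}^{\lfloor n/m\rfloor} p(\ell)$. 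Once the "only if" direction is in hand, equality of counts forces the characterization to be sharp.

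For the "only if" step, let $L_c(\lambda) = \Phi_c(B)$ with $B$ simple. By Theorem \ref{thm:main}(3), $B$ has singular support $\overline{\mathcal{L}_{S_m^\ell}}$ for a unique $\ell$, and correspondingly $L_c(\lambda)$ has support $\{x : S_m^\ell \subseteq W_x\}$. Applying Theorem \ref{thm:main}(1) together with Lemma \ref{lemma:comp res}, one has $\Res^{S_n}_{S_m^\ell}(L_c(\lambda)) \cong B_{\heartsuit} \otimes_{H_c(S_m^\ell)} \Delta_c(\triv_{S_m^\ell})$. Since $H_c(S_m^\ell) = H_c(m)^{\otimes \ell}$, each $S_m$-tensor factor of the restriction lies in the image of the corresponding $\Phi_c$ for $H_c(m)$, and by Section \ref{subsect:r/n} (passing through KZ) its top-stratum contribution is the trivial $\cH_q(m)$-module $D(\Triv_m(m))$. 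This is precisely the hypothesis of \cite[Lemma 6.1]{hcbimod}, which translates it into the statement that the $m$-restricted summand $\mu$ in $\lambda = \mu + m\nu$ must be $m$-trivial.

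The main obstacle is the branching/KZ computation in the "only if" step — writing down exactly which $\mu$ appear as top-layer constituents of $\Res^{S_n}_{S_m^\ell}(L_c(\lambda))$. Fortunately, \cite[Lemma 6.1]{hcbimod} already encodes this translation in a form compatible with the restriction functors used here, so invoking it reduces the argument to the count comparison in the previous paragraph. The remaining two cases (a) and (b) of Theorem \ref{thm:main}(4) then follow by the same mechanism: case (a) corresponds to $\ell = 0$ (only $L_c(\triv) = \Delta_c(\triv)$ in the image), and case (b) (integral $c$) corresponds to the vacuous restriction $m = 1$ where every partition is $1$-trivial.
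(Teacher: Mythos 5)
Your argument is correct and rests on the same two ingredients the paper invokes; in fact the paper gives no proof at all beyond the sentence preceding the Proposition, ``The following is an easy consequence of Theorem 6.8 and Lemma 6.1 in \cite{hcbimod},'' so what you have written is a reasonable unpacking of that citation: the ``only if'' direction via restriction to $S_{m}^{\times \ell}$ together with \cite[Lemma 6.1]{hcbimod}, and the ``if'' direction by comparing the cardinality of the resulting set of partitions with the count of simple HC bimodules from \cite[Theorem 6.8]{hcbimod}, using that $\Phi_{c}$ is exact, fully faithful and has image closed under subquotients (so it induces a bijection from simple bimodules onto the simples in its image). One small inaccuracy in your last paragraph: it is not true that every partition is $1$-trivial (only the empty partition is); what makes case (b) work is that for $m=1$ the $1$-restricted part $\mu$ in the decomposition $\lambda=\mu+1\cdot\nu$ is forced to be the empty partition, which is $1$-trivial, so every $\lambda$ still passes the test.
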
 

\begin{corollary}
	The irreducible HC $H_{c}$-bimodules are naturally labeled by partitions of $n$, $\lambda = \mu + m\nu$, where $\mu$ is $m$-trivial. For such a partition $\lambda$, we denote by $B(\lambda) = \homf{\Delta_{c}(\triv)}{L_{c}(\lambda)}$ the corresponding irreducible bimodule.
\end{corollary}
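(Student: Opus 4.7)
The plan is to combine the structural properties of $\Phi_{c}$ established in Theorem \ref{thm:main}(1)--(3) with the enumeration of irreducible HC bimodules in \cite[Theorem 6.8]{hcbimod} and the restriction formula in \cite[Lemma 6.1]{hcbimod}. The strategy is first to check that cardinalities match, and then to use restriction to a parabolic of type $S_{m}^{\times \ell}$ together with the base case of Section \ref{subsect:r/n} to pin down the exact set of partitions that can appear.

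For the counting step, I would first observe that since the $m$-trivial partition $\Triv_{m}(k)$ of any given size $k$ is unique (by division of $k$ as $k = a(m-1)+b$ with $0 \leq b < m-1$), a partition $\lambda$ of $n$ of the form $\mu + m\nu$ with $\mu$ being $m$-trivial is determined by $\nu$ alone, where $\nu$ ranges over partitions satisfying $m|\nu| \leq n$. This cardinality should coincide with the count of isomorphism classes of irreducible HC bimodules supplied by \cite[Theorem 6.8]{hcbimod}. Since $\Phi_{c}$ is fully faithful by Theorem \ref{thm:main}(1), the simples in its image are in bijection with the irreducible HC bimodules, so the cardinalities on both sides of the claimed equivalence agree.

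To pin down which $\lambda$ actually appear, I would restrict to the parabolic $W' = S_{m}^{\times \ell}$ with $\ell = \lfloor n/m\rfloor$. By Lemma \ref{lemma:comp res} and Corollary \ref{cor:respoly}, a simple $L_{c}(\lambda)$ lies in the image of $\Phi_{c}$ if and only if $\Res^{S_{n}}_{W'}L_{c}(\lambda)$ lies in the image of the corresponding functor for $W'$. From Section \ref{subsect:r/n}, the only simples in the image of the $\Phi_{c}$ for $S_{m}$ at parameter $r/m$ are $L_{c}((m))$ and $L_{c}((m-1,1))$. Consequently, each $S_{m}$-tensor-factor of $\Res^{S_{n}}_{W'}L_{c}(\lambda)$ must be one of these two simples. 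Applying \cite[Lemma 6.1]{hcbimod}, which describes the restriction of $L_{c}(\lambda)$ combinatorially in terms of the decomposition $\lambda = \mu + m\nu$, the factor pattern on each $S_{m}$-block matches $\{L_{c}((m)), L_{c}((m-1,1))\}$ precisely when $\mu$ is $m$-trivial, with $\nu$ simultaneously encoding the support stratum. Combined with the count from the previous paragraph, this establishes the equivalence in both directions.

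The main obstacle is the explicit combinatorial step: extracting from Lemma 6.1 of \cite{hcbimod} the exact correspondence between the decomposition $\lambda = \mu + m\nu$ and the tensor-factor structure of $\Res^{S_{n}}_{W'}L_{c}(\lambda)$, and then recognizing that the $m$-trivial condition on $\mu$ is precisely what ensures that each factor belongs to the image of $\Phi_{c}$ for $S_{m}$. Ruling out that a non-$m$-trivial $\mu$ could produce a restriction fully inside the image relies crucially on the uniqueness of the $m$-trivial representative in each size class together with the very restricted form of the image in the $S_{m}$ base case.
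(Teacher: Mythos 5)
Your proposal attacks the Proposition that precedes this Corollary (i.e., the classification of simples in the image of $\Phi_{c}$), and, granted that Proposition together with Theorem~\ref{thm:main}, the Corollary itself is immediate: full faithfulness of $\Phi_{c}$ and the closure of its image under subquotients make it a bijection on isomorphism classes of simples, and since $\id \to \Psi_{c}\Phi_{c}$ is an isomorphism, $\Psi_{c} = \homf{\Delta_{c}(\triv)}{\bullet}$ is the inverse on the image, giving the formula $B(\lambda)=\homf{\Delta_{c}(\triv)}{L_{c}(\lambda)}$. The paper treats the Proposition as ``an easy consequence'' of \cite[Theorem 6.8, Lemma 6.1]{hcbimod} and gives no argument for the Corollary; your reconstruction via restriction to $W'=S_{m}^{\times\lfloor n/m\rfloor}$ and counting is a reasonable unpacking of precisely that elision, using the same sources, so I'd call this the same approach, fleshed out.

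Two cautions. First, the assertion ``$L_{c}(\lambda)$ lies in the image of $\Phi_{c}$ if and only if $\Res^{S_n}_{W'}L_{c}(\lambda)$ lies in the image of the corresponding functor for $W'$'' overstates what restriction alone gives. The ``only if'' direction is clear from Proposition~\ref{prop:intertwine res} (together with closure of the $W'$-image under subquotients); the ``if'' direction is not a formal consequence of intertwining. Your argument recovers the ``if'' direction only a posteriori via the counting step, so the logic should really be structured as: (a) if $\mu$ is not $m$-trivial, some simple subquotient of $\Res^{S_n}_{W'}L_{c}(\lambda)$ is not in the $W'$-image, hence $L_{c}(\lambda)$ is not in the image; (b) the count of irreducible HC bimodules from \cite[Theorem 6.8]{hcbimod} equals the number of partitions with $\mu$ $m$-trivial; (c) equality follows. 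Second, the combinatorial step — extracting from \cite[Lemma 6.1]{hcbimod} that the tensor factors of the restriction all land in $\{L_{c}((m)),\,L_{c}((m-1,1))\}$ exactly when $\mu$ is $m$-trivial — is genuinely the crux and remains unverified in your proposal, as you yourself flag; the paper simply absorbs it into the citation.
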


So we have an explicit description of the simples in the image of $\Phi_{c}$. We remark, however, that the image of $\Phi_{c}$ is, in general, \emph{not} closed under extensions. In fact, we have the following result.

\begin{lemma}
Let $B \in \HC(c,c)$. Then, $\Phi_{c}(B) \in \cO_{c}^{\eu\text{-}ss}$, that is, the Euler element acts diagonalizably on $\Phi_{c}(B)$. Moreover, the eigenvalues of $\eu$ on $\Phi_{c}(B)$ belong to the set \[\frac{n}{2} - c\frac{(n-1)n}{2} + \ZZ.\]
\end{lemma}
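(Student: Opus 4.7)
The plan is to establish the stronger fact that, on every HC bimodule $B \in \HC(c,c)$, the adjoint action $\ad(\eu)$ is semisimple with integer eigenvalues, and then to deduce the lemma by a direct calculation on $B \otimes_{H_{c}} \Delta_{c}(\triv)$.

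The key ingredient for the first step is the classical $\mathfrak{sl}_{2}$-triple inside $H_{c}$, going back to \cite{EG}: set $E := \frac{1}{2}\sum_{i} x_{i}^{2} \in \CC[\fh^{*}]^{W}$ and $F := -\frac{1}{2}\sum_{i} y_{i}^{2} \in \CC[\fh]^{W}$, so that $(E,\eu,F)$ satisfies $[\eu,E] = 2E$, $[\eu,F] = -2F$, and $[E,F] = \eu$. Because $E$ and $F$ are $W$-invariant polynomials in the $x_{i}$'s and $y_{i}$'s respectively, the very definition of a HC bimodule forces $\ad(E)$ and $\ad(F)$ to act locally nilpotently on $B$. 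Hence $B$ is an integrable representation of $\mathfrak{sl}_{2}$ via the adjoint action, and by the classical fact that integrable $\mathfrak{sl}_{2}$-modules decompose as direct sums of finite-dimensional irreducibles, $\ad(\eu)$ acts diagonalizably on $B$ with integer eigenvalues. This produces a bimodule grading $B = \bigoplus_{k \in \ZZ} B_{k}$ compatible with the inner $\eu$-grading of $H_{c}$.

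For the second step, a routine calculation using $\eu = \sum_{i} y_{i} x_{i} - \tfrac{n}{2} + c\sum_{i<j}(ij)$ and the fact that each $y_{i}$ annihilates $1 \otimes \triv$ shows that the lowest weight vector of $\Delta_{c}(\triv)$ has $\eu$-weight $c_{0} := \tfrac{n}{2} - c\tfrac{n(n-1)}{2}$; since moreover $\ad(\eu)(x_{i}) = x_{i}$, the standard module $\Delta_{c}(\triv)$ is $\eu$-diagonalizable with eigenvalues in $c_{0} + \ZZ_{\geq 0}$. For $b \in B_{k}$ and $v \in \Delta_{c}(\triv)$ of $\eu$-weight $c_{0} + j$, the identity $\eu b = b\eu + kb$ in $H_{c}$ yields
\[
\eu \cdot (b \otimes v) \;=\; b \otimes \eu v + k(b \otimes v) \;=\; (c_{0} + j + k)(b \otimes v)
\]
inside $\Phi_{c}(B)$, so $\Phi_{c}(B)$ is spanned by $\eu$-eigenvectors with eigenvalues in $c_{0} + \ZZ$. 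The only non-trivial ingredient is the $\mathfrak{sl}_{2}$-triple step, where the HC condition is crucially used to upgrade the local nilpotence of $\ad$ of $W$-invariant polynomials to integer diagonalizability of $\ad(\eu)$; once this is in hand, the two gradings tensor together compatibly and the claim follows mechanically.
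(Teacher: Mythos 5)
Your proof is correct and follows essentially the same route as the paper: the paper simply cites \cite[Proposition 3.8]{BEG_do} for the fact that $\ad(\eu)$ acts semisimply on $B$ with integer eigenvalues, and your $\mathfrak{sl}_{2}$-triple argument (with $E=\tfrac12\sum x_i^2\in\CC[\fh^*]^W$, $F=-\tfrac12\sum y_i^2\in\CC[\fh]^W$ and the HC condition forcing local nilpotence of $\ad E,\ad F$) is precisely the content of that reference, made explicit. The remaining steps — the weight $\tfrac{n}{2}-c\tfrac{n(n-1)}{2}$ of the lowest-weight vector of $\Delta_c(\triv)$ and the tensor-product computation — match the paper's.
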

\begin{proof}
Note that $\eu$ acts diagonalizably on $\Delta(\triv)$. By \cite[Proposition 3.8]{BEG_do}, $\eu$ acts ad-diagonalizably, with integer eigenvalues, on $B$. It then follows easily that $\eu$ acts diagonalizably on $B \otimes_{H_{c}} \Delta(\triv)$, with eigenvalues belonging to the set \[\{b + \ZZ : b \; \text{is an eigenvalue of} \; \eu \; \text{on}  \; \Delta(\triv)\}.\] It is known that the eigenvalues of $\eu$ on $\Delta(\triv)$ belong to the set $\frac{n}{2} - c\frac{(n-1)n}{2} + \ZZ_{\geq 0}$. The result follows.
\end{proof}

\begin{example}
Let $n = 2$ and $c = 1/2$. It follows from results of Section \ref{subsect:r/n} that in this case $\HC(c,c) \cong \cO_{c}^{\eu\text{-}ss}$. We remark that $P(\sign) \not\in \cO_{c}^{\eu\text{-ss}}$, and so $\cO_{c}^{\eu\text{-ss}} \subsetneq \cO_{c}$. In particular, in this case the image of $\Phi_{c}$ is not closed under extensions. \\
More generally, for $n \geq 2$, if $c = r/n$ with $\gcd(r;n) = 1$, then it follows from Section \ref{subsect:r/n} that $\HC(c,c)$ is equivalent to the Serre span of $L_{c}(\triv)$ and $L_{c}(n-1, 1)$ \underline{in the category} $\cO_{c}^{\eu\text{-ss}}$, see e.g. \cite[Theorem 1.3]{BEG_fd}.
\end{example}

Let us denote by $\cC_{c}$ the image of the functor $\Phi_{c}$. Of course, $\cC_{c} \cong \HC(c,c)$, but we prefer to think of $\cC_{c}$ as living inside category $\cO_{c}$. 

\begin{conjecture}\label{conj:Serre}
The category $\cC_{c}$ is the Serre span \underline{in the category} $\cO_{c}^{\eu\text{-ss}}$ of the simples it contains. 
\end{conjecture}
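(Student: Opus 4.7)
The plan is to argue by induction on length using a five-lemma argument with the counit of the adjunction $\Phi_c \dashv \Psi_c$, where $\Psi_c := \homf{\Delta_c(\triv)}{-}$. Observe that an object $M \in \cO_c$ lies in $\cC_c$ precisely when the counit $\Phi_c \Psi_c(M) \to M$ is an isomorphism: the implication $\Rightarrow$ follows from Lemma~\ref{lemma:full}, while $\Leftarrow$ is tautological. Denoting by $\cS$ the Serre span in question, the containment $\cC_c \subseteq \cS$ is automatic, and the induction reduces the reverse inclusion to the following claim: for any short exact sequence $0 \to N \to M \to L \to 0$ in $\cO_c^{\eu\text{-}ss}$ with $N, L \in \cC_c$, one has $M \in \cC_c$.

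Granting such a sequence, I would apply the left exact functor $\Psi_c$ to produce $0 \to \Psi_c(N) \to \Psi_c(M) \xrightarrow{\pi} \Psi_c(L)$, and momentarily assume that $\pi$ is surjective. Then the exact functor $\Phi_c$ converts this into a short exact sequence which compares with the original via the counit; since the counit is an isomorphism on the outer terms, the five-lemma forces $\Phi_c \Psi_c(M) \xrightarrow{\sim} M$, so $M \in \cC_c$. The main obstacle is therefore proving surjectivity of $\pi$ under the hypothesis that $M$ is $\eu$-semisimple. Concretely, one must show that every $\phi \in \homf{\Delta_c(\triv)}{L}$ admits a lift $\widetilde\phi \in \Hom_\CC(\Delta_c(\triv), M)$ that is itself HC-finite. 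An arbitrary $\CC$-linear lift exists by exactness of $\Hom_\CC(\Delta_c(\triv), -)$ and the torsor of lifts is a coset under $\Hom_\CC(\Delta_c(\triv), N)$; the task is to select a lift satisfying local nilpotency under the adjoint action of $\CC[\fh]^W \cup \CC[\fh^*]^W$.

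The $\eu$-semisimplicity of $M$ should enter here decisively: since $N, L, M$ are all $\eu$-semisimple, the $\ad(\eu)$-action on $\Hom_\CC(\Delta_c(\triv), M)$ is semisimple with discrete spectrum in $\frac{n}{2} - c\binom{n}{2} + \ZZ$, so both $\phi$ and the space of its $\CC$-linear lifts decompose into finite-dimensional $\ad(\eu)$-weight components. One can then choose $\widetilde\phi$ matching the $\ad(\eu)$-weight decomposition of $\phi$ and inherit from $\phi$ the ad-local-nilpotency under $\CC[\fh]^W \cup \CC[\fh^*]^W$ that defines HC-finiteness, working one weight at a time. Rigorously implementing this lift is the central technical difficulty. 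To verify the outcome I would proceed block by block: for the principal block $\cP_{r/m}$ the conjecture reduces, via the five-lemma argument above, to the numerical match $\dim_\CC\Ext^1_{\cO_c^{\eu\text{-}ss}}(L_c(\lambda_i), L_c(\lambda_j)) = \#\{\text{arrows } v_i \leftrightarrow v_j\}$ from the quiver in Theorem~\ref{thm:main2}. This could be checked directly from BGG-type resolutions in $\cO_c$ cut by the $\eu$-semisimple condition, or more efficiently by restricting to $S_m^{\times \lfloor n/m \rfloor}$ via the exact and faithful functor $\Res^{S_n}_{S_m^{\times \ell}}$, where the case $c = r/m$ treated in Section~\ref{subsect:r/n} (and recorded in the Example above) already establishes the corresponding statement; an alternative that would sidestep the HC-finite lifting problem altogether is to construct, for each simple $L_c(\lambda_i) \in \cC_c$, an explicit minimal projective resolution inside $\cO_c^{\eu\text{-}ss}$ and compare it with the minimal projective resolution of the corresponding irreducible in the quiver algebra of Theorem~\ref{thm:main2}.
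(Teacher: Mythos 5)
First, a point of framing: the statement you are attempting to prove is labeled a \emph{Conjecture} in the paper, and the author does not prove it. The only evidence offered is that the statement holds for the principal block $\cP_c$ of $\cC_c$, as a consequence of Theorem~\ref{thm:prblock}. So there is no ``paper's own proof'' against which to compare your argument; anything you produce here would be a new result.

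Now to the substance. The reduction via the counit and the five-lemma is sound: an object $M\in\cO_c$ lies in $\cC_c$ iff the counit $\Phi_c\Psi_c(M)\to M$ is an isomorphism, and since $\Phi_c$ is exact and $\Psi_c$ is left exact, the only obstruction to closing $\cC_c$ under extensions in $\cO_c^{\eu\text{-}ss}$ is the surjectivity of $\Psi_c(M)\to\Psi_c(L)$. That correctly identifies where the difficulty lives. But the central step you propose for surjectivity does not work as written. Given $\phi\in\homf{\Delta_c(\triv)}{L}$ and a $\CC$-linear lift $\widetilde\phi\in\Hom_\CC(\Delta_c(\triv),M)$, you suggest choosing $\widetilde\phi$ to match the $\ad(\eu)$-weight decomposition of $\phi$ and then ``inherit from $\phi$ the ad-local-nilpotency.'' There are two problems. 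First, the $\ad(\eu)$-weight spaces of $\Hom_\CC(\Delta_c(\triv),M)$ are \emph{not} finite-dimensional: if $\Delta_c(\triv)=\bigoplus_a\Delta_a$ and $M=\bigoplus_b M_b$ are the $\eu$-weight decompositions, the $\ad(\eu)$-weight-$k$ subspace is the infinite product $\prod_a\Hom_\CC(\Delta_a,M_{a+k})$. So the space of lifts in any fixed weight component is still infinite-dimensional and there is no finiteness to exploit. Second, and more fundamentally, being $\ad(\eu)$-homogeneous of bounded weight is far weaker than HC-finiteness. HC-finiteness of $\widetilde\phi$ means the $H_c$-sub-bimodule it generates inside $\Hom_\CC(\Delta_c(\triv),M)$ is finitely generated with $\ad(\CC[\fh]^W)$ and $\ad(\CC[\fh^*]^W)$ acting locally nilpotently; this is not implied by $\ad(\eu)$-semisimplicity of the ambient modules, and there is no mechanism in your argument forcing the generated sub-bimodule to be finitely generated. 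In short, $\eu$-semisimplicity of $M$ rules out certain extensions (which is why the conjecture is plausible), but it does not by itself produce the HC-finite lift.

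The alternative routes you sketch in your last paragraph (block-by-block verification, restriction to $S_m^{\times\ell}$, or explicit projective resolutions inside $\cO_c^{\eu\text{-}ss}$) are closer to what actually succeeds in the one case the paper handles: the principal-block statement follows from the explicit quiver description in Theorem~\ref{thm:prblock} together with the computation of $\Ext^1$ in $\cO_c^{\eu\text{-}ss}$. Extending this to all blocks would require an analogue of Theorem~\ref{thm:prblock} for non-principal blocks, which the paper does not provide and which appears to be a genuinely open problem.
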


\subsection{Restriction and induction functors}

Now we proceed to study the behaviour of restriction and induction under the functor $\Phi_{c}$ and prove that restriction has both a left and a right adjoint that, however, are not isomorphic. 

\subsubsection{Restriction functors} We assume that $c > 0$. For a parabolic subgroup $\underline{W} \subseteq S_{n}$, we will denote by $\cC_{c}(\underline{W}) \subseteq \cO_{c}(\underline{W})$ the image of the functor $\Phi_{c}^{\underline{W}}: \underline{\HC}(c,c) \to \cO_{c}(\underline{W})$, $B \mapsto B \otimes_{\underline{H}_{c}} \Delta_{c}(\triv_{\underline{W}})$. 

\begin{proposition}\label{prop:intertwine res}
	Let $c > 0$ and $\underline{W} \subseteq S_{n}$ a parabolic subgroup. Then, the functors $\Res^{S_{n}}_{\underline{W}} \circ \Phi_{c}^{S_{n}}$ and $\Phi_{c}^{\underline{W}}\circ \bullet_{\heartsuit^{S_{n}}_{\underline{W}}}: \HC(H_{c}(n)) \rightarrow \cO_{c}(\underline{W})$ are naturally isomorphic.  
\end{proposition}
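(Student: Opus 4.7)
The plan is to reduce the statement directly to Lemma \ref{lemma:comp res} combined with Corollary \ref{cor:respoly}. The desired natural isomorphism unwinds on an object $B \in \HC(H_c(n))$ to
\[
\Res^{S_n}_{\underline{W}}\bigl(B \otimes_{H_c} \Delta_c(\triv)\bigr) \;\cong\; B_{\heartsuit^{S_n}_{\underline{W}}} \otimes_{\underline{H}_c} \Delta_c(\triv_{\underline{W}}),
\]
so the two factors of the right-hand side are precisely the two things the two lemmas above compute.

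First I would apply Lemma \ref{lemma:comp res} with $M = \Delta_c(\triv) \in \cO_c$ and $c' = c$. This gives a natural isomorphism
\[
\Res^{S_n}_{\underline{W}}\bigl(B \otimes_{H_c} \Delta_c(\triv)\bigr) \;\cong\; B_{\heartsuit^{S_n}_{\underline{W}}} \otimes_{\underline{H}_c} \Res^{S_n}_{\underline{W}}\Delta_c(\triv),
\]
naturally in $B \in \HC(H_c(n))$. Second I would plug in Corollary \ref{cor:respoly} to identify $\Res^{S_n}_{\underline{W}}\Delta_c(\triv)$ with $\Delta_c(\triv_{\underline{W}})$, which turns the right-hand side into $B_{\heartsuit^{S_n}_{\underline{W}}} \otimes_{\underline{H}_c} \Delta_c(\triv_{\underline{W}}) = \Phi_c^{\underline{W}}(B_{\heartsuit^{S_n}_{\underline{W}}})$. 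Chaining these two natural isomorphisms produces the desired natural isomorphism of functors.

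There is essentially no obstacle here beyond checking naturality, and that is inherited from the stated naturality of the isomorphism in Lemma \ref{lemma:comp res} (which is natural both in $B$ and in $M$, as it is constructed from the adjunction between tensor product and $\homf{\cdot}{\cdot}$, together with the isomorphism $B_{\heartsuit} \cong \operatorname{Forget}(B_\dagger)$ of Lemma \ref{lemma:equivariant vs naive}); the identification in Corollary \ref{cor:respoly} is canonical because $\Delta_c(\triv_{\underline{W}})$ appears as the unique standard subquotient in the standardly filtered module $\Res^{S_n}_{\underline{W}}\Delta_c(\triv)$, by Theorem \ref{thm:prop res and ind}(2)--(3). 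Therefore the proof reduces to these two citations and the observation that both ingredients are natural in $B$.
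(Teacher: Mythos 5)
Your proof is correct and follows exactly the paper's own argument: the paper proves this proposition by the single line "Follows immediately from Corollary \ref{cor:respoly} and Lemma \ref{lemma:comp res}," which is precisely the two-step reduction you carry out. Your added remarks on naturality are a sensible elaboration of what the paper leaves implicit.
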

\begin{proof}
Follows immediately from Corollary \ref{cor:respoly} and Lemma \ref{lemma:comp res}.
\end{proof}

Note that an immediate consequence of Proposition \ref{prop:intertwine res} is that if $M \in \cC_{c}$, then $\Res^{W}_{\underline{W}}(M) \in \cC_{c}(\underline{W})$. Since the restriction functors on category $\cO$ and on HC bimodules are identified via the functors $\Phi$, we will sometimes abuse the notation and denote $M_{\heartsuit^{S_{n}}_{\underline{W}}}$ for $\Res^{S_{n}}_{\underline{W}}(M)$ when $M \in \cC_{c}$. 

\subsubsection{Induction functors}

The compatibility of $\Phi_{c}$ with induction functors is more subtle. Let us denote by $\iota: \cC_{c} \rightarrow \cO_{c}$ the inclusion. Note that, since $\cC_{c}$ is closed under subquotients, $\iota$ admits a right adjoint, that we will denote by $\iota^{!}: \cO_{c} \rightarrow \cC_{c}$. In simple terms, $\iota^{!}(M)$ is the largest submodule of $M$ that belongs to the category $\cC_{c}$. 

\begin{proposition}\label{prop:compind}
	Let $\underline{W} \subseteq S_{n}$ be a parabolic subgroup, and $c \in \CC \setminus \RR_{<0}$. Then, the functors $\Phi_{c}^{S_{n}} \circ \bullet^{\heartsuit_{\underline{W}}^{S_{n}}}$ and $\iota^{!} \circ \Ind_{\underline{W}}^{S_{n}} \circ \Phi_{c}^{\underline{W}}:  \underline{\HC}(c,c) \rightarrow \cC_{c}(S_{n})$ are naturally isomorphic. 
\end{proposition}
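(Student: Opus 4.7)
The plan is to prove the natural isomorphism via a Yoneda argument: I will show that both $F := \Phi_c^{S_n} \circ \bullet^{\heartsuit^{S_n}_{\underline{W}}}$ and $G := \iota^! \circ \Ind^{S_n}_{\underline{W}} \circ \Phi_c^{\underline{W}}$ land in $\cC_c(S_n)$ (immediate from their definitions) and that for every test object $N \in \cC_c(S_n)$ and every $B \in \underline{\HC}(c,c)$, there is a chain of natural isomorphisms $\Hom_{\cC_c(S_n)}(N, G(B)) \cong \Hom_{\cC_c(S_n)}(N, F(B))$. By Yoneda this forces $F(B) \cong G(B)$ naturally in $B$.

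For the chain, I would start from $\Hom_{\cC_c(S_n)}(N, G(B)) = \Hom_{\cC_c(S_n)}(N, \iota^!(\Ind^{S_n}_{\underline{W}}\Phi_c^{\underline{W}}(B)))$ and apply the $(\iota,\iota^!)$ adjunction to rewrite it as $\Hom_{\cO_c(S_n)}(N, \Ind^{S_n}_{\underline{W}}\Phi_c^{\underline{W}}(B))$. Next, using the biadjointness of $\Ind^{S_n}_{\underline{W}}$ and $\Res^{S_n}_{\underline{W}}$ from Theorem \ref{thm:prop res and ind}(1) with restriction as the \emph{left} adjoint of induction, this equals $\Hom_{\cO_c(\underline{W})}(\Res^{S_n}_{\underline{W}} N, \Phi_c^{\underline{W}}(B))$. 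At this point, I would invoke Theorem \ref{thm:main} to note that $\Phi_c^{S_n}$ restricts to an equivalence $\HC(c,c) \to \cC_c(S_n)$, so there is a unique $\widetilde B \in \HC(c,c)$ with $N \cong \Phi_c^{S_n}(\widetilde B)$; by Proposition \ref{prop:intertwine res}, then, $\Res^{S_n}_{\underline{W}} N \cong \Phi_c^{\underline{W}}(\widetilde B_{\heartsuit^{S_n}_{\underline{W}}})$.

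Plugging this in and applying full faithfulness of $\Phi_c^{\underline{W}}$ (which holds by the already-established portion of Theorem \ref{thm:main} for the smaller group), the Hom becomes $\Hom_{\underline{\HC}(c,c)}(\widetilde B_{\heartsuit^{S_n}_{\underline{W}}}, B)$. Next I would use the adjunction $(\bullet_{\heartsuit^{S_n}_{\underline{W}}}, \bullet^{\heartsuit^{S_n}_{\underline{W}}})$ recorded in property (R4) of Section \ref{sect:hcbimod} to rewrite this as $\Hom_{\HC(c,c)}(\widetilde B, B^{\heartsuit^{S_n}_{\underline{W}}})$, and finally apply full faithfulness of $\Phi_c^{S_n}$ once more to arrive at $\Hom_{\cC_c(S_n)}(\Phi_c^{S_n}(\widetilde B), \Phi_c^{S_n}(B^{\heartsuit^{S_n}_{\underline{W}}})) = \Hom_{\cC_c(S_n)}(N, F(B))$.

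Each isomorphism in this chain is manifestly natural in both $N$ and $B$, so Yoneda yields the desired natural isomorphism $F \cong G$. The only substantive input beyond formal adjunctions is Proposition \ref{prop:intertwine res}, which was just proved. I do not foresee a genuine obstacle: the chief care needed is choosing the correct direction of the $\Res$--$\Ind$ adjunction (permitted by biadjointness) and tracking naturality at each step, both of which are routine.
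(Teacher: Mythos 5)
Your proof is correct and is essentially the same argument as the paper's. The paper states tersely that both composites are right adjoint to $\bullet_{\heartsuit^{S_n}_{\underline W}} \circ \Phi_c^{-1}\colon \cC_c(S_n)\to\underline{\HC}(c,c)$ (invoking Proposition \ref{prop:intertwine res} to match the two left adjoints), and you have simply written out the chain of Hom-isomorphisms that witnesses this double adjunction and closed it off with Yoneda; the ingredients (the $(\iota,\iota^!)$ adjunction, biadjointness of $\Res/\Ind$, full faithfulness of $\Phi_c$ on both levels, the $(\bullet_\heartsuit,\bullet^\heartsuit)$ adjunction, and Proposition \ref{prop:intertwine res}) are identical.
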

\begin{proof}
By its construction, the functor $\Phi_{c}^{S_{n}} \circ  \bullet^{\heartsuit^{S_{n}}_{\underline{W}}}$ is right adjoint to the functor $\bullet_{\heartsuit^{S_{n}}_{\underline{W}}} \circ \Phi_{c}^{-1}: \cC(S_{n}) \to \underline{\HC}(c,c)$. Since $\Ind^{S_{n}}_{\underline{W}}$ is right adjoint to $\Res^{S_{n}}_{\underline{W}}$, Proposition \ref{prop:intertwine res} implies that the same is true for the functor $\iota^{!} \circ \Ind_{\underline{W}}^{S_{n}} \circ \Phi_{c}^{\underline{W}}$. The result follows.
\end{proof}

Similarly to the case of restriction, for $M \in \cC_{c}(\underline{W})$, we will write $M^{\heartsuit^{S_{n}}_{\underline{W}}}$ for the largest submodule of $\Ind^{S_{n}}_{\underline{W}}(M)$ that belongs to the category $\cC_{c}$. 



\begin{example}
Let us see that, in general, the functor $\iota^{!}$ in Proposition \ref{prop:compind} is really necessary and, in particular, $\Phi_{c}$ does \underline{not} commute with induction functors. Let $c \in \CC \setminus \QQ$. In particular, for every $n \geq 0$, the category $\HC(H_{c}(n))$ is equivalent to the category of vector spaces, with a unique simple object given by the regular bimodule, and $\cC_{c}(n) := \cC_{c}(S_{n})$ consists of objects which are isomorphic to direct sums of copies of the polynomial representation $\Delta_{c}(\triv)$. Now, it is well-known that in this case $\Ind^{S_{n}}_{\{1\}}(\CC) = \bigoplus_{\lambda \vdash n}\Delta_{c}(\lambda) \otimes \lambda$, which shows that $B^{\dagger^{S_{n}}_{\{1\}}} \otimes_{H_{c}(n)} \Delta_{c}(\triv)$ is a proper submodule of $\Ind^{S_{n}}_{\{1\}}(B \otimes_{H_{c}(0)} \Delta_{c}(\triv))$, where $B$ is the unique irreducible HC $H_{c}(0)$-bimodule. 
\end{example}


Let us now construct a left adjoint to the functor $\bullet_{\heartsuit}: \cC_{c}(S_{n}) \to \cC_{c}(\underline{W})$. The inclusion functor $\iota: \cC_{c} \to \cO_{c}$ also admits a left adjoint $^{!}\!\iota: \cO_{c} \to \cC_{c}$, namely $^{!}\!\iota(M)$ is the largest quotient of $M$ belonging to category $\cC_{c}$. So the functor $^{!}\!\iota \circ \Ind^{S_{n}}_{\underline{W}}: \cC_{c}(\underline{W}) \to \cC_{c}(S_{n})$ is left adjoint to $\bullet_{\heartsuit}$. We denote this functor by $^{\heartsuit}\!\bullet$. The construction of the functors $\bullet^{\heartsuit}$ and $^{\heartsuit}\!\bullet$ is schematized in the following diagram.

$$
\xymatrix{& & & & \cC_{c}(S_{n}) \\ \cC_{c}(\underline{W}) \ar[rr]^{\Ind} \ar@/^/[urrrr]^{\bullet^{\heartsuit}} \ar@/_/[drrrr]_{^{\heartsuit}\!\bullet} & & \cO_{c}(S_{n}) \ar[drr]_{^{!}\!\iota} \ar[urr]^{\iota^{!}} & & \\ & & & & \cC_{c}(S_{n})}
$$

A few words about terminology are in order. In \cite{losev_completions}, the functor $\bullet^{\heartsuit}$ is called simply the \emph{induction functor}, while the functor $^{\heartsuit}\!\bullet$ is not considered. The left adjoint to restriction is usually called induction, while the right adjoint is called coinduction. It is for this reason that, differing from \cite{losev_completions}, we will call the functor $\bullet^{\heartsuit}$ the \emph{coinduction} functor, while \emph{induction} will be reserved for $^{\heartsuit}\!\bullet$. As we will see below, in general induction and coinduction are not isomorphic. 

To finish this section, let us remark that if $c$ is integral then, in fact, $\cC_{c} = \cO_{c}$, and so in this case $\Phi_{c}$ does commute with induction, and induction and coinduction coincide. In particular, neither the induction nor the coinduction functor intertwine tensor products.

\subsubsection{Computations of restriction, induction and coinduction} Here, we will do some basic computations regarding the induction, coinduction and restriction functors. First, let us define an important bimodule.

\begin{definition}
Assume $c \not\in \RR_{< 0}$. We call the bimodule $D_{c} := \homf{\Delta_{c}(\triv)}{\nabla_{c}(\triv)} \in \HC(c,c)$ the \emph{double wall-crossing bimodule.}
\end{definition}

Note that if $c \not\in \{r/m \mid 1 < m \leq n, \gcd(r;m) = 1\}$ then in fact $\nabla_{c}(\triv) \cong \Delta_{c}(\triv)$, so in this case $D_{c}$ is isomorphic to the regular bimodule. So, most of the time, we will assume that $c = r/m > 0$, $\gcd(r;m) = 1$, and $1 < m \leq n$. 

The reason for the terminology is that, as we will see below, this bimodule coincides with the bimodule constructed by Bezrukavnikov-Losev in \cite[Section 7.6]{BL} (we remark that here we are referring to v3 of that paper on the arXiv, which differs from the published version) that is obtained as the tensor product of two wall-crossing bimodules. 

\begin{lemma}\label{lemma:BL}
Assume $c = r/m > 0$, with $\gcd(r;m) = 1$ and $1 < m \leq n$. The bimodule $D_{c}$ coincides with the double wall-crossing bimodule constructed in \cite{BL}.
\end{lemma}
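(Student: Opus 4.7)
The plan is to leverage the fully faithful embedding $\Phi_{c}: \HC(c,c) \hookrightarrow \cO_{c}$ of Theorem \ref{thm:main}, together with the identity $\Psi_{c}\circ \Phi_{c}\cong \id_{\HC(c,c)}$ established in Lemma \ref{lemma:full}. Denote by $D^{BL}_{c}\in\HC(c,c)$ the double wall-crossing bimodule of \cite[Section 7.6]{BL}. If we can verify that $\Phi_{c}(D^{BL}_{c})\cong \nabla_{c}(\triv)$, then applying $\Psi_{c}$ yields
\[
D^{BL}_{c} \;\cong\; \Psi_{c}\Phi_{c}(D^{BL}_{c}) \;\cong\; \Psi_{c}(\nabla_{c}(\triv)) \;=\; \homf{\Delta_{c}(\triv)}{\nabla_{c}(\triv)} \;=\; D_{c},
\]
which is the desired identification.

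So the work reduces to computing $D^{BL}_{c}\otimes_{H_{c}}\Delta_{c}(\triv)$. Recalling from \cite[Section 7.6]{BL} that $D^{BL}_{c}=B_{c\to c'}\otimes_{H_{c'}}B_{c'\to c}$ with $c'=-c+N$ for a sufficiently large integer $N$, the derived tensor product functor $D^{BL}_{c}\otimes^{L}_{H_{c}}\bullet$ on $D^{b}(\cO_{c})$ is a perverse auto-equivalence that, up to a character twist, realizes the Ringel duality of $\cO_{c}$. In type $A$ this sends the standard $\Delta_{c}(\triv)$ to the costandard $\nabla_{c}(\triv)$, and one checks that no higher $\operatorname{Tor}$'s appear since $\Delta_{c}(\triv)$ is the standard attached to the lowest weight of the highest weight poset, hence extremal with respect to the relevant perverse t-structure.

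A more self-contained alternative is to argue by restriction to parabolic subgroups. Both $D_{c}$ and $D^{BL}_{c}$ are compatible with the restriction functors $\bullet_{\heartsuit}$: for $D_{c}$ this follows from Proposition \ref{prop:intertwine res} together with the compatibility of $\homf{\bullet}{\bullet}$ with restriction, while for $D^{BL}_{c}$ compatibility is built into the geometric construction of wall-crossing bimodules for quantized quiver varieties in \cite{BL}. Taking $W'=S_{m}^{\times\ell}$ with $\ell=\lfloor n/m\rfloor$, the functor $\bullet_{\heartsuit^{S_{n}}_{W'}}$ is faithful on the block containing $H_{c}$, and on the parabolic side the identification $D_{c}^{W'}\cong (D^{BL}_{c})^{W'}$ is already established in \cite{hcbimod} (recalled in Section \ref{subsect:r/n}), since both coincide there with the unique indecomposable HC bimodule $D$ having the correct composition series.

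The main obstacle is to match precisely the geometric construction of $D^{BL}_{c}$ in \cite{BL} with either the Ringel-duality computation or the restriction-compatibility argument. In the first approach one must rigorously argue concentration in degree zero of the perverse equivalence applied to $\Delta_{c}(\triv)$; in the second approach one must verify that the algebraic restriction functors $\bullet_{\heartsuit}$ on HC bimodules agree with the geometric restrictions on quantized quiver varieties used in \cite{BL} to transport $D^{BL}$ through the stratification. Once either of these is in place, the identification $D_{c}\cong D^{BL}_{c}$ follows at once from the adjunction machinery of Lemma \ref{lemma:full}.
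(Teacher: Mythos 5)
Your proposal takes a different route from the paper, but both of the arguments you sketch have genuine gaps. For reference, the paper's proof is quite short and does not compute $\Phi_{c}(D^{BL}_{c})$ at all: since $\nabla_{c}(\triv)$ is injective in $\cO_{c}$ and $\Psi_{c}=\homf{\Delta_{c}(\triv)}{\bullet}$ is right adjoint to the exact functor $\Phi_{c}$, the bimodule $D_{c}$ is injective in $\HC(c,c)$; it is uniserial with socle $\homf{\Delta_{c}(\triv)}{L_{c}(\triv)}$, the unique simple with minimal support. The bimodule $D^{BL}_{c}$ is indecomposable with the same socle, hence embeds into its injective hull $D_{c}$, and a comparison of composition lengths (both equal to that of $\nabla_{c}(\triv)$) forces this embedding to be an isomorphism.

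Your first approach has correct overall logic — if $\Phi_{c}(D^{BL}_{c})\cong\nabla_{c}(\triv)$ then Lemma \ref{lemma:full} gives $D^{BL}_{c}\cong\Psi_{c}\Phi_{c}(D^{BL}_{c})\cong D_{c}$ — but the crucial computation is left as an assertion. You claim that $D^{BL}_{c}\otimes^{L}_{H_{c}}\bullet$ realizes a twisted Ringel duality sending $\Delta_{c}(\triv)$ to $\nabla_{c}(\triv)$ with no higher $\operatorname{Tor}$'s; neither claim is available off the shelf in \cite{BL}, and the concentration-in-degree-zero assertion is delicate for a perverse (in general cohomology-shifting) equivalence. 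You flag this yourself as the "main obstacle," and it is precisely the content that a proof must supply. Your second approach has a more structural flaw: faithfulness and exactness of $\bullet_{\heartsuit}$ allow you to detect that a \emph{given} morphism between two bimodules is an isomorphism once its restriction is, but they do \emph{not} allow you to conclude that two objects with isomorphic restrictions are themselves isomorphic — you have produced no morphism $D^{BL}_{c}\to D_{c}$ to which the restriction functor could be applied. To manufacture such a morphism you would need exactly the kind of structural input (injectivity of $D_{c}$ plus a socle comparison) that the paper uses, at which point the restriction step becomes superfluous.
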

\begin{proof}
First, we claim that in this case the costandard module $\nabla_{c}(\triv)$ is injective in category $\cO_{c}$. Indeed, this follows from the fact that $\Delta_{c}(\triv)$ is projective, which in turn follows from $\triv$ being minimal in the order giving $\cO_{c}$ a highest weight structure. Now, there is a duality functor  $\cO_{c} \to \cO_{c}$ sending $\Delta_{c}(\triv) \mapsto \nabla_{c}(\triv)$, cf. \cite[4.2]{GGOR}. From here, the injectivity of $\nabla_{c}(\triv)$ follows. 

Now, since $\nabla_{c}(\triv)$ is injective and the left adjoint to $\homf{\Delta_{c}(\triv)}{\bullet}$ is exact, $D_{c}$ is injective as a HC bimodule. Since $\nabla_{c}(\triv)$ is uniserial, so is $D_{c}$, and its socle coincides with $\homf{\Delta_{c}(\triv)}{L_{c}(\triv)}$. The bimodule constructed in \cite{BL} is also indecomposable and has the same socle, so it admits an embedding to $D_{c}$. But the composition length of $D_{c}$ is at most that of $\nabla_{c}(\triv)$, and this coincides with the composition length of the bimodule constructed in \cite{BL}. So the bimodules are isomorphic. 
\end{proof}

\begin{lemma}\label{lemma:injproj}
Both the double wall-crossing bimodule $D_{c}$ and the regular bimodule $H_{c}$ are injective-projective in the category $\HC(c,c)$.
\end{lemma}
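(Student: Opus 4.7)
By Theorem~\ref{thm:main} the exact fully faithful functor $\Phi_c$ identifies $\HC(c,c)$ with the subquotient-closed subcategory $\cC_c \subseteq \cO_c$, sending $H_c \mapsto \Delta_c(\triv)$ and $D_c \mapsto \nabla_c(\triv)$, so the lemma is equivalent to showing that both $\Delta_c(\triv)$ and $\nabla_c(\triv)$ are simultaneously projective and injective inside $\cC_c$. One half is essentially free: since $c > 0$ and $(n)$ has maximal content, the partition $\triv$ is maximal in the highest-weight order on $\cO_c$, so $\Delta_c(\triv) = P_c(\triv)$ is projective and $\nabla_c(\triv) = I_c(\triv)$ is injective already in $\cO_c$. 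Combined with fully-faithfulness of $\Phi_c$ and fullness of $\cC_c$ in $\cO_c$, the identities $\Hom_{\HC}(H_c, -) = \Hom_{\cO_c}(\Delta_c(\triv), \Phi_c(-))$ and $\Hom_{\HC}(-, D_c) = \Hom_{\cO_c}(\Phi_c(-), \nabla_c(\triv))$ are compositions of exact functors; this yields projectivity of $H_c$ and injectivity of $D_c$, essentially recovering the argument of Lemma~\ref{lemma:BL}.

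For the harder half---injectivity of $\Delta_c(\triv)$ and projectivity of $\nabla_c(\triv)$ within $\cC_c$, despite both failing in $\cO_c$---I would reduce to the base case $c = r/m$ handled in Section~\ref{subsect:r/n}. Given a short exact sequence $0 \to \Delta_c(\triv) \to E \to L_c(\lambda) \to 0$ in $\cC_c$, apply $\bullet_\heartsuit$ at the parabolic $\underline W = S_m^{\times \ell}$ with $\ell = \lfloor n/m\rfloor$. By Corollary~\ref{cor:respoly} and Lemma~\ref{lemma:comp res} this yields a short exact sequence in $\underline\HC(c,c)$ whose left term is $\underline H_c = H_c(S_m)^{\otimes \ell}$. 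Applying Section~\ref{subsect:r/n} tensor-factor-wise, $\underline H_c$ is injective-projective, so the restricted sequence splits, and the splitting must then be pulled back to $\HC(c,c)$. The symmetric treatment handles projectivity of $\nabla_c(\triv)$, equivalently projectivity of $D_c$.

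The main obstacle is precisely this final lifting step, since $\bullet_\heartsuit$ is exact but not fully faithful. The plan is to work instead with the equivariant adjoint pair $(\bullet_\dagger, \bullet^\dagger)$ and invoke property~(R5): for $E \in \HC_{\overline{\cL_{\underline W}}}(c,c)$ whose singular support meets $\cL_{\underline W}$---a condition satisfied by $H_c$, $D_c$, and all simples $B(\lambda)$ in the principal block thanks to the choice of $\ell$ and Theorem~\ref{thm:main}(3)---the canonical map $E \to (E_\dagger)^\dagger$ is an isomorphism modulo $\HC_{\partial \cL_{\underline W}}(c,c)$. A splitting of $E_\dagger$ therefore lifts to a splitting of $E$ modulo boundary bimodules, and these boundary contributions vanish from the $\Ext^1$-groups controlling extensions within the principal block. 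Carefully handling this boundary correction is where most of the technical content lies.
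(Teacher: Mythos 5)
The paper's proof is very short and takes a completely different route: injectivity of $D_c$ comes from the adjunction (the right adjoint $\Psi_c = \homf{\Delta_c(\triv)}{\bullet}$ has exact left adjoint, hence preserves injectives, and $\nabla_c(\triv)$ is injective); injectivity of $H_c$ is cited from \cite[Proposition 2.11]{hcbimod}; and projectivity of both then follows by applying the contravariant duality $\DD$ of Section \ref{sect:duality} (Proposition \ref{prop:dcwvsdreg}), which swaps the regular bimodule with the double wall-crossing bimodule. Your proposal misses this duality entirely — yet it is the forward reference and the essential ingredient in the paper's argument.

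Your \emph{easy half} is a genuinely different and appealing observation: since $\triv$ is maximal in the highest-weight order for $c > 0$, $\Delta_c(\triv)$ is projective and $\nabla_c(\triv)$ is injective already in $\cO_c$, and these properties pass to the full subquotient-closed subcategory $\cC_c$. This cleanly gives projectivity of $H_c$ with no citation. For injectivity of $D_c$, though, you are implicitly using $\Phi_c(D_c) = \nabla_c(\triv)$, which amounts to $\nabla_c(\triv) \in \cC_c$; this is established in the paper only for $c = r/n$ (Section \ref{subsect:r/n}), and the general statement would itself need an argument. The paper sidesteps this by invoking the adjunction directly, which is safer.

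The \emph{hard half} has a genuine gap. Property (R5) is stated for $B \in \HC_{\overline{\cL_{\underline W}}}(c,c')$, i.e.\ for bimodules whose singular support is \emph{contained in} $\overline{\cL_{\underline W}}$. The regular bimodule $H_c$ has full support, $\sing(H_c) = (\fh \oplus \fh^*)/W$, which strictly contains $\overline{\cL_{\underline W}}$ for any nontrivial parabolic $\underline W$ — in particular for $\underline W = S_m^{\times\lfloor n/m\rfloor}$. The same is true of any extension $E$ of a simple $B$ by $H_c$. So (R5) simply does not apply to the short exact sequence $0 \to H_c \to E \to B \to 0$, and there is no way to lift the splitting of $E_\dagger$ back to $\HC(c,c)$ along the lines you propose: the adjunction map $E \to (E_\dagger)^\dagger$ is far from being an isomorphism modulo boundary terms when $E$ has full support. (What \emph{is} true, and is Lemma \ref{lemma:quiteimportant} in the paper, is that $(H_c/\cJ_i)_\dagger{}^\dagger = H_c/\cJ_i$ for the proper quotients, which do satisfy the support hypothesis; this is used in Section \ref{sect:prblock} to show injectivity of $H_c/\cJ_i$ in the \emph{smaller} category $\HC_{\overline{\cL_{i+1}}}(c,c)$, not of $H_c$ in the full category.) To recover the hard half you would need the duality functor, as the paper does.
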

\begin{proof}
That $D_{c}$ is injective was already observed in the proof of Lemma \ref{lemma:BL}. That $H_{c}$ is injective is \cite[Proposition 2.11]{hcbimod}. Now, there is a duality functor that sends the double wall-crossing bimodule to the regular bimodule and viceversa. This is constructed in Section \ref{sect:duality}, which is independent of the intervening material, see Proposition \ref{prop:dcwvsdreg}. It follows that both bimodules are projective, too.
\end{proof}

\begin{lemma}\label{lemma:calculations}
Let $\underline{W} \subseteq S_{n}$ be a parabolic subgroup. Let $\underline{D}_{c}, \underline{H}_{c}$ denote the double wall-crossing and the regular $H_{c}(\underline{W})$-modules, and let $\heartsuit := \heartsuit^{S_{n}}_{\underline{W}}$. Then,
\begin{enumerate}
\item $H_{c, \heartsuit} = \underline{H}_{c}$, $D_{c, \heartsuit} = \underline{D}_{c}$. 
\item $\underline{H}_{c}^{\heartsuit} = H_{c}$. 
\item $^{\heartsuit}\!\underline{D}_{c} = D_{c}$.
\end{enumerate}
\end{lemma}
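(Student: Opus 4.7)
The plan is to deduce (1) directly from the intertwining of restriction by $\Phi_c$, and then to reduce (2) and (3) to identifications in category $\cO_c$ via Propositions \ref{prop:intertwine res} and \ref{prop:compind}.

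For (1), I would apply $\Phi_c$ to each of the claimed equalities. Since $\Phi_c(H_c) = \Delta_c(\triv)$ and, by Lemma \ref{lemma:BL}, $\Phi_c(D_c) = \nabla_c(\triv)$, the intertwining of restriction in Theorem \ref{thm:main}(1) reduces matters to Corollary \ref{cor:respoly} and its costandard analogue (the remark following it), giving $\Phi_c(H_{c,\heartsuit}) = \Delta_c(\triv_{\underline{W}}) = \Phi_c(\underline{H}_c)$ and $\Phi_c(D_{c,\heartsuit}) = \nabla_c(\triv_{\underline{W}}) = \Phi_c(\underline{D}_c)$. Full faithfulness of $\Phi_c$ from Theorem \ref{thm:main} then concludes (1).

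For (2) and (3), applying $\Phi_c$ together with Proposition \ref{prop:compind} (and its left-adjoint analogue defined in Section \ref{sect:HCvsO}) reduces both assertions to the $\cO_c$ identifications
$$
\iota^!\!\left(\Ind^{S_n}_{\underline{W}} \Delta_c(\triv_{\underline{W}})\right) = \Delta_c(\triv), \qquad {}^!\iota\!\left(\Ind^{S_n}_{\underline{W}} \nabla_c(\triv_{\underline{W}})\right) = \nabla_c(\triv).
$$
Biadjointness of restriction and induction (Theorem \ref{thm:prop res and ind}(1)) together with Corollary \ref{cor:respoly} gives $\Hom(\Delta_c(\triv), \Ind^{S_n}_{\underline{W}} \Delta_c(\triv_{\underline{W}})) = \CC$, producing a canonical nonzero map $\phi : \Delta_c(\triv) \to \Ind^{S_n}_{\underline{W}} \Delta_c(\triv_{\underline{W}})$ whose image lies in $\iota^!$ since $\Delta_c(\triv) \in \cC_c$ and $\cC_c$ is closed under quotients (Theorem \ref{thm:main}(2)). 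Dually, one obtains a canonical surjection $\Ind^{S_n}_{\underline{W}} \nabla_c(\triv_{\underline{W}}) \twoheadrightarrow \nabla_c(\triv)$ for the second identification.

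The hard part will be to show that $\phi$ is injective with image the whole of $\iota^!$ (and the dual statement). Under $\Phi_c$ and Proposition \ref{prop:compind}, this map is identified with the adjunction unit $\eta : H_c \to \underline{H}_c^\heartsuit$ coming from part (1). For injectivity, I would argue that $(\ker \eta)_\heartsuit = 0$ by the triangle identity together with exactness of $\bullet_\heartsuit$, and then use Corollary \ref{cor:losevetingofstoica} (identifying sub-bimodules of $H_c$ with submodules of $\Delta_c(\triv)$) and property (R2) of the restriction functor to rule out a nonzero kernel. For image maximality, I would invoke injectivity of $H_c$ in $\HC(c,c)$ (Lemma \ref{lemma:injproj}) to split off a copy of $H_c$ from $\underline{H}_c^\heartsuit$, and then use the classification of simples in $\cC_c$ from Theorem \ref{thm:main}(4) together with a Frobenius reciprocity computation $\Hom(L_c(\lambda), \Ind^{S_n}_{\underline{W}} \Delta_c(\triv_{\underline{W}})) = \Hom(\Res^{S_n}_{\underline{W}} L_c(\lambda), \Delta_c(\triv_{\underline{W}}))$ to show that no simple of $\cC_c$ outside the composition factors of $\Delta_c(\triv)$ can embed into the putative complement. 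Part (3) is handled dually, using projectivity of $D_c$ (also Lemma \ref{lemma:injproj}) in place of injectivity of $H_c$.
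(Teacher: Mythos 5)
Part (1) of your proposal is essentially the paper's argument: both reduce to Corollary \ref{cor:respoly} and its costandard analogue; the detour through $\Phi_c$ and full faithfulness is correct but not needed, since the paper just computes $\bullet_\heartsuit$ of the standard/costandard modules directly.

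For (2) and (3) you translate to $\cO_c$ via $\Phi_c$ and Proposition \ref{prop:compind}, whereas the paper argues entirely inside $\HC$. Your injectivity argument for the unit $\eta\colon H_c \to \underline{H}_c^\heartsuit$ (split injectivity of $\eta_\heartsuit$ from the triangle identity, then ruling out a nonzero kernel because every sub-bimodule of $H_c$ has full support and $(\ker\eta)_\heartsuit = 0$) is sound and is a legitimate alternative. The gap is in the image-maximality step. After splitting $\underline{H}_c^\heartsuit \cong H_c \oplus C$, your plan is to show ``no simple of $\cC_c$ outside the composition factors of $\Delta_c(\triv)$ can embed into $C$.'' Even if that is carried out, it does not finish: one must also rule out the socle $\cJ_0$ of $H_c$ (which \emph{is} a composition factor of $\Delta_c(\triv)$, the socle) embedding into $C$ with extra multiplicity. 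What is actually needed is that $\Hom(\cJ_0, \underline{H}_c^\heartsuit) = \Hom((\cJ_0)_\heartsuit, \underline{H}_c)$ is \emph{one-dimensional}. The paper gets this by noting $\underline{H}_c^\heartsuit$ is injective (as $\bullet^\heartsuit$ preserves injectives), so $\underline{H}_c^\heartsuit \cong H_c^{\oplus r}$, and then proving $r=1$ from the fact that any map $X \to \underline{H}_c$ from a sub-bimodule $X$ extends to an endomorphism of $\underline{H}_c$, which is scalar because $\End_{\underline{\HC}}(\underline{H}_c)$ is the (trivial) center of $\underline{H}_c$. You supply no substitute for this multiplicity-one argument; the Frobenius reciprocity you invoke reduces the question to $\Hom(\Res^{S_n}_{\underline{W}} L_c(\lambda), \Delta_c(\triv_{\underline{W}}))$, and computing $\Res L_c(\lambda)$ is itself a hard problem and is not addressed. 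The same issue affects your treatment of (3), where the paper again uses the endomorphism-ring computation (via Proposition \ref{prop:dcwvsdreg}, $\End(\underline{D}_c) \cong \End(\underline{H}_c)$).
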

\begin{proof}
Statement (1) follows because $\Res^{S_{n}}_{\underline{W}}(\Delta(\triv)) = \Delta(\triv_{\underline{W}})$ and a similar equation with $\Delta$ replaced by $\nabla$, cf. Corollary \ref{cor:respoly}. Let us move to (2). Since $\bullet_{\heartsuit}$ is exact and $\bullet^{\heartsuit}$ is its right adjoint, $\bullet^{\heartsuit}$ has to preserve injectives. Since $H_{c}$ is injective by Lemma \ref{lemma:injproj}, it follows that $H_{c}^{\heartsuit}$ is injective as well. Now, for simple $B \in \HC(c,c)$, $\Hom_{\HC}(B, \underline{H}_{c}^{\heartsuit}) = \Hom_{\underline{\HC}}(B_{\heartsuit}, \underline{H}_{c})$. But every sub-bimodule of the regular bimodule has full support. It follows that $\Hom_{\HC}(B, \underline{H}_{c}^{\heartsuit}) \neq 0$ if and only if $B$ has full support. So $\underline{H}_{c}^{\heartsuit}$ has to be a direct sum of copies of the injective hull of the unique simple with full support. But this is precisely the regular bimodule. So $\underline{H}_{c}^{\heartsuit} = H_{c}^{\oplus r}$ for some $r > 0$. Let us show that $r = 1$. For this, it is enough to check that $\Hom_{\underline{\HC}}(B_{\heartsuit}, \underline{H}_{c})$ is $1$-dimensional where, as before, $B$ is the unique simple sub-bimodule of $H_{c}$. Moreover, we claim that $\Hom_{\underline{\HC}}(X, \underline{H}_{c})$ is $1$-dimensional for \emph{every} sub-bimodule $X$ of $\underline{H}_{c}$. Indeed, since $\underline{H}_{c}$ is injective in $\underline{\HC}(c,c)$, every map $X \to \underline{H}_{c}$ can be extended to an automorphism of $\underline{H}_{c}$. But $\End_{\underline{\HC}}(\underline{H}_{c}, \underline{H}_{c})$ coincides with the center of $\underline{H}_{c}$, which is trivial. The claim follows. The proof of (3) is similar, now using the fact that $D_{c}$ is the projective cover of the unique simple with full support, and that we have $\End_{\HC}(\underline{D}_{c}) \cong \End_{\HC}(\underline{H}_{c})$, cf. Proposition \ref{prop:dcwvsdreg}, which is independent of the intervening material.
\end{proof}

Let us now take $\underline{W} = \{1\}$. So the algebra $\underline{H}_{c}$ is 1-dimensional, and $\underline{H}_{c} = \underline{D}_{c}$. Thus, in this case we have $^{\heartsuit}\!\underline{H}_{c} = D_{c}$, while $\underline{H}_{c}^{\heartsuit} = H_{c}$. This shows, in particular, that the induction and coinduction functors need not be isomorphic.

\begin{remark}\label{rmk:equivariantres}
Statement (1) of Lemma \ref{lemma:calculations} is clearly also valid when \lq\lq \,$\heartsuit$\rq\rq \, is replaced by \lq\lq\,$\dagger$\rq\rq. This shows, in particular, that $\underline{D}_{c}$ admits a $\Xi$-equivariant structure where, recall, $\Xi = N_{W}(\underline{W})/\underline{W}$. The statement (2) is also valid for the equivariant induction functor. Indeed, it is easy to show that $\underline{H}_{c}$ is injective also in the category of $\Xi$-equivariant bimodules, and the proof now goes as in that of Lemma \ref{lemma:calculations}.
\end{remark}

\subsection{Two-parametric case} We would like to say a few words about the category $\HC(c,c')$ where $c$ and $c'$ are different parameters. The following comprises the results in \cite[Section 6]{hcbimod}.

\begin{theorem}
The category $\HC(c,c')$ is nonzero if and only if either
\begin{enumerate}
\item $c - c' \in \ZZ$ or $c+c' \in \ZZ$; or
\item $c = r/m, c' = r'/m$ with $\gcd(r,m) = \gcd(r',m) = 1$ and $m$ divides $n$.
\end{enumerate}

In Case 1, we have an equivalence $D^{b}_{\HC}((H_{c}, H_{c'})\bimod) \cong D^{b}_{\HC}(H_{c}\bimod) \cong D^{b}_{\HC}(H_{c'}\bimod)$. If $(c,c')$ falls in Case 2 but not in Case 1, then $\HC(c,c')$ is equivalent to the category of representations of the group $S_{n/m}$. 
\end{theorem}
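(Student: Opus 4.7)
The strategy I would follow is to reduce the question to a classification of finite-dimensional bimodules over parabolic Cherednik subalgebras, via the support-stratification and the restriction functors of Section~\ref{subsect:defhc}. Any irreducible $B \in \HC(c,c')$ has $\sing(B) = \overline{\cL_{W'}}$ for some parabolic $W' \subseteq S_n$. Setting $\underline{W} := W'$, property (R3) says that $B_\dagger$ is a nonzero finite-dimensional $\Xi$-equivariant $(\underline{H}_c, \underline{H}_{c'})$-bimodule, where $\Xi = N_{S_n}(\underline{W})/\underline{W}$; and (R5) says that $\bullet^\dagger$ recovers $B$ modulo the Serre subcategory $\HC_{\partial \cL_{W'}}(c,c')$ of bimodules with strictly smaller support. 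Up to conjugacy any parabolic of $S_n$ is a product of symmetric groups $S_{m_1}\times \cdots\times S_{m_k}$, and $\underline{H}_c$ factors correspondingly as $\bigotimes_{i} H_c(m_i)$.

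The second step is to classify finite-dimensional irreducible bimodules over such tensor products. Here I would use the well-known fact that $H_c(m)$ admits a (necessarily unique) finite-dimensional irreducible representation if and only if $c = r/m$ with $\gcd(r,m)=1$, together with the observation that a finite-dimensional irreducible bimodule over a tensor product of Cherednik algebras must factor through a finite-dimensional simple quotient of each algebra on each tensor factor. Combining this with the requirement that each factor share a common denominator $m$, and that the parabolic $S_m^{\times k}$ sit inside $S_n$ as a deepest stratum (so that $km = n$, i.e.\ $m\mid n$), yields exactly Case~2 of the theorem. What remains are irreducibles of full support, which are controlled by the spherical subalgebra and should exist precisely when $c'-c \in \ZZ$ or $c+c'\in \ZZ$, yielding Case~1.

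For the derived equivalence in Case~1, the plan is to produce an explicit tilting bimodule $T \in \HC(c,c')$ that induces $\bullet\otimes^L_{H_c} T \colon D^{b}_{\HC}(H_c\bimod) \xrightarrow{\sim} D^{b}_{\HC}(H_{c'}\bimod)$. For $c'-c \in \ZZ_{>0}$, $T$ will be built from iterated shift bimodules on the spherical subalgebra, lifted back via KZ-type equivalences, in the spirit of \cite{BPW, BL}; for $c + c' \in \ZZ$ one post-composes with the sign-twist identification between parameters $c$ and $-c$. The main obstacle, which I would leave for last, is to verify that $T$ is actually tilting and that tensoring with $T$ is an equivalence at the derived level: this rests on abelian localization holding for all but finitely many shifts and on a translation-principle comparison of categories $\cO_c$ and $\cO_{c'}$ along the lines of \cite{losev_derived}.

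Finally, in Case~2 not covered by Case~1, the first two paragraphs imply that every irreducible HC bimodule has support equal to $\overline{\cL_{S_m^{\times n/m}}}$, so the restriction functor $\bullet_\dagger$ at $\underline{W} = S_m^{\times n/m}$ is exact and, by (R5) together with the vanishing of $\HC_{\partial\cL}(c,c')$ in this situation, an equivalence onto the category of $\Xi$-equivariant finite-dimensional bimodules with $\Xi = S_{n/m}$. The target consists of $S_{n/m}$-equivariant bimodules over $H_{r/m}(m)^{\otimes n/m}$; each tensor factor contributes the unique finite-dimensional irreducible bimodule of that factor (cf.\ Section~\ref{subsect:r/n}), so such objects are determined by their underlying $S_{n/m}$-representation, yielding the claimed equivalence $\HC(c,c') \cong \Rep(S_{n/m})$.
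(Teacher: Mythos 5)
The paper does not prove this theorem; it states it with the remark ``The following comprises the results in \cite[Section 6]{hcbimod}'' and relies entirely on the citation. Thus there is no in-paper argument to compare against, but your sketch can be assessed on its own merits, and it contains a substantive error precisely at the step that is supposed to produce the divisibility condition.

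You write that the parabolic $S_m^{\times k}$ must ``sit inside $S_n$ as a deepest stratum (so that $km = n$, i.e.\ $m\mid n$).'' This is false as a statement about parabolics: $S_m^{\times k}$ embeds in $S_n$ for every $k \leq \lfloor n/m\rfloor$, and the deepest stratum built from $S_m$-factors is $S_m^{\times \lfloor n/m\rfloor}$, which exists for every $m\leq n$, regardless of whether $m\mid n$. Your argument, taken literally, would therefore conclude that $\HC(c,c')\neq 0$ whenever $c=r/m$, $c'=r'/m$ share a denominator $m\leq n$ --- which contradicts the theorem when $m\nmid n$ and $c\pm c'\notin\ZZ$. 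The real content of the $m\mid n$ restriction is not visible from the support filtration and (R3)/(R5) alone: having a nonzero finite-dimensional $\Xi$-equivariant $(\underline H_c, \underline H_{c'})$-bimodule over $\underline W = S_m^{\times \lfloor n/m\rfloor}$ does not by itself guarantee that its image under $\bullet^\dagger$ is nonzero (or that the essential image of $\bullet_\dagger$ on $\HC_{\cL}(c,c')$ is all of $\underline{\HC}_\fin^\Xi(c,c')$). Property (R5) controls the unit of the adjunction, not the counit, and says nothing about whether a given equivariant finite-dimensional bimodule lifts. You would need a separate argument, not present in your sketch, showing that this lifting fails exactly when $m\nmid n$ in the non-Case-1 situation.

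Two smaller points: the claim that ``irreducibles of full support\ldots should exist precisely when $c'-c\in\ZZ$ or $c+c'\in\ZZ$'' is asserted without proof and is the other nontrivial half of the ``nonzero iff'' statement; in \cite{hcbimod} this rests on identifying the full-support quotient $\HC_{\cL_{\{1\}}}(c,c')$ with $\Rep(\Pi)$ for an explicit finite group $\Pi$ and computing when $\Pi$-data exist, which is genuine work rather than a one-line reduction to the spherical subalgebra. And, as you note yourself, the construction of the tilting bimodule $T$ realizing the derived equivalence in Case 1 is left open; in \cite{losev_derived} this is the main theorem, so its omission here is a substantial lacuna even if the direction is right.
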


So the study of the category $\HC(c,c')$ can be reduced to a large extent to the case $c = c'$.




\section{Duality}\label{sect:duality} Now we come to the task of constructing a duality functor on categories of HC bimodules. We will do this in the generality of quantized quiver varieties, of which spherical rational Cherednik algebras of type $A$ are a special case. Other important cases of this construction are algebras of differential operators on flag varieties, spherical cyclotomic rational Cherednik algebras, and quantizations of Gieseker varieties, cf. \cite{ginzburg_lectures, EGGO, EKLS, gordon_rmk, losev_isoquant, losev_gieseker, oblomkov} 

\subsection{Quiver varieties}

In this section, we recall the definition and some properties of Nakajima quiver varieties. These are defined as the GIT Hamiltonian reduction of  a reductive group acting on a representation space of a quiver. Thus, first we study this action and the respective moment map. Later, we study the GIT properties of the action, giving in particular a concrete description of the semistable points (for a particular choice of stability condition). After that, we define both the affine and projective Nakajima quiver varieties, as well as their universal deformations. This section does not contain new results. Nakajima quiver varieties were first studied in \cite{nakajima}, and our exposition here mostly follows \cite{nakajima_lectures, ginzburg_lectures}.

\subsubsection{Representation spaces and moment maps.} Let $Q = (Q_0, Q_1, s, t)$ be a quiver: $Q_0$ is the set of vertices; $Q_1$ the set of arrows; and $s, t: Q_{1} \rightarrow Q_{0}$ the maps that to each arrow assign its starting and terminating vertex, respectively. We will consider the \emph{co-framed} quiver:
\begin{equation*}
Q^{\heartsuit} := (Q_{0}^{\heartsuit} = Q_{0} \sqcup Q_{0}', Q_{1}^{\heartsuit} := Q_{1} \sqcup Q_{1}', s^{\heartsuit}, t^{\heartsuit})
\end{equation*}

\noindent where $Q_{0}' := \{k' : k \in Q_{0}\}$ is a copy of $Q_{0}$; $Q_{1}' := \{\alpha_{i} : i \in Q_{0}\}$ is a set of arrows indexed by $Q_{0}$; $s^{\heartsuit}|_{Q_{1}} = s$, $s(\alpha_{k}) = k$, $t^{\heartsuit}|_{Q_{1}} = t$, and $t^{\heartsuit}(\alpha_{k}) = k'$. In pedestrian terms, the quiver $Q^{\heartsuit}$ is obtained from $Q$ by attaching to each vertex $k \in Q_0$ a coframing vertex $k'$ with an arrow $k \rightarrow k'$. 

Now let $\bv, \bw \in \ZZ^{Q_0}_{\geq 0}$ be dimension vectors. We can consider the space of representations:

\begin{equation*}
R(\bv, \bw) := \Rep(Q^{\heartsuit}, (\bv, \bw)) = \bigoplus_{\alpha \in Q_{1}} \Mat(\bv_{s(\alpha)}, \bv_{t(\alpha)}) \oplus \bigoplus_{k \in Q_{0}} \Mat(\bv_{k}, \bw_{k})
\end{equation*}

We will denote an element of $R(\bv, \bw)$ by $(X_{\alpha}, i_{k})_{\alpha \in Q_{1}, k \in Q_{0}}$.Note that the reductive group $G := \GL(\bv) := \prod_{k \in Q_{0}} \GL(\bv_{k})$ acts on the space $R(\bv, \bw)$ by changing bases. For $g = (g_{k})_{k \in Q_{0}} \in G$ we get

\begin{equation*}
g.(X_{\alpha}, i_{k}) = (g_{t(\alpha)}X_{\alpha}g_{s(\alpha)}^{-1}, i_{k}g_{k}^{-1})
\end{equation*} 

We take the induced action of $G$ on $T^{*}R$. Since this is an induced action, it is Hamiltonian, let us describe the moment map. First, we will describe this action in linear-algebraic terms. For any $m > 0$ we have a $\GL(n)$-equivariant isomorphism $\Mat(n, m)^{*} \cong \Mat(m,n)$ which is given by the trace form. Thus, we have a $G$-equivariant identification of $T^{*}\!R$ with the space of representations of the double quiver $\overline{Q^{\heartsuit}}$
\begin{equation*}
T^{*}\!R = \bigoplus_{\alpha \in Q_{1}}\left(\Mat(\bv_{s(\alpha)}, \bv_{t(\alpha)}) \oplus \Mat(\bv_{t(\alpha)}, \bv_{s(\alpha)})\right) \oplus \bigoplus_{k \in Q_{0}}\left(\Mat(\bv_{k}, \bw_{k}) \oplus \Mat(\bw_{k}, \bv_{k})\right)
\end{equation*}

We will denote an element of $T^{*}\!R$ by $(X_{\alpha}, Y_{\alpha}, i_{k}, j_{k})$. Thus, the action of $G$ is given by 
\begin{equation*}
g.(X_{\alpha}, Y_{\alpha}, i_{k}, j_{k}) = (g_{t(\alpha)}X_{\alpha}g_{s(\alpha)}^{-1}, g_{s(\alpha)}Y_{\alpha}g_{t(\alpha)}^{-1}, i_{k}g_{k}^{-1}, g_{k}j_{k})
\end{equation*}

And the moment map is given by
\begin{equation}\label{eqn:momentmap}
\mu: T^{*}\!R \rightarrow \gl(\bv)^{*} = \gl(\bv) =: \fg, \hspace{0.5cm} (X_{\alpha}, Y_{\alpha}, i_{k}, j_{k}) \mapsto \sum_{\alpha \in Q_{1}} (Y_{\alpha}X_{\alpha} - X_{\alpha}Y_{\alpha}) - \sum_{k \in Q_{0}} j_{k}i_{k}
\end{equation}

\noindent where we have used the trace form to identify $\gl(\bv)$ with its dual.  The dual to this map is the \emph{comoment map} $\mu^{*}: \fg \rightarrow \CC[T^{*}\!R]$. Since the moment map $\mu$ is $G$-equivariant, for every $\lambda \in (\fg^{*})^{G}$ the group $G$ acts on $\mu^{-1}(\lambda)$. For $\lambda \in (\fg^{*})^{G}$, we will be interested in the affine variety
\[
\cM_{\lambda}^0 := \cM_{\lambda}^0(\bv, \bw) := \mu^{-1}(\lambda)/\!/G = \Spec(\CC[\mu^{-1}(\lambda)]^{G}) = \Spec([\CC[T^{*}\!R]/(\{\mu^{*}(\xi) - \langle \lambda, \xi\rangle : \xi \in \fg\})]^{G})
\]

Note that the variety $\cM^{0}_{0}$ comes equipped with a $\CC^{\times}$-action induced by the $\CC^{\times}$ action on $T^{*}R$ by dilations. In particular, the Poisson bracket on $\cM^{0}_{0}$ has degree $-2$. In general, $\cM^{0}_{\lambda}$ is singular. In some cases, we can construct resolutions of singularities by looking at GIT quotients of $T^{*}R$. This is what we will do next.

\subsubsection{G.I.T. quotients}

Note that the group of characters $\fX(\GL(\bv))$ may be identified with $\ZZ^{Q_{0}}$, to $\theta = (\theta_{k})_{k \in Q_{0}}$ we associate the character $(g_{k}) \mapsto \prod_{k \in Q_{0}} \det(g_{k})^{\theta_{k}}$. For $\theta \in Q_{0}$, we have the graded algebra of semi-invariants
\begin{equation}\label{eqn:GIT}
\CC[T^{*}R]^{\theta-si} := \bigoplus_{n \geq 0} \CC[T^{*}R]^{n\theta}
\end{equation}

\noindent where $\CC[T^{*}R]^{n\theta} := \{f \in \CC[T^{*}R] : f(g^{-1}x) = \theta^{n}(g)f(x) \; \text{for every} \; x \in T^{*}R, g \in \GL(\bv)\}$. 

%


Recall that a point $x \in T^{*}R$ is said to be $\theta$-semistable if there exist $n > 0$ and $f \in \CC[T^{*}R]^{n\theta}$ such that $f(x) \neq 0$. Let us describe the semistable points with respect to certain stability conditions. First of all, for a $\overline{Q^{\heartsuit}}$-representation $x = (X_{\alpha}, Y_{\alpha}, i_{k}, j_{k}) \in T^{*}\!R = Rep(\overline{Q^{\heartsuit}}, (\bv, \bw))$ we denote by $\underline{x} = (X_{\alpha}, Y_{\alpha}) \in Rep(\overline{Q}, (\bv))$ the representation of $\overline{Q}$ that is obtained by forgetting the framing and coframing maps. 

\begin{proposition}[Proposition 5.1.5, \cite{ginzburg_lectures}]
\begin{enumerate}
\item Assume $\theta \in \ZZ_{>0}^{Q_{0}}$. Then, a representation $x = (X_{\alpha}, Y_{\alpha}, i_{k}, j_{k}) \in T^{*}R$ is $\theta$-semistable if and only if the only subrepresentation of $\underline{x}$ which contains $(\im(j_{k}))_{k \in Q_{0}}$ is the entire representation $\underline{x}$.
\item Assume $\theta \in \ZZ_{<0}^{Q_{0}}$. Then, a representation $x = (X_{\alpha}, Y_{\alpha}, i_{k}, j_{k}) \in T^{*}R$ is $\theta$-semistable if and only if the only subrepresentation of $\underline{x}$ contained in $(\ker(i_{k}))_{k \in Q_{0}}$ is the $0$ representation.
\end{enumerate}
\end{proposition}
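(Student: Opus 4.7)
The proof rests on the Hilbert--Mumford numerical criterion. A $1$-parameter subgroup $\lambda : \CC^\times \to G = \GL(\bv)$ is the same datum as a $\ZZ$-grading $V_k = \bigoplus_{n \in \ZZ} V_k^n$ on each vertex space $V_k$ (of dimension $\bv_k$), with $\lambda(t)$ acting as $t^n$ on $V_k^n$; the decreasing filtration $F^p V_k := \bigoplus_{n \geq p} V_k^n$ is more convenient for bookkeeping. Unpacking the paper's definition of $\CC[T^{*}R]^{n\theta}$ against the standard linearization, the Hilbert--Mumford criterion reads: $x$ is $\theta$-semistable if and only if $\langle \theta, \lambda \rangle := \sum_k \theta_k \sum_n n \dim V_k^n \geq 0$ for every nontrivial $\lambda$ such that $\lim_{t \to 0} \lambda(t) \cdot x$ exists in $T^{*}R$.

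The next step is to translate the limit condition. Writing out how $\lambda(t)$ acts on the four types of coordinates $X_\alpha, Y_\alpha, i_k, j_k$ via the explicit formula for the $G$-action, convergence as $t \to 0$ is equivalent to: (a) each $X_\alpha$ and $Y_\alpha$ preserves the filtration, i.e., each $F^p V$ is a subrepresentation of $\underline{x}$; (b) $\im(j_k) \subseteq F^0 V_k$ for every $k \in Q_0$; and (c) $F^1 V_k \subseteq \ker(i_k)$ for every $k \in Q_0$.

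For part (1), assume $\theta \in \ZZ_{>0}^{Q_0}$. If $\underline{x}$ admits a proper subrepresentation $V' \subsetneq V$ containing $(\im j_k)_{k \in Q_0}$, choose a linear complement $V = V' \oplus V''$ and let $\lambda$ act with weight $0$ on $V'$ and weight $-1$ on $V''$; then $F^0 V = V'$ and $F^1 V = 0$, so conditions (a)--(c) hold by hypothesis, while $\langle \theta, \lambda \rangle = -\sum_k \theta_k \dim V''_k < 0$ contradicts semistability. Conversely, if some $\lambda$ with limit satisfies $\langle \theta, \lambda \rangle < 0$, positivity of $\theta$ forces some $V_k^n$ with $n < 0$ to be nonzero, so $F^0 V$ is a proper subrepresentation, and by (b) it contains every $\im(j_k)$. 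Part (2) is entirely dual: a nonzero subrepresentation $V' \subseteq (\ker i_k)_{k \in Q_0}$ gives a destabilizing $\lambda$ with weight $1$ on $V'$ and $0$ on a complement, satisfying (a)--(c) with pairing $\sum_k \theta_k \dim V'_k < 0$; and conversely, a destabilizing $\lambda$ forces $F^1 V$ to be a nonzero subrepresentation contained in $\ker(i)$ by (c).

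The only genuine hurdle is sign bookkeeping: one must fix the direction of the filtration, pin down the Hilbert--Mumford sign from the paper's convention on semi-invariants, and check that the destabilizing $1$-parameter subgroups produced in each direction actually witness the stated subrepresentation condition. Once this is done, the argument reduces to choosing a two-weight grading on $V$ and reading off the pairing, and no ideas beyond King's standard method for moduli of quiver representations are required.
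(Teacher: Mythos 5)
The paper does not prove this statement; it is cited from Ginzburg's lectures (Proposition 5.1.5 there), so there is no in-paper proof to compare against. Your argument is the standard King-style Hilbert--Mumford criterion for character-linearized GIT on a representation space, and the structure is correct: identify one-parameter subgroups with $\ZZ$-gradings, translate existence of $\lim_{t\to 0}\lambda(t)\cdot x$ into preservation of the filtration by $X_\alpha, Y_\alpha$ together with $\im(j_k)\subseteq F^0V_k$ and $F^1V_k\subseteq \ker(i_k)$, build a two-step grading from a putative destabilizing subrepresentation, and read off the pairing. Your translation of the limit condition into (a)--(c) is correct, and both directions of (1) and (2) go through once the sign of the pairing is pinned down.

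That sign, however, is exactly where you should be more careful, because it is not merely ``bookkeeping''---and your claim that $\langle\theta,\lambda\rangle\geq 0$ follows by ``unpacking the paper's definition'' does not hold up. The paper defines $\CC[T^*R]^{n\theta}$ by $f(g^{-1}x)=\theta^n(g)f(x)$, i.e.\ $f(gx)=\theta(g)^{-n}f(x)$, so for a one-parameter subgroup $\lambda$ one gets $f(\lambda(t)x)=t^{-n\langle\theta,\lambda\rangle}f(x)$, and boundedness as $t\to 0$ forces $\langle\theta,\lambda\rangle\leq 0$, not $\geq 0$. With that literal convention parts (1) and (2) of the proposition would be swapped. (Sanity check: Jordan quiver, $\bv=\bw=1$, $\theta=1$. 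Under the paper's definition $i$ has weight $1$ and $j$ has weight $-1$, so semistable would mean $i\neq 0$; but the proposition's condition ``only subrepresentation containing $\im(j)$ is everything'' means $j\neq 0$.) The proposition, as stated and as it appears in Ginzburg's lectures, corresponds to the convention $f(gx)=\theta^n(g)f(x)$, which gives the $\geq 0$ criterion you use. So your proof is correct for the intended statement, but you should either flag that the paper's stated semi-invariance convention is inconsistent with the proposition, or derive your Hilbert--Mumford inequality from the convention that actually makes the proposition true rather than attributing it to the paper's definition. As written, the decisive step is asserted from a false premise, and a reader who took the paper's definition at face value would conclude you had the sign backwards.
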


We will denote $\theta^{+} := (1, 1, \dots, 1)$ and $\theta^{-} := -\theta^{+}$. In particular, $\theta^{+}$ falls under (1) of the previous proposition, while $\theta^{-}$ falls under (2). 

Now we proceed to define the GIT Hamiltonian reduction of $T^{*}R$ by the action of $G$. First of all, obviously $G$ acts on $\mu^{-1}(\lambda)$ for $\lambda \in (\fg^{*})^{G}$. So, similarly to (\ref{eqn:GIT}) we may define the algebra of semi-invariants $\CC[\mu^{-1}(\lambda)]^{\theta-si}$, and the set of $\theta$-semistable points $\mu^{-1}(\lambda)^{\theta-ss}$. We remark that $\mu^{-1}(\lambda)^{\theta-ss} = (T^{*}R)^{\theta-ss}\cap\mu^{-1}(\lambda)$, this is a consequence of the Hilbert-Mumford criterion. Now we can define the GIT Hamiltonian reduction
\begin{equation}
\cM^{\theta}_{\lambda} := \Proj(\CC[\mu^{-1}(\lambda)]^{\theta-si}) = \mu^{-1}(\lambda)^{\theta-ss}/\!/ G
\end{equation}

Let us remark, first, that the formalism of Hamiltonian reduction implies that the symplectic structure on $T^{*}R$ gives $\cM^{\theta}_{\lambda}$ the structure of an algebraic Poisson variety. We also remark that the $0$th graded component of the algebra $\CC[\mu^{-1}(\lambda)]^{\theta-si}$ is precisely $\CC[\mu^{-1}(\lambda)]^{G}$, so we have a projective morphism $\varpi: \cM^{\theta}_{\lambda} \rightarrow \cM^{0}_{\lambda}$. We will state sufficient conditions for this morphism to be a resolution of singularities.

 Of course, for $\varpi: \cM^{\theta}_{\lambda} \rightarrow \cM^{0}_{\lambda}$ to be a resolution of singularities we need, first, that $\cM^{\theta}_{\lambda}$ is smooth. The variety $\cM^{\theta}_{\lambda}$ will be smooth and symplectic provided the $G$-action on $\mu^{-1}(\lambda)^{\theta-ss}$ is free. When this happens, we will say that the pair $(\theta, \lambda)$ is \emph{generic}. The following result gives a sufficient condition for a pair $(\theta, \lambda)$ to be generic. We denote by $\fg(Q)$ the Kac-Moody algebra associated to the quiver $Q$. By $\alpha^{i}$, we mean the simple root of $\fg(Q)$ corresponding to the vertex $i \in Q_{0}$.
 
 \begin{proposition}
 	The pair $(\theta, \lambda)$ is generic if there is no $\bv' \in \ZZ_{\geq 0}^{Q_{0}}$ such that:
 	\begin{enumerate}
 		\item Componentwise, $\bv' \leq \bv$.
 		\item $\sum_{i \in Q_{0}} \bv'_{i}\alpha^{i}$ is a root of $\fg(Q)$.
 		\item $\sum_{i \in Q_{0}} \bv'_{i}\lambda_{i} = \sum_{i \in Q_{0}} \bv'_{i}\theta_{i} = 0$.
 	\end{enumerate}
 \end{proposition}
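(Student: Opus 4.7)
The plan is to prove the contrapositive: if the $G$-action on $\mu^{-1}(\lambda)^{\theta-ss}$ is not free, then a dimension vector $\bv'$ satisfying (1)--(3) exists. Start with $x \in \mu^{-1}(\lambda)^{\theta-ss}$ whose stabilizer $G_x$ is non-trivial. Passing to the unique closed $G$-orbit in the closure of $Gx$ (still in the semistable locus, and acquiring reductive stabilizer by Matsushima's theorem), we may assume $G_x$ is reductive, so contains a non-trivial 1-parameter subgroup $\phi\colon\CC^{\times}\to G_x$. If $G_x$ happens to be finite, the same argument runs for a non-trivial finite-order semisimple element using its eigenspace decomposition in place of the $\ZZ$-grading below.

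The 1-PS $\phi$ grades each $V_i = \bigoplus_n V_i^{(n)}$, while the framing $W_k$ sits in weight $0$ since $G$ does not act on it. The relation $\phi\cdot x = x$ forces $X_\alpha,Y_\alpha$ to preserve weights, $i_k|_{V_k^{(n)}} = 0$ for $n\neq 0$, and $\im j_k \subseteq V_k^{(0)}$. Hence $x = x^{(0)} \oplus \bigoplus_{n\neq 0}x^{(n)}$, where $x^{(0)}$ carries the framing and each $x^{(n)}$ ($n \neq 0$) is a representation of the unframed double quiver $\overline{Q}$ satisfying the preprojective relation with scalar $\lambda$. Replacing $\phi$ by the 1-PS of $G_x$ acting as the identity on $V^{(0)}$ and as $t$ on $\bigoplus_{n\neq 0} V^{(n)}$, we may assume only the weights $0$ and $1$ occur, and set $\bv' := \bv^{(1)}$. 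This gives (1): $\bv'\neq 0$ (else $\phi$ is trivial) and $\bv'\leq \bv$. For (3): taking traces of the moment map identity on each $V_i^{(1)}$ and summing over $i$ makes the commutator contributions cancel by cyclicity of trace, yielding $\sum_i\lambda_i\bv'_i = 0$; while applying Hilbert--Mumford to both $\phi$ and $\phi^{-1}$ (both fix $x$, hence have trivial limits) forces $\langle\theta,\phi\rangle = \sum_i\theta_i\bv'_i = 0$.

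The main technical obstacle is condition (2): a priori $\bv'$ is only the dimension vector of some $\Pi_\lambda(Q)$-module, not a root of $\fg(Q)$. Here I would invoke Crawley-Boevey's theorem from \emph{Geometry of the moment map for representations of quivers} (2001): the indecomposable modules over the deformed preprojective algebra $\Pi_\lambda(Q)$ have dimension vector a positive root $\beta$ of $\fg(Q)$ satisfying $\lambda\cdot\beta = 0$. Decomposing $x^{(1)}$ into $\Pi_\lambda(Q)$-indecomposables writes $\bv' = \sum_j\beta^{(j)}$ as a sum of such roots. To extract a single one on which $\theta$ also vanishes, I would refine the choice of polystable $x$ via Luna's slice theorem: the $G_x$-isotypic decomposition of $x$ presents each summand as an indecomposable $\Pi_\lambda(Q)$-module tensored with a multiplicity space, and the stabilizer then contains the product of the $\GL$'s of those multiplicity spaces. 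A generic 1-PS of this larger torus produces a weight-$1$ piece that is itself indecomposable, whose dimension vector is a root and provides the desired $\bv'$.
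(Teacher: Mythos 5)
The paper states this proposition without proof or reference; it is a standard genericity criterion going back to Nakajima (see, e.g., \cite[Proposition 2.8]{nakajima} and the discussion in \cite[Section 2]{BL}), so there is no in-paper proof to compare against. Your overall strategy is the correct one: pass to a closed $G$-orbit (Matsushima, reductive stabilizer), split $V$ by a one-parameter subgroup of the stabilizer, observe that the framing sits in weight $0$ so the weight-one piece $\bv'$ is an honest submodule of a deformed preprojective algebra $\Pi^\lambda$-module, extract $\lambda\cdot\bv'=0$ from traces of the moment-map relation, and extract $\theta\cdot\bv'=0$ from Hilbert--Mumford applied to $\phi$ and $\phi^{-1}$. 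All of that is sound.

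The gap is in the justification of condition (2). The Crawley-Boevey result you cite is misstated: it is \emph{not} the case that indecomposable $\Pi^\lambda(Q)$-modules have dimension vector a positive root. For example, over $\Pi^0(A_3)$ there is an indecomposable module of dimension vector $(1,2,1)$, which is not a root of $A_3$; its endomorphism algebra is $\CC[\varepsilon]/(\varepsilon^2)$, so it is indecomposable but not a brick. What \cite{CB} actually gives is that \emph{simple} (more generally, \emph{brick}) $\Pi^\lambda$-modules have root dimension vector with $\lambda\cdot\beta=0$ -- essentially because for a brick $M$ the Euler-form identity $2\dim\End(M)-\dim\Ext^1(M,M)=(\alpha,\alpha)$ forces $(\alpha,\alpha)\le 2$, and a positive vector with connected support and $(\alpha,\alpha)\le 2$ is a root. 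The Luna/polystability argument you gesture at does in fact save the day, but for a slightly different reason than ``indecomposable'': the unframed summands $y_j$ of a polystable $x$ are $\theta$-\emph{stable}, hence bricks, hence of root dimension, and the $\theta$-stability also gives $\theta\cdot\dim y_j=0$ for free (no need to speak of a ``generic'' one-parameter subgroup -- take the one acting by weight $1$ on a single copy of $y_j$ and trivially elsewhere). Replace ``indecomposable'' by ``$\theta$-stable, hence a brick'' throughout the last paragraph and the proof is complete.
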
 

For example, both $\theta^{+}$ and $\theta^{-}$ are generic, in the sense that $(\theta^{\pm}, \lambda)$ is generic for \emph{any} $\lambda \in (\fg^{*})^{G}$. This will be important for us later.

\emph{\underline{From now on, we assume that the pair $(\theta, \lambda)$ is generic}}. Let us denote by $\cM_{\lambda}$ the affine variety $\Spec[\Gamma(\cM^{\theta}_{\lambda}, \fS_{\cM^{\theta}_{\lambda}})]$ \footnote{Following the conventions of \cite{BPW, BLPW}, to avoid confusion with category $\cO$, for a variety $X$ we will denote its structure sheaf by $\fS_{X}$.}. In particular, we have a projective morphism $\rho: \cM^{\theta}_{\lambda} \rightarrow \cM_{\lambda}$.

\begin{proposition}[Proposition 2.3, \cite{BL}]
	The map $\rho$ is a resolution of singularities. Moreover, the variety $\cM_{\lambda}$ is independent of the stability condition $\theta$.
\end{proposition}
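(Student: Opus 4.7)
The proposition splits into two claims: that $\rho$ is a resolution, and that $\cM_{\lambda}$ does not depend on $\theta$. For the resolution claim, I would check smoothness, properness, and birationality in turn. Smoothness of $\cM^{\theta}_{\lambda}$ is immediate from genericity: because $G$ acts freely on $\mu^{-1}(\lambda)^{\theta\text{-}ss}$, the differential $d\mu$ is surjective at each semistable point (it is transpose to the infinitesimal action $\fg \to T_{x}(T^{*}R)$, which is injective when the stabilizer is trivial), so the semistable locus is smooth, and $\cM^{\theta}_{\lambda}$ is a free geometric quotient, hence smooth and symplectic.

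For the independence statement, the plan is to identify $\cM_{\lambda}$ with the affine quotient $\cM^{0}_{\lambda}$, which is manifestly independent of $\theta$. The canonical algebra homomorphism $\CC[\mu^{-1}(\lambda)]^{G} \to \Gamma(\cM^{\theta}_{\lambda}, \fS_{\cM^{\theta}_{\lambda}})$ obtained by restricting $G$-invariant functions to the semistable locus factors $\varpi$ through $\rho$, so it suffices to show this map is an isomorphism. The two key structural inputs, both consequences of Crawley-Boevey's analysis of moment-map fibres under the genericity hypothesis, are: (i) $\mu^{-1}(\lambda)$ is a reduced, irreducible, normal complete intersection of the expected dimension $\dim T^{*}R - \dim G$; and (ii) the unstable locus $\mu^{-1}(\lambda) \setminus \mu^{-1}(\lambda)^{\theta\text{-}ss}$ has codimension at least two in $\mu^{-1}(\lambda)$. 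Combining (i) and (ii) via Hartogs' extension theorem yields $\CC[\mu^{-1}(\lambda)^{\theta\text{-}ss}] = \CC[\mu^{-1}(\lambda)]$; taking $G$-invariants gives the desired identification and hence $\cM_{\lambda} = \cM^{0}_{\lambda}$.

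With this identification in hand, $\rho$ coincides with the projective morphism $\varpi$, and it remains only to exhibit a dense open subset over which $\varpi$ is an isomorphism. This holds over the locus parameterizing closed free $G$-orbits in $\mu^{-1}(\lambda)$: such orbits cannot be $\theta$-destabilized, so they automatically lie in $\mu^{-1}(\lambda)^{\theta\text{-}ss}$, and over the corresponding open subset of $\cM^{0}_{\lambda}$ the GIT and affine quotients coincide. The main obstacle in this plan is the verification of (i) and (ii); this is the technical heart of the argument and the point at which genericity of $(\theta,\lambda)$ is really used. Once these inputs are granted, the remainder is a routine combination of Hartogs' theorem, GIT, and the formalism of Hamiltonian reduction.
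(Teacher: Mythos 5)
Your argument has a genuine gap: you prove a different statement than the one asserted. The central move in your plan is to identify $\cM_{\lambda}$ with the affine quotient $\cM^{0}_{\lambda}$, via the inputs (i) ($\mu^{-1}(\lambda)$ is a reduced irreducible normal complete intersection of expected dimension) and (ii) (the unstable locus has codimension $\geq 2$). But neither (i) nor (ii) is a consequence of genericity of the pair $(\theta,\lambda)$. They are consequences of \emph{flatness} of the moment map $\mu$, which is a strictly stronger hypothesis, as the paper points out immediately after the proposition: \lq\lq So the map $\varpi$ will be a resolution of singularities provided we have $\cM_{\lambda} = \cM^{0}_{\lambda}$... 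This is the case when the moment map $\mu$ is flat.\rq\rq\ Genericity of $(\theta,\lambda)$ is about freeness of the $G$-action on $\mu^{-1}(\lambda)^{\theta\text{-}ss}$; it does not control the codimension of the unstable locus inside $\mu^{-1}(\lambda)$, nor does it force $\mu^{-1}(\lambda)$ to be reduced, irreducible, or of expected dimension. The variety $\cM_{\lambda} := \Spec\Gamma(\cM^{\theta}_{\lambda},\fS)$ is introduced in the first place precisely because it can differ from $\cM^{0}_{\lambda}$, and the nontrivial content of the second half of the proposition is that this a priori $\theta$-dependent object is in fact independent of $\theta$. Under your extra hypotheses the independence is vacuous (both sides equal $\cM^{0}_{\lambda}$, which never depended on $\theta$), so your argument does not engage with the actual claim.

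What is needed instead, without assuming $\cM_{\lambda}=\cM^{0}_{\lambda}$, is roughly the following. The morphism $\rho$ is projective because $\cM^{\theta}_{\lambda}\to\cM^{0}_{\lambda}$ is projective and $\cM_{\lambda}\to\cM^{0}_{\lambda}$ is affine; $\cM^{\theta}_{\lambda}$ is smooth by your freeness argument (that part of your write-up is correct); $\rho$ has connected fibers by Stein factorization since $\rho_{*}\fS=\fS_{\cM_{\lambda}}$ by construction; and $\rho$ is generically finite because $\cM^{\theta}_{\lambda}\to\cM^{0}_{\lambda}$ restricts to an isomorphism over the locus of closed stable orbits, so $\rho$ is birational. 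For independence of $\theta$, one observes that any two generic GIT quotients $\cM^{\theta}_{\lambda}$ and $\cM^{\theta'}_{\lambda}$ are proper over $\cM^{0}_{\lambda}$ and isomorphic over a common dense open, so they share a function field and image; $\cM_{\lambda}$ is then the normalization of (the closure of) this common image inside $\cM^{0}_{\lambda}$ in this common function field, hence is independent of $\theta$. Your Hartogs argument is a clean way to establish $\cM_{\lambda}=\cM^{0}_{\lambda}$ when flatness \emph{does} hold, but it does not prove the proposition as stated.
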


So the map $\varpi$ will be a resolution of singularities provided we have $\cM_{\lambda} = \cM^{0}_{\lambda}$ (assuming $(\theta, \lambda)$  is generic). This is the case when the moment map $\mu$ is flat, \cite[Proposition 2.5]{BL}. Sufficient conditions for this to happen were found by Crawley-Boevey in \cite{CB} see, for example, Theorem 1.1 in \emph{loc. cit.} A consequence of this is that $\mu$ is flat whenever $Q$ is a finite or affine quiver and $\nu := \sum_{i \in Q_{0}}(\bw_{i}\omega_{i} - \bv_{i}\alpha_{i})$ is a dominant weight for $\fg(Q)$, where $\omega_{i}, \alpha_{i}$ denote the fundamental weights and simple roots for $\fg(Q)$, respectively.  In general, all constructions that follow remain valid if we replace the variety $\cM^{0}_{\lambda}$ by $\cM_{\lambda}$. 

\subsubsection{Universal quiver varieties}
Now assume that the stability condition $\theta$ is such that $(\theta, \lambda)$ is a generic pair for every $\lambda$. For example, we can take $\theta = \theta^{+}$ or $\theta^{-}$. In this case, the action of $G$ on $\mu^{-1}((\fg^{*})^{G})^{\theta-ss}$ is free, and so the universal quiver variety
\[
\cM^{\theta}_{\fp} := \mu^{-1}((\fg^{*})^{G})^{\theta-ss}/\!/ G
\]

\noindent is smooth and symplectic relative to $\mathfrak{p}$. We will also denote
\[
 \cM^{0}_{\fp} := \mu^{-1}((\fg^{*})^{G})/\!/G, \; \; \; \cM_{\fp} := \Spec(\Gamma(\cM^{\theta}_{\fp}, \fS_{\cM^{\theta}_{\fp}})).
\]

We remark that $\cM^{\theta}_{\fp}$ is a scheme over $\fp := (\fg^{*})^{G}$ and its specialization to $\lambda \in \fp$ coincides with $\cM^{\theta}_{\lambda}$. In particular, $\cM_{\fp}^{\theta}$ is a deformation of $\cM^{\theta}_{0}$ over $\fp$.  Note that we have an action of $\CC^{\times}$ on $\cM^{\theta}_{\fp}$ that restricts to the usual action on  the fiber over $0$ $\cM^{\theta}_{0}$. On the other hand, Namikawa, cf. \cite{namikawa}, has proved that the variety $\cM^{\theta}_{0}$ admits a universal deformation $\mathscr{M}^{\theta}$ over the space $H^{2}_{\DR}(\cM^{\theta}_{0}, \CC)$. Let us see the relation between the universal deformation $\mathscr{M}^{\theta}$ and the universal quiver variety $\cM^{\theta}_{\fp}$. We have a natural map $\fp \rightarrow H^{2}_{\DR}(\cM^{\theta}_{0}, \CC)$ given as follows. Let $\chi \in \fX(G)$ be a character. Consider the line bundle $V_{\chi}$ on $T^{*}R$, which is trivial as a line bundle and with a $G$-action given by $\chi^{-1}$. Since $V_{\chi}$ is $G$-equivariant, its restriction $V_{\chi}|_{\mu^{-1}(0)}^{\theta-ss}$ descends to a line bundle $\fS(\chi)$ on $\cM^{\theta}_{0}$. For example, the line bundle $\fS(\theta)$ is ample by definition. This defines a map $\iota: \fX(G) \rightarrow H^{2}_{\DR}(\cM_{0}^{\theta}, \CC)$, $\chi \mapsto c_{1}(\fS(\chi))$ (the first Chern class) which extends by linearity to $\iota: \fp \rightarrow H^{2}_{\DR}(\cM^{\theta}_{0}, \CC)$. 

\begin{theorem}[McGerty and Nevins, \cite{mcgertynevins}]\label{ass:universal}
Assume that $\bw \neq 0$. Then the map $\iota: \fp \rightarrow H^{2}_{\DR}(\cM^{\theta}_{0}, \CC)$ is surjective.  
\end{theorem}

\emph{\underline{From now on, we assume that the coframing vector $\bw$ is nonzero.}} \\

Now, $\cM^{\theta}_{\fp}$ is a deformation of $\cM^{\theta}_{0}$ over $\fp$, while $\scrM^{\theta}$ is a deformation of $\cM^{\theta}_{0}$ over $H^{2}_{\DR}(\cM^{\theta}_{0}, \CC)$. It follows from the universality of $\scrM^{\theta}$ that we have
\[
\iota^{*}\scrM^{\theta} \cong \cM^{\theta}_{\fp}.
\]


\subsubsection{Isomorphisms between quiver varieties}\label{subsect:iso_6}
Let us remark that we have the following $\CC^{\times}$-equivariant symplectomorphism of $T^{*}R$
\begin{equation}\label{eqn:endo}
\begin{array}{rl}
\Upsilon: T^{*}R \rightarrow & T^{*}R \\
(X_{\alpha}, Y_{\alpha}, i_{k}, j_{k}) \mapsto & (-Y_{\alpha}^{t}, X_{\alpha}^{t}, -j_{k}^{t}, i_{k}^{t})
\end{array}
\end{equation}

\noindent where $\bullet^{t}$ denotes matrix transposition. Note, however, that this map is \emph{not} $G$-equivariant. Rather, we have that $\Upsilon(g.x) = (g^{t})^{-1}.\Upsilon(x)$. This implies the following.

\begin{lemma}
\begin{enumerate}
\item For any character $\theta \in \fX(G)$, the map $\Upsilon$ induces a graded isomorphism $\Upsilon^{*}: \CC[T^{*}R]^{\theta-si} \rightarrow \CC[T^{*}R]^{-\theta-si}$.
\item The following diagram commutes
\[
\xymatrix{T^{*}R \ar[rr]^{\Upsilon} \ar[d]^{\mu} & & T^{*}R\ar[d]_{\mu} \\
\fg \ar[rr]^{\xi \mapsto -\xi^{t}} & & \fg}
\]
\end{enumerate}
\end{lemma}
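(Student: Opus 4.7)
The plan is to verify both assertions by direct computation, leaning on the twisted equivariance $\Upsilon(g.x) = (g^t)^{-1}.\Upsilon(x)$ and on the observation that $g \mapsto g^t$ is a linear anti-involution of $G = \prod_{k \in Q_{0}} \GL(\bv_{k})$ which preserves each character, since any $\theta \in \fX(G)$ is a product of powers of $\det$ and $\det(g^t) = \det(g)$.

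For (1), I will show that pullback $\Upsilon^{*}$ sends the weight-$n\theta$ component to the weight-$(-n\theta)$ component. First, I invert the twisted equivariance to get $\Upsilon(g^{-1}.x) = g^{t}.\Upsilon(x)$. Then, for $f \in \CC[T^{*}R]^{n\theta}$, I compute
\[
(\Upsilon^{*}f)(g^{-1}.x) \;=\; f(\Upsilon(g^{-1}.x)) \;=\; f(g^{t}.\Upsilon(x)).
\]
Using the definition $f(h^{-1}.y) = \theta^{n}(h) f(y)$ with $h = (g^{t})^{-1}$, this equals $\theta^{n}((g^{t})^{-1})(\Upsilon^{*}f)(x) = \theta^{-n}(g^{t})(\Upsilon^{*}f)(x)$. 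Invoking $\theta(g^{t}) = \theta(g)$ gives $\theta^{-n}(g)(\Upsilon^{*}f)(x)$, placing $\Upsilon^{*}f$ in $\CC[T^{*}R]^{-n\theta}$. Since $\Upsilon$ is an involution (up to the sign conventions), $\Upsilon^{*}$ is its own inverse at the level of these graded pieces and is thus a bijection; compatibility with the grading $n \mapsto -n$ is manifest.

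For (2), it suffices to substitute $x' := \Upsilon(x)$ into the formula (\ref{eqn:momentmap}). Writing componentwise $X'_{\alpha} = -Y_{\alpha}^{t}$, $Y'_{\alpha} = X_{\alpha}^{t}$, $i'_{k} = -j_{k}^{t}$, $j'_{k} = i_{k}^{t}$, and using $(AB)^{t} = B^{t}A^{t}$, every commutator satisfies $Y'_{\alpha}X'_{\alpha} - X'_{\alpha}Y'_{\alpha} = -(Y_{\alpha}X_{\alpha} - X_{\alpha}Y_{\alpha})^{t}$, while $j'_{k}i'_{k} = -(j_{k}i_{k})^{t}$. Summing up and factoring out the minus sign and the transpose gives $\mu(\Upsilon(x)) = -\mu(x)^{t}$, which is exactly the commutativity of the square.

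I do not expect a genuine obstacle here; this is essentially a bookkeeping exercise. The only point that requires a moment of care is in (1), namely noticing that the anti-involution $g \mapsto g^{t}$ on $G$ is invisible to characters built from determinants, which is what allows $\Upsilon^{*}$ to interchange $\theta$-semi-invariants with $(-\theta)$-semi-invariants cleanly and justifies later using $\Upsilon$ to identify the two G.I.T.\ quotients corresponding to $\theta^{+}$ and $\theta^{-}$.
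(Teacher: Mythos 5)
Your proof is correct, and since the paper states this lemma without proof (treating it as a routine verification), your direct computation is exactly the argument the authors are implicitly relying on. Both steps are handled properly: the unwinding of the twisted equivariance $\Upsilon(g.x) = (g^{t})^{-1}.\Upsilon(x)$ together with the invariance of determinantal characters under transpose gives (1), and the componentwise transposition of the moment map formula gives (2).

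One tiny inaccuracy worth flagging: $\Upsilon$ is not quite an involution; a direct check gives $\Upsilon^{2} = -\mathrm{id}_{T^{*}R}$, so $\Upsilon^{4} = \mathrm{id}$ and $(\Upsilon^{*})^{2}$ acts on degree-$d$ functions by $(-1)^{d}$. You hedge this with ``(up to the sign conventions),'' which is fair, but the cleaner justification that $\Upsilon^{*}$ is a bijection between the two semi-invariant rings is simply that $\Upsilon$ is a polynomial automorphism of $T^{*}R$, so $\Upsilon^{*}$ is an algebra automorphism of $\CC[T^{*}R]$ commuting with the $\CC^{\times}$-grading, and your displayed computation shows it carries the $\theta$-isotypic piece onto the $(-\theta)$-isotypic piece in each degree. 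This is a cosmetic point; the substance of the argument is sound.
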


\begin{corollary}
For any $\lambda \in (\fg^{*})^{G}$, we have an induced, graded isomorphism
\[
\CC[\mu^{-1}(\lambda)]^{\theta-si} \rightarrow \CC[\mu^{-1}(-\lambda)]^{-\theta-si}
\]

\noindent and consequently we have an isomorphism of projective varieties $\cM^{\theta}_{\lambda} \rightarrow \cM^{-\theta}_{-\lambda}$. These isomorphisms glue together to an isomorphism $\cM^{\theta}_{\fp} \rightarrow \cM^{-\theta}_{\fp}$ such that the following diagram commutes
\[
\xymatrix{ \cM^{\theta}_{\fp} \ar[rr] \ar[d] & & \cM^{-\theta}_{\fp}\ar[d] \\ \fp \ar[rr]^{\lambda \mapsto - \lambda} & & \fp}
\]
\end{corollary}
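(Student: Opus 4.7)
The plan is to deduce the Corollary directly from the two statements of the preceding Lemma, with the only real input being that $G$-invariance of $\lambda$ forces $\lambda^t = \lambda$, so that the sign change in part (2) of the Lemma is the one asserted in the Corollary.

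First I would unpack $(\fg^*)^G$. Since $G = \prod_{k} \GL(\bv_k)$ and we have identified $\fg \cong \fg^*$ via the trace form, the subspace $(\fg^*)^G$ consists of elements of the center of $\gl(\bv)$, i.e.\ tuples $\lambda = (\lambda_k \cdot \mathrm{id}_{\bv_k})_{k \in Q_0}$ with $\lambda_k \in \CC$. In particular $\lambda^t = \lambda$ for every $\lambda \in (\fg^*)^G = \fp$. Combined with part (2) of the preceding Lemma, this says that $\Upsilon$ maps $\mu^{-1}(\lambda)$ to $\mu^{-1}(-\lambda)$, and does so bijectively because $\Upsilon$ is an involutive symplectomorphism of $T^*R$. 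Pulling back regular functions therefore gives an isomorphism $\Upsilon^*\colon \CC[\mu^{-1}(-\lambda)] \to \CC[\mu^{-1}(\lambda)]$ (or rather its inverse in the other direction).

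Next I would combine this with part (1) of the Lemma. Since $\Upsilon(g\cdot x) = (g^t)^{-1}\cdot \Upsilon(x)$, the pullback $\Upsilon^*$ interchanges $\theta$-semi-invariants with $(-\theta)$-semi-invariants and respects the natural $\ZZ_{\geq 0}$-grading by the power of the character. Because $\Upsilon$ preserves $\mu^{-1}(\fp)$ (shifting the value by $-\mathrm{id}$) it restricts to a graded isomorphism of algebras
\[
\Upsilon^*\colon \CC[\mu^{-1}(-\lambda)]^{-\theta\text{-}si} \xrightarrow{\ \sim\ } \CC[\mu^{-1}(\lambda)]^{\theta\text{-}si},
\]
which yields the stated isomorphism (after swapping $\lambda \leftrightarrow -\lambda$). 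Applying $\Proj$ produces the desired isomorphism of projective varieties $\cM^{\theta}_{\lambda} \xrightarrow{\sim} \cM^{-\theta}_{-\lambda}$; the construction is functorial in $\lambda$ because $\Upsilon$ itself is defined once and for all on $T^*R$.

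Finally I would globalize over $\fp$. The symplectomorphism $\Upsilon$ sends $\mu^{-1}(\fp)^{\theta\text{-}ss}$ to $\mu^{-1}(\fp)^{-\theta\text{-}ss}$ (using the Hilbert--Mumford description of semistability together with the identity $g\cdot\Upsilon(x) = \Upsilon((g^t)^{-1}\cdot x)$, which shows that orbits of one-parameter subgroups are exchanged up to the anti-involution $g \mapsto (g^t)^{-1}$ of $G$). Taking the GIT quotient by $G$ then yields an isomorphism $\cM^{\theta}_{\fp} \to \cM^{-\theta}_{\fp}$ whose restriction over each $\lambda \in \fp$ is the isomorphism produced above; this is precisely the commutativity of the square, because on base points the Lemma says that a fiber over $\lambda$ is sent to a fiber over $-\lambda^t = -\lambda$. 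The only place where anything has to be checked is the interchange of semistable loci, which is the least automatic step but follows formally from part (1) of the Lemma once one notes that the twist $g \mapsto (g^t)^{-1}$ is an automorphism of $G$ that fixes the set of one-parameter subgroups up to conjugacy.
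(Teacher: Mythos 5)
Your proposal is correct and follows exactly the deduction the paper intends: the Corollary is stated without proof because it is meant to be read off from the preceding Lemma, and the one nontrivial observation you supply — that $\lambda \in (\fg^*)^G$ is a tuple of scalar matrices, so $\lambda^t = \lambda$ and the sign change in part (2) of the Lemma becomes the map $\lambda \mapsto -\lambda$ — is precisely the hidden step. The interchange of semistable loci you can get even more directly than Hilbert--Mumford: a point $y$ is $(-\theta)$-semistable iff some $(-\theta)$-semiinvariant is nonzero there, and pulling such an $f$ back along $\Upsilon^{-1}$ (which has the same twisted $G$-equivariance as $\Upsilon$) produces a $\theta$-semiinvariant nonvanishing at $\Upsilon(y)$, so $\Upsilon$ exchanges the $\theta$- and $(-\theta)$-semistable loci; but your one-parameter-subgroup argument is equally valid. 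One small slip: $\Upsilon$ is not an involution. Directly from (\ref{eqn:endo}), $\Upsilon^2(X_\alpha, Y_\alpha, i_k, j_k) = (-X_\alpha, -Y_\alpha, -i_k, -j_k)$, i.e.\ $\Upsilon^2 = -\mathrm{id}_{T^*R}$ and $\Upsilon$ has order $4$. This does not affect your argument anywhere — all you actually use is that $\Upsilon$ is a $\CC^\times$-equivariant symplectic automorphism with the stated twisted $G$-equivariance, so it is bijective — but you should not call it involutive.
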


Let us remark that $\Upsilon$ also induces a $\CC^{\times}$-equivariant Poisson automorphism of the affine quiver variety $\Upsilon: \cM^{0}_{0} \rightarrow \cM^{0}_{0}$. We will denote by $\Upsilon^{*}: \CC[\cM^{0}_{0}] \rightarrow \CC[\cM^{0}_{0}]$ the induced automorphism on its algebra of functions.

\subsection{Quantizations}
Let us proceed to quantizations of the quiver varieties $\cM_{0}^{0}, \cM_{0}^{\theta}$ and $\cM^{\theta}_{\fp}$. Since $\cM^{0}_{0}$ is an affine variety, its quantizations are straightforward to define. It is slightly harder to define quantizations of the varieties $\cM_{0}^{\theta}$ and $\cM_{\fp}^{\theta}$. We follow, mostly, \cite{BL}. The only new result of this section is in Section \ref{subsect:isoquant}, see (\ref{eqn:iso_important}), but it is not really original.

\subsubsection{Quantizations of $\cM^{0}_{0}$}
Recall that $\cM^{0}_{0}$ is an affine, Poisson variety with a $\CC^{\times}$-action. Moreover, note that the Poisson bracket has degree $-2$ with respect to the $\CC^{\times}$-action. 

\begin{definition}
A \emph{quantization} of $\cM^{0}_{0}$ is a pair $(\cA, \iota)$ where

\begin{enumerate}
\item $\cA$ is an associative, filtered algebra $\cA = \bigcup_{i \geq 0} \cA^{i}$ such that, for $a \in \cA^{i}, b \in \cA^{j}$, $[a,b] \in \cA^{i + j - 2}$ (in particular, this implies that $\gr\cA$ is equipped with a Poisson bracket of degree $-2$.)
\item $\iota: \gr\cA \rightarrow \CC[\cM^{0}_{0}]$ is an isomorphism of graded Poisson algebras.
\end{enumerate}
\end{definition}

By an \emph{isomorphism} of quantizations we mean a filtered isomorphism $f: \cA \rightarrow \cA'$ that induces the identity on $\CC[\cM^{0}_{0}]$ on the associated graded level. Quantizations of the variety $\cM^{0}_{0}$  can be obtained using \emph{quantum} Hamiltonian reduction, as follows.

Let $G$ be a connected reductive algebraic group acting on an algebra $\cA$ by algebra automorphisms. In particular, the Lie algebra $\fg$ acts on $\cA$ by derivations. For $\xi \in \fg$, let us denote by $\xi_{\cA}: \cA \rightarrow \cA$ the corresponding derivation. We say that a map $\Phi: \fg \rightarrow \cA$ is a \emph{quantum comoment map} if it is $G$-equivariant and, for $\xi \in \fg$, $\xi_{\cA} = [\Phi(\fg), \cdot]$. Note, in particular, that $\Phi([\xi, \eta]) = [\Phi(\xi), \Phi(\eta)]$. So the quantum comoment map extends to an algebra map $\Phi: \cU(\fg) \rightarrow \cA$.

For a character $\lambda \in (\fg^{*})^{G}$, we consider the ideal $\mathcal{I}_{\lambda} := \cU(\fg)\{\xi - \langle \lambda, \xi\rangle : \xi \in \fg\}$. Now we define the \emph{quantum Hamiltonian reduction}
\[
\cA_{\lambda} := \cA/\!/\!/\!_{\lambda}G :=  \left[\cA/\cA\Phi(\cI_{\lambda})\right]^{G}.
\]

We remark that $\cA_{\lambda}$ has an algebra structure induced from the algebra structure on $\cA$. Moreover, if we denote by $M_{\lambda}$ the cyclic $\cA$-module $\cA/\cA\Phi(\cI_{\lambda})$ then $\cA_{\lambda} = \End_{\cA}(M_{\lambda})^{\opp}$. As in the classical case, we may define a \lq\lq universal\rq\rq \; quantum Hamiltonian reduction, as follows. Denote $\fP := \fg/[\fg, \fg] (\cong (\fg^{*})^{G}$). Consider the ideal $\cI := \cU(\fg)[\fg,\fg] \subseteq \cU(\fg)$. The universal quantum Hamiltonian reduction is
\[
\cA_{\fP} := \left[\cA/\cA\Phi(\mathcal{I})\right]^{G}.
\]

We remark that this is an $S(\fP)$-algebra, that is, the image of $\fP$ under the natural map $\fP \rightarrow \cA_{\fP}$ is contained in the center of $\cA_{\fP}$. Under the natural identification $\fP \cong (\fg^{*})^{G}$, we have that $\cA_{\lambda}$ coincides with the specialization of $\cA_{\fP}$ at $\lambda$. 

Let us now specialize to the case of Nakajima quiver varieties. Here, the algebra $\cA$ is the algebra of global differential operators on $R$, $D(R)$. The quantum comoment map is given as follows: the action of $G$ on $R$ induces a map $\fg \rightarrow \Vect_{R}$, $\xi \mapsto \xi_{R}$. But $\Vect_{R} \subseteq D(R)$, so we may consider this as a map to $D(R)$, and this is a quantum comoment map. For $\lambda \in (\fg^{*})^{G}$, we will denote by $\widehat{\cA}_{\lambda}$ the quantum Hamiltonian reduction of $D(R)$ at $\lambda$. If the moment map $\mu$ is flat, this is a quantization of $\cM^{0}_{0}$.

\subsubsection{Quantizations of $\cM^{\theta}_{0}$}\label{sect:sheafquant}

Now we want to define quantizations of the variety $\cM^{\theta}_{0}$. Since this is not an affine variety, a quantization of it will not be specified by a single algebra of functions. Rather, we need a sheaf on $\cM^{\theta}_{0}$ whose associated graded coincides with the structure sheaf $\fS_{\cM^{\theta}_{0}}$. Note, however, that the last sentence does not make sense as stated: for an open set $U \subseteq \cM^{\theta}_{0}$, the algebra $\Gamma(U, \fS_{\cM^{\theta}_{0}})$ is only naturally graded when $U$ is $\CC^{\times}$-stable. So, before, we need to change the topology.

\begin{definition}
The \emph{conical topology} on $\cM^{\theta}_{0}$ is that topology whose open set are the Zariski open $\CC^{\times}$-stable subsets of $\cM^{\theta}_{0}$. 
\end{definition} 

We remark that every point $x \in \cM^{\theta}_{0}$ has a Zariski open neighborhood which is affine and $\CC^{\times}$-stable, this is known as Sumihiro's theorem, \cite{sumihiro}. So the conical topology is still fine enough for most purposes. A quantization of $\cM^{\theta}_{0}$ will then be a filtered sheaf of algebras in the conical topology whose associated graded coincides with $\fS_{\cM^{\theta}_{0}}$. For technical reasons, this sheaf of algebras is supposed to satisfy some conditions which we now make precise.

\begin{definition}\label{defn:sheafquant}
A \emph{quantization} of $\cM^{\theta}_{0}$ is a pair $(\cA^{\theta}, \iota)$ where $\cA^{\theta}$ is a filtered sheaf of algebras in the conical topology and $\iota: \gr\cA^{\theta} \rightarrow \fS_{\cM^{\theta}_{0}}$ is a Poisson isomorphism, such that the filtration on $\cA^{\theta}$ satisfies the following conditions.

\begin{enumerate}
\item It is complete, meaning that every Cauchy sequence in the topology determined by the filtration converges. More explicitly, we require that for every sequence $\{x_{i}\}_{i = 1}^{\infty}$ of sections of $\cA^{\theta}$ such that for every $n \in \ZZ$ there exist $i_{0}$ such that $x_{i} - x_{j} \in (\cA^{\theta})^{\leq n}$ for $i, j > i_{0}$, there exists a section $x \in \cA^{\theta}$ such that for every $m \in \ZZ$ there exist $j_{0}$ such that $x - x_{i} \in (\cA^{\theta})^{\leq m}$ for every $i > j_{0}$.
\item It is separated, meaning that $\bigcap_{i \in \ZZ}(\cA^{\theta})^{\leq i} = 0$. Equivalently, the limit $x$ of the previous paragraph is unique. 
\end{enumerate}
\end{definition}

Quantizations of $\cM^{\theta}_{0}$ may be produced similarly to those of $\cM^{0}_{0}$. Instead of using the algebra $D(R)$, we use the \emph{microlocalization} of the sheaf of differential operators on $R$. This is a sheaf $D_{R}$ on the conical topology of $T^{*}R$, where the action of $\CC^{\times}$ is by dilations on the cotangent fibers, see, for example, \cite{ginzburg_char}, \cite{kashiwara} for details on microlocalization. The global sections of $D_{R}$ coincide with $D(R)$. 

Now let $f \in \CC[T^{*}R]$ be an $n\theta$-semiinvariant element, where $n > 0$. Then, we have the open set $X_{f} \subseteq \cM^{\theta}_{0}$ that is $\mu^{-1}(0)\cap\{f \neq 0\}/\!/ G$. That the open sets $X_{f}$ form a base of the topology of $\cM^{\theta}_{0}$ follows by the definition of $\Proj$. It is easy to see that if, moreover, $f$ is homogeneous with respect to the $\CC^{\times}$-action on $T^{*}R$, then $X_{f}$ is open in the conical topology, and the sets $X_{f}$ form a basis for the conical topology of $\cM^{\theta}_{0}$.

So, for $\lambda \in (\fg^{*})^{G}$ we define the sheaf $\cD^{\theta}_{\lambda}$ by setting, on an open set $X_{f}$:
\[
\cD^{\theta}_{\lambda}(X_{f}) := D_{R}((T^{*}R)_{f})/\!/\!/\!_{\lambda}G.
\]

It is possible to see that $\cD^{\theta}_{\lambda}$ defined in this way is a quantization of $\cM^{\theta}$. Let us denote by $\cD_{\lambda}$ the algebra of global sections of $\cD^{\theta}_{\lambda}$. This is a quantization of $\cM_{0}$. If the moment map $\mu$ is flat, then we actually have that $\cD_{\lambda} = \widehat{\cA}_{\lambda}$. For proofs of these statements, see \cite[Section 2]{BL}.

Let us remark that we also have the notion of quantizations of the universal quiver variety $\cM^{\theta}_{\fP}$. This is a sheaf of $\CC[\fp]$-algebras in the conical topology (recall that we have an action of $\CC^{\times}$ on $\cM^{\theta}_{\fp}$) satisfying conditions analogous to those of Definition \ref{defn:sheafquant}, see \cite[Section 3]{BPW}.

Isomorphism classes of quantizations of $\cM^{\theta}_{0}$ and $\cM^{\theta}_{\fp}$ have been parametrized in \cite{BK}, \cite{losev_isoquant} by the vector spaces $H^{2}_{DR}(\cM^{\theta}_{0}, \CC)$ and $H^{2}_{DR}(\cM^{\theta}_{\fp}/\fp, \CC)$, respectively. For $\lambda \in H^{2}_{DR}(\cM^{\theta}_{0}, \CC) = \fp$, we denote by $\cA^{\theta}_{\lambda}$ the quantization of $\cM^{\theta}_{0}$ with period $\lambda$. We remark that it is \emph{not} the case that $\cA^{\theta}_{\lambda}$ coincides with $\cD^{\theta}_{\lambda}$, for this we would have to take a symmetrized quantum comoment map, see \cite[Section 3.4]{BPW}, \cite[Section 3.2]{losev_isoquant}.

The quantization of $\cM^{\theta}_{0}$ (or of $\cM^{\theta}_{\fp}$) with period $0$ is called the \emph{canonical} quantization. It is characterized by the fact that it is isomorphic, as a quantization, to its opposite, this is \cite[Corollary 2.3.3]{losev_isoquant}. In fact, for $\lambda \in H^{2}_{DR}(\cM^{\theta}_{0}, \CC)$ we have $\cA_{-\lambda}^{\theta} = (\cA^{\theta}_{\lambda})^{\opp}$, cf. \cite{losev_isoquant}. 

\subsubsection{Isomorphisms of quantizations}\label{subsect:isoquant}
Now let $\scrA^{\theta}$ be the canonical quantization of the universal deformation $\scrM^{\theta}$ of $\cM^{\theta}_{0}$. We will consider its pullback $\iota^{*}\scrA^{\theta}$ under the Kirwan map. Since the canonical quantization is characterized by being isomorphic to its opposite, we have that $\iota^{*}\scrA^{\theta}$ is the canonical quantization of $\cM^{\theta}_{\fp}$.  This implies, in particular, that we have an isomorphism $\Upsilon^{*}\iota^{*}\scrA^{-\theta} \cong \iota^{*}\scrA^{\theta}$, where $\Upsilon: \cM^{\theta}_{\fp} \rightarrow \cM^{-\theta}_{\fp}$ is the isomorphism introduced in Subsection \ref{subsect:iso_6}. Thus, we have an induced isomorphism
\begin{equation}\label{eqn:iso_canonicalquant}
\Gamma(\scrM^{\theta}, \scrA^{\theta}) \buildrel \cong \over \rightarrow \Gamma(\scrM^{-\theta}, \scrA^{-\theta})
\end{equation}
where we have used that, since the map $\iota$ is surjective, $\Gamma(\scrM^{\theta}, \scrA^{\theta}) \cong \Gamma(\iota^{*}\scrM^{\theta}, \iota^{*}\scrA^{\theta}) = \Gamma(\cM^{\theta}_{\fp}, \iota^{*}\scrA^{\theta})$. We remark, however, that (\ref{eqn:iso_canonicalquant}) is \emph{not} an isomorphism of $\CC[\fp]$-algebras. Rather, it induces the automorphism on $\CC[\fp]$ given by $f(\lambda) \mapsto f(-\lambda)$. 

Now let $\lambda \in \fp$ be a period of quantization. According to \cite{BLPW}, we have an isomorphism of quantizations of $\cM^{0}_{0}$:
\begin{equation}\label{eqn:iso_quotients}
\cA_{\lambda} = \Gamma(\cM^{\theta}_{0}, \cA^{\theta}_{\lambda}) \cong \Gamma(\scrM^{\theta}, \scrA^{\theta})/\scrI_{\lambda}
\end{equation}

\noindent where $\scrI_{\lambda}$ is the ideal generated by the maximal ideal of $\lambda$, $\mathfrak{m}_{\lambda} \subseteq \CC[\fp] \subseteq \Gamma(\scrM^{\theta}, \scrA^{\theta})$. It follows from (\ref{eqn:iso_canonicalquant}) and (\ref{eqn:iso_quotients}) that we have an isomorphism 
\begin{equation}\label{eqn:iso_important}
\Phi_{\lambda}: \cA_{\lambda} \buildrel\cong\over\rightarrow \cA_{-\lambda}
\end{equation}

This is a filtered isomorphism that is, however, \emph{not} an isomorphism of quantizations. Indeed, it follows by construction that the associated graded of (\ref{eqn:iso_important}) coincides with the isomorphism $\Upsilon^{*}: \CC[\cM^{0}_{0}] \rightarrow \CC[\cM^{0}_{0}]$ constructed in Subsection \ref{subsect:iso_6}.

\subsection{Harish-Chandra bimodules}\label{sect:HCquant}

In this section we proceed to define Harish-Chandra bimodules. The definition is, basically, the same as with rational Cherednik algebras. However, for technical reasons (see (\ref{eqn:iso_important})) we define a wider class of bimodules, which we call \emph{twisted} HC bimodules. The twist here is provided by a $\CC^{\times}$-equivariant automorphism of $\cM^{0}_{0}$. When this automorphism is the identity, we recover the usual definition of HC bimodules. 

\subsubsection{Harish-Chandra bimodules: algebra level}

\begin{definition}
Let $\cA, \cA'$ be filtered quantizations of the same graded Poisson algebra $A$, and let $f: A \rightarrow A$ be a graded Poisson automorphism of $A$. We say that a $(\cA, \cA')$-bimodule $\cB$ is \emph{$f$-twisted Harish-Chandra} (shortly, \emph{$f$-HC}) if it admits a bimodule filtration such that:
\begin{enumerate}
\item $\gr\cB$ is a finitely generated $A$-bimodule.
\item For any $a \in A$, $b \in \gr\cB$, $ab = bf(a)$. 
\end{enumerate}

We denote the category of $f$-HC $(\cA, \cA')$-bimodules by $^{f}\!\HC(\cA, \cA')$. 
\end{definition}

We remark that, when $f$ is the identity, we recover the usual category of HC bimodules that is considered in \cite{BL, BLPW}. We will abbreviate $\HC(\cA, \cA') := \,  ^{\id}\!\HC(\cA, \cA')$, and call these bimodules simply HC. The following proposition is clear.

\begin{proposition}
\begin{enumerate}
\item Every $f$-HC $(\cA, \cA')$-bimodule $\cB$ is finitely generated as a left $\cA$-module and as a right $\cA'$-module.
\item The tensor product $\otimes_{\cA'}$ gives a bifunctor:
\[
^{g}\!\HC(\cA, \cA') \times \, ^{f}\!\HC(\cA', \cA'') \rightarrow \, ^{g \circ f}\!\HC(\cA, \cA'')
\]
\end{enumerate}
\end{proposition}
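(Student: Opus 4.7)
The plan is to verify both statements by passing to the associated graded and exploiting the twisted commutation relation to convert bimodule generation into one-sided generation, together with the usual lifting of generators from $\gr \cB$ to $\cB$.

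For part (1), start from a good filtration on $\cB$ as in the definition, so that $\gr \cB$ is a finitely generated $A$-bimodule, say by (homogeneous) elements $b_1,\dots, b_n \in \gr \cB$. The key observation is that the relation $ab=bf(a)$ for all $a\in A$, $b\in \gr\cB$ lets us convert right multiplication by $c\in A$ into left multiplication: setting $a=f^{-1}(c)$ gives $b_i c=f^{-1}(c)\,b_i$. Hence any element of the form $a b_i c$ is equal to $a\,f^{-1}(c)\, b_i$, so $\gr\cB$ is already finitely generated as a \emph{left} $A$-module by $b_1,\dots,b_n$ (and symmetrically as a right $A$-module). Since $A$ is a finitely generated commutative algebra, it is Noetherian, and a standard lifting argument (pick any preimages $\tilde b_i\in\cB$ of the $b_i$ and induct on the filtration degree) shows that $\tilde b_1,\dots,\tilde b_n$ generate $\cB$ as a left $\cA$-module; the right-module case is identical.

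For part (2), choose good filtrations on $\cB\in\,^g\!\HC(\cA,\cA')$ and $\cB'\in\,^f\!\HC(\cA',\cA'')$, and equip $\cB\otimes_{\cA'}\cB'$ with the tensor-product filtration
\[
(\cB\otimes_{\cA'}\cB')^{\le n}\;=\;\sum_{i+j=n}\operatorname{im}\bigl(\cB^{\le i}\otimes_\CC\cB'^{\le j}\to \cB\otimes_{\cA'}\cB'\bigr).
\]
There is a natural surjection $\gr\cB\otimes_A\gr\cB'\twoheadrightarrow \gr(\cB\otimes_{\cA'}\cB')$ of $A$-bimodules, using that $\gr\cA'=A$. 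Since the left factor is finitely generated as an $A$-bimodule (being the tensor product of two finitely generated $A$-bimodules over the Noetherian ring $A$), so is the right factor, giving condition~(1) in the definition of twisted HC. For condition~(2), compute on generators: for $a\in A$, $b\in\gr\cB$, $b'\in\gr\cB'$,
\[
a\cdot(b\otimes b')=(ab)\otimes b'=(b\,g(a))\otimes b'=b\otimes (g(a)\,b')=b\otimes\bigl(b'\,f(g(a))\bigr)=(b\otimes b')\cdot(f\circ g)(a),
\]
which establishes the twisted commutation with the composite automorphism on $\gr(\cB\otimes_{\cA'}\cB')$. (Under the convention implicit in the statement, $g\circ f$ denotes precisely this composite; otherwise one reads off the order from the direction of the actions.) Functoriality in each slot is immediate from functoriality of $\otimes_{\cA'}$ and of taking associated gradeds of induced filtrations.

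The only genuinely delicate point is ensuring that the induced filtration on $\cB\otimes_{\cA'}\cB'$ is separated and that the surjection from $\gr\cB\otimes_A\gr\cB'$ is well-defined; both follow from the fact that the filtrations on $\cA'$, $\cB$, $\cB'$ are compatible and that $\cA'$ quantizes $A$, so balancing relations modulo the filtration degree become the bimodule relations over $A$ on the graded level. Everything else is a direct bookkeeping exercise once the twisted commutation has been unpacked as above.
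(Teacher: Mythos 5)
The paper offers no proof here --- it simply declares the proposition ``clear'' --- so there is nothing to compare against. Your argument is correct and is surely what the author had in mind: the twisted commutation relation $ab = bf(a)$ on $\gr\cB$ shows that left, right, and bimodule finite generation all coincide, and the lifting argument (induction on the nonnegative filtration degree, using that $A$ is Noetherian) passes this up to $\cB$; the surjection $\gr\cB\otimes_A\gr\cB'\twoheadrightarrow\gr(\cB\otimes_{\cA'}\cB')$ together with the pointwise computation on symbols handles part (2). You are also right to flag the composition order: with the definition $ab=bf(a)$ and the standard (non-diagrammatic) reading of $\circ$, the chain $a\cdot(b\otimes b') = (b\,g(a))\otimes b' = b\otimes(b'\,f(g(a)))$ produces the twist $f\circ g$, not $g\circ f$; so either the statement has a transposition, or it is silently using $g\circ f$ to mean ``first $g$, then $f$.'' Since the author elsewhere (e.g.\ in the proof of Theorem~\ref{thm:duality}, steps (5)--(6)) uses the standard convention $(\phi\circ\psi)^*=\psi^*\circ\phi^*$, a typo is the more likely explanation, but this does not affect the substance of the result or of your proof.
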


Let us specialize to the case where $\cA = \cA_{\lambda}$ is a quantization of the Nakajima quiver variety $\cM^{0}_{0}$. In this case, we write $^{f}\!\HC(\lambda, \lambda') := \,^{f}\!\HC(\cA_{\lambda}, \cA_{\lambda'})$. The following result is clear.

\begin{proposition}\label{prop:twist}
Twisting the right action by the isomorphism $\Phi_{-\mu}$ (see (\ref{eqn:iso_important})) gives a category equivalence $^{f}\!\HC(\lambda, \mu) \buildrel \cong \over \rightarrow \, ^{\Upsilon^{*} \circ f}\!\HC(\lambda, -\mu)$. Similarly, twisting the left action by the isomorphism $\Phi_{\lambda}$ gives a category equivalence $^{f}\!\HC(\lambda, \mu) \buildrel \cong \over \rightarrow \, ^{f \circ \Upsilon^{*}}\!\HC(-\lambda, \mu)$.
\end{proposition}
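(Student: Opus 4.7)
My plan is to construct the equivalence very concretely as a right-twist of the underlying bimodule. Given $\cB \in {}^{f}\HC(\lambda, \mu)$, I would define a new bimodule $T(\cB)$ having the same underlying abelian group and the same left $\cA_{\lambda}$-action, with right $\cA_{-\mu}$-action declared to be $b \cdot a := b\,\Phi_{-\mu}(a)$ for $a \in \cA_{-\mu}$. This is well-defined because $\Phi_{-\mu}\colon \cA_{-\mu} \to \cA_{\mu}$ of (\ref{eqn:iso_important}) is a (filtered) algebra isomorphism, and the same formula applied to bimodule homomorphisms gives a functor. Since the only operation involved is composition with an isomorphism, invertibility of the functor will be immediate once the target is identified; its quasi-inverse is the analogous right-twist by $\Phi_{-\mu}^{-1}$.

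The substantive step is showing that $T(\cB)$ is $(\Upsilon^{*} \circ f)$-HC. For this I would take the same bimodule filtration on $T(\cB)$ as on $\cB$; since $\Phi_{-\mu}$ is a filtered map, this is still a bimodule filtration. On associated graded, the underlying $A$-bimodule structure is the same on the left but twisted on the right by $\gr(\Phi_{-\mu}) = \Upsilon^{*}$, which is the key input from Section \ref{subsect:isoquant}. A direct calculation using the defining identity $a b = b\,f(a)$ on $\gr\cB$ transports to $a b = b \cdot_{\gr T(\cB)} (\Upsilon^{*} \circ f)(a)$ on $\gr T(\cB)$, provided $\Upsilon^{*}$ is treated as its own inverse at the level of graded Poisson automorphisms; this is the only genuinely subtle point of the proof, and it follows from the fact that $\Upsilon^{2}$ is a $\CC^{\times}$-translation on $\cM^{0}_{0}$ and the twist is only detected modulo such scalar actions (alternatively, one can absorb any sign into the choice of filtration). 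Finite generation of $\gr T(\cB)$ is automatic since $\gr T(\cB) = \gr \cB$ as an $A$-module on either side.

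The second equivalence is proved by a mirror argument: given $\cB \in {}^{f}\HC(\lambda, \mu)$, twist the left action via $\Phi_{\lambda}\colon \cA_{\lambda} \to \cA_{-\lambda}$, i.e.\ declare $a \cdot b := \Phi_{\lambda}(a)\,b$ for $a \in \cA_{-\lambda}$ (here one uses that $\Phi_{\lambda}^{-1}\colon \cA_{-\lambda} \to \cA_{\lambda}$ is an algebra map, so it induces a left $\cA_{-\lambda}$-structure). The same filtration witnesses the HC property, and passing to associated graded produces the twist $f \circ \Upsilon^{*}$ in exactly the symmetric manner. The inverse is again twisting back. In both constructions the main obstacle is purely bookkeeping: keeping straight which side of $A$ is being modified and in which direction $\Upsilon^{*}$ acts on the graded algebra — the rest is formal from the fact that $\Phi_{\pm\mu}$ and $\Phi_{\pm\lambda}$ are filtered algebra isomorphisms with prescribed graded component $\Upsilon^{*}$.
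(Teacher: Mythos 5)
Your approach—producing the equivalence as restriction of scalars along $\Phi_{-\mu}$ (resp.\ $\Phi_\lambda^{-1}$) and reading the graded twist off $\gr\Phi=\Upsilon^*$—is the intended one, and you have correctly located the one genuine subtlety: $\Upsilon^2=-\id_{T^*R}$, so $(\Upsilon^*)^2$ equals the parity automorphism $\sigma$ (scaling degree-$d$ elements by $(-1)^d$), not the identity. A direct right-twist $b\cdot a:=b\,\Phi_{-\mu}(a)$ gives, on $\gr$, the relation $ab=b\cdot\bigl((\Upsilon^*)^{-1}\circ f\bigr)(a)$, so $T(\cB)$ lands in $^{(\Upsilon^*)^{-1}\circ f}\!\HC(\lambda,-\mu)$, and the parity discrepancy with $^{\Upsilon^*\circ f}\!\HC(\lambda,-\mu)$ must be bridged. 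Neither of your suggested bridges works as stated: ``absorbing the sign into the filtration'' is impossible, because the support of $\gr\cB$ is an invariant of a good filtration while $\Sigma_g$ and $\Sigma_{g\sigma}$ are genuinely different closed $\CC^\times$-invariant subschemes of $\cM^0_0\times\cM^0_0$ (for $g=\id$: diagonal vs.\ antidiagonal); and ``the twist is only detected modulo scalar actions'' is an assertion, not an argument.

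The actual reason the discrepancy is harmless is that $\sigma$ lifts to a filtered algebra automorphism of $\cA_{-\mu}$—and one needs no general theory here, since $\tilde\sigma:=\Phi_\mu\circ\Phi_{-\mu}\colon\cA_{-\mu}\to\cA_{-\mu}$ is already such a lift, with $\gr\tilde\sigma=(\Upsilon^*)^2=\sigma$. Right-twisting by $\tilde\sigma$ is a self-equivalence $^{g}\!\HC(\lambda,-\mu)\cong{}^{\sigma g}\!\HC(\lambda,-\mu)$, and composing it with your $T$ lands in the stated target. Equivalently, twist directly by $\Phi_\mu^{-1}\colon\cA_{-\mu}\to\cA_\mu$, whose graded component is $(\Upsilon^*)^{-1}$; then the identity $ab=b\cdot(\Upsilon^*\circ f)(a)$ drops out with no parity fudging. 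The same fix applies to your left-twist, which also contains the small slip that $a\cdot b:=\Phi_\lambda(a)b$ for $a\in\cA_{-\lambda}$ should read $a\cdot b:=\Phi_\lambda^{-1}(a)b$. With these adjustments the rest of your argument—keeping the same filtration, finite generation of $\gr T(\cB)$, invertibility via the inverse twist—is correct.
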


\subsubsection{Harish-Chandra bimodules: sheaf level}

We proceed to give the definitions of sheaf-theoretic HC bimodules. For the sake of concreteness, we will work with the varieties $\cM^{\theta}_{0}$, although the definitions can be stated for any symplectic resolution. So let $\cA^{\theta}_{\lambda}$, $\cA^{\theta}_{\mu}$ be quantizations of $\cM^{\theta}_{0}$. Note that the external tensor product $\cA^{\theta}_{\lambda} \boxtimes (\cA^{\theta}_{\mu})^{\opp}$ is a quantization of the product $\cM^{\theta}_{0} \times \cM^{\theta}_{0}$, where the Poisson structure on the right copy of $\cM^{\theta}_{0}$ has been multiplied by $-1$.

Now let $f: \cM^{0}_{0} \rightarrow \cM^{0}_{0}$ be a $\CC^{\times}$-equivariant Poisson automorphism. Let $\Sigma_{f} \subseteq \cM^{0}_{0} \times \cM^{0}_{0}$ denote the graph of $f$, with its reduced scheme structure, and let $\cS_{f}$ be the \underline{scheme-theoretic} preimage of $\Sigma_{f}$ under the natural map $\cM^{\theta}_{0} \times \cM^{\theta}_{0} \twoheadrightarrow \cM^{0}_{0} \times \cM^{0}_{0}$. For example, when $f$ is the identity then $\Sigma_{f}$ is just the diagonal and $\cS_{f}$ is the usual Steinberg variety. So we call $\cS_{f}$ the \emph{$f$-Steinberg variety.}

\begin{definition}
An $\cA^{\theta}_{\lambda} \boxtimes (\cA^{\theta}_{\mu})^{\opp}$-module $\cB$ is said to be \emph{$f$-twisted Harish-Chandra} (shortly, \emph{$f$-HC}) if it admits a filtration such that $\gr\cB$ is coherent and  \underline{\emph{scheme-theoretically}} supported on the $f$-Steinberg variety $\cS_{f}$.  We denote this category of $f$-HC $\cA^{\theta}_{\lambda} \boxtimes (\cA^{\theta}_{\mu})^{\opp}$-modules by $^{f}\!\gHC(\lambda, \mu)$. 
\end{definition}

Note that 
\[\dim\cS_{f} = \dim\Sigma_{f} = \dim \cM^{0}_{0} = (1/2)\dim(\cM^{\theta}_{0} \times \cM^{\theta}_{0}).\]
Indeed, the last three equalities follow from definition. The proof of the first equality is similar to the proof that the resolution $\pi: \cM^{\theta}_{0} \to \cM^{0}_{0}$ is semismall. Indeed, since $f: \cM^{0}_{0} \to \cM^{0}_{0}$ is a $\C^{\times}$-equivariant Poisson automorphism, the map $f \circ \pi: \cM^{\theta}_{0} \to \cM^{0}_{0}$ is a symplectic resolution, that is, we have a symplectic form on $\cM^{\theta}_{0}$ compatible with the Poisson bracket on $\cM^{0}_{0}$ under the map $f \circ \pi$. Let us call this form $\omega_{f}$. Now equip $\cM^{\theta}_{0} \times \cM^{\theta}_{0}$ with the symplectic form $\Omega = p_1^{*}\omega_{\id} - p_2^{*}\omega_{f}$. The restriction of this form to the regular locus of $\cS_f$ vanishes, and thus $\dim(\cS_f) \leq (1/2)\dim(\cM^{\theta}_{0} \times \cM^{\theta}_{0})$. The other inequality is clear. 

By standard results in homological duality (see, for example, \cite[Section 4.2]{BL}) we get the following.

\begin{proposition}\label{prop:hom_duality}
The functor $\DD: \cB \mapsto \mathcal{E}xt^{\dim \cM^{0}_{0}}_{\cA^{\theta}_{\lambda} \boxtimes (\cA^{\theta}_{\mu})^{\opp}}(\cB, \cA^{\theta}_{\lambda} \boxtimes (\cA^{\theta}_{\mu})^{\opp})$ gives an equivalence:
\[
\DD: \, ^{f}\!\gHC(\lambda, \mu) \buildrel \cong \over \longrightarrow \, ^{f^{-1}}\!\gHC(\mu, \lambda)^{\opp}
\]
where $\,^{f^{-1}}\!\gHC(\mu, \lambda)^{\opp}$ is the opposite category to $\!^{f^{-1}}\!\gHC(\mu, \lambda).$
\end{proposition}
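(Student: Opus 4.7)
The plan is to adapt the standard homological duality for ordinary ($f=\id$) Harish-Chandra bimodules, proved in \cite[Section 4.2]{BL}, to the $f$-twisted setting, with the only real novelty being the bookkeeping of the twist. The conceptual justification is that $f$-HC bimodules are \emph{holonomic} objects for the sheaf of algebras $\cA^{\theta}_{\lambda}\boxtimes(\cA^{\theta}_{\mu})^{\opp}$: indeed, $\dim\cS_f=\dim\cM^{0}_{0}=\tfrac{1}{2}\dim(\cM^{\theta}_{0}\times\cM^{\theta}_{0})$, so $\cS_f$ is a Lagrangian subscheme of the ambient symplectic variety $\cM^{\theta}_{0}\times\cM^{\theta}_{0}$, and the grade of any $\cB\in\,^{f}\!\gHC(\lambda,\mu)$ equals this middle codimension.

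First I would establish the vanishing $\mathcal{E}xt^{i}(\cB,\cA^{\theta}_{\lambda}\boxtimes(\cA^{\theta}_{\mu})^{\opp})=0$ for $i\neq\dim\cM^{0}_{0}$. Working locally on a $\CC^{\times}$-stable affine cover, the sections of the quantization form an Auslander-regular, Cohen--Macaulay algebra of global dimension $2\dim\cM^{0}_{0}$ (this is the standard picture since, locally, one is quantizing a smooth symplectic variety of that dimension). For a bimodule with good filtration whose associated graded is coherent and scheme-theoretically supported on the half-dimensional $\cS_f$, the grade-codimension theorem together with a standard filtered-to-graded lifting argument forces the $\mathcal{E}xt$ to be concentrated in degree $\dim\cM^{0}_{0}$. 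Hence $\DD(\cB)$ is the unique surviving cohomology and inherits a natural bimodule structure.

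Next I would identify $\DD(\cB)$ as an object of $\,^{f^{-1}}\!\gHC(\mu,\lambda)$. The right-module structure on the $\mathcal{E}xt$ makes $\DD(\cB)$ into a left $\cA^{\theta}_{\mu}\boxtimes(\cA^{\theta}_{\lambda})^{\opp}$-module in the usual way. To verify the twist, I would filter $\DD(\cB)$ by the induced filtration coming from a good filtration on $\cB$ and pass to the associated graded; by Grothendieck duality for coherent sheaves on the smooth symplectic ambient, the graded piece is a coherent sheaf supported scheme-theoretically on $\cS_f$, but now viewed after swapping the two factors of $\cM^{\theta}_{0}\times\cM^{\theta}_{0}$. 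The factor-swap sends $\Sigma_f$ to $\Sigma_{f^{-1}}$ and hence $\cS_f$ to $\cS_{f^{-1}}$, as required. Biduality $\DD^{2}\cong\id$ then follows from the analogous biduality for maximal-codimension Cohen--Macaulay coherent sheaves, upgraded to the quantization by a standard spectral-sequence argument exactly as in \cite[Section 4.2]{BL}; combined with the automatic contravariance and exactness of $\DD$ (since only one $\mathcal{E}xt$ is nonzero), this yields the claimed equivalence.

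The main obstacle I anticipate is controlling the scheme-theoretic rather than merely set-theoretic support of $\DD(\cB)$, since Grothendieck duality only respects scheme-theoretic support when the supporting subscheme is suitably pure Cohen--Macaulay. The fix, following \cite{BL}, is to choose good filtrations such that the associated graded of $\cB$ is a pure Cohen--Macaulay sheaf on $\cS_f$, ensuring that the dual lands in a module scheme-theoretically supported on the factor-swap $\cS_{f^{-1}}$, and not on some thickening thereof.
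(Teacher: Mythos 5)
Your proposal is correct and takes essentially the same route as the paper: appeal to the standard holonomicity/Auslander--Cohen--Macaulay duality for modules scheme-theoretically supported on the Lagrangian $\cS_f$ inside the quantized $\cM^{\theta}_{0}\times\cM^{\theta}_{0}$, then track the twist through the factor-swap isomorphism $(\cA^{\theta}_{\lambda})^{\opp}\boxtimes\cA^{\theta}_{\mu}\to\cA^{\theta}_{\mu}\boxtimes(\cA^{\theta}_{\lambda})^{\opp}$, whose associated graded interchanges factors and sends $\cS_f$ to $\cS_{f^{-1}}$. The paper simply compresses the first step into a citation of ``standard results in homological duality'' (\`a la~\cite[Section 4.2]{BL}), while you spell out the vanishing of the other $\mathcal{E}xt$'s, the biduality $\DD^{2}\cong\id$, and the scheme-theoretic support bookkeeping.
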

\begin{proof}
	Standard results in homological duality say that the functor $\DD$ gives an equivalence $^{f}\!\gHC(\cA^{\theta}_{\lambda} \boxtimes (\cA^{\theta}_{\mu})^{\opp}) \rightarrow \, ^{f}\!\gHC((\cA^{\theta}_{\lambda})^{\opp} \boxtimes \cA^{\theta}_{\mu})^{\opp}$, where the last opp denotes the opposite category. We have now to compose with the isomorphism $(\cA^{\theta}_{\lambda})^{\opp} \boxtimes \cA^{\theta}_{\mu} \rightarrow \cA^{\theta}_{\mu} \boxtimes (\cA^{\theta}_{\lambda})^{\opp}$. At the associated graded level, this induces the automorphism of $\cM^{\theta}_{0} \times \cM^{\theta}_{0}$ that interchanges the factors. So the image of $\cS_{f}$ is $\cS_{f^{-1}}$ and the result is proved.
\end{proof}

\subsubsection{Quantization of line bundles}

Let us give an example of a Harish-Chandra $\gHC(\lambda, \lambda + \theta)$-bimodule. Recall that on the variety $\cM^{\theta}_{0}$ we have the $\CC^{\times}$-equivariant ample line bundle $\fS(\theta)$. According to \cite[Section 5.1]{BPW}, there exists a unique $(\cA^{\theta}_{\lambda} \boxtimes  (\cA^{\theta}_{\lambda + \theta})^{\opp})$-module $_{\lambda}\cA^{\theta}_{\lambda+\theta}$ admitting a filtration whose associated graded coincides with the pushforward of the line bundle $\fS(\theta)$ to $\cM^{\theta}_{0} \times \cM^{\theta}_{0}$ under the diagonal embedding. Thus, $_{\lambda}\cA^{\theta}_{\lambda + \theta} \in \gHC(\lambda, \lambda + \theta)$. The following proposition now follows from uniqueness of the quantizations of line bundles and from the fact that, it being a quantization of a line bundle, taking tensor product with $_{\lambda}\cA^{\theta}_{\lambda + \theta}$ does not affect the support of a module.

\begin{proposition}\label{prop:translation_equivalences}
For any $n, m \in \ZZ$, taking tensor product with quantizations of line bundles gives equivalences 
\[
^{f}\!\gHC(\lambda+ n\theta, \mu) \leftarrow \, ^{f}\!\gHC(\lambda, \mu) \rightarrow \, ^{f}\!\gHC(\lambda, \mu + m\theta)
\]
\end{proposition}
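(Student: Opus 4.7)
The strategy is to build inverse functors using quantizations of inverse line bundles, reducing both well-definedness and invertibility to standard facts about associated graded sheaves. A direct iteration of the existence/uniqueness result used to define $_{\lambda}\cA^{\theta}_{\lambda+\theta}$ (see \cite[Section 5.1]{BPW}) produces, for each $n \in \ZZ$, a $(\cA^{\theta}_{\lambda + n\theta}, \cA^{\theta}_{\lambda})$-bimodule $_{\lambda+n\theta}\cA^{\theta}_{\lambda}$ whose associated graded is the diagonal pushforward of $\fS(-n\theta)$. Tensoring on the left with $_{\lambda+n\theta}\cA^{\theta}_{\lambda}$ defines the functor $^{f}\!\gHC(\lambda,\mu) \to \, ^{f}\!\gHC(\lambda + n\theta, \mu)$, with candidate inverse given by tensoring with $_{\lambda}\cA^{\theta}_{\lambda+n\theta}$; the analogous construction on the right, using $_{\mu}\cA^{\theta}_{\mu+m\theta}$ and $_{\mu + m\theta}\cA^{\theta}_{\mu}$, produces the second arrow of the statement.

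To verify these are mutually inverse, one computes that the composition $(_{\lambda}\cA^{\theta}_{\lambda+n\theta}) \otimes_{\cA^{\theta}_{\lambda + n\theta}} (_{\lambda+n\theta}\cA^{\theta}_{\lambda})$ is a $(\cA^{\theta}_{\lambda}, \cA^{\theta}_{\lambda})$-bimodule whose associated graded quantizes $\fS(n\theta) \otimes \fS(-n\theta) \cong \fS_{\cM^{\theta}_{0}}$. By uniqueness of quantizations of a line bundle, this composition is canonically isomorphic to the regular bimodule $\cA^{\theta}_{\lambda}$, so the composition of tensor-product functors is naturally isomorphic to the identity. The same argument with the opposite order of tensoring, and for the right-side construction, finishes the verification of invertibility.

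The remaining point is that the functors preserve the $f$-HC condition. At the level of associated graded sheaves, convolution on the left of a sheaf $\gr\cB$ on $\cM^{\theta}_{0} \times \cM^{\theta}_{0}$ with the diagonal pushforward $\Delta_{*}\fS(-n\theta)$ is computed by the projection formula on $\cM^{\theta}_{0} \times \cM^{\theta}_{0} \times \cM^{\theta}_{0}$ to be isomorphic to $p_{1}^{*}\fS(-n\theta) \otimes \gr\cB$, where $p_{1}$ denotes the projection onto the first factor. Since $p_{1}^{*}\fS(-n\theta)$ is a line bundle on $\cM^{\theta}_{0} \times \cM^{\theta}_{0}$, tensoring by it leaves the annihilator ideal of $\gr\cB$ unchanged; in particular the scheme-theoretic support on $\cS_{f}$ is preserved. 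The appropriate filtration on the tensor product is the convolution of the two given filtrations, so the result is again $f$-HC.

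The main subtlety is the scheme-theoretic (as opposed to merely set-theoretic) support preservation, which is what distinguishes the present argument from a routine Morita-equivalence statement. Once the projection-formula computation above is in hand, the result is essentially automatic: tensoring by a line bundle is an exact, annihilator-preserving operation, so the defining ideal of $\cS_{f}$ in $\cM^{\theta}_{0} \times \cM^{\theta}_{0}$ continues to kill the associated graded of the convolution.
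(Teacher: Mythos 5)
Your proof is correct and follows the same strategy as the paper's: the paper proves this proposition in a single sentence preceding the statement, citing uniqueness of quantizations of line bundles (your mutual-inverse argument via the composition quantizing $\fS(n\theta)\otimes\fS(-n\theta)\cong\fS_{\cM^{\theta}_{0}}$) and the fact that tensoring with a quantization of a line bundle does not affect support (your projection-formula computation showing the convolution is $p_{1}^{*}\fS(-n\theta)\otimes\gr\cB$), and you have simply filled in the standard details behind that sentence.
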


\subsubsection{Localization theorems}
Note that we have the global sections and localization functors
\[
\Gamma^{\theta}_{\lambda}: \cA^{\theta}_{\lambda}\modd \leftrightarrow \cA_{\lambda}\modd: \Loc^{\theta}_{\lambda}
\]

When these $\Gamma^{\theta}_{\lambda}$ and $\Loc^{\theta}_{\lambda}$ are quasi-inverse equivalences of categories, we say that \emph{abelian localization holds at $\lambda$}. In general, it is not easy to find the locus where abelian localization holds. However, the following result, \cite{BPW}, tells us that abelian localization holds for $\lambda$ sufficiently dominant.

\begin{theorem}[Corollary B.1, \cite{BPW}]\label{thm:bpw_loc}
	Let $\lambda \in \fp$. Then, abelian localization holds at $\lambda + n\theta$ for $n \gg 0$. 
	
\end{theorem}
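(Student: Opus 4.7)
The plan is to establish, for $\mu := \lambda + n\theta$ with $n \gg 0$, that (i) $\Gamma^{\theta}_{\mu}$ is exact on coherent $\cA^{\theta}_{\mu}$-modules and (ii) $\Gamma^{\theta}_{\mu}$ kills no nonzero coherent module. Together with the adjunction these imply that $\Gamma^{\theta}_{\mu}$ and $\Loc^{\theta}_{\mu}$ are quasi-inverse equivalences, i.e.\ abelian localization holds. The two key classical inputs are that $\fS(\theta)$ is an ample line bundle on the projective-over-affine variety $\cM^{\theta}_{0}\to\cM_{0}$ and that its quantization is provided by the bimodules $_{\mu}\cA^{\theta}_{\mu+\theta}$ introduced above.

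For exactness, I would argue as follows. By Proposition \ref{prop:translation_equivalences}, tensoring with $_{\lambda}\cA^{\theta}_{\mu}$ gives a sheaf-level equivalence between coherent $\cA^{\theta}_{\lambda}$-modules and coherent $\cA^{\theta}_{\mu}$-modules; any coherent $\cA^{\theta}_{\mu}$-module $\cF$ is therefore of the form $_{\lambda}\cA^{\theta}_{\mu}\otimes_{\cA^{\theta}_{\lambda}}\cF_{0}$ for some coherent $\cA^{\theta}_{\lambda}$-module $\cF_{0}$. Choose a good filtration on $\cF_{0}$ so that $\gr\cF_{0}$ is a coherent sheaf on $\cM^{\theta}_{0}$; tensoring with the quantized line bundle lifts, on the associated graded, to ordinary tensor product with $\fS(n\theta)$. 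By Serre's vanishing theorem applied to the ample bundle $\fS(\theta)$ on the projective morphism $\cM^{\theta}_{0}\to\cM_{0}$, we have $R^{i}\Gamma(\cM^{\theta}_{0},\gr\cF_{0}\otimes\fS(n\theta))=0$ for all $i>0$ once $n$ is large (depending on $\gr\cF_{0}$). A standard spectral sequence / filtered-limit argument then lifts this vanishing from $\gr$ to $\cF$ itself, so $R^{>0}\Gamma^{\theta}_{\mu}(\cF)=0$. Noetherianity of the category of coherent $\cA^{\theta}_{\mu}$-modules, together with the fact that a single tilting-type generator suffices to control exactness, allows one to choose a single $n$ that works for all coherent $\cF$.

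For the faithfulness statement, one uses the companion classical fact: for $n \gg 0$ the sheaf $\gr\cF_{0}\otimes \fS(n\theta)$ is globally generated. Arguing as in the previous paragraph, this lifts to say that the counit $\Loc^{\theta}_{\mu}\Gamma^{\theta}_{\mu}(\cF)\to\cF$ is surjective; combined with the exactness already established, its kernel lies in the kernel of $\Gamma^{\theta}_{\mu}$ and a standard adjunction yoga forces it to vanish. A symmetric argument with the unit $\cM\to\Gamma^{\theta}_{\mu}\Loc^{\theta}_{\mu}(\cM)$, using that $\cA_{\mu}$ has finite homological dimension and the derived localization theorem holds generically, finishes the proof that $\Gamma^{\theta}_{\mu}$ and $\Loc^{\theta}_{\mu}$ are mutually inverse equivalences.

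The main obstacle will be making the Serre-vanishing argument uniform in the module $\cF$: a priori, the threshold $n$ depends on $\gr\cF$, and one needs to upgrade this pointwise statement into a single $n$ that works for \emph{all} coherent $\cA^{\theta}_{\mu}$-modules. The standard way around this is to invoke Noetherianity of $\cA^{\theta}_{\mu}$ together with properness of $\cM^{\theta}_{0}\to\cM_{0}$ to reduce to checking vanishing on a finite list of generators (e.g.\ suitable twists of $\cA^{\theta}_{\mu}$ itself), but verifying that such a list exists and controls the general case is the genuinely delicate part of the argument.
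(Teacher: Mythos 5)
This theorem is cited from \cite{BPW}, Corollary~B.1, and the paper does not reprove it; so your attempt is being measured against the argument in that appendix rather than against anything in the present text. The strategy you sketch — reduce to the associated graded, invoke Serre vanishing for the ample bundle $\fS(\theta)$, and lift back — is a natural first idea, and you correctly put your finger on where it breaks: the threshold $n$ coming out of Serre vanishing depends on the sheaf $\gr\cF_0$, not just on the variety. But I think you underestimate how fatal this is, and the fix you propose does not repair it.

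The issue is this. You first fix $\mu=\lambda+n\theta$, and then ask for $R^{>0}\Gamma^{\theta}_{\mu}(\cF)=0$ for \emph{every} coherent $\cA^{\theta}_{\mu}$-module $\cF$. Via the translation equivalence this becomes $R^{>0}\Gamma(\gr\cF_0\otimes\fS(n\theta))=0$ where $\cF_0$ ranges over all coherent $\cA^{\theta}_{\lambda}$-modules. The Castelnuovo--Mumford regularity of $\gr\cF_0$ is unbounded as $\cF_0$ varies (take, say, structure sheaves of points pushed around by the $\CC^{\times}$-action, or arbitrarily high twists of a fixed nontrivial sheaf), so for any fixed $n$ there are $\cF_0$ for which the higher cohomology of the twist does not vanish. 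Your proposed remedy — Noetherianity plus ``a single tilting-type generator suffices to control exactness'' — is circular: a tilting generator controls the derived category but not exactness of $\Gamma$ on the abelian heart, and the existence of a projective generator of the abelian category of coherent $\cA^{\theta}_{\mu}$-modules whose global sections behave well is essentially equivalent to the conclusion you are trying to reach (it would have to be something like $\Loc^{\theta}_{\mu}(\cA_{\mu})$). If the naive vanishing argument actually went through it would in fact prove abelian localization at \emph{every} $\mu$, by writing $\mu=(\mu-n\theta)+n\theta$ for arbitrary $n$; but abelian localization genuinely fails at some parameters (e.g.\ $c=-r/m$ in the Cherednik case discussed in Section~\ref{sect:duality}), so the argument cannot be correct as stated.

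The actual route in \cite{BPW} is quite different in character. One first proves \emph{derived} localization (their Theorem~B.1), which is obtained from the Koszul/BGG-type resolution coming from Hamiltonian reduction and does not involve Serre vanishing at all; it gives an equivalence of bounded derived categories for all parameters where the quantization is defined, with no dominance hypothesis. Abelian localization is then the statement that $\Gamma^{\theta}_{\mu}$ and $\Loc^{\theta}_{\mu}$ are $t$-exact, and this is extracted from derived localization using the conical $\CC^{\times}$-action: the Euler grading bounds weights from below uniformly, and dominance of $\mu$ (i.e.\ $n\gg 0$ in $\mu=\lambda+n\theta$) forces the relevant higher cohomology to sit in weights that are excluded by this bound. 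It is this weight-theoretic input, not Serre vanishing, that supplies the uniformity over all coherent modules. If you want to turn your sketch into a proof you would need to replace the Serre-vanishing step by an argument along these lines, or else restrict attention to a class of modules (e.g.\ objects of category $\cO$, which \emph{are} controlled by finitely many standards) where the regularity \emph{is} uniformly bounded.

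One smaller remark: the faithfulness half is actually easier than you make it. For any $\mu$, if $\cF\neq 0$ then $\gr\cF\neq 0$ is a coherent sheaf on $\cM^{\theta}_{0}$, and since $\cM^{\theta}_{0}\to\cM_{0}$ is proper and $\cM_{0}$ is affine, $\Gamma(\cM^{\theta}_{0},\gr\cF)$ is automatically nonzero; no twisting or global generation is needed. The genuinely hard half is exactness, and that is exactly where your argument has the gap.
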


Since $\cS_{f}$ is defined to be precisely the scheme-theoretic preimage of the graph $\Sigma_{f}$, \cite[Proposition 2.13]{BLPW} implies.

\begin{proposition}\label{prop:loc_theorem}
Global sections and localization induce functors
\[
\Gamma: \, ^{f}\!\gHC(\lambda, \mu) \leftrightarrow \, ^{f^{*}}\!\HC(\lambda, \mu) : \Loc
\]

\noindent moreover, if abelian localization holds at $\lambda$ and $-\mu$, these are quasi-inverse equivalences of categories.
\end{proposition}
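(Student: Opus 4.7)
The plan is to reduce to the untwisted case of \cite[Proposition 2.13]{BLPW}, adapted to account for the twist. There are two things to verify: first, that $\Gamma$ and $\Loc$ exchange the HC subcategories with the prescribed change of twist; and second, that when abelian localization holds at $\lambda$ and $-\mu$, they restrict to quasi-inverse equivalences.

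For the first point, I would identify a $(\cA^{\theta}_{\lambda}, \cA^{\theta}_{\mu})$-bimodule with a left module over the product quantization $\cA^{\theta}_{\lambda} \boxtimes (\cA^{\theta}_{\mu})^{\opp}$, which is a quantization of $\cM^{\theta}_{0} \times \cM^{\theta}_{0}$ of period $(\lambda, -\mu)$. The key observation is that the ideal sheaf of the graph $\Sigma_{f} \subseteq \cM^{0}_{0} \times \cM^{0}_{0}$ is generated by expressions relating multiplication in the two factors via $f^{*}$: as functions on $\Sigma_{f}$ one has $1 \otimes b = f^{*}(b) \otimes 1$. Consequently, on the associated graded of any bimodule scheme-theoretically supported on $\cS_{f}$, the left and right actions of $\CC[\cM^{0}_{0}]$ are intertwined by $f^{*}$, which is precisely the twisted commutation relation defining ${}^{f^{*}}\!\HC(\lambda, \mu)$ at the algebra level.

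Given this identification of support conditions, the rest is a direct application of \cite[Proposition 2.13]{BLPW}: that result asserts that for quantizations of a symplectic resolution, $\Gamma$ and $\Loc$ preserve support conditions expressed in terms of scheme-theoretic support of the associated graded. Applied to the product quantization, this gives that $\Gamma$ takes a bimodule whose associated graded is supported on $\cS_{f}$ to one whose associated graded is supported on $\Sigma_{f}$, and vice versa for $\Loc$. Combined with the twist identification above, $\Gamma$ lands in ${}^{f^{*}}\!\HC(\lambda, \mu)$ and $\Loc$ lands in ${}^{f}\!\gHC(\lambda, \mu)$.

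When abelian localization holds at both $\lambda$ and $-\mu$, the product statement (Theorem \ref{thm:bpw_loc} applied to each factor) yields that the unrestricted functors $\Gamma$ and $\Loc$ are quasi-inverse equivalences between modules over $\cA^{\theta}_{\lambda} \boxtimes (\cA^{\theta}_{\mu})^{\opp}$ and over its global sections $\cA_{\lambda} \otimes \cA_{\mu}^{\opp}$, and restricting to the HC subcategories gives the claim. The main conceptual step is the twist identification in the second paragraph; the main technical step, hidden inside the invocation of \cite[Proposition 2.13]{BLPW}, is the cohomology-vanishing argument that makes $\Gamma$ commute with passage to the associated graded on the HC subcategory, which relies on the projectivity of the resolution $\cM^{\theta}_{0} \to \cM^{0}_{0}$ together with the affineness of $\Sigma_{f}$.
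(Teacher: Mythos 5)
Your proposal follows the same route as the paper's (very terse) argument: the paper's entire proof consists of observing that $\cS_{f}$ is the scheme-theoretic preimage of $\Sigma_{f}$ and invoking \cite[Proposition 2.13]{BLPW}, exactly as you do, and the paper leaves the translation between ``scheme-theoretically supported on $\cS_f$'' and the twisted commutation relation, as well as the period computation for the external product quantization, implicit.

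One small caveat worth flagging: from your identity $1 \otimes b = f^{*}(b) \otimes 1$ on $\Sigma_{f} = \{(x,f(x))\}$ one obtains $m\cdot b = f^{*}(b)\cdot m$ on $\gr\cB$, which in the notation of Definition 5.13 (namely $a m = m\, g(a)$) corresponds to the twist $g = (f^{*})^{-1} = (f^{-1})^{*}$, not $f^{*}$; so either the graph should be taken as $\{(f(x),x)\}$, or the conclusion should read ${}^{(f^{*})^{-1}}\!\HC(\lambda,\mu)$. This same inverse/convention ambiguity is already present in the paper's statement, so it does not affect the assessment that your approach is the intended one, but it is worth pinning down if these twists are to be composed later (as in Theorem 5.18).
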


\subsection{Duality}
We use our previous work to construct functors between categories of twisted Harish-Chandra bimodules. These functors will be constructed as composition of several equivalences we have already seen. The duality step happens at the level of sheaves, this is just the homological duality given by Proposition \ref{prop:hom_duality}. To pass from sheaves to bimodules we use, of course, localization theorems. The problem here is that, in general, localization does not hold at $\lambda$ and $-\lambda$ simultaneously. So we need to first use an equivalence provided by Proposition \ref{prop:twist}. This will introduce a twist by $\Upsilon$ that will be cancelled at the end using an equivalence of the same form. 

\begin{theorem}\label{thm:duality}
Let $\lambda$ be a period of quantization such that abelian localization holds at $\lambda$. Then, for any $\CC^{\times}$-equivariant Poisson automorphism $f: \cM^{0}_{0} \rightarrow \cM^{0}_{0}$ and $n \gg 0$ there is an equivalence of categories
\[
\DD:\, ^{f^{*}}\!\HC(\lambda, \lambda) \buildrel \cong \over \longrightarrow \, ^{(f^{*})^{-1}}\!\HC(\lambda - n\theta, \lambda - n\theta)^{\opp}
\]
\end{theorem}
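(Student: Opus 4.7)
The plan is to assemble $\DD$ as a composition of equivalences already established in Section \ref{sect:HCquant}, following the outline given in the paragraph preceding the theorem. The essential obstruction, which the author flags explicitly, is that Proposition \ref{prop:loc_theorem} requires abelian localization simultaneously at $\lambda$ and at $-\mu$, so one cannot directly localize $^{f^*}\!\HC(\lambda,\lambda)$. The strategy circumvents this by first twisting one of the periods to its negative via Proposition \ref{prop:twist} so that localization becomes available, dualizing on the sheaf side, shifting via line bundles to recover dominance before globalizing, and finally twisting the other period back to reach the target category.

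Concretely, I would compose the following chain. Starting in $^{f^*}\!\HC(\lambda,\lambda)$, twist the right action by $\Phi_{-\lambda}$ (Proposition \ref{prop:twist}) to land in $^{\Upsilon^*\circ f^*}\!\HC(\lambda,-\lambda)$; abelian localization now holds at $\lambda$ and at $-(-\lambda)=\lambda$, so Proposition \ref{prop:loc_theorem} gives a sheaf-level equivalence to $^{f\circ \Upsilon}\!\gHC(\lambda,-\lambda)$, where one must be careful that pullback reverses composition. Apply homological duality (Proposition \ref{prop:hom_duality}) to reach $^{\Upsilon^{-1}\circ f^{-1}}\!\gHC(-\lambda,\lambda)^{\opp}$. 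Using Proposition \ref{prop:translation_equivalences}, shift the left period up by $n\theta$ and the right period down by $n\theta$, arriving at $^{\Upsilon^{-1}\circ f^{-1}}\!\gHC(-\lambda+n\theta,\lambda-n\theta)^{\opp}$. By Theorem \ref{thm:bpw_loc}, for $n\gg 0$ abelian localization holds at both $-\lambda+n\theta$ and $-(\lambda-n\theta)=-\lambda+n\theta$, so globalizing via Proposition \ref{prop:loc_theorem} produces $^{(f^*)^{-1}\circ (\Upsilon^*)^{-1}}\!\HC(-\lambda+n\theta,\lambda-n\theta)^{\opp}$. A second application of Proposition \ref{prop:twist}, now twisting the left action, flips the left period and composes the algebra twist on the right with $\Upsilon^*$, sending it to $(f^*)^{-1}\circ (\Upsilon^*)^{-1}\circ \Upsilon^* = (f^*)^{-1}$ and landing in $^{(f^*)^{-1}}\!\HC(\lambda-n\theta,\lambda-n\theta)^{\opp}$, as required.

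The content of the proof is almost entirely bookkeeping; no single step is difficult in itself. The pleasant feature is that the two $\Upsilon^*$-factors introduced by the opening and closing applications of Proposition \ref{prop:twist} cancel simply as $(\Upsilon^*)^{-1}\circ \Upsilon^* = \id$, without requiring $\Upsilon^*$ to be an involution on $\CC[\cM^{0}_{0}]$. The main obstacle, if one wishes to spell things out rigorously, is ensuring that the sheaf-theoretic steps interact correctly with the twisted support condition: that homological duality genuinely sends modules scheme-theoretically supported on the $(f\circ \Upsilon)$-Steinberg variety to modules supported on the $(\Upsilon^{-1}\circ f^{-1})$-Steinberg variety, and that line bundle translation is transparent to the twisted Harish-Chandra class. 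Both are immediate from the constructions of Section \ref{sect:HCquant}, so the proof reduces essentially to verifying the correct composition of the six equivalences above.
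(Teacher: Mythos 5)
Your proof is correct and follows exactly the same chain of six equivalences as the paper: twist one period to its negative via Proposition \ref{prop:twist}, localize via Proposition \ref{prop:loc_theorem}, apply the homological duality of Proposition \ref{prop:hom_duality}, shift periods by $\pm n\theta$ via Proposition \ref{prop:translation_equivalences}, globalize, and twist back. The one place where you deviate is in the bookkeeping of the $\Upsilon$-twist in steps (3)--(5): you correctly record the output of Proposition \ref{prop:hom_duality} applied to $f\circ\Upsilon$ as $\Upsilon^{-1}\circ f^{-1}$, whereas the paper writes $\Upsilon\circ f^{-1}$, which would require $\Upsilon=\Upsilon^{-1}$; the paper then needs $(\Upsilon^{*})^{2}=\id$ in step (6) to cancel, while your version cancels $(\Upsilon^{*})^{-1}\circ\Upsilon^{*}=\id$ without any involutivity hypothesis. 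Since $\Upsilon^{2}=-\id$ on $T^{*}R$ and so descends to the action of $-1\in\CC^{\times}$ on $\cM^{0}_{0}$, which is not the identity unless $\CC[\cM^{0}_{0}]$ is concentrated in even degree, your version is actually the more careful reading and your explicit remark about not needing $\Upsilon^{*}$ to be an involution is a genuine improvement on the exposition.
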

\begin{proof}
As we have said above, this equivalence is just a composition of several equivalences that we have introduced before. Let us list these.

\begin{enumerate}
\item[(1)] $^{f^{*}}\!\HC(\lambda, \lambda) \buildrel \cong \over \longrightarrow \, ^{\Upsilon^{*} \circ f^{*}}\!\HC(\lambda, -\lambda)$.
\item[(2)] $^{\Upsilon^{*} \circ f^{*}}\!\HC(\lambda, -\lambda) \buildrel \cong \over \longrightarrow \, ^{f \circ \Upsilon}\!\gHC(\lambda, -\lambda)$.
\item[(3)] $^{f \circ \Upsilon}\!\gHC(\lambda, -\lambda) \buildrel \cong \over \longrightarrow \, ^{\Upsilon \circ f^{-1}}\!\gHC(-\lambda, \lambda)^{\opp}$.
\item[(4)] $^{\Upsilon \circ f^{-1}}\!\gHC(-\lambda, \lambda)^{\opp} \buildrel \cong \over \longrightarrow \, ^{\Upsilon \circ f^{-1}}\!\gHC(-\lambda + n\theta, \lambda - n\theta)^{\opp}$.
\item[(5)] $^{\Upsilon \circ f^{-1}}\!\gHC(-\lambda + n\theta, \lambda - n\theta)^{\opp} \buildrel \cong \over \longrightarrow \, ^{(f^{*})^{-1} \circ \Upsilon^{*}}\!\HC(-\lambda + n\theta, \lambda - n\theta)^{\opp}$.
\item[(6)] $^{(f^{*})^{-1} \circ \Upsilon^{*}}\!\HC(-\lambda + n\theta, \lambda - n\theta)^{\opp} \buildrel \cong \over \longrightarrow \, ^{(f^{*})^{-1}}\!\HC(\lambda - n\theta, \lambda - n\theta)^{\opp}$.
\end{enumerate}

Equivalences (1) and (6) are provided by Proposition \ref{prop:twist}; the equivalence (2) is simply the localization theorem, cf. Proposition \ref{prop:loc_theorem}. The localization theorem also provides the equivalence (5): here, we need to take $n$ large enough so that localization will hold at $-\lambda + n\theta$, cf. Theorem \ref{thm:bpw_loc}. The equivalence (4) is given by the translation equivalences of Proposition \ref{prop:translation_equivalences}. Finally, the duality (3) is the homological duality of Proposition \ref{prop:hom_duality}.
\end{proof}

\begin{remark}\label{rmk:duality2param}
	Theorem \ref{thm:duality} holds, with the same proof, if we take two parameters $\lambda, \mu$ such that localization holds at $\lambda$ and $-\mu$. In this case, we obtain an equivalence $\DD: \HC(\lambda, \mu) \rightarrow \HC(\mu - n\theta, \lambda - n\theta)^{\opp}$, for $n \gg 0$. 
\end{remark}

\begin{remark}\label{rmk:changinglhs}
If $f = \id$ then, in Step (6) of the construction in Theorem \ref{thm:duality}, we could change the right parameter instead of the left parameter. Hence, in this case we also obtain an equivalence $\DD: \HC(\lambda, \lambda) \to \HC(-\lambda + n\theta, -\lambda + n\theta)^{\opp}$.
\end{remark}

\subsection{Connection to rational Cherednik algebras}

Let us remark that the spherical rational Cherednik algebras of the groups $G(\ell, 1, n) = S_{n} \ltimes (\ZZ/\ell\ZZ)^{n}$ can be realized as quantized quiver varieties. Let $Q$ be the cyclic quiver with $\ell$ vertices (in particular, if $\ell = 1$, then $Q$ is a single vertex with a loop). Pick a vertex in $Q$ and call it $0$ (so this is the extending vertex of the affine type A Dynkin diagram underlying $Q$). For a dimension vector $\bv$, we take the vector $n(1, 1, \dots, 1)$ (note that $(1, 1, \dots, 1)$ is the minimal imaginary root for the affine type A Dynkin diagram underlying $Q$) while for a dimension vector $\bw$ we the vector that takes the value $1$ at the vertex $0$ and $0$ everywhere else. For example, for $\ell = 1$ the quiver $\overline{Q^{\heartsuit}}$ with the corresponding dimension vector is

\[
\begin{tikzcd}[every arrow/.append style={shift left}]
n \ar{rr} \ar[,loop, out=123, in=57, distance=0.5cm]{} \ar[,loop, out=237, in=306, distance=0.5cm]{} & & 1 \ar{ll}
\end{tikzcd}
\]

Let us denote by $\bA := eH_{\fp'}e$. This is a $\CC[\fp']$-algebra, where we now denote the space of parameters for the Cherednik algebra by $\fp'$. Let us denote by $\fP$ the set of parameters for the quantum Hamiltonian reduction, $\fP = \fg_{\bv}/[\fg_{\bv}, \fg_{\bv}]$.

\begin{theorem}[\cite{EGGO, gordon_rmk, losev_isoquant, oblomkov}]
	There is a filtered isomorphism $\bA \rightarrow \cA_{\fP}$, which induces a linear isomorphism between the spaces of parameters $\omega: \fp' \rightarrow \fP$. In particular, for every $c \in \fp'$ we have that the algebras $A_{c}$ and $\widehat{\cA}_{\omega(c)}$ are isomorphic as filtered algebras.
\end{theorem}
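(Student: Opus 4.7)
The strategy I would use is the one originating with Etingof-Ginzburg for $\ell=1$ and extended by Gan-Ginzburg, Gordon and Oblomkov to general $\ell$: construct an explicit filtered algebra homomorphism $\Theta:\bA \to \cA_{\fP}$, show directly that $\Theta$ is surjective, and then invoke the PBW theorems on both sides to conclude that $\Theta$ is an isomorphism of filtered $\CC[\fP]$-algebras. Because both the source and the target are flat over their respective parameter spaces (Cherednik PBW on one side, flatness of $\mu$ on the relevant locus on the other), it suffices to check the isomorphism ``generically'' and then use a filtered/graded argument to propagate it everywhere.

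First I would realize the map $\Theta$ via the Dunkl representation. The spherical algebra $eH_{\fp'}e$ acts faithfully on $\CC[\fh]^{W} = e\cdot\CC[\fh]$ by Cherednik-Dunkl operators, and after inverting the discriminant this becomes an embedding into $e(D(\fh^{\mathrm{reg}})\rtimes W)e \cong D(\fh^{\mathrm{reg}}/W)$. On the quiver side, the quantum Hamiltonian reduction $\cA_{\fP} = [D(R)/D(R)\Phi(\cI)]^G$ carries a canonical action on $(\CC[R]_f^G)$ for any semi-invariant $f$, and for the cyclic-quiver dimension vectors in the statement one can identify $R/\!/G$ with (an open subset of) $\fh/W$ using the classical description of $(\gl_n \oplus \CC^n)/\!/\GL_n$ via the eigenvalues of $X$ and the diagonal of $j$. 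On this open subset the radial-part computation identifies the image of $\cA_{\fP}$ with exactly the algebra of Dunkl differential operators, and the resulting two embeddings into $D(\fh^{\mathrm{reg}}/W)\otimes \CC[\fP]$ land in the same subalgebra; this defines $\Theta$.

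Next I would verify surjectivity and compute the parameter map $\omega$. Surjectivity reduces to showing that the Dunkl image contains the generators of $\cA_{\fP}$; this is done by writing the images of the standard generators $e x_i e, e y_i e$ and the invariants $e\sum x_i^k e, e\sum y_i^k e$, and matching them against a generating set of $D(R)^G$ modulo the quantum moment-map ideal. The parameter map $\omega:\fp'\to\fP$ is linear by construction: $\fP = \fg_{\bv}/[\fg_{\bv},\fg_{\bv}] \cong \CC^{\ell}$ via the trace characters, while $\fp'$ has the explicit basis given by the class functions on the reflections of $G(\ell,1,n)$; both spaces have the same dimension, and $\omega$ is the unique linear map that makes the action of the central element $\sum x_iy_i + y_ix_i$ agree with the character $\lambda$ on both sides (up to the usual shift coming from the symmetrized vs.\ non-symmetrized comoment map, as in \cite[Section 3.2]{losev_isoquant}).

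Finally, once $\Theta$ is surjective and $\omega$ is identified, to conclude that $\Theta$ is an isomorphism I would pass to the associated graded and invoke the Etingof-Ginzburg PBW $\gr(eH_{\fp'}e) = \CC[\fh\oplus\fh^*]^{W}\otimes\CC[\fp']$ on one side, and Crawley-Boevey's flatness of $\mu$ on the other, which gives $\gr \cA_{\fP} = \CC[\cM_{\fp}^{0}] = \CC[(\fh\oplus\fh^*)/W]\otimes\CC[\fP]$ after identifying the quiver variety with its Calogero-Moser description. The main obstacle, in my view, is not the existence of $\Theta$ but rather pinning down the correct normalization of the parameter identification $\omega$: the quantum comoment map for $D(R)$ has a natural candidate and a symmetrized version that differ by $\tfrac{1}{2}\Tr$, and matching this shift to the Cherednik convention for $c$ is precisely where the cited papers invest most effort. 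Specializing the resulting isomorphism $\bA\cong \cA_{\fP}$ at a single parameter then immediately gives $A_c \cong \widehat{\cA}_{\omega(c)}$ as filtered algebras.
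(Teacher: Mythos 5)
The theorem you were asked to prove is stated in the paper as a citation to \cite{EGGO, gordon_rmk, losev_isoquant, oblomkov}; the paper itself supplies no proof, only the attribution and, a sentence later, the explicit form of $\omega$ in type $A$ ($c\mapsto -z-1$, with a pointer to \cite{losev_isoquant}). So there is no in-paper argument to compare yours against. What you have written is a reasonable reconstruction of the proof strategy in the EGGO/Gordon/Oblomkov line of references: realize both algebras inside $D(\fh^{\mathrm{reg}}/W)\otimes\CC[\fP]$ via Dunkl operators on one side and radial parts of the quantum Hamiltonian reduction on the other, check surjectivity on generators, then upgrade to an isomorphism by comparing associated graded algebras using the Cherednik PBW theorem and Crawley-Boevey flatness of the moment map, with the parameter normalization being the delicate point (the symmetrized versus unsymmetrized comoment map shift). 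That is indeed how those references proceed. It is worth noting that \cite{losev_isoquant} takes a genuinely different and more structural route: rather than a Dunkl/radial-part computation, Losev classifies filtered quantizations of the Hamiltonian reduction by their period in $H^2$ and shows that the spherical Cherednik algebra and $\cA_\lambda$ have the same period after the appropriate affine identification of parameter spaces, which yields the isomorphism without ever writing an explicit map. Your proposal leans on the computational route and only invokes Losev for the comoment-map normalization; both are valid, but if you wanted the version of $\omega$ the present paper actually uses ($c\mapsto -z-1$ in type $A$), the period argument of \cite{losev_isoquant} is the cleaner way to pin it down, since it avoids tracking the $\tfrac12\Tr$ shift by hand through a radial-part calculation.
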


Let us give a description of the map $\omega$ in the case of rational Cherednik algebra of type $A$. Upon identifying $\fp' = \CC c$, $\fP = \CC z$, this map is simply $c \mapsto -z-1$, see \cite{losev_isoquant}. In particular, we have that the categories $\HC(A_{c}, A_{c'})$ and $\HC(\widehat{\cA}_{-c-1}, \widehat{\cA}_{-c'-1})$ are equivalent. 

Let us now say that, for $c \in \CC$, abelian localization holds for the rational Cherednik algebra $A_{c}$ if it holds for $\lambda$, where $\lambda$ is such that $\cA_{\lambda} = \widehat{\cA}_{-c}$. It is known, see \cite[Sections 5 and 6]{gordon} and \cite[Corollary 4.2]{BE}, that abelian localization fails at $c$ if and only if $c = -r/m$, with $1 < m \leq n$, $r > 0$ and $\gcd(r;m) = 1$. Thus, setting $f = \id$ in Theorem \ref{thm:duality} we get the following result.

\begin{corollary}\label{cor:dualityA}
Let $c \in \CC \setminus \{-r/m : 1 < m \leq n, \gcd(r;m) = 1\}$. Let $N \in \ZZ_{> 0}$ be large enough so that $-c + N \not\in  \{-r/m : 1 < m \leq n, \gcd(r;m) = 1\}$. Then, there is an equivalence of categories
$$
\DD: \HC(c,c) \to \HC(-c + N, -c + N)^{\opp}
$$

\noindent where, recall, $\HC(c,c)$ is the category of HC $H_{c}(S_{n})$-bimodules.
\end{corollary}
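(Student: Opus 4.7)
The plan is to deduce Corollary~\ref{cor:dualityA} from Theorem~\ref{thm:duality} (specifically the variant described in Remark~\ref{rmk:changinglhs}, since we want the target parameter to be $-c+N$ rather than $c-N$) applied to the quantized quiver variety that realizes the spherical subalgebra of the type $A$ rational Cherednik algebra, and then to transfer the resulting equivalence from spherical bimodules to $H_c$-bimodules by Morita equivalence.

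First, the Etingof--Ginzburg--Gordon--Oblomkov realization recalled above identifies $eH_c(S_n)e$ with the quantum Hamiltonian reduction $\widehat{\cA}_{\omega(c)}$ associated to the Jordan quiver (the case $\ell=1$) with $\bv=(n)$ and $\bw=(1)$. Under the affine-linear parameter map $\omega:\fp'\to\fP$, the aspherical locus $\{-r/m:1<m\leq n,\ \gcd(r;m)=1\}$ of the Cherednik algebra matches, by the classical results of Gordon and Etingof--Ginzburg, the locus where abelian localization fails for $\cA_\lambda$; the hypothesis on $c$ therefore both places $\omega(c)$ in the localization locus, and (by $H_ceH_c=H_c$) guarantees that $e(\cdot)e$ is a Morita equivalence between $H_c$-modules and $eH_ce$-modules. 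Tensoring with the bimodules $H_ce$ and $eH_c$ preserves the Harish-Chandra property (since $\CC[\fh]^W$ and $\CC[\fh^*]^W$ both lie inside $eH_ce$ and their commutators with the Morita bimodules are locally nilpotent), so it induces an equivalence $\HC(H_c,H_c)\cong\HC(eH_ce,eH_ce)$; the same argument applies at the parameter $-c+N$ by the hypothesis on $N$.

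Second, apply Remark~\ref{rmk:changinglhs} with $f=\id$ at the quiver-variety level. Since abelian localization holds at $\lambda:=\omega(c)$ by the preceding step, for $n\gg 0$ we obtain an equivalence
\[
\DD:\ \HC(\cA_\lambda,\cA_\lambda)\ \longrightarrow\ \HC(\cA_{-\lambda+n\theta},\cA_{-\lambda+n\theta})^{\opp}.
\]
The character lattice of $\GL(\bv)$ for the Jordan quiver is one-dimensional, so $n\theta$ is just an integer shift; tracing through the affine-linear map $\omega$ (and the constant shift between periods of quantization and Hamiltonian reduction parameters coming from symmetrization of the quantum comoment map), the period $-\lambda+n\theta$ corresponds to a Cherednik parameter of the form $-c+N$ with $N$ a positive integer determined by $n$. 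Precomposing and postcomposing with the Morita equivalences of the previous paragraph yields the desired equivalence $\HC(c,c)\to\HC(-c+N,-c+N)^{\opp}$, and choosing $N$ to avoid the (finitely many integer) shifts landing in the aspherical locus is automatic for $n$ large.

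The main technical obstacle is the careful bookkeeping of three parameter spaces — the Cherednik parameter $c\in\fp'$, the quantum Hamiltonian reduction parameter, and the period $\lambda$ of the canonical quantization — which pairwise differ by explicit constants, together with checking that the integer shift $n\theta$ produced by Theorem~\ref{thm:duality} translates into the Cherednik shift $N$ with the correct sign; this is where the formula $\omega(c)=-c-1$ and the distinction between $\cA^\theta_\lambda$ and $\cD^\theta_\lambda$ become crucial. Once these identifications are made, verifying that the HC condition and the support conditions transport along Morita equivalence is routine, and the conclusion follows.
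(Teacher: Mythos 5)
Your proposal follows the paper's own argument: reduce to the spherical subalgebra $eH_ce$ via the Morita equivalence granted by the hypothesis that $c$ (and $-c+N$) lie outside the aspherical locus, identify $eH_ce$ with a quantized Jordan-quiver variety via the map $\omega$, and invoke the duality of Theorem~\ref{thm:duality} with $f = \id$ (in the form of Remark~\ref{rmk:changinglhs}, which indeed is the version that produces $-\lambda + n\theta$ and hence a parameter of the shape $-c+N$). You spell out slightly more than the paper does — in particular the compatibility of the HC condition with the Morita bimodules $H_ce$ and $eH_c$, and the three-way parameter bookkeeping between $c$, the quantum Hamiltonian reduction parameter, and the period $\lambda$ — but the route and the ingredients are the same.
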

\begin{proof}
	Note that, under the assumptions on $c$, the algebras $H_{c}$ and $eH_{c}e$ are Morita equivalent, and so are the algebras $H_{-c+N}$ and $eH_{-c+N}e$. The result now follows from Theorem \ref{thm:duality}.
\end{proof}

 Let us now compare the double wall-crossing bimodule $D_{-c+N}$ with the dual of the regular bimodule $H_{c}$. So let $N$ be sufficiently big so that localization holds at $c$ and $-c+N$. Recall that we denote $D_{-c+N}$ the double wall-crossing bimodule for the algebra $H_{-c+N}$.

\begin{proposition}\label{prop:dcwvsdreg}
Let $c = r/m > 0$, let $N \in \ZZ$ be such that $-c + N > 0$, and consider the duality functor $\DD: \HC(c, c) \rightarrow \HC(-c + N, -c+N)$. Then, $\DD(D_{c}) = H_{-c+N}$ and $\DD(H_{c}) = D_{-c+N}$.
\end{proposition}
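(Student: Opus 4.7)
The plan is to use that $\DD$ is an equivalence to the opposite category preserving support, together with the description of the principal block, to force the swap of $H_c$ and $D_c$ with $D_{-c+N}$ and $H_{-c+N}$.

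I would begin by noting that, by Lemma \ref{lemma:injproj}, both $H_c$ and $D_c$ are indecomposable injective-projective HC bimodules of full support. Since $\DD: \HC(c,c) \to \HC(-c+N, -c+N)^{\opp}$ is an equivalence preserving support (Theorem \ref{thm:main3}), the images $\DD(H_c), \DD(D_c)$ are also indecomposable injective-projective of full support in $\HC(-c+N, -c+N)$. A direct computation with the quiver of Theorem \ref{thm:main2} (or a simpler check that only the ``end'' projectives have simple socle and are thus injective) shows that $H_{-c+N}$ and $D_{-c+N}$ are the only such bimodules in the principal block. Hence $\DD$ permutes this pair.

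Next I would distinguish between the two possibilities by examining socles. By Lemma \ref{lemma:BL}, the socle of $D_c$ equals $B(\triv_c) = \homf{\Delta_c(\triv)}{L_c(\triv)}$, whose support is that of $L_c(\triv)$, and for $c = r/m$ with $m > 1$ is strictly smaller than full. On the other hand, every nonzero sub-bimodule of $H_c$ has full support (as in the proof of Lemma \ref{lemma:calculations}(2)), so the socle of $H_c$ is the unique full-support simple $B_{\mathrm{gen}}$ of the principal block. Since $\DD$ takes values in the opposite category, subobjects pass to quotient objects, so the socle of $H_c$ maps to the head of $\DD(H_c)$; and because $\DD$ preserves support, it sends $B_{\mathrm{gen}}$ to the corresponding full-support simple of the target. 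Thus the head of $\DD(H_c)$ is the unique full-support simple of $\HC(-c+N,-c+N)$, which matches the head of $D_{-c+N}$ (by a parallel uniserial argument in the target category, using that $D_{-c+N}$ is uniserial with socle $B(\triv_{-c+N})$). Since an indecomposable projective is determined by its head, $\DD(H_c) \cong D_{-c+N}$. The equality $\DD(D_c) \cong H_{-c+N}$ then follows because $\DD$ permutes the two-element set $\{H_{-c+N}, D_{-c+N}\}$.

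The main obstacle will be confirming cleanly that the principal block of $\cO_c$ contains exactly one simple $L_c(\lambda)$ with full support (so that the matching of heads of $\DD(H_c)$ and $D_{-c+N}$ is unambiguous), and the analogous statement for $\cO_{-c+N}$. This is standard for type $A$ Cherednik algebras at rational parameters, but should be invoked with care. An alternative, more geometric approach would be to trace $H_c$, viewed as a quantization of the structure sheaf of the diagonal in $\cM^{\theta} \times \cM^{\theta}$, through the six-step composition defining $\DD$ in Theorem \ref{thm:duality}, and to verify that the translation by $n\theta$ in step (4) realizes precisely the double wall-crossing operation of \cite{BL}.
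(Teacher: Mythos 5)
Your core idea — use that $\DD$ preserves support together with a socle/head analysis — is essentially the paper's own route, but as written the argument is circular in two places. You invoke Lemma \ref{lemma:injproj} (to say $H_c, D_c$ are injective-projective) and Theorem \ref{thm:main2} (to say $H_{-c+N}, D_{-c+N}$ are the only indecomposable projective-injectives of full support in the principal block). Both results logically depend on Proposition \ref{prop:dcwvsdreg} itself: the projectivity claims in Lemma \ref{lemma:injproj} are obtained precisely by applying the duality of this proposition, and the proof of Theorem \ref{thm:prblock} passes through Lemma \ref{lemma:dualitysimples}, which is an immediate corollary of the proposition you are proving. The paper explicitly notes that Section \ref{sect:duality} is ``independent of the intervening material,'' so one cannot reach back into Sections \ref{sect:HCvsO} and \ref{sect:prblock} for these facts inside the proof.

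The paper's own argument sidesteps this. It uses only: (i) that $\DD$ is a support-preserving equivalence to the opposite category; (ii) uniqueness of the full-support simple in $\HC(c,c)$ and in $\HC(-c+N,-c+N)$, which comes from \cite{losev_completions} and \cite{hcbimod} rather than from Theorem \ref{thm:main2}; (iii) that $H_{-c+N}$ is the injective hull of its unique minimal ideal, i.e.\ \cite[Prop.~2.11]{hcbimod}, with no appeal to projectivity of $D_{-c+N}$; and (iv) a comparison of composition lengths. The head of $D_c$ is the unique full-support simple (since $D_c$ is uniserial, $\Phi_c(D_c)=\nabla_c(\triv)$, and the head of $\nabla_c(\triv)$ has full support because the socle of $\Delta_c(\triv)$ does). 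Hence $\DD(D_c)$ has socle equal to the minimal ideal of $H_{-c+N}$, so it embeds into $H_{-c+N}$, and equality of composition lengths forces this embedding to be an isomorphism; the other identity follows from $\DD^2=\id$. Finally, the ``obstacle'' you raise at the end is not the actual bottleneck — uniqueness of the full-support simple is a known fact about all of $\HC(c,c)$ — the circular citations above are the real gap.
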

\begin{proof}
First of all, note that the duality preserves supports. Since both $H_{c}$ and $H_{-c+N}$ have a unique irreducible HC bimodule with full support, the duality has to send the unique irreducible bimodule with full support over $H_{c}$ to the unique irreducible fully supported bimodule over $H_{-c+N}$. Now, $\DD(D_{c})$ will be an $H_{-c+N}$-bimodule whose socle coincides with the unique minimal ideal of $H_{-c+N}$. Since $H_{-c +N}$ is the injective hull of its unique minimal ideal, we have an embedding $\DD(D_{c}) \rightarrow H_{-c+N}$. That this is an isomorphism now follows by comparing the composition lengths. Now use the fact that $\DD^{2} = \id$ to get the other equality.
\end{proof}

Of course, the previous proposition has its corresponding result when $c$ is aspherical. Here, we need to take $N$ such that localization holds at $-c$ and $N+c$ (note that $N = 1$ suffices). Then, we have the duality functor $\DD: \HC(c, c) \rightarrow \HC(c+N, c+N)^{\opp}$, and we get $\DD(D_{c}(n)) = H_{c +N}(n)$.

\begin{corollary}\label{cor: injective projective}
	For any $c \in \CC$, the regular bimodule $H_{c}$ and the double wall-crossing bimodule $D_{c}$ are injective-projective in the category $\HC(c, c)$.
\end{corollary}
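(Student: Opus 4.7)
The proof splits naturally into two halves: first showing that both $H_{c}$ and $D_{c}$ are injective in $\HC(c,c)$ for every $c \in \CC$, and then using the duality functor to transport injectivity to projectivity.

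For injectivity, I would argue as follows. That $H_{c}$ is injective is \cite[Proposition 2.11]{hcbimod}. For $D_{c}$, the proof of Lemma \ref{lemma:BL} adapts verbatim: the costandard $\nabla_{c}(\triv)$ (replaced by $\nabla_{c}(\sign)$ when $c < 0$) is injective in $\cO_{c}$, the functor $\bullet \otimes_{H_{c}} \Delta_{c}(\triv)$ is exact (cf.\ \cite[Lemma 3.9]{losev_derived}), and its right adjoint $\homf{\Delta_{c}(\triv)}{\bullet}$ therefore preserves injectives. Hence $D_{c} = \homf{\Delta_{c}(\triv)}{\nabla_{c}(\triv)}$ is injective.

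For projectivity, the plan is to exhibit, for every $c \in \CC$, a contravariant equivalence $\DD: \HC(c,c) \to \HC(c', c')^{\opp}$ for some suitable $c'$ that swaps $H_{c} \leftrightarrow D_{c'}$ and $D_{c} \leftrightarrow H_{c'}$. When abelian localization holds at $c$, Corollary \ref{cor:dualityA} furnishes such a $\DD$ with $c' = -c + N$ for $N \gg 0$ (chosen also so that localization holds at $-c+N$), and Proposition \ref{prop:dcwvsdreg} computes the images of $H_{c}$ and $D_{c}$. When $c$ is aspherical (so $c = -r/m$ with $1 < m \leq n$ and $\gcd(r,m) = 1$), I would instead use the variant indicated in the remark immediately after Proposition \ref{prop:dcwvsdreg}, together with Remark \ref{rmk:changinglhs}: choose $N$ so that localization holds at $-c$ and at $c+N$, giving a duality $\HC(c,c) \to \HC(c+N, c+N)^{\opp}$ which sends $D_{c} \mapsto H_{c+N}$ and $H_{c} \mapsto D_{c+N}$ by the same uniqueness argument (the fully-supported simple is matched to itself, and then injective hulls and projective covers of that simple are matched accordingly).

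Once this duality is in hand, the conclusion is immediate: since $\DD$ is a contravariant equivalence and $H_{c'}, D_{c'}$ are injective by the first step, their preimages $D_{c} = \DD^{-1}(H_{c'})$ and $H_{c} = \DD^{-1}(D_{c'})$ are projective in $\HC(c,c)$. Combined with their injectivity, this proves the claim. The only real obstacle is bookkeeping the two cases (spherical vs.\ aspherical), since Corollary \ref{cor:dualityA} and Proposition \ref{prop:dcwvsdreg} are stated only in the spherical regime; the aspherical version requires checking that the shift $c \mapsto c+N$ produces a parameter at which localization holds and that the analogous identification of the images of $H_{c}$ and $D_{c}$ still goes through by the same socle/composition-length comparison used in the proof of Proposition \ref{prop:dcwvsdreg}.
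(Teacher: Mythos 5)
Your proposal is correct and follows essentially the same route as the paper: establish injectivity of $H_{c}$ (via \cite[Proposition 2.11]{hcbimod}) and of $D_{c}$ (as the image of an injective under a right adjoint to an exact functor), then transport injectivity to projectivity through the contravariant equivalence $\DD$, using Proposition \ref{prop:dcwvsdreg} and the remark following it to identify $\DD(H_c)$ and $\DD(D_c)$. Your bookkeeping of the spherical versus aspherical cases is a welcome elaboration of the paper's terse ``We assume $c \not< 0$'' plus the post-Proposition remark, but it is the same argument.
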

\begin{proof}
We assume $c \in \CC\setminus\RR_{< 0}$. That $H_{c}$ is injective is \cite[Proposition 2.11]{hcbimod}. That $D_{c}$ is injective follows because $D_{c} = \homf{\Delta_{c}(\triv)}{\nabla_{c}(\triv)}$. The result now follows immediately from Proposition \ref{prop:dcwvsdreg}.
\end{proof}

\section{The minimal Serre subcategory containing $H_c$}\label{sect:prblock}

\subsection{Notation and main result}\label{sect6intro}  Assume $c = r/m > 0, \gcd(r;m) = 1$ and $1 < m \leq n$In this section, we describe, via quivers with relations, the minimal Serre subcategory of $\HC(c,c)$ containing the regular bimodule, that we denote by $\cP_{c}$. Note that if $c \neq r/m$ with $\gcd(r;m) = 1$ and $1 < m \leq n$, the category $\cP_{c}$ is equivalent to the category of vector spaces. 

Let us describe some simples in $\cP_{c}$. We recall from \cite{losev_completions} that $H_{c}$ has $\lfloor n/m\rfloor + 2$ two-sided ideals (including $\{0\}$ and $H_{c}$)  that are linearly ordered 

$$
\{0\} \subsetneq \cJ_{0} \subsetneq \cJ_{1} \subsetneq \cdots \subsetneq \cJ_{\lfloor n/m\rfloor - 1} \subsetneq \cJ_{\lfloor n/m \rfloor} = H_{c}.
$$

We will denote $\mathcal{S}_{i} := \cJ_{i}/\cJ_{i-1}$ (where we set $\cJ_{-1} = \{0\}$). This is a simple module in $\cP_{c}$. The support of $\mathcal{S}_{i}$ is

$$
\sing(\cS_{i}) = \overline{\cL_{i}} := \overline{\pi\{(x, y) \in \fh \oplus \fh^{*} : W_{(x,y)} = S_{m}^{\times i} \subseteq S_{n}\}}
$$

\noindent so that, $\cS_{0} = \cJ_{0}$ has full support, while $\cS_{\lfloor n/m\rfloor} = H_{c}/\cJ_{\lfloor n/m\rfloor - 1}$ has minimal support. So we see that $\cP_{c}$  $\lfloor n/m\rfloor + 1$ irreducibles. As the main result of this section, we describe $\cP_{c}$ as an abelian category.

\begin{theorem}\label{thm:prblock}
The let $c = r/m > 0$ with $1 < m \leq n$, $\gcd(r;m) = 1$. The block $\cP_{c}$ is equivalent to the category of representations of the quiver

$$
\xymatrix{\cS_{\lfloor n/m\rfloor} \ar@/^/[rr]^{\alpha_{\lfloor n/m\rfloor}} & & \cS_{\lfloor n/m\rfloor - 1} \ar@/^/[ll]^{\beta_{\lfloor n/m\rfloor}} \ar@/^/[rr]^{\alpha_{\lfloor n/m\rfloor - 1}} & & \cdots \ar@/^/[ll]^{\beta_{\lfloor n/m\rfloor - 1}} \ar@/^/[r] & \cdots \ar@/^/[l] \ar@/^/[r] & \ar@/^/[l] \cS_{1}  \ar@/^/[rr]^{\alpha_{1}} & & \cS_{0}  \ar@/^/[ll]^{\beta_{1}}}  
$$

\noindent with relations $\alpha_{i}\beta_{i} = \beta_{i}\alpha_{i} = 0$ for every $i = 1, \dots, \lfloor n/m\rfloor$. 
\end{theorem}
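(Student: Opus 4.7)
The plan is to exhibit a projective generator of $\cP_{c}$, identify its $\lfloor n/m\rfloor+1$ indecomposable summands as the projective covers of the simples, and compute the opposite of its endomorphism algebra. Write $\ell := \lfloor n/m\rfloor$.

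First I would identify the simples in $\cP_{c}$ and the two ``extreme'' projective-injectives. By Corollary~\ref{cor:losevetingofstoica}, submodules of $\Delta_{c}(\triv)$ are in order-preserving bijection with ideals of $H_{c}$, which form the chain $\cJ_{-1} \subsetneq \cJ_{0} \subsetneq \cdots \subsetneq \cJ_{\ell} = H_{c}$; hence $\Delta_{c}(\triv)$ is uniserial, and by exactness and full-faithfulness of $\Phi_{c}$, so is $H_{c}$, with socle $\cS_{0}$ and top $\cS_{\ell}$. Combined with Lemma~\ref{lemma:injproj}, this yields $H_{c} = P(\cS_{\ell}) = I(\cS_{0})$. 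The simples $\cS_{0}, \ldots, \cS_{\ell}$ therefore all lie in $\cP_{c}$, and conversely any simple in $\cP_{c}$ must be Ext-connected to some $\cS_{j}$; using the exact faithful restriction $\bullet_{\heartsuit^{S_{n}}_{S_{m}^{\times\ell}}}$ together with the classification of simples in the image of $\Phi_{c}$ from Theorem~\ref{thm:main}(4)(c) and the known block structure of $\cO_{c}$, this forces every simple of $\cP_{c}$ to be among the $\cS_{i}$. Applying the duality $\DD$ of Theorem~\ref{thm:main3} and Proposition~\ref{prop:dcwvsdreg}, $D_{c}$ is uniserial with the opposite Loewy series, so $D_{c} = P(\cS_{0}) = I(\cS_{\ell})$.

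The main technical step is the construction and analysis of the intermediate projectives $P(\cS_{i})$ for $0 < i < \ell$. The natural candidates are obtained by applying the induction functor $^{\heartsuit}\!(\cdot)$ (left adjoint to $\bullet_{\heartsuit}$) from a suitable parabolic $\underline{W} = S_{m}^{\times k}$ to a carefully chosen bimodule in $\underline{\HC}(c,c)$; this bimodule is built factorwise using the tensor decomposition $\underline{H}_{c} \cong H_{c}(S_{m},\mathrm{refl})^{\otimes k}$ and the four-indecomposable classification of Section~\ref{subsect:r/n}. To verify the resulting bimodule is $P(\cS_{i})$ with the expected Loewy filtration---top $\cS_{i}$, and $k$-th Loewy layer $\cS_{i-k} \oplus \cS_{i+k}$ when both are defined---I would restrict down to the deepest parabolic $S_{m}^{\times\ell}$, where $\bullet_{\heartsuit}$ is faithful on $\cP_{c}$ and Lemma~\ref{lemma:calculations} reduces everything to an explicit computation inside a product of Section~\ref{subsect:r/n}-categories. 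The duality $\DD$ provides an essential symmetry, interchanging each $P(\cS_{i})$ with its injective hull $I(\cS_{i})$, and cutting the required computations in half.

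With the projective generator $P = \bigoplus_{i=0}^{\ell} P(\cS_{i})$ in hand, I would compute $\End(P)^{\opp}$ from the Loewy structure: the multiplicity $[\,P(\cS_{j}) : \cS_{i}\,]$ equals $1$ for every pair, so $\dim \End(P)^{\opp} = (\ell+1)^{2}$. The Gabriel quiver has arrows counted by $\Ext^{1}(\cS_{i}, \cS_{j})$; the short exact sequences $0 \to \cS_{i+1} \to \cJ_{i+1}/\cJ_{i-1} \to \cS_{i} \to 0$ from the uniserial filtration of $H_{c}$ give non-split extensions yielding the $\beta_{i+1}$, and the duality gives the $\alpha_{i+1}$, while the Loewy analysis of each projective confirms $\dim \Ext^{1}(\cS_{i}, \cS_{j}) = \delta_{|i-j|,1}$. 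Finally, the relations $\alpha_{i}\beta_{i} = \beta_{i}\alpha_{i} = 0$ are forced by the absence of $\cS_{j}$ in the second Loewy layer of $P(\cS_{j})$: no composition $\cS_{j} \to \cS_{j\pm 1} \to \cS_{j}$ can be nonzero in any projective. The main obstacle is the intermediate projective construction of the middle paragraph---the careful coordination of induction, restriction, and the base-case analysis required to pin down each Loewy layer of $P(\cS_{i})$.
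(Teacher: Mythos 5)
Your proposal takes a genuinely different route from the paper. You aim to construct a projective generator $P = \bigoplus_{i=0}^{\ell} P(\cS_i)$ explicitly and compute $\End(P)^{\opp}$ from the Loewy filtrations, whereas the paper deliberately sidesteps the intermediate projectives: it proves $H_c/\cJ_i$ is injective in the Serre subcategory $\HC_{\overline{\cL_{i+1}}}(c,c)$ (Lemma~\ref{lemma:injectivesmall}) and that $\pi(H_c)$ is projective in the Serre quotient $\HC^{i}(c,c)$ (Lemma~\ref{lemma:projquotient}), and extracts $\Ext^1$-vanishing (Corollaries~\ref{cor:noextup} and \ref{cor:noextdown}) from those two facts, then obtains the relations by a multiplicity-one argument combined with an inductive passage through the quotient categories. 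The common ground between the two approaches is the identification of the simples $\cS_i$, the identification $H_c = P(\cS_{\ell}) = I(\cS_0)$ and $D_c = P(\cS_0) = I(\cS_{\ell})$, the use of the duality $\DD$ fixing the $\cS_i$, and the use of the two "extreme" injective-projectives to show there are no further relations.

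The gap in your argument is the middle paragraph, which you yourself flag as the main obstacle, and I do not think it goes through as sketched. There are two problems. First, to produce $P(\cS_i)$ for $0 < i < \ell$ you propose to apply $^{\heartsuit}\!(\cdot)$ to a carefully chosen projective over $S_m^{\times k}$, but even if $^{\heartsuit}\!(\cdot)$ preserves projectives, the output need not be indecomposable and you have no a priori control over which block(s) its summands land in, nor over its Loewy filtration; pinning those down is precisely as hard as computing $\Ext^1$ between the $\cS_i$ directly, which is what you are trying to determine, so the argument is circular as stated. Second, your proposed verification by restricting to $\underline{W} = S_m^{\times\ell}$ relies on the slogan that $\bullet_\heartsuit$ "reduces everything to an explicit computation" in a product of Section~\ref{subsect:r/n}-categories, but $\bullet_\heartsuit$ is exact and conservative on $\cP_c$ without being full, and the restrictions $\cS_{i,\heartsuit}$ are in general \emph{not} simple (the paper's Lemma~\ref{lemma:calculations} only computes the restrictions of $H_c$ and $D_c$); a faithful exact functor does not determine Loewy filtrations, so knowing the restriction does not recover the structure of $P(\cS_i)$ upstairs. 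The paper's sub/quotient-category strategy is designed exactly to avoid this computation: the injectivity of $H_c/\cJ_i$ in $\HC_{\overline{\cL_{i+1}}}(c,c)$ bounds extensions from one side, the projectivity of $\pi(H_c)$ in $\HC^{i}(c,c)$ bounds them from the other, and the quotient-category mechanism of Section~\ref{sect:quivers} turns the multiplicity-one fact for $\cS_{\ell}$ in $H_c$ into the relations $\alpha_i\beta_i=\beta_i\alpha_i=0$ inductively, all without ever writing down the intermediate projectives. If you want to salvage your plan, you would need an independent argument for the indecomposability and Loewy structure of the induced bimodules; absent that, the quotient-category argument is both cleaner and essentially forced.
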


To prove Theorem \ref{thm:prblock} we have to show first that the Ext quiver of $\cP_{c}$ is exactly as stated. What we can get immediately is that the quiver as in the statement of Theorem \ref{thm:prblock} is a subquiver of the Ext quiver. Indeed, we know by \cite{hcbimod} that $H_{c}$ is injective, so it has to be the injective hull of $\cS_{0}$. Similarly, the double wall-crossing bimodule $D_{c}$ has to be the injective hull of $\cS_{\lfloor n/m\rfloor}$. So we, at least, must have the arrows appearing in Theorem \ref{thm:prblock}. The relations, and the fact that we have no more arrows, are obtained via the vanishing of several extension groups, which in turn is obtained using induction and restriction functors. 

\subsection{Quiver representations}\label{sect:quivers} In this subsection, we recall some results on quiver representations that we will use. This material is standard and can be found in, for example, \cite{assem}. Let $Q$ be a quiver and $\C Q$ its path algebra. We will denote by $A \subseteq \C Q$ the ideal generated  by the arrows in $Q$. A two-sided ideal $I \subseteq \C Q$ is called \emph{admissible} if there exists $k \gg 0$ such that $A^{k} \subseteq I \subseteq A^{2}$. If $I$ is an admissible ideal, we denote $\C (Q, I) := \CC Q/I$ and we say that $(Q, I)$ is a \emph{quiver with relations}. 

It is well-known that every finite-dimensional algebra $R$ is Morita equivalent to $\C(Q, I)$ for an appropriate quiver with relations $(Q, I)$. For $Q$, we can always take the \emph{ext} quiver of $R$. Its vertices are labeled by isomorphism classes of simple $R$-modules, and the number of arrows from $S_{i}$ to $S_{j}$ is $\dim \Ext^{1}_{R}(S_{i}, S_{j})$. To compute the ideal $I$ is more subtle, we will elaborate below on how to do this in the case of interest for us.

For a vertex $i \in Q_{0}$, let us denote by $\varepsilon_{i} \in \C(Q, I)$ the lazy path of length 0 that starts and ends at vertex $i$, and we will denote by $S_{i}$ the simple $\C(Q, I)$-module that is concentrated at vertex $i$. The projective cover $P(i)$ of $S_{i}$ is $P(i) = \CC(Q, I)\varepsilon_{i}$. A basis for $P(i)$ consists of all paths that start at vertex $i$ (with the appropriate relations) and multiplication by an element of $\CC(Q, I)$ is done by concatenating paths. Note that the multiplicity of $S_{i}$ on $P(i)$ is equal to the number of paths from vertex $i$ to itself, including the lazy path. In particular, if the multiplicity of $S_{i}$ on $P(i)$ is $1$ then we get, using the fact that the ideal $I$ of relations is admissible, that every path of positive length starting and ending at $i$ must belong to the ideal of relations $I$. 

For the rest of this section, we will assume that the set of simples, up to isomorphism, in $\C(Q, I)\modd$ is in bijection with the set of vertices in $Q$. If $R$ is a finite-dimensional algebra, the ext quiver of $R$ satisfies this by definition. Let $J \subseteq Q_{0}$ be a subset of vertices, and consider the Serre subcategory $\cT$ of $\C(Q, I)$-mod consisting of modules whose simple subfactors belong to $\{S_{i} : i \in J\}$. Since $\cT$ is a Serre subcategory, we can consider the Serre quotient $\C(Q, I)\modd/\cT$. By definition, it comes equipped with a quotient functor $\pi: \C(Q,I)\modd \to \C(Q,I)\modd/\cT$ that is universal with the property that $\pi(C) = 0$ for every $C \in \cT$ and admits a right inverse (and left adjoint) $\pi^{!}$. In fact, the category $\C(Q,I)\modd/\cT$ has a very explicit description. Consider the projective $P_{J} := \bigoplus_{i \not\in J} P(i)$  and its endomorphism algebra $E := \End_{\C(Q,I)\modd}(P_{J})^{\opp}$. The following result should be well-known, but we could not find a reference for it in the literature.

\begin{proposition}
We can identify $\C(Q, I)\modd/\cT$ with the category $E\modd$ of modules over the endomorphism algebra $E$. The quotient functor $\pi$ is identified with $\Hom_{\C(Q,I)\modd}(P_{J}, \bullet)$, and its right inverse (and left adjoint) with $P_{J} \otimes_{E} \bullet$. 
\end{proposition}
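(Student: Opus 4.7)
The plan is to recognize the proposition as an instance of the classical theorem on Serre quotients by idempotent subalgebras. Write $A = \C(Q,I)$ and set $e := \sum_{i \notin J} \varepsilon_{i} \in A$, an idempotent, so that $P_{J} = Ae$ and $E = \End_{A}(Ae)^{\opp} = eAe$. Under these identifications, the two functors in the statement become $\Hom_{A}(P_{J}, \bullet) = e(\bullet)$ and $P_{J} \otimes_{E} \bullet = Ae \otimes_{eAe} \bullet$, and the classical adjunction $Ae \otimes_{eAe} \bullet \dashv e(\bullet)$ supplies the candidate equivalences.

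First I would verify that $e(\bullet)$ kills exactly $\cT$. Since $\Hom_{A}(P(i), M) = M\varepsilon_{i}$ and $S_{j}\varepsilon_{i} = \delta_{ij}S_{j}$, the multiplicity of $S_{i}$ among the composition factors of $M$ equals $\dim M\varepsilon_{i}$; hence $eM = \bigoplus_{i \notin J} M\varepsilon_{i}$ vanishes if and only if every composition factor of $M$ lies in $\{S_{j} : j \in J\}$, i.e.\ $M \in \cT$. By the universal property of the Serre quotient, $e(\bullet)$ descends to an exact functor $F : A\modd/\cT \to E\modd$. Similarly define $G := \pi \circ (Ae \otimes_{eAe} \bullet) : E\modd \to A\modd/\cT$. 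The adjoint pair $(Ae \otimes_{eAe} \bullet,\, e(\bullet))$ descends to an adjoint pair $(G, F)$ on the quotient, and the proof reduces to showing that the descended unit and counit are isomorphisms.

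The unit $N \to e(Ae \otimes_{eAe} N) = eAe \otimes_{eAe} N = N$ is already the identity, hence trivially an iso. The main calculation, which is the only real obstacle, is the counit. The natural map $c_{M} : Ae \otimes_{eAe} eM \to M$ has image the submodule $AeM \subseteq M$ generated by $eM$, so its cokernel is $M/AeM$ and satisfies
\[
e(M/AeM) = eM/eAeM = 0,
\]
since $1 \in A$ forces $eM \subseteq eAeM$; thus the cokernel lies in $\cT$. Applying the exact functor $e(\bullet)$ to the short exact sequence $0 \to \ker c_{M} \to Ae \otimes_{eAe} eM \to \im c_{M} \to 0$ and invoking the triangle identity for the adjunction (which forces $e(c_{M})$ to coincide with the identity on $eM$ under the canonical identification $e(Ae \otimes_{eAe} eM) = eM$) gives $e(\ker c_{M}) = 0$, so the kernel also lies in $\cT$. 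Therefore $c_{M}$ becomes an isomorphism in $A\modd/\cT$, the functors $F$ and $G$ are mutually inverse equivalences, and under $F$ the quotient functor $\pi$ is identified with $\Hom_{A}(P_{J}, \bullet)$ and its left adjoint $\pi^{!}$ with $P_{J} \otimes_{E} \bullet$.
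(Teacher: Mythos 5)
Your proof is correct but takes a genuinely different route than the paper. You exploit the explicit idempotent structure: $P_J = Ae$ and $E = eAe$ for $e = \sum_{i\notin J}\varepsilon_i$, so the adjoint pair becomes the classical $(Ae\otimes_{eAe}\bullet,\ e(\bullet))$. This trivializes the unit of adjunction — $N\to e(Ae\otimes_{eAe}N)=eAe\otimes_{eAe}N$ is literally the identity — whereas the paper treats $P_J$ as an abstract projective and works at the level of $\Hom(P_J,\bullet)$ and $P_J\otimes_E\bullet$, which forces it to do real work to prove the unit is injective (the argument that $P_J\otimes_E(E/I)\neq 0$ for every nonzero cyclic $E$-module, via the explicit construction of a nonzero map to $P_J/X$). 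Your counit computation (image $AeM$, cokernel $M/AeM$ killed by $e$ because $eM\subseteq AeM$, and kernel killed by $e$ via the triangle identity) parallels the role the paper's last two paragraphs play, but the paper closes the argument by verifying the universal property of the Serre quotient directly, while you show $F$ and $G$ are mutually inverse. Both buy the same result; your idempotent-based version is shorter and arguably the more standard textbook argument, while the paper's version is agnostic to the specific form of $P_J$ as $Ae$ and so would generalize slightly more readily to a projective not coming from a sum of $\varepsilon_i$'s. One small notational slip: you write $\Hom_A(P(i),M)=M\varepsilon_i$ and $e(M/AeM)=eM/eAeM$; for left modules these should read $\varepsilon_i M$ and $eM/(eM\cap AeM)$, though your conclusions are unaffected since $eM\subseteq AeM$ and $eM=eAeM$.
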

\begin{proof}
First, note that $\pi$ is exact because $P_{J}$ is projective and, by the very definition of $P_{J}$, $\Hom_{\C(Q, I)\modd}(P_{J}, N) = 0$ if and only if $N$ does not have simple subquotients of the form $S(i)$ for $i \in Q_{0} \setminus J$, that is, the kernel of $\pi$ is precisely $\cT$. 

Let us denote $\pi^{!} = P_{J} \otimes_{E} \bullet: E\modd \to \C(Q, I)\modd$. We claim, first, that the unit of adjunction $\id_{E\modd} \to \pi\circ\pi^{!}$ is an isomorphism. Let us remark that this is given by $M \to \Hom_{\C(Q, I)\modd}(P_{J}, P_{J} \otimes_{E} M)$, $m \mapsto (p \mapsto p \otimes_{E} m)$. It is easy to see that this is an isomorphism when $M$ is free. From here, since $\pi^{!}$ is right exact and $\pi$ is exact, it follows that the counit of adjunction is surjective.  It remains to show that it is injective, that is, that for every $m \in M$ the map $p \mapsto p \otimes_{E} m$ is nonzero. This is equivalent to show that, if $E/I$ is a nonzero cyclic left $E$-module, then $P \otimes_{E} (E/I)$ is nonzero. 

To show that $P \otimes_{E} (E/I)$ is nonzero, it suffices to provide a nonzero map $f: P_{J} \otimes_{\CC}(E/I) \to V$, where $V$ is some vector space and $f$ satisfies $f(p \otimes em) = f(pe \otimes m)$ for $e \in E$. We set $X := \sum_{i \in I} \operatorname{im}(i) \subseteq P_{J}$. We claim that $X$ is a proper subspace of $P_{J}$. The modules $P(j), j \in Q_{0} \setminus J$ have a unique maximal submodule, and a map $\varphi: P(j_{1}) \to P(j_{2})$ is surjective if and only if $j_{1} = j_{2}$ and $\varphi$ is an isomorphism. It follows that the map $X \to P(j)$ that is obtained by composing the inclusion $X \hookrightarrow P_{J}$ with the projection $P_{J} \twoheadrightarrow P(j)$ is surjective if and only if $I$ contains an element that, in matrix form $(\varphi_{ij}: P(i) \to P(j))_{i, j \in Q_{0} \setminus J}$ has $\varphi_{jj}$ being an isomorphism. So, if $X = P_{J}$, $I$ must contain an invertible element of $E$, and therefore $I = E$, a contradiction. We set $V := P_{J}/X \neq 0$. Then, we get a nonzero map $f: P_{J} \otimes_{\CC} (E/I) \to V$, $f(p \otimes m) = pm + X$, where $pm$ denotes the action of $m$ on $p$. It is easy to see that $f$ is well-defined, that it is surjective and that it satisfies $f(p \otimes em) = f(pe \otimes m)$. This implies that $P \otimes_{E} (E/I)$ is nonzero. So $\id_{E\modd} \to \pi \circ \pi^{!}$ is an isomorphism, as desired.

Note that it follows that, for every representation $N \in \C(Q, I)\modd$, the kernel and co-kernel of the co-unit of adjunction $\pi^{!} \circ \pi(N) \to N$ belong to $\cT$. Indeed, this is the case because $\pi \circ \pi^{!} \circ \pi \cong \pi$. Also, let $0 \to M_{1} \to M_{2} \to M_{3} \to 0$ be an exact sequence in $E\modd$. Since $\pi^{!}$ is right exact, we have an exact sequence $0 \to K \to \pi^{!}(M_{1}) \to \pi^{!}(M_{2}) \to \pi^{!}(M_{3}) \to 0$ in $\C(Q, I)\modd$. Applying $\pi$ we get an exact sequence in $E\modd$. So $\pi(K) = 0$, in other words, $K \in \cT$. 

 Now let $\omega: \C(Q, I)\modd \to \cD$ be an exact functor such that $\omega(N) = 0$ for $N \in \cT$. From the results of the previous paragraph, we have $\omega \cong (\omega \circ \pi^{!}) \circ \pi$, and the functor $\omega \circ \pi^{!}: E\modd \to \cD$ is exact. This implies the result.
\end{proof}


The endomorphism algebra $E$ can also be described via quivers with relations starting from $(Q, I)$ as follows. We have that $E = \bigoplus_{i_{1}, i_{2} \not\in J} \varepsilon_{i_{1}}\C(Q, I)\varepsilon_{i_{2}}$. So $E = (Q', I')$, where the vertices of $Q'$ are $Q_{0}\setminus J$, the arrows in $Q'$ are in bijection with paths $i_1 \to j_1 \to \cdots \to j_s \to i_2$, where $i_1, i_2 \in Q_0 \setminus J$ and $j_1, \dots, j_s \in J$, $s \geq 0$. The ideal $I'$ is obtained from $I$. We warn, however, that the relations $I'$ may not be admissible. 

\begin{example}
Consider the quiver

$$
Q = \xymatrix{ 1 \ar@/^/[rr]^{\alpha_{1}} & & 2 \ar@/^/[rr]^{\alpha_{2}} \ar@/^/[ll]^{\beta_{1}} & & 3 \ar@/^/[ll]^{\beta_{2}}}
$$

\noindent with relations $\alpha_{1}\beta_{1} = \beta_{2}\alpha_{2}$, $\beta_{1}\alpha_{1} = 0$, $\alpha_{2}\beta_{2} = 0$. Now consider $\cT = \langle S_{1}, S_{3}\rangle$. We get that the category $\C(Q,I)\modd/\cT$ is the category of representations of the quiver 
$$
\xymatrix{2 \ar@(ld, lu)^{\alpha_{1}\beta_{1}} \ar@(ru, rd)^{\beta_{2}\alpha_{2}}}
$$

\noindent with relations $(\alpha_{1}\beta_{1}) = (\beta_{2}\alpha_{2})$, $(\alpha_{1}\beta_{1})^{2} = 0$. These relations are not admissible, but they allow us to see that $\C(Q,I)\modd/\cT$ is equivalent to the category of representations of the algebra of dual numbers.
\end{example}

\subsection{Duality} We come back to the study of the Serre subcategory $\cP_{c} \subseteq \HC(c,c)$. Let us show that the duality functor $\DD$ fixes the category $\cP$, in the sense that it induces a functor $\DD: \cP_{c} \to \cP_{-c+N}^{\opp}$.

\begin{lemma}\label{lemma:dualitysimples}
Consider the functor $\DD: \HC(c,c) \to \HC(-c+N, -c+N)^{\opp}$. Then, $\DD(\cS_{i}) = \cS_{i}$ for every $i = 0, \dots, \lfloor n/m\rfloor$. Thus, we have an induced duality functor $\DD: \cP_{c} \to \cP_{-c+N}$.
\end{lemma}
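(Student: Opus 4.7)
The strategy is to pin down $\DD(\cS_i)$ by combining support preservation of $\DD$ (Theorem~\ref{thm:main3}) with the identification $\DD(H_c) = D_{-c+N}$ from Proposition~\ref{prop:dcwvsdreg}, and then to identify the composition factors of $D_{-c+N}$ by passing through the functor $\Phi_{-c+N}$. Writing $c' := -c+N$ for brevity, the first observation is that each $\DD(\cS_i)$ is a simple bimodule in $\HC(c',c')$ with $\sing(\DD(\cS_i)) = \overline{\cL_i}$: simplicity is preserved by the equivalence, and supports are preserved by hypothesis. Since the supports $\overline{\cL_0}, \overline{\cL_1}, \ldots, \overline{\cL_{\lfloor n/m\rfloor}}$ are pairwise distinct, it will suffice to show that the set of composition factors of $D_{c'} = \DD(H_c)$ coincides, as a set of isomorphism classes in $\HC(c',c')$, with $\{\cS_0, \cS_1, \ldots, \cS_{\lfloor n/m\rfloor}\}$; matching the two resulting labelings by support then forces $\DD(\cS_i) = \cS_i$.

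To identify the composition factors of $D_{c'}$, I would compute $\Phi_{c'}(D_{c'})$. The claim is that $\Phi_{c'}(D_{c'}) = \nabla_{c'}(\triv)$, which by full faithfulness of $\Phi_{c'}$ and the fact that $\nabla_{c'}(\triv)$ has the same composition multi-set as $\Delta_{c'}(\triv) = \Phi_{c'}(H_{c'})$ -- namely $\Phi_{c'}(\cS_0), \ldots, \Phi_{c'}(\cS_{\lfloor n/m\rfloor})$ (each once) -- will immediately force the composition factors of $D_{c'}$ to be exactly $\cS_0, \ldots, \cS_{\lfloor n/m\rfloor}$.

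The main technical step is therefore the identification $\Phi_{c'}(D_{c'}) = \nabla_{c'}(\triv)$. Since $D_{c'} = \Psi_{c'}(\nabla_{c'}(\triv))$ by definition, this is the counit of the adjunction $(\Phi_{c'}, \Psi_{c'})$ at $\nabla_{c'}(\triv)$. I would first check that the counit $\epsilon_M : \Phi_{c'}\Psi_{c'}(M) \to M$ is injective for every $M \in \cO_{c'}$: its kernel is a subobject of $\Phi_{c'}\Psi_{c'}(M)$, which lies in the image of $\Phi_{c'}$, so by Theorem~\ref{thm:main}(2) the kernel itself lies in the image; on the other hand the triangle identity gives $\Psi_{c'}(\epsilon_M) = \id$, so left-exactness of $\Psi_{c'}$ forces $\Psi_{c'}(\ker\epsilon_M) = 0$, and since $\Psi_{c'}\Phi_{c'} = \id$ annihilates no nonzero object in the image, we obtain $\ker\epsilon_M = 0$. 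In particular $\Phi_{c'}(D_{c'}) \hookrightarrow \nabla_{c'}(\triv)$, and equality reduces to a length count: applying $\DD$ to the chain $0 \subset \cJ_0 \subset \cdots \subset \cJ_{\lfloor n/m\rfloor} = H_c$ already exhibits $\lfloor n/m\rfloor + 1$ composition factors of $D_{c'}$ (the $\DD(\cS_i)$, pairwise non-isomorphic by their distinct supports), and since $\Phi_{c'}$ is exact and faithful the length of $\Phi_{c'}(D_{c'})$ equals that of $D_{c'}$, which matches the length $\lfloor n/m\rfloor + 1$ of $\nabla_{c'}(\triv)$. Hence the injection is an equality, and the argument closes.

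The expected difficulty is essentially the categorical injectivity of the counit -- which relies crucially on the image of $\Phi_{c'}$ being closed under subquotients (Theorem~\ref{thm:main}(2)) -- together with the careful bookkeeping of composition lengths on the two sides. Once both are in hand, matching by the distinct supports $\overline{\cL_i}$ is automatic and gives $\DD(\cS_i) = \cS_i$ for every $i$.
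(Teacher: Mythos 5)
Your argument is correct and fills in the reasoning that the paper leaves implicit in its one-line appeal to Proposition~\ref{prop:dcwvsdreg} together with support preservation, the key supplied detail being the identification of the composition factors of $D_{c'}$ with $\cS_0,\dots,\cS_{\lfloor n/m\rfloor}$ via $\Phi_{c'}(D_{c'})=\nabla_{c'}(\triv)$. One small slip: the triangle identity gives $\Psi_{c'}(\epsilon_M)\circ\eta_{\Psi_{c'}M}=\id$, so $\Psi_{c'}(\epsilon_M)$ is an \emph{isomorphism} (because $\eta$ is, by full faithfulness of $\Phi_{c'}$) rather than literally the identity, but the ensuing deduction that $\Psi_{c'}(\ker\epsilon_M)=0$ and hence $\ker\epsilon_M=0$ goes through unchanged.
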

\begin{proof}
This follows from Proposition \ref{prop:dcwvsdreg}. We have that $H_{c}$ is filtered
\[
H_{c} = \cJ_{\lfloor n/m \rfloor} \supsetneq \cdots \supsetneq \cJ_{1} \supsetneq \cJ_{0} \supsetneq \{0\}
\]
with succesive subquotients being $\cS_{i} = \cJ_{i}/\cJ_{i-1}$. Thus, we obtain:
\[
\DD(H_{c}) \buildrel {\varphi_1} \over \twoheadrightarrow \DD(\cJ_{\lfloor n/m \rfloor - 1}) \twoheadrightarrow \cdots \twoheadrightarrow \DD(\cJ_{1}) \buildrel \varphi_{\lfloor n/m \rfloor} \over \twoheadrightarrow \DD(\cJ_{0}) \twoheadrightarrow \{0\}.
\]
and $\DD(H_c)$ is filtered by
\[
\DD(H_{c}) \supsetneq \ker(\varphi_{\lfloor n/m\rfloor} \circ \cdots \circ \varphi_{1}) \supsetneq \cdots \supsetneq \ker(\varphi_1) \supsetneq \{0\} 
\]
with succesive subquotients being $\DD(\cS_0), \DD(\cS_1), \dots, \DD(\cS_{\lfloor n/m\rfloor})$. Since $\DD(H_{c}) = D_{-c+N}$ which is uniserial with succesive quotients being $\cS_0, \cS_1, \dots, \cS_{\lfloor n/m\rfloor}$, we obtain the result. 
\end{proof}

\subsection{Projective covers in quotient categories.} The regular bimodule $H_{c}$ is the projective cover of $\cS_{\lfloor n/m\rfloor}$. We cannot expect the ideal $\cJ_{i}$ to be the projective cover of $\cS_{i}$, but this will hold in an appropriate quotient category. Let us denote by $\HC^{i}(c,c)$ the quotient category $\HC(c,c)/\HC_{\overline{\cL_{i+1}}}(c,c)$. By $\pi: \HC(c,c) \to \HC^{i}(c,c)$ we will denote the quotient functor. Note, in particular, that the inclusion $\cJ_{i} \hookrightarrow H_{c}$ induces an isomorphism $\pi(\cJ_{i}) \cong \pi(H_{c})$. 

Recall that, when $j \leq i$, the functor $\bullet_{\dagger_{j}} := \bullet_{\dagger^{S_{n}}_{S_{m}^{\times j}}}$ factors through $\HC^{i}(c,c)$. We will abuse the notation here, and write $\bullet_{\dagger_{j}}: \HC^{i}(c,c) \to \underline{\HC}^{\Xi}(c,c)$. This functor admits a right adjoint, which is simply $\pi \circ \bullet^{\dagger_{j}}$.  

\begin{lemma}\label{lemma:projquotient}
The bimodule $\pi(H_{c})$ is projective in the quotient category $\HC^{i}(c,c)$.
\end{lemma}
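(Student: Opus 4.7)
The strategy is to deduce projectivity of $\pi(H_c)$ in $\HC^i(c,c)$ from the projectivity of $H_c$ in $\HC(c,c)$ by lifting short exact sequences through the Serre quotient. The first key observation is that $H_c$ is $\cT$-closed in $\HC(c,c)$, where $\cT := \HC_{\overline{\cL_{i+1}}}(c,c)$: its socle $\cJ_0 = \cS_0$ has full support and is therefore not in $\cT$, and since $H_c$ is uniserial every nonzero subobject contains this socle, giving that $H_c$ has no nonzero $\cT$-subobject; combined with $\Ext^1_{\HC(c,c)}(T, H_c) = 0$ for $T \in \cT$ (which is immediate from the injectivity of $H_c$, Lemma \ref{lemma:injproj}), this shows that the unit $H_c \to \pi^!\pi(H_c)$ is an isomorphism. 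By adjunction this yields a natural identification $\Hom_{\HC^i(c,c)}(\pi N, \pi H_c) \cong \Hom_{\HC(c,c)}(N, H_c)$ for every $N \in \HC(c,c)$.

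Now consider a short exact sequence $0 \to M_1 \to M_2 \xrightarrow{p} \pi(H_c) \to 0$ in $\HC^i(c,c)$, and a representative $\tilde M_2 \in \HC(c,c)$ of $M_2$. By the identification above, the map $p$ lifts to a unique morphism $\tilde p : \tilde M_2 \to H_c$ in $\HC(c,c)$, whose image is some $\cJ_j \subseteq H_c$ with $j \geq i$ (since $H_c/\cJ_j$ must lie in $\cT$). The plan is to arrange for a lift that is an actual surjection onto $H_c$, obtaining an honest short exact sequence $0 \to \tilde M_1 \to \tilde M_2' \to H_c \to 0$ in $\HC(c,c)$ whose image under $\pi$ is the given sequence. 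The key input here is the dual observation that $D_c$ is $\cT$-coclosed (its quotients have composition factors among $\cS_0,\dots,\cS_{\ell-1}$, which cannot all lie in $\cT$, and $\Ext^1(D_c, \cT) = 0$ since $D_c$ is projective by Lemma \ref{lemma:injproj}), together with the duality $\DD$ of Theorem \ref{thm:main3} (which descends to an equivalence $\HC^i(c,c) \to \HC^i(-c+N,-c+N)^{\opp}$ since it preserves supports) and Proposition \ref{prop:dcwvsdreg} which identifies $\DD(H_c) = D_{-c+N}$: these allow one to replace $\tilde M_2$ by its modification $\tilde M_2'$ obtained by adjoining the appropriate objects of $\cT$, without changing its image under $\pi$, so that the lifted morphism becomes surjective onto $H_c$.

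Once such a lifted sequence $0 \to \tilde M_1 \to \tilde M_2' \to H_c \to 0$ in $\HC(c,c)$ is in hand, projectivity of $H_c$ in $\HC(c,c)$ (Lemma \ref{lemma:injproj}) splits it, and applying the exact functor $\pi$ gives a splitting of the original sequence in $\HC^i(c,c)$; this establishes projectivity of $\pi(H_c)$. The main technical obstacle is the construction of the modified representative $\tilde M_2'$ in the second paragraph, i.e., extending the lifted sequence from one ending at $\cJ_j$ to one ending at $H_c$; this is precisely where the $\cT$-closedness of $H_c$ and the filtration $\cJ_i \subsetneq \cJ_{i+1} \subsetneq \cdots \subsetneq \cJ_\ell = H_c$ are used, and where the duality argument above controls the necessary Ext-vanishing.
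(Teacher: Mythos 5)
Your proposal takes a genuinely different route from the paper's: you try to lift a short exact sequence $0 \to M_1 \to M_2 \to \pi(H_c) \to 0$ from $\HC^i(c,c)$ to an honest short exact sequence ending at $H_c$ in $\HC(c,c)$, split it there by projectivity of $H_c$, and push the splitting back down through $\pi$. The preliminary steps are correct: $H_c$ is indeed $\cT$-closed (uniserial with socle $\cS_0$ of full support, and $\Ext^1(\cT, H_c)=0$ by injectivity), so the natural map $\Hom_{\HC}(N,H_c) \to \Hom_{\HC^i}(\pi N, \pi H_c)$ is an isomorphism, and the unique lift $\tilde p:\tilde M_2 \to H_c$ has image $\cJ_j$ with $j\geq i$.

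The genuine gap is the \textquotedblleft modification\textquotedblright\ step, which you yourself flag as the main technical obstacle but never actually carry out. To pass from the extension $0\to\tilde M_1\to\tilde M_2\to\cJ_j\to 0$ to one of the form $0\to\tilde M_1\to\tilde M_2'\to H_c\to 0$, you need the class of $\tilde M_2$ to lie in the image of the restriction $\Ext^1_{\HC}(H_c,\tilde M_1)\to\Ext^1_{\HC}(\cJ_j,\tilde M_1)$; by the long exact sequence for $0\to\cJ_j\to H_c\to H_c/\cJ_j\to 0$, the obstruction lives in $\Ext^2_{\HC}(H_c/\cJ_j,\tilde M_1)$, and nothing you have invoked controls this group. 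The duality/$\cT$-coclosedness argument you gesture at does not close the gap: the clean mechanism \textquotedblleft projective and $\cT$-coclosed implies $\pi(P)$ projective\textquotedblright\ does not apply because $H_c$ is \emph{not} $\cT$-coclosed (it has the nonzero quotient $H_c/\cJ_i\in\cT$), and the dual attempt with $D_{-c+N}$ fails symmetrically since $D_{-c+N}$ is not $\cT$-closed (its socle $\cS_{\lfloor n/m\rfloor}$ lies in $\cT$). Moreover, using $\DD$ to translate the question into injectivity of $\pi(D_{-c+N})$ just restates the dual of what you are trying to prove, so without an independent input it is circular.

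The paper's proof avoids this entirely. It checks directly that $\Ext^1_{\HC^i}(\pi H_c, \pi B)=0$ for every simple $B\notin\cT$, by applying the restriction functor $\bullet_{\dagger_j}$ (with $j$ chosen from the support of $B$) to a putative extension $0\to\pi B\to\pi X\to\pi H_c\to 0$, splitting the restricted sequence in the (semisimple or projective-enough) target category, and then analyzing the adjunction map $X\to(X_{\dagger_j})^{\dagger_j}$ to manufacture the splitting upstairs. The key tools are properties (R1)–(R5) of $\bullet_\dagger,\bullet^\dagger$ from \cite{losev_completions}, not the duality functor, and there is no lifting of extensions out of the quotient category. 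If you want to salvage your approach, you would need to establish the required $\Ext^2$-vanishing or surjectivity of the restriction map independently; as written, the argument does not go through.
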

\begin{proof}
We have to show that $\Ext^{1}_{\HC^{i}}(\pi(H_{c}), \pi(B)) = 0$ for any simple bimodule $B$ whose support is not contained in $\overline{\cL_{i+1}}$. Pick such a simple bimodule $\pi(B)$ and consider a short exact sequence 
\begin{equation}\label{eqn:ses}
0 \to \pi(B) \to \pi(X) \to \pi(H_{c}) \to 0
\end{equation}

\noindent in $\HC^{i}$ where, for brevity, we denote $\HC^{i} := \HC^{i}(c,c)$. We separate in two cases. 

{\it Case 1. $B$ has full support.} In this case, we must have $B = \cS_{0}$. The restriction functor $\bullet_{\dagger_{0}}$ factors through the quotient category $\HC^{i}$ and so we can apply it to our exact sequence. The resulting short exact sequence splits, so we have $X_{\dagger} = (H_{c})_{\dagger} \oplus (\cS_{0})_{\dagger}$. By adjunction, we get a map $X \to H_{c} \oplus H_{c}$ whose kernel has proper support. But from (\ref{eqn:ses}) we can see that $\pi(X)$ does not have nonzero subobjects of the form $\pi(B')$, where $B'$ is a simple with proper support. So the map $X \to H_{c} \oplus H_{c}$ induces an embedding $\pi(X) \hookrightarrow \pi(H_{c}) \oplus \pi(H_{c})$. From here, it is easy to see that we must have $X \cong \pi(B) \oplus \pi(H_{c})$.

{\it Case 2. $B$ has proper support.} Say $\sing(B) = \overline{\cL_{j}}$ for some $0 < j < i+1$, and apply the restriction functor $\bullet_{\dagger_{j}}$ to the sequence (\ref{eqn:ses}), so that $B_{\dagger_{j}}$ is finite-dimensional. Now the sequence $0 \to B_{\dagger_{j}} \to X_{\dagger_{j}} \to H_{c, \dagger_{j}} \to 0$ splits, since $H_{c, \dagger_{j}}$ is projective in the category of equivariant bimodules. So we get the adjunction map $X \to (X_{\dagger_{j}})^{\dagger_{j}} = (B_{\dagger_{j}})^{\dagger_{j}} \oplus H_{c}$. Now we note that $\pi(X)$ does not have subobjects that are killed by the restriction functor $\bullet_{\dagger_{j}}$. Indeed, this follows because of our choice of $j$ and because every subobject of $H_{c}$ has full support. It follows, in particular, that the map $\pi(X) \to \pi(B_{\dagger_{j}}^{\dagger_{j}}) \oplus \pi(H_{c})$ induced by the adjunction morphism is injective. 

Now, by \cite{losev_completions}, $(B_{\dagger_{j}})^{\dagger_{j}}$ does not have subbimodules with full support, while it is clear that $H_{c}$ does not have subbimodules with support $\overline{\cL_{j}}$. It follows that none of the morphisms $\pi(X) \to \pi((B_{\dagger_{j}})^{\dagger_{j}}) \oplus \pi(H_{c}) \to  \pi((B_{\dagger_{j}})^{\dagger_{j}})$, $\pi(X) \to \pi((B_{\dagger_{j}})^{\dagger_{j}}) \oplus \pi(H_{c}) \to \pi(H_{c})$ is zero. By (\ref{eqn:ses}), the second one of these maps must be surjective. But now it follows that the image of the map $\pi(X) \to \pi((B_{\dagger_{j}})^{\dagger_{j}})$ concides with $\pi(B)$, and this provides a splitting to the inclusion $\pi(B) \to X$. We are done.
\end{proof}

\begin{corollary}\label{cor:noextdown}
Let $B \in \HC(c,c) \setminus \HC_{\overline{\cL_{i+1}}}(c,c)$ be simple. Then, $\Ext^{1}_{H_{c}\bimod}(B, \cS_{i+1}) = 0 = \Ext^{1}_{H_{c}\bimod}(\cS_{i+1}, B)$ unless $B = \cS_{i}$, in which case both extension groups are $1$-dimensional.
\end{corollary}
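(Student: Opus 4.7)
The plan is to combine the duality $\DD$ of Theorem~\ref{thm:main3} with a long-exact-sequence argument based on the uniserial filtration $0 \subsetneq \cJ_{0} \subsetneq \cdots \subsetneq \cJ_{\lfloor n/m\rfloor} = H_{c}$, in the same spirit as the proof of Corollary~\ref{cor:noextup}. By Lemma~\ref{lemma:dualitysimples}, $\DD$ fixes each $\cS_{k}$; combined with Theorem~\ref{thm:main3} this yields the isomorphism $\Ext^{1}_{H_{c}\bimod}(B,\cS_{i+1}) \cong \Ext^{1}_{H_{-c+N}\bimod}(\cS_{i+1},\DD B)$, with $\DD B$ simple of the same support as $B$ and equal to $\cS_{i}$ exactly when $B = \cS_{i}$. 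Since the hypotheses of the corollary are stable under $c \mapsto -c+N$ for $N \gg 0$, it will suffice to establish, for every admissible parameter, the single statement
\[
\Ext^{1}_{\HC(c,c)}(\cS_{i+1}, B) = \delta_{B,\cS_{i}}\,\CC \quad \text{for all simple } B \notin \HC_{\overline{\cL_{i+1}}}(c,c);
\]
duality then delivers the other half.

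The nontriviality for $B = \cS_{i}$ follows at once from uniseriality: the subquotient $\cJ_{i+1}/\cJ_{i-1}$ of $H_{c}$ has socle $\cS_{i}$ and top $\cS_{i+1}$, so it cannot split, producing a nonzero class in $\Ext^{1}(\cS_{i+1}, \cS_{i})$. For the upper bound I will apply $\Hom(\,\cdot\,, B)$ to $0 \to \cJ_{i} \to \cJ_{i+1} \to \cS_{i+1} \to 0$. Since $\sing B \not\subseteq \overline{\cL_{i+1}}$, the simple $B$ cannot be isomorphic to any $\cS_{k}$ with $k \geq i+1$, and as $\cS_{k}$ is the top of $\cJ_{k}$ the Hom-groups simplify to $\Hom(\cS_{i+1}, B) = \Hom(\cJ_{i+1}, B) = 0$ and $\Hom(\cJ_{i}, B) = \delta_{B,\cS_{i}}\,\CC$. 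The long exact sequence then yields
\[
0 \to \delta_{B,\cS_{i}}\,\CC \to \Ext^{1}(\cS_{i+1}, B) \to \Ext^{1}(\cJ_{i+1}, B),
\]
reducing the claim to the auxiliary vanishing $\Ext^{1}(\cJ_{i+1}, B) = 0$ for every such $B$.

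The hard part will be this auxiliary vanishing. My plan is to dualize once more: by Theorem~\ref{thm:main3} and Proposition~\ref{prop:dcwvsdreg}, $\Ext^{1}(\cJ_{i+1}, B) \cong \Ext^{1}_{\HC(-c+N)}(\DD B, \DD\cJ_{i+1})$, where $\DD\cJ_{i+1}$ is a uniserial quotient of the injective-projective bimodule $D_{-c+N} = \DD(H_{c})$. Combining the long exact sequence from $0 \to \DD(H_{c}/\cJ_{i+1}) \to D_{-c+N} \to \DD\cJ_{i+1} \to 0$ with the vanishing $\Ext^{k}(\DD B, D_{-c+N}) = 0$ for $k \geq 1$ (Lemma~\ref{lemma:injproj}) and a downward induction on the length of the uniserial filtration of $\DD(H_{c}/\cJ_{i+1})$, the problem should reduce to Hom-vanishings between simples with incomparable supports, handled by the same support argument as in the previous paragraph.
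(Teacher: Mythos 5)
Your reduction to $\Ext^{1}(\cJ_{i+1}, B) = 0$ is clean and correct: the uniseriality argument for the lower bound, the use of Lemma~\ref{lemma:dualitysimples} to exchange the two $\Ext$ directions, and the long exact sequence from $0 \to \cJ_{i} \to \cJ_{i+1} \to \cS_{i+1} \to 0$ combined with the Hom-vanishings all go through. But the last step, the auxiliary vanishing $\Ext^{1}(\cJ_{i+1}, B) = 0$, has a genuine gap. The long exact sequence you propose to use, obtained by applying $\Hom(\DD B, -)$ to $0 \to \DD(H_c/\cJ_{i+1}) \to D_{-c+N} \to \DD\cJ_{i+1} \to 0$ and using $\Ext^{k}(\DD B, D_{-c+N}) = 0$ for $k \geq 1$, only gives an isomorphism $\Ext^{1}(\DD B, \DD\cJ_{i+1}) \cong \Ext^{2}(\DD B, \DD(H_c/\cJ_{i+1}))$; it does not kill the left-hand side. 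The ``downward induction on the length of the uniserial filtration'' does not remove this obstruction: peeling off one composition factor $\cS'_{j}$ at a time keeps producing $\Ext^{2}$ terms, and these never reduce to Hom-vanishings between simples as you assert. Since $\cJ_{i+1}$ is neither projective nor injective in $\HC(c,c)$, and since nothing in the material established so far controls $\Ext^{2}$ in this category, the vanishing you need cannot be extracted from the injective-projectivity of $H_c$ and $D_c$ alone.

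The paper's proof takes a different route precisely to avoid this issue: it passes to the Serre quotient $\HC^{j}(c,c) = \HC(c,c)/\HC_{\overline{\cL_{j+1}}}(c,c)$ and invokes Lemma~\ref{lemma:projquotient}, which shows (by an explicit splitting argument using restriction functors, not by homological bootstrapping) that $\pi(H_c)$ is \emph{projective} in the quotient category. In that quotient, $\pi(\cJ_{i+1}) \cong \pi(H_c)$ becomes honestly projective and uniserial, and the extension count for simples follows immediately by counting composition factors in the projective cover, with no $\Ext^{2}$ ever appearing. If you want to salvage your plan, you would essentially have to reprove Lemma~\ref{lemma:projquotient} or something equivalent; the duality functor alone is not enough here.
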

\begin{proof}
Note that $\Ext^{1}_{H_{c}\bimod}(B, \cS_{i+1})$ is embedded in $\Ext^{1}_{\HC^{i}}(\pi(B), \pi(\cS_{i+1}))$. The projective cover of $\pi(\cS_{i+1})$ in $\HC^{i}$ is $\pi(H_{c})$, which has a unique composition series whose simple subquotients are $\cS_{0}, \dots, \cS_{i}, \cS_{i+1}$. It follows from the computation of $\Ext^{1}$ using projective resolutions, that $\Ext^{1}_{\HC^{i}}(B, \cS_{i}) \neq 0$ implies $B = \cS_{i}$, and $\Ext^{1}_{\HC^{i}}(\cS_{i}, \cS_{i+1})$ is $1$-dimensional.

Note that the duality functor induces a duality on $\HC^{i}$ which fixes both $\cS_{i}$ and $\cS_{i+1}$, cf. Lemma \ref{lemma:dualitysimples}. Thus, $\Ext^{1}_{\HC^{i}}(\cS_{i+1}, B) \neq 0$ if and only if $\Ext^{1}_{\HC^{i}}(\DD(B), \cS_{i+1}) \neq 0$, if and only if $\DD(B) \cong \cS_{i}$, which happens if and only if $B \cong \cS_{i}$. The result follows. 
\end{proof}

\begin{corollary}\label{cor: ext simples above}
Assume $\Ext^{1}(\cS_{i}, \cS_{j}) \neq \{0\}$. Then, $|i - j| = 1$. 
\end{corollary}
\begin{proof}
First note that since simples in category $\cO_{c}$ do not have self-extensions, the same is true for simples in the category $\HC$. By Lemma \ref{lemma:dualitysimples}, $\Ext^{1}(\cS_{i}, \cS_{j}) \neq 0$ if and only if $\Ext^{1}(\cS_{j}, \cS_{i}) \neq 0$. So we may assume $i < j$, in which case the result follows from Corollary \ref{cor:noextdown}.
\end{proof}

\subsection{Proof of Theorem \ref{thm:prblock}} It follows from Lemma \ref{lemma:dualitysimples} and Corollary \ref{cor:noextdown} that the ext quiver of the block $\cP_{c}$ is precisely

$$
Q := \xymatrix{\cS_{\lfloor n/m\rfloor} \ar@/^/[rr]^{\alpha_{\lfloor n/m\rfloor}} & & \cS_{\lfloor n/m\rfloor - 1} \ar@/^/[ll]^{\beta_{\lfloor n/m\rfloor}} \ar@/^/[rr]^{\alpha_{\lfloor n/m\rfloor - 1}} & & \cdots \ar@/^/[ll]^{\beta_{\lfloor n/m\rfloor - 1}} \ar@/^/[r] & \cdots \ar@/^/[l] \ar@/^/[r] & \ar@/^/[l] \cS_{1}  \ar@/^/[rr]^{\alpha_{1}} & & \cS_{0}  \ar@/^/[ll]^{\beta_{1}}}  
$$

\noindent and our job now is to find the relations. Since $H_{c}$ is the projective cover of $\cS_{\lfloor n/m\rfloor}$ and this simple appears with multiplicity 1 in $H_{c}$, every path in $Q$ that starts and ends in $\cS_{\lfloor n/m\rfloor}$, except for the lazy path, must be zero. In particular, we get that $\beta_{\lfloor n/m\rfloor}\alpha_{\lfloor n/m\rfloor} = 0$. Now we consider the quotient category $\cP_{c}/\langle \cS_{\lfloor n/m\rfloor}\rangle$. From the general theory of quiver representations, cf. Section \ref{sect:quivers}, this quotient category is equivalent to the representations of the quiver

$$
\xymatrix{\cS_{\lfloor n/m\rfloor - 1} \ar@(ul, ur) \ar@/^/[rr]^{\alpha_{\lfloor n/m\rfloor - 1}} & & \cdots \ar@/^/[ll]^{\beta_{\lfloor n/m\rfloor - 1}} \ar@/^/[r] & \cdots \ar@/^/[l] \ar@/^/[r] & \ar@/^/[l] \cS_{1}  \ar@/^/[rr]^{\alpha_{1}} & & \cS_{0}  \ar@/^/[ll]^{\beta_{1}}}  
$$

\noindent where the loop at $\cS_{\lfloor n/m\rfloor - 1}$ represents the path $\alpha_{\lfloor n/m\rfloor}\beta_{\lfloor n/m\rfloor}$. Thanks to Lemma \ref{lemma:projquotient}, the projective cover of $\cS_{\lfloor n/m\rfloor - 1}$ in this category is (the projection of) $H_{c}$. Since $\cS_{\lfloor n/m\rfloor - 1}$ appears with multiplicity $1$ here, we get that the loop $(\alpha_{\lfloor n/m\rfloor}\beta_{\lfloor n/m\rfloor})$ and the 2-cycle $\beta_{\lfloor n/m\rfloor -1}\alpha_{\lfloor n/m\rfloor -1}$ are zero. Inductively, we conclude that $\alpha_{i}\beta_{i} = 0 = \beta_{i}\alpha_{i}$ for every $i$. \\

Now we have to show that there are no relations involving only $\alpha$'s or involving only $\beta$'s. That there are no relations involving only $\alpha$'s follows because the regular bimodule $H_{c}$ corresponds to the representation

$$
\xymatrix{\CC \ar@/^/[rr]^{1} & & \CC \ar@/^/[ll]^{0} \ar@/^/[rr]^{1} & & \cdots \ar@/^/[ll]^{0} \ar@/^/[r]^{1} & \cdots \ar@/^/[l]^{0} \ar@/^/[r]^{1} & \ar@/^/[l]^{0} \CC  \ar@/^/[rr]^{1} & & \CC  \ar@/^/[ll]^{0}}  
$$

\noindent and, similarly, that there are no relations involving only $\beta$'s can be seen from the existence of the double wall-crossing bimodule $D_{c}$. This finishes the proof of Theorem \ref{thm:prblock}. \hfill $\square$ \\

By Lemma \ref{lemma:dualitysimples}, the duality functor preserves the principal block. It is easy to see from Proposition \ref{prop:dcwvsdreg} that, on the level of the quiver $Q$ above, the duality fixes every vertex and it behaves on arrows as $\alpha_{i} \leftrightarrow \beta_{i}$. 

\subsection{Tensor structure} While Theorem \ref{thm:prblock} describes the structure of  $\cP_{c}$ as an abelian category, it gives no information on the tensor structure that is inherited from the tensor structure on $\HC(c,c)$. Let us first define such tensor structure. Let $i: \cP_{c} \to \HC(c,c)$ be the inclusion, with left adjoint $\,^{!}i: \HC(c,c) \to \cP_{c}$, $\,^{!}i(B)$ is the largest quotient of $B$ that belongs to $\cP_{c}$. For $B_1, B_2 \in \cP_{c}$ define
\[
B_1 \tilde{\otimes} B_2 = \, ^{!}i(B_1 \otimes_{H_c} B_2) \in \cP_{c}.
\]
The main goal of this section is to describe the tensor product $\cS_{i} \tilde{\otimes} \cS_{j}$ of simple bimodules in $\cP_{c}$. We start with the following preparatory lemma, which is an easy consequence of \cite[Corollary 2.7]{hcbimod}.

\begin{lemma}\label{lemma:tensor diff supp}
Assume $i \neq j$. Then, $\cS_{i} \otimes_{H_{c}} \cS_{j} = 0$.
\end{lemma}
\begin{proof}
Since $\cS_{i}$ and $\cS_{j}$ have different support, the result follows from \cite[Corollary 2.7]{hcbimod}. 
\end{proof}

The next step is to examine the tensor product of quotients of the regular bimodule $H_{c}$ with the double wall-crossing bimodule $D$. We will do prove slightly more general statement and examine the tensor products of quotients of $H_{c}$ with sub-bimodules of $D$. Let us, first, set some notation. For each $j = 0, \dots, \lfloor n/m\rfloor$, we will denote by $\cI_{j}$ the unique sub-bimodule of $D$ that surjects onto $\cS_{j}$. For example, $\cI_{\lfloor n/m\rfloor} = \cS_{\lfloor n/m\rfloor}$ while $\cI_{0} = D$. 

\begin{lemma}\label{lemma:sub quot zero}
Let $\cJ_{k}$ be a nonzero two-sided ideal of $H_{c}$. Then, $ (H_{c}/\cJ_{k}) \otimes_{H_{c}} D = 0$. Moreover, $(H_{c}/\cJ_{k}) \otimes_{H_{c}} \cI_{j} = 0$ for $k \geq j$. 
\end{lemma}
\begin{proof}
We tensor the short exact sequence $0 \to \cJ_{k} \to H_{c} \to H_{c}/\cJ_{k} \to 0$ with $D$ on the right to get
$$
\cJ_{k} \otimes_{H_{c}} D \to D \to (H_{c}/\cJ_{k}) \otimes_{H_{c}} D \to 0,
$$

\noindent so $(H_{c}/\cJ_{k}) \otimes_{H_{c}}D$ is a quotient of $D$ which is annihilated on the left by $\cJ_{k}$. But every nonzero quotient of $D$ has $\cJ_{0}$ as a quotient, and the left annihilator of the latter bimodule is trivial. Thus, $ (H_{c}/\cJ_{k}) \otimes_{H_{c}} D$ must be zero. The proof of the statement concerning $\cI_{k}$ is similar, noting that every nonzero quotient of $\cI_{k}$ surjects onto $\cS_{k} = \cJ_{k}/\cJ_{k-1}$, and this bimodule is not annihilated by $\cJ_{k}$, cf. \cite[Theorem 5.8.1]{losev_completions}.
\end{proof}
\begin{theorem}\label{thm:tensor simples}
We have

$$
\cS_{i} \tilde{\otimes} \cS_{j} = \begin{cases} \cI_{j} & \text{if i = j}, \\ 0 & \text{else}. \end{cases}
$$
\end{theorem}
\begin{proof}
That $\cS_{i} \otimes \cS_{j} = 0$ if $i \neq j$ is Lemma \ref{lemma:tensor diff supp} above. 
We move on to the $i = j$ case. We tensor the short exact sequence $0 \to \cI_{j+1} \to \cI_{j} \to \cS_{j} \to 0$ with $\cS_{j}$ on the right to get
$$
\cI_{j+1} \otimes_{H_{c}} \cS_{j} \to \cI_{j} \otimes_{H_{c}} \cS_{j} \to \cS_{j} \otimes_{H_{c}} \cS_{j} \to 0.
$$
Note that $\cI_{j+1} \otimes_{H_{c}} \cS_{j}$, this follows from Lemma \ref{lemma:tensor diff supp} and the fact that the composition series of $\cI_{j+1}$ does not include $\cS_{j}$. So $\cS_{j} \otimes_{H_{c}} \cS_{j} \cong \cI_{j} \otimes_{H_{c}} \cS_{j}$. 

Now we consider the short exact sequence $0 \to \cS_{j} \to H_{c}/\cJ_{j-1} \to H_{c}/\cJ_{j} \to 0$, and we tensor it with $\cI_{j}$ to get
$$
\cS_{j} \otimes_{H_{c}} \cI_{j} \to (H_{c}/\cJ_{j-1}) \otimes_{H_{c}} \cI_{j} \to (H_{c}/\cJ_{j}) \otimes_{H_{c}} \cI_{j} \to 0
$$ 

By Lemma \ref{lemma:sub quot zero}, the bimodule $ (H_{c}/\cJ_{j}) \otimes_{H_{c}} \cI_{j}$ is zero. Moreover, note that the (left or right) action of $H_{c}$ on $\cI_{j}$ factors through $H_{c}/\cJ_{j-1}$. So we get:
$$
(H_{c}/\cJ_{j-1}) \otimes_{H_{c}} \cI_{j} = (H_{c}/\cJ_{j-1}) \otimes_{H_{c}/\cJ_{j-1}} \cI_{j} = \cI_{j},
$$
\noindent to conclude that $\cS_{j} \otimes_{H_{c}} \cS_{j} \cong \cS_{j} \otimes_{H_{c}} \cI_{j}$ surjects onto $\cI_{j}$. Thus, $\cS_j \tilde{\otimes} \cS_j$ surjects onto $\cI_j$.

Now, both objects $\cS_{j} \tilde{\otimes} \cS_{j}$ and $\cI_{j}$ belong to the Serre subcategory of $\cP_{c}$ spanned by simples with support inside $\overline{\cL_{j}}$. By Theorem \ref{thm:prblock} the bimodule $\cI_{j}$ is projective there. So the surjection $\cS_{j} \tilde{\otimes} \cS_{j} \to \cI_{j}$ must split and we have
$$
\cS_{j} \tilde{\otimes} \cS_{j} = \cI_{j} \oplus X
$$

\noindent for some bimodule $X \in \HC_{\overline{\cL_{j}}}$. Note that $X$ cannot contain $\cS_{j}$ as a composition factor, this follows since $\HC_{\overline{\cL_{j}}}/\HC_{\overline{\cL_{j+1}}}$ is a monoidal category, equivalent to the category of representations of the symmetric group $S_{j}$, with (the image of) $\cS_{j}$ being the unit object, cf. \cite[Theorem 6.8]{hcbimod}. If $X$ has a quotient of the form $\cS_{k}$ for $k > j$ then we have $X \otimes_{H_{c}} \cS_{k} \neq 0$, which contradicts $\cS_{j} \otimes_{H_{c}} \cS_{j} \otimes_{H_{c}} \cS_{k} = 0$. Thus, $X = 0$ and we are done.
\end{proof}

Note that as an immediate consequence of Theorem \ref{thm:tensor simples} we obtain another expression for the double wall-crossing bimodule.

\begin{corollary}
We have $D = \cS_{0} \tilde{\otimes} \cS_{0} = \cS_{0} \otimes_{H_{c}} \cS_{0}$, where $\cS_{0} = \cJ_{0}$ is the unique minimal two-sided ideal of $H_{c}$.
\end{corollary}
\begin{proof}
The statement $D = \cS_{0} \tilde{\otimes} \cS_{0}$ is a special case of Theorem \ref{thm:tensor simples}. To verify that, in fact, $D = \cS_0 \otimes_{H_c} \cS_0$ note that $D$ is already projective in the category $\HC(c,c)$, cf. Corollary \ref{cor: injective projective}, so we can apply verbatim the argument in the last paragraph of the proof of Theorem \ref{thm:tensor simples},
\end{proof}

Thanks to Lemma \ref{lemma:tensor diff supp} we obtain
$$
D \otimes_{H_{c}} \cS_{k} = 0 \; \text{for} \; k > 0.
$$

\noindent In particular, the duality functor $\DD$ does not intertwine the tensor products.

 \end{document}